\begin{document}

\author{Daniel Smania }

\address{Departamento de Matem\'atica \\
   Instituto de Ci\^encias Matem\'aticas e de Computa\c{c}\~ao, Universidade de S\~ao Paulo-Campus de S\~ao Carlos (ICMC/USP/S\~ao Carlos), Caixa Postal 668, CEP 13560-970 \\ S\~ao Carlos-SP, Brazil}
\email{smania@icmc.usp.br} \urladdr{\href{http://conteudo.icmc.usp.br/pessoas/smania/}{http://conteudo.icmc.usp.br/pessoas/smania/}}

\date{\today}
\dedicatory{ Dedicated to the memory of  \\ Welington de Melo (1946-2016)} 
\title[Solenoidal attractors with  bounded combinatorics are shy]{Solenoidal attractors with bounded combinatorics are shy}

\begin{abstract}  We show that in a generic   finite-dimensional real-analytic family  of real-analytic multimodal maps, the subset of parameters on which   the corresponding map has a  solenoidal attractor with bounded combinatorics is a set with  zero Lebesgue measure.  \end{abstract}

\subjclass[2000]{  37E05, 37E20, 37F25, 37C20, 37D20,  37E20, 37L45} \keywords{ rigidity, renormalization, conjugacy,  universality, hyperbolic, solenoidal attractor, multimodal}

\thanks{We thank the referees for their careful reading and suggestions. D.S. was partially supported by CNPq 430351/2018-6, 307617/2016-5, 470957/2006-9, 310964/2006-7,  472316/03-6,  303669/2009-8, 305537/2012-1 and FAPESP 03/03107-9,  2008/02841-4, 2010/08654-1, 2017/06463-3.}

\maketitle
\newcommand{\co}{\mathbb{C}}
\newcommand{\incl}[1]{i_{U_{#1}-Q_{#1},V_{#1}-P_{#1}}}
\newcommand{\inclu}[1]{i_{V_{#1}-P_{#1},\co-P}}
\newcommand{\func}[3]{#1\colon #2 \rightarrow #3}
\newcommand{\norm}[1]{\left\lVert#1\right\rVert}
\newcommand{\norma}[2]{\left\lVert#1\right\rVert_{#2}}
\newcommand{\hiper}[3]{\left \lVert#1\right\rVert_{#2,#3}}
\newcommand{\hip}[2]{\left \lVert#1\right\rVert_{U_{#2} - Q_{#2},V_{#2} -
P_{#2}}}
\newtheorem{prop}{Proposition}[section]

\newtheorem{lem}[prop]{Lemma}

\newtheorem{rem}[prop]{Remark}

\newtheorem*{mth}{Main Theorem}
\newtheorem{thm}{Theorem}

\newtheorem{cor}[prop]{Corollary}

\setcounter{tocdepth}{1}
\tableofcontents

\section{Introduction.}

A multimodal map $f\colon I \rightarrow I$  is a smooth map defined in an interval $I$, with a finite number
 of critical points $c_i$, all of them local maximum or local minimum, and such that $f(\partial I) \subset \partial I$.  We are going to 
 assume that  $f$ is real-analytic. 
 
For {\it unimodal}  maps with a  {\it quadratic } critical point,  the   understanding of the {\it typical} behaviour  is very satisfactory.  Lyubich \cite{lhyp} and    Graczyk and \'Swiatek \cite{gs}  proved the density of hyperbolic parameters in the quadratic family. But this was not enough   to understand the typical behaviour  at almost every parameter of  the quadratic family.  Indeed  earlier Jakobson \cite{jak} proved that in the complement of the hyperbolic parameters there is a  subset of parameters with positive measure for which the dynamics admits  an absolutely continuous invariant probability (the map is stochastic).  Finally Lyubich \cite{lyuq}  proved that for almost every parameter  in the quadratic family the map is either regular (a hyperbolic map) or stochastic.  Avila, Lyubich and de Melo  \cite{alm} generalised  this result for a non degenerate real analytic family of quadratic real analytic unimodal maps and Avila and Moreira \cite{am2} improved this, proving that in a non degenerate family the map is either regular or Collet-Eckmann at almost every parameter.  There are  similar results  for real-analytic unimodal maps with higher order by Clark \cite{clark}. See also Bruin,  Shen and  van Strien  \cite{sbr}, Avila, Lyubich and Shen \cite{als} and Shen \cite{shen} for related results. 

Similar studies for multimodal maps (or even unimodal maps with higher order) pose new difficulties. New phenomena appear, as non-renormalizable maps without decay of geometry (see  Bruin, Keller, Nowicki and  van Strien \cite{wild2}, Keller and Nowicki \cite{wild3} ). Decay of geometry was an essential  tool in the study of unimodal quadratic maps.  This was a major difficulty in the study of the so-called Fibonacci renormalization for unimodal maps with higher order  in  Smania \cite{sm4} and the proof of  the density of hyperbolicity for polynomials  in  Kozlovski,  Shen,  van Strien \cite{kss1} \cite{kss2}.  Moreover the lack of decay of geometry allows additional  metric behaviours, as the  existence of wild attractors. See Milnor \cite{wild1}, Bruin, Keller, Nowicki and  van Strien \cite{wild2} and Bruin, Keller and St. Pierre \cite{wild4}.  

Another issue is that for families of  polynomials with more than one critical point  (as in the cubic family) the parameter space has dimension larger than one. That  implies that the parapuzzle approach  as used in the unimodal case (see Lyubich \cite{lyu3}, Avila, Lyubich and de Melo \cite{alm}) does not seem to be easily adaptable here, since  the fact that holomorphic maps with one-variable are conformal was  used in a crucial way. 

So as a consequence there are a lot of unanswered questions concerning  the typical behaviour  in the {\it measure-theoretical} sense in families of polynomials and/or multimodal maps.   

One of them  is   how often maps with  {\it solenoidal attractors}  appear in these families.  We say that a set $\Lambda \subset I$ is a  {\it solenoidal attractor} of a multimodal map $f$ if there exists an increasing sequence of positive intergers $n_k$, $k \in \mathbb{N}$, and  a family of closed intervals $I^k_j \subset I$, $k \in \mathbb{N}$ and $0\leq j < n_k$, such that 
\begin{itemize}
\item[A.] For each $k$ the intervals in the family $\{ I^k_j  \}_{j < n_k}$ has pairwise disjoint interior.
\item[B.] We have $f(I^k_j) \subset  I^k_{j+1 \mod n_k}$.
\item[C.] For every $k$
$$\{c_i\}_i \cap \cup_{j < n_{k}} I^k_j \neq  \emptyset.$$
and
$$\cup_{j < n_{k+1}} I^{k+1}_j \subset  \cup_{j < n_{k}}  I^k_j.$$
\item[D.] We have
$$\Lambda= \cap_k  \cup_{j< n_k}  I^k_j.$$
\end{itemize}
See  Blokh and Lyubich \cite{bl} \cite{bl2} for more information on attractors for multimodal maps.  The solenoidal attractor $\Lambda$ has {\it bounded combinatorics } if 
$$\sup_k \frac{n_{k+1}}{n_k} < \infty.$$

One important step in previous results about the typical behaviour in  families of unimodal maps is to prove that at a  typical parameter the map {\it does not } have solenoidal attractors. This was done in the quadratic family by Lyubich \cite{lyu3} and for no degenerate families of unimodal maps by Avila, Lyubich and de Melo \cite{alm}.  An important tool in many of these results on unimodal maps is the fact that the topological  classes of unimodal maps extend to an analytic,  codimension one lamination  (except a few combinatorial types).  This implies that the holonomy  of this lamination is quite regular. Our goal  is to prove that  \\ 

\noindent {\bf Theorem A.} { \it On a generic  real-analytic finite-dimensional  family of real-analytic multimodal maps with quadratic critical points and   negative schwarzian derivative the set of parameters whose corresponding maps have a solenoidal attractor with bounded combinatorics has zero Lebesgue measure.}  \\ 

\noindent The precise statement is given in Theorem \ref{thmA}. We  also have an analogous result for families with finite smoothness and  continuous  families. The method used in the unimodal case in  Avila, Lyubich and de Melo \cite{alm}  no longer works in the multimodal case, once the lamination of topological classes  has higher codimension, so we are going to use a quite different approach.   If  a map $f$ has a solenoidal attractor with bounded combinatorics, one can find an induced map $F$ of $f$ that is   a composition of unimodal maps and it is infinitely   renormalizable as defined in \cite{sm2}. In particular the iterations of the {\it renormalization operator $\mathcal{R}$ for multimodal maps} are  well-defined for  $F$. Using the universality property proved in  \cite{sm2} one can prove that  $F$  belongs to the stable lamination of the omega-limit set $\Omega$ of $\mathcal{R}$.  The renormalization operator is a real-analytic, compact and  non-linear operator acting on a Banach space of real analytic multimodal maps.  

Our main technical result is that  \\

\noindent  {\bf Theorem B.} { \it  Consider the renormalization operator $\mathcal{R}$ acting on  real-analytic multimodal maps which  are renormalizable with combinatorics bounded by some $p > 0$. Then the omega-limit set $\Omega$ of $\mathcal{R}$  is a hyperbolic set.} \\

\noindent The precise statement is given in  Section \ref{hypsection}. Lyubich\cite{lyu} proved the hyperbolicity of the omega-limit set in the unimodal case using  the so-called Small Orbits Theorem. We use a different approach, reducing  the study of the  hyperbolicity of $\Omega$ to  the study of the existence  and regularity of solutions for a certain linear cohomological equation.   This new method allows us to deal only  with real-analytic maps and its complex analytic extensions. 

 The relationship  between renormalization and cohomological equations appears in many contexts, as for instance  in the study of rigidity of circle diffeomorphisms and generalized interval exchange transformations.  Closer to our setting we have  the  introduction by  Lyubich \cite{lyu} of the concept of horizontal direction in the study of the renormalization operator for unimodal maps and  the study of the hyperbolicity of the  fixed point of the action of a pseudo-Anosov map on certain character variety by Kapovich \cite{kapo}.

The final ingredient is a  very recent result on partially hyperbolic invariant sets on Banach spaces \cite{sm5}. The result we use is, roughly speaking,  the following  (see \cite[Theorem  1]{sm5}). Suppose that a ``regular" real-analytic operator $\mathcal{R}$ has a hyperbolic set $\Omega$, and its stable lamination $W^s(\Omega)$ satisfies the ``Transversal Empty Interior property": every regular  manifold $M$ that is  transversal to $W^s(\Omega)$ intersects  $W^s(\Omega)$ in a subset of empty interior (in the topology of $M$). Then   a generic  real-analytic finite-dimensional   family  intersects $W^s(\Omega)$ on a subset with zero Lebesgue measure. This will give us our main result. The Transversal Empty Interior property for the renormalization operator (see Corollary \ref{tei}) is closely  related with the fact that maps $F$ that are infinitely renormalizable with bounded combinatorics can be approximated by hyperbolic maps.

Some of the most classical  families of one-dimensional dynamical systems are  families of polynomials. The {\it cubic family} is the two parameter family

$$f_{a,b}(z)= z^3- 3a^2 z + b$$
The critical points of $f_{a,b}$ are $a,-a$.  We also have  \\

\noindent {\bf Theorem C.} { \it  The set of of parameters $(a,b)\in \mathbb{R}^2$ such that $f_{a,b}$ is infinitely renormalizable with bounded combinatorics has zero  $2$-dimensional Lebesgue measure. }\\

The study of the renormalization operator has a long history. It was first discovered in the unimodal case by Feigenbaum \cite{fei1}\cite{fei2} and Coullet and Tresser \cite{ct}. They conjectured that the period-doubling renormalization operator has a unique fixed point in a  space of  quadratic unimodal maps, that this  fixed point is hyperbolic and its codimension one stable manifold contains all Feigenbaum maps. Such conjectures  could explain certain intriguing universal features of the bifurcation diagram of families of unimodal maps. The existence and hyperbolicity of such fixed point was proven by Lanford \cite{lanford}.  Such conjectures were later extended for arbitrary bounded combinatorial types, when the fixed point need to be replaced  by an omega-limit set that is hyperbolic (see Derrida,  Gervois and Pomeau \cite{dgp} and Gol'berg, Sina{\u\i} and Khanin \cite{sinai}).   Sullivan \cite{ds} proved that the orbit by the renormalization operator of a map that is infinitely renormalizable with bounded combinatorics converges to the orbit of a map on the omega limit set and such orbit is determined by the combinatorics  of the map.   This Sullivan's result in particular   implies  that uniqueness of the fixed point to the period-doubling renormalization operator and that it attracts all Feigenbaum maps. McMullen\cite{mc2} proved that the rate of convergence is indeed exponential. Finally Lyubich \cite{lyu} proved that  the omega-limit set of the renormalization operator for unimodal maps is hyperbolic. In particular Lyubich found a suitable space where the renormalization operator is a complex-analytic non-linear operator. See also de Faria, de Melo and Pinto \cite{fma} for the proof of the conjectures in the $C^r$ case. 

The renormalization operator for bimodal maps was first considered in MacKay  and van Zeijts \cite{mackay} and Hu \cite{hu}. The general multimodal case, with a precise combinatorial description, was described in \cite{sm1}, as well the so-called  real and complex a priori bounds for bounded combinatorics. In  \cite{sm3} it was proved the phase space  universality in the bounded combinatorics case. 

It is natural to ask if results as Theorems A., B. and C. holds for  the full renormalization operator, that is, considering {\it unbounded } combinatorial type as well.  We believe  that recent results by  Avila and Lyubich \cite{al33}  on  the contraction of the renormalization operator in the hybrid class of infinitely renormalizable unimodal maps with unbounded combinatorics can be carried out  for multimodal maps. So the main difficulty seems to be to understand the dynamics of the renormalization operator in the directions transversal to the horizontal spaces, as in the  proof of Theorem B. New difficulties arise in the unbounded case, once the omega-limit set of the renormalization operator has not a simple structure anymore. However we are confident that a version of the Key Lemma (Theorem \ref{keylem})  can be obtained in this setting and  it   will be useful to understand the dynamics of the renormalization operator and the generic behavior in families of multimodal maps.


\section{Renormalization of extended maps.}

\begin{figure}
\psfrag{f}{$F$}
\psfrag{f2}{$F^2$}
\psfrag{f5}{$F^5$}
\psfrag{rf}{$R(F)$}

\includegraphics[scale=0.45]{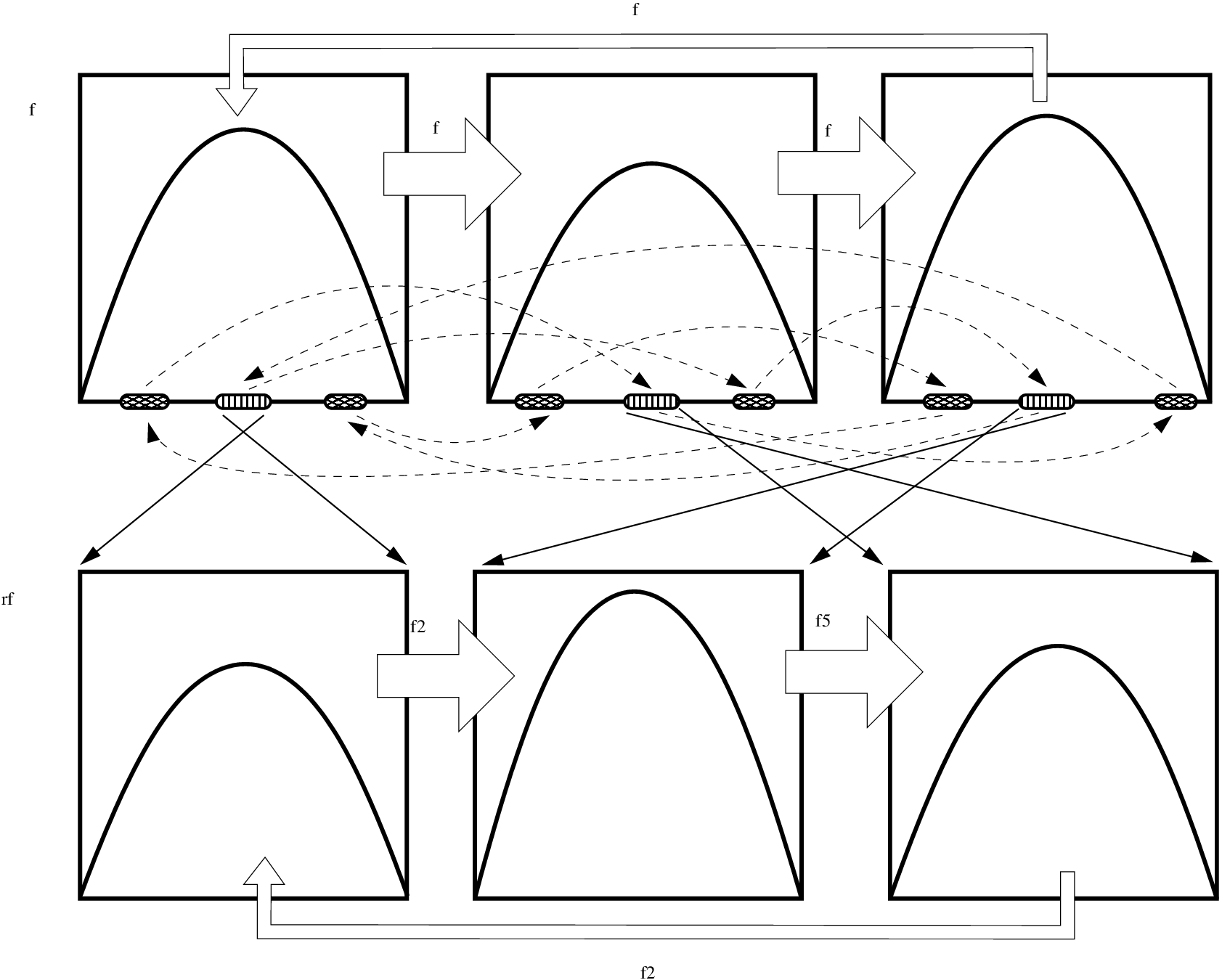}
\caption{Renormalization of an extended map of type $3$. }
\end{figure}

To study the renormalisation of multimodal maps, it  is more convenient to decompose the dynamics of $f$ in its unimodal
 parts. Let $I_{i}=[-1,a_i]$, with $a_i > 0$, be intervals and 
\begin{equation} \label{aqs} f_i\colon I_i \rightarrow I_{i + 1 \text{ mod }n}\end{equation}
 be  $C^1$ maps such that $c_i$ is its  unique critical point, that is a maximum and $f_i(\partial I_i)\subset \partial I_{i + 1 \text{ mod }n}$. An  extended map $F$ is defined by a finite sequence $(f_1,\dots,f_n)$ of  maps is  the map
defined on $I^n_F = \{(x,i) \colon x \in I_{i},  1 \leq i \leq n  \}$ as

\begin{equation}
F(x,i) = (f_i(x), i + 1 \text{ mod }n) 
\end{equation}

We say that $f$ is a multimodal map of type $n$ if it
can be written as a composition of $n$ unimodal maps: to be more precise, if there exist  maps
 $f_1, \dots, f_n$ as above satisfying 
\begin{enumerate}
\item $f = f_n \circ \dots \circ f_1$.
\item We have  $f_i(c_i) \geq c_{i+1 \text{ mod }n}$.
\end{enumerate}
The $n$-uple $(f_1,...,f_n)$ is a decomposition of $f$. In this paper, we will assume
that the unimodal maps are analytic and the critical points of $f_i$ are quadratic. Clearly $f$ has many
decompositions. 

In \cite{sm1}, we proved that deep renormalizations of infinitely renormalizable multimodal
 maps are multimodal maps of type $n$.

\subsection{Renormalization of   extended maps}\label{re_ex}  We say that $J$ is a $k$-periodic interval, $k\geq 2$, of the extended map
  $F$ if
\begin{itemize}
\item $(c_1,1) \in J$ ($(c_i,i)$ are  the critical points of $F$),
\item $\{ J, F(J), \dots, F^{k-1}(J) \}$ is a collection  of intervals with disjoint interiors,
\item The union of intervals in the above family contains $\{ (c_i,i) \}$,
\item $F^k(J) \subset J$.
\end{itemize}
We will call $k$ the period of $J$. If $F$ has a $k$-periodic interval, for some $k$, we say that $F$ is  renormalizable. 

Suppose that there exists a $k$-periodic interval for $F$. Let  $P \subset I_1\times \{1\}$ be the maximal interval
 which is a $k$-periodic interval for $F$. Then $F^{k}(\partial P) \subset \partial P$. We say
 that $P$ is a restrictive interval for $F$ of period $k$. Note that if $P$ and
$\tilde{P}$ are, respectively, restrictive intervals for $F$ of period $k$ and $\tilde{k}$,
$k < \tilde{k}$, then $\tilde{P} \subset P$.  Let $P$ be a restrictive interval and let
 $0 = \ell_1 < \dots < \ell_n$ be the iterations  such that $(c_i,i) \in F^{\ell_j}(P)$ for some $i$.
 Let $P_j$ be the symmetrization of $F^{\ell_j}(P)$ in relation to $(c_i,i)$. Observe that
$P_j$ contains a periodic point in its boundary. If $(c_i,i) \in P_j$ Let 
$$A_{P_j}\colon \mathbb{C}\times \{i\} \rightarrow \mathbb{C}\times \{j\} $$ be the affine map which maps
 $(c_i,i)$ to $(0,j)$ and this periodic point to $-1$. Let $[-1,b_j]\times \{j\}=A_{P_j}(P_j)$. Then 
 $$g_j\colon [-1,b_j]\times \{j\} \rightarrow [-1,b_{j+1}]\times \{j+1\}$$
 defined by 
$g_j =  A_{P_{j+1}} \circ F^{\ell_{j+1}-\ell_{j}}\circ A_{P_j}^{-1}$  is a unimodal map.
The extended map $G(x,j)= g_j(x,j)$ is called a   renormalization of the extended map $F$.   An extended map may have many renormalizations, but at most one with a given period. The renormalization with minimal period $k$ is called the first renormalization of $F$, and it is denoted $R(F)$. 

Following the notation in \cite{sm2}, the   primitive marked combinational data (primitive m.c.d)  associated with the first renormalization  of $F$  is  $\sigma = <A, \prec, A^c>$ where
\begin{itemize}
\item $A=\{ 1, 2, \dots, k\}$,
\item The relation $\prec$ is a partial order on $A$ defined in the following way $i \prec \ell $ if $F^iJ$ and $F^\ell J$ belongs to  the same  interval in $I^n_F$ and $F^iJ$ is on the left side of $F^\ell J$, 
\item The set $A^c$ is a subset of $A$ and $i \in A^c$ if $F^iJ$ intersects $\{ (c_i,i) \}$.
\end{itemize} 

The extended map $R(F)$ can be renormalizable again and so on. If this process can be continued
 indefinitely, we say that $F$ is infinitely renormalizable. If $F$ is infinitely renormalizable then all of its  renormalizations can be obtained iterating the operator $R$.  Denote by $P^k_0$ the restrictive interval
associated to the $k$-th renormalization $R^k(F)$. If $q \in
C(F):=\{(c_i,i)\}$, denote by the corresponding
 capital letter $Q_0^k$ the symmetrization of the interval $F^{\ell}(P_0^k)$ which contains $q$.
 We reserve the letter $p$ for $(c_1,1)$. The critical point $r$ for $F$ will be the
successor of the critical point $q$ at level $k$ if $r \in F^\ell(Q_0^k)$, for the
 minimal $\ell$ so that $F^\ell(Q_0^k)$ contains a critical point. Define $n^k_r = \ell$.
 Then, for any $r \in C(F)$, $k \in \mathbb{N}$ and $i < n^k_r$, there exists an interval
 $R^k_{-i}$ so that
\begin{itemize}
\item $F^i$ is monotone in $R^k_{-i}$,

\item $F^i(R^k_{-i})=R^k_{0}$,

\item The interval $F^{n^k_r - i}(Q^k_0)$ is contained in $R^k_{-i}$.
\end{itemize}
For details, see \cite{sm1}.

Denote by $N_k$ the period of the restrictive interval $P^k_0$. We say that $F$ has
C-bounded combinatorics if  $N_{k+1}/N_k \leq C$ for every $k$.

For $(x,i), (y,j) \in I^n_F$, we say that $(x,i) < (y,j)$ if $i = j$ and $x < y$. The
intervals of  $I^n_F$ are the sets $J \times \{i\}$, for some interval $ J \subset I_i$ and $1\leq i \leq  n$.
If $c_i$ is the critical point of $f_i$, denote $C(F)=\{(i,c_i)  \}_i$.

Let $F$ and $G$ be two infinitely renormalizable extended maps. We say that
$F$ and $G$ have same combinatorics if $F^i(c_k) < F^j(c_\ell)$ if and only if  $G^i(c_k) < G^j(c_\ell)$, for any
$i$,$j$ $\geq 0$ and $k$ and $\ell$ $< n$. 

Let $\sigma_i$ be the primitive m.c.d. of the (first) renormalization of $R^i(F)$ and $\tilde{\sigma}_i$ be the primitive m.c.d. of the (first) renomalization of $R^i(G)$. It turns out that $F$ and $G$ has the same combinatorics if and only if $\sigma_i=\tilde{\sigma}_i$ for every $i$. So we say that $F$ has combinatorics $(\sigma_1,\sigma_2,\sigma_3,\dots)$. Moreover, let $\mathcal{C}_{p,n}$  be the set of all  primitive m.c.d. that appears as the first renormalization of an extended map with $n$ intervals and it has period either smaller or equal to $p$. By Corollary 2.3 in \cite{sm2} for every given sequence $\sigma_i \in \cup_p \mathcal{C}_{p,n}$, $i\geq 1$, there exists a real analytic extended map (with, say, quadratic critical points)  whose $i$-th renormalization has primitive m.c.d. $\sigma_i$.

\subsection{Polynomial-like extended maps}  Denote $\mathbb{C}_n = \{(x,i) \colon x \in \mathbb{C}, 1 \leq i \leq n  \}$ (in other words, $\mathbb{C}_n$ is a disjoint union of $n$ copies of $\mathbb{C}$). Given an open set $O \subset \mathbb{C}_n$, denote 
$$O_i =  O\cap (\mathbb{C}\times \{i\}).$$
  A polynomial-like extended map is a map $F\colon U \rightarrow V$, where 
\begin{itemize}
\item $U$ and $V$ are open sets of $\mathbb{C}_n$, where $\overline{U} \subset V$, 
\item for each $i$, $F(U_i)=V_{i+1 \ mod \ n}$. Moreover $F\colon U_i \rightarrow V_{i+1 \mod n}$ is a proper map with a unique critical point. 
\item  for each $i$ we have that $U_i$ and $V_i$ are simply connected domains.
\end{itemize}
We define 
$$\mod(V\setminus U)= \min_i \mod(V_i\setminus U_i).$$

The filled-in Julia set $K(F)$ of a polynomial-like extended map $F$ is defined as

$$K(F)=\cap_{i\geq 0} F^{-i}(V).$$
Note that $K(F)$ is  connected if and only if all the critical points of $F$ belongs to $K(F)$. 

A  real analytic  extended map $F\colon I^n_F \rightarrow I^n_F$ has  a polynomial-like extension  if there is a polynomial-like extend map $\tilde{F}\colon U \rightarrow V$ such that $I\times \{i\} \subset U_i$ and $F=\tilde{F}$ on $I^n_F$. 
 
\subsection{Polynomial-like  renormalization of  real analytic  extended maps}\label{po_re} Let $U \subset \mathbb{C}_n$ be an open set. Given an analytic  function $F\colon U  \rightarrow \mathbb{C}_n$,  define the open set
$$\mathcal{D}^n_{U}(F):= \bigcap_{i=0}^{n-1}F^{-i}U.$$
In other words, $\mathcal{D}^n_{U}(F)$ is the domain contained in $\mathbb{C}_n$ where
$F^{n}$ is defined.

Let $F\colon U \rightarrow V$ be a complex-analytic extension  of an extended map.  Note that $F$ does not need to be a polynomial-like extension. Suppose that the extended map $F$ is $r$-times renormalizable, and  let  $P_j$ and $\ell_j$ be the intervals and integers associated with the $r$th renormalization, as  defined in Section \ref{re_ex}. Define $n_k = \ell_{k+1 \ mod \ n}-\ell_k$. 

Suppose we can find a sequence $i_k$, $k=1,\dots, n$, with $i_1=1$, simply connected domains $\hat{U}_k$ and $\hat{V}_k \subset \mathbb{C}\times \{i_k\}$,  such that \begin{itemize}
\item[1.] We have  $\{ i_k\}_k =\{1, 2, \dots, n\}$,
\item[2.] We have  $P_k \subset  \hat{U}_k$ and  $\overline{\hat{U}_k}\subset \hat{V}_k$, 
\item[3.]  The domains $\hat{U}_k$ and $\hat{V}_k$ satisfies  $\overline{\hat{U}_k} \subset \mathcal{D}^{n_k}_{U}(F)$, $F^{n_k}\hat{U}_k=\hat{V}_{k+1 \mod n}$. 
\item[4.]  The map  $$F^{n_k}\colon \hat{U}_k \rightarrow \hat{V}_{k+1\mod n}$$
is a proper map for which $(0,i_k)$ is the unique critical point. 
\end{itemize}

Let $A_j\colon \mathbb{C}\rightarrow \mathbb{C}$ be the affine maps  defined in Section \ref{re_ex}. Define  $A\colon \mathbb{C}_n \rightarrow \mathbb{C}_n$ as $A(x,i)= (A_i(x),i)$.  Then we can define  $$g_i\colon A(\hat{U}_j) \rightarrow A(\hat{V}_{j+1}) $$
as $g_j =  A \circ F^{n_k}\circ A^{-1}$. If 
$$\hat{U}= \bigcup_i A(\hat{U}_i) \times \{i\}, \ \hat{V}= \bigcup_i A(\hat{V}_i) \times \{i\},$$
then the map 
$$R^r(F)\colon \hat{U}\rightarrow \hat{V}$$
defined by $R^r(F)(x,i)=(g_i(x),i+1)$ is a polynomial-like extension of the real renormalization $R^r(F)$ and it is called a  polynomial-like $r$th renormalization of  $F$. Note that $\hat{U}_k$ and $\hat{V}_k$ are not uniquely defined, however any  polynomial-like extension of $R^r(F)$ does coincide on $I^n_{R^r(F)}$.

\begin{lem}\label{koenigs} Let $x_1,\dots,x_{n}\in \mathbb{C}$, and $U_i\subset \mathbb{C}$ be open  sets such that $x_i \in U_{i_k}$ and
$$f_{i,k}\colon U_{i,k}\rightarrow \mathbb{C}, \ k=1,2,$$
be holomorphic functions such that 
\begin{itemize}
\item[i.]   $f_{i,k}(x_i)=x_{i+1 \mod n}$.
\item[ii.] We have  $|\lambda_1\lambda_2\cdots\lambda_{n}|> 1$, where   $\lambda_i=f_{i,1}'(x_i)=f_{i,2}'(x_i)$.
\item[iii.] Let
$$g_{i,k}=f_{ (i+n-1) \, mod \,  n, \, k}\circ \cdots \circ f_{(i+1) \,  mod \,  n, \, k}\circ f_{i,k}.$$
There is $q\geq 1$ such that $g_{i,1}^q=g_{i,2}^q$ for every $i$.
\end{itemize}
Then $f_{i,1}(x)=f_{i,2}(x)$ for every $i$ and for every $x$ close to $x_i$.
\end{lem}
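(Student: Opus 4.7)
\smallskip

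\noindent\textbf{Proof plan for Lemma \ref{koenigs}.} The strategy is to linearize each $n$--fold composition at its fixed point via Koenigs, use the hypothesis $g_{i,1}^q=g_{i,2}^q$ to collapse $g_{i,1}$ and $g_{i,2}$ to the same local map, and then exploit the intertwining relation between $f_{i,k}$ and $g_{i,k}$ to recover $f_{i,1}=f_{i,2}$.

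\smallskip

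First I would verify the structural facts used throughout. Set $\Lambda=\lambda_1\lambda_2\cdots\lambda_n$, so $|\Lambda|>1$. Each $g_{i,k}$ fixes $x_i$ and has derivative $\Lambda$ there, by the chain rule together with assumption (i). A direct computation using $f_{i,k}(x_i)=x_{i+1\bmod n}$ gives the conjugacy relation
\begin{equation*}
f_{i,k}\circ g_{i,k}=g_{i+1\bmod n,k}\circ f_{i,k}
\end{equation*}
on a neighbourhood of $x_i$, since both sides equal $f_{i-1,k}\circ\cdots\circ f_{i+1,k}\circ f_{i,k}\circ f_{i,k}$.

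\smallskip

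The next step is to prove that $g_{i,1}=g_{i,2}$ near $x_i$. Since $x_i$ is a repelling fixed point of $h:=g_{i,1}^q=g_{i,2}^q$ (with multiplier $\Lambda^q$, $|\Lambda^q|>1$), Koenigs' theorem produces a holomorphic change of coordinates $\phi$ near $x_i$ with $\phi(x_i)=0$ and $\phi\circ h\circ\phi^{-1}(w)=\Lambda^q w$. Put $\tilde g_k=\phi\circ g_{i,k}\circ\phi^{-1}$. Then $\tilde g_k(0)=0$, $\tilde g_k'(0)=\Lambda$ and $\tilde g_k^{\,q}(w)=\Lambda^q w$. Because $\tilde g_k$ commutes with its own $q$th iterate,
\begin{equation*}
\tilde g_k(\Lambda^q w)=\Lambda^q\,\tilde g_k(w).
\end{equation*}
Writing $\tilde g_k(w)=\sum_{m\ge 1}a_m w^m$ and equating coefficients gives $a_m(\Lambda^{qm}-\Lambda^q)=0$; since $|\Lambda|>1$ the factor is nonzero for $m\ge 2$, forcing $\tilde g_k(w)=\Lambda w$. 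Thus $g_{i,1}=g_{i,2}=:g_i$ on a neighbourhood of $x_i$ for every $i$.

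\smallskip

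For the final step, fix $i$ and choose Koenigs linearizers $\phi_i$, $\phi_{i+1\bmod n}$ for $g_i$, $g_{i+1\bmod n}$ at $x_i$, $x_{i+1\bmod n}$. The intertwining relation, combined with $g_{i,1}=g_{i,2}=g_i$, shows that both $\psi_k:=\phi_{i+1\bmod n}\circ f_{i,k}\circ\phi_i^{-1}$ satisfy $\psi_k(0)=0$ and
\begin{equation*}
\psi_k(\Lambda w)=\Lambda\,\psi_k(w).
\end{equation*}
The same power series argument (the multiplier condition now reads $\Lambda^m=\Lambda$, which for $|\Lambda|>1$ forces $m=1$) shows $\psi_k$ is linear, hence determined by its derivative at $0$. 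That derivative equals $\phi_{i+1\bmod n}'(x_{i+1\bmod n})\cdot\lambda_i/\phi_i'(x_i)$, which is independent of $k$ by hypothesis (ii). So $\psi_1=\psi_2$, and therefore $f_{i,1}=f_{i,2}$ near $x_i$.

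\smallskip

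The only delicate point is the rigidity argument in the middle step, where one must rule out nonlinear local $q$th roots of a linear map with the prescribed multiplier; this is where expansion $|\Lambda|>1$ is essential. Everything else reduces to Koenigs linearization and bookkeeping with the cyclic structure of the compositions.
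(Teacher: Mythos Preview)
Your proof is correct and follows essentially the same route as the paper: both arguments rest on K{\oe}nigs linearization at the repelling periodic orbit and the rigidity of germs commuting with (or intertwining) a linear expanding map. The only organizational difference is that the paper works directly with the conjugacy $h_i$ between $g_{i,1}$ and $g_{i,2}$ (normalized by $h_i'(x_i)=1$), observes that $h_i$ also conjugates $g_{i,1}^q$ to $g_{i,2}^q$, and invokes uniqueness in Schr\"oder's equation to force $h_i=\mathrm{id}$; you instead first prove $g_{i,1}=g_{i,2}$ by linearizing their common $q$th iterate and then compare $f_{i,1}$ and $f_{i,2}$ in the resulting K{\oe}nigs charts. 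These are two packagings of the same idea.
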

\begin{proof} Due iii. we have that $x_i$ is a repelling fixed point of $g_{i,k}$ and its multiplier is $\lambda_1\lambda_2\cdots\lambda_{n}$. By the K{\oe}nigs linearization theorem (see for instance Milnor \cite[Theorem 8.2]{milnor}) there is a {\it unique} germ of holomorphic function $h_i$ at $x_i$ such that $h'_i(x_i)=1$ and 
\begin{equation}  h_i\circ g_{i,1}(x) = g_{i,2} \circ h_i(x)
\end{equation} 
for $x$ close to $x_i$. Note that the uniqueness of $h_i$ and ii. implies
\begin{equation}\label{aux}  h_{i+1}\circ f_{i,1}(x) = f_{i,2} \circ h_i(x).
\end{equation} 
for $x$ close to $x_i$. Note also that 
\begin{equation}  h_i\circ g_{i,1}^q(x) = g_{i,2}^q \circ h_i(x)
\end{equation} 
But since $g_{i,1}^q=g_{i,2}^q$ this implies (due the uniqueness of the solution of the Schr\"{o}der's equation in the K{\oe}nigs linearization theorem) that $h_i(x)=x$, so by (\ref{aux}) we have  $f_{i,1} = f_{i,2}$ for every $i$.
\end{proof}

\begin{prop}[Injectivity of Renormalization] \label{inj_ren} Let $F_1, F_2$ be real-analytic extended maps of type $n$ with polynomial-like extensions of type $n$.   Suppose that $F_k$, $k=1,2$ are  renormalizable and $R(F_k)$, $k=1,2$, also have polynomial-like extensions of type $n$. Additionally assume that 
\begin{equation} r \in \overline{\{F^\ell_k(s), \ \ell \geq 0 \}}, \text{ every $r,s \in C(F_k)$, $k=1,2$,}\end{equation}
Then $R(F_1)=R(F_2)$ implies $F_1=F_2$.
\end{prop}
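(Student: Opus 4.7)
The plan is to combine Lemma \ref{koenigs} (K{\oe}nigs uniqueness) with the identity principle for holomorphic functions. The hypothesis $R(F_1) = R(F_2)$, combined with the defining formula $g_j = A_{P_{j+1}}^{(k)} \circ F_k^{n_j} \circ (A_{P_j}^{(k)})^{-1}$ for the renormalization pieces, gives the affine conjugacy
$$F_2^{n_j} \circ B_j = B_{j+1} \circ F_1^{n_j}, \qquad B_j := (A_{P_j}^{(2)})^{-1} \circ A_{P_j}^{(1)},$$
on the polynomial-like extension of each restrictive piece $P_j$. Composing cyclically over $j = 1, \dots, n$ yields $F_2^N = B_1 \circ F_1^N \circ B_1^{-1}$ on a neighborhood of the restrictive interval $P = P_1$.

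The first step is to show $B_j = \mathrm{id}$ for every $j$, so that $F_1^{n_j} = F_2^{n_j}$ on the polynomial-like domain and in particular $F_1^N = F_2^N$ on a complex neighborhood of $P$. The affine map $B_j$ is determined by having to send the critical point of $f_{i_j}^{(1)}$ to that of $f_{i_j}^{(2)}$ and the period-$N$ point $p^{(j)}_1 \in \partial P_j^{(1)}$ to $p^{(j)}_2$; so $B_j = \mathrm{id}$ reduces to the coincidence of these distinguished critical and periodic points. The transitivity hypothesis --- that every critical point of $F_k$ lies in the forward-orbit closure of every critical point of $F_k$ --- provides the rigidity needed here: without it, a slice-wise affine change of coordinates applied to $F_2$ would yield another map with the same renormalization.

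With $F_1^N = F_2^N$ established on a complex neighborhood of $P$, the second step invokes Lemma \ref{koenigs} with the lemma's parameter equal to the type $n$, with $x_1, \dots, x_n$ a repelling cyclic period-$n$ orbit of $F_k$ common to both $F_1$ and $F_2$, $f_{j,k} = f_j^{(k)}$, $g_{j,k} = F_k^n$, and $q = m$ where $N = n m$. The hypothesis $g_{j,1}^q = g_{j,2}^q$ becomes $F_1^N = F_2^N$ near the orbit, which follows from the equality near $P$ together with the identity principle on the connected holomorphic domain where $F_k^N$ is defined; the multipliers match by conjugation with $R(F_k)^n$, and the product of multipliers has modulus greater than one by the polynomial-like property $\mathrm{mod}(V \setminus U) > 0$. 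The lemma then yields $f_j^{(1)} = f_j^{(2)}$ near each $x_j$, and the identity principle applied to the holomorphic polynomial-like extensions promotes this to equality on all of $I_j$; hence $F_1 = F_2$. The main obstacle is the first step: recovering the un-normalized equality $F_1^N = F_2^N$ from the a priori merely affinely normalized equality $R(F_1) = R(F_2)$, which is precisely where the critical-orbit-density hypothesis is essential in order to kill the residual affine-rescaling ambiguity.
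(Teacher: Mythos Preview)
Your overall architecture matches the paper's: first upgrade the affinely-normalized equality $R(F_1)=R(F_2)$ to an honest equality $F_1^N=F_2^N$ on a complex neighborhood of the restrictive pieces, then feed the period-$n$ orbit $\{(-1,i)\}_i$ into Lemma~\ref{koenigs}. However, both steps as written contain genuine gaps.

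\textbf{Step 1 is asserted, not proved.} You correctly isolate the problem as showing $B_j=\mathrm{id}$, i.e.\ that the boundary periodic points $\beta_{j,1}=\beta_{j,2}$ (the critical points already coincide by normalization). But you give no mechanism; you only say the density hypothesis ``provides the rigidity needed''. The paper's argument is concrete and does not use the density hypothesis here at all. It passes to the rescaled multimodal maps $g_{j,k}(x)=-\beta_{j,k}^{-1}\,\pi_1\bigl(F_k^n(-\beta_{j,k}x,i_{j,k})\bigr)$ on $Y_{j,k}=[-1/\beta_{j,k},1/\beta_{j,k}]$, observes that $R(F_1)=R(F_2)$ forces $g_{j,1}^{q_1}=g_{j,2}^{q_2}$ on $[-1,1]$, and then argues: if $|\beta_{j,1}|<|\beta_{j,2}|$ the smaller interval $Y_{j,2}$ would be invariant under $g_{j,1}^{q_1}$ and hence a restrictive interval for $g_{j,1}$, contradicting that $[-1,1]$ carries its \emph{first} renormalization. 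A further inductive argument (counting critical values in the rescaled intervals) is then needed to match the periods $p_k$, the visit times $\ell_{j,k}$, and the slice indices $i_{j,k}$; you have not addressed any of this either.

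\textbf{Step 2 does not verify hypothesis (ii) of Lemma~\ref{koenigs}.} Even granting $F_1^N=F_2^N$ on a neighborhood of $\{(-1,i)\}_i$, this yields only $(\lambda_{1,1}\cdots\lambda_{n,1})^{N/n}=(\lambda_{1,2}\cdots\lambda_{n,2})^{N/n}$, i.e.\ equality of the \emph{product} of multipliers, not of the individual $\lambda_{i,k}=DF_k(-1,i)$ that the lemma requires. Your claim that ``the multipliers match by conjugation with $R(F_k)^n$'' is not correct: the fixed points of $R(F_k)$ correspond to the points $\beta_{j,k}$ of $F_k$, not to $(-1,i)$, so no information about $DF_k(-1,i)$ is obtained that way. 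The paper closes this gap by using, in addition to $F_1^{qn}=F_2^{qn}$, the piecewise identities $F_1^{\ell_{j+1}-\ell_j}=F_2^{\ell_{j+1}-\ell_j}$ near $(-1,i_j)$ (which come directly from matching the individual pieces of $R(F_k)$), and then an induction on $j_0$: since for each $j_0$ there is a unique $\ell_{i_{j_0}}$ of the form $w_{j_0}n+j_0$, differentiating $F_k^{\ell_{i_{j_0}}}$ at $(-1,1)$ isolates $\lambda_{j_0,k}$ once $\lambda_{1,k},\dots,\lambda_{j_0-1,k}$ and the product are known.
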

\begin{proof} We use an argument similar to de Melo and van Strien \cite[Chapter VI, Proposition 1.1]{ms}. To simplify the notation we assume that $F_k(x,i)=F_k(-x,i)$. Let $p_k$ be the period of the first renormalization of $F_k$ and $q_k=p_k/n$. Using the notation of Section \ref{re_ex}, we have $c_i=0$, $a_i=1$ and $b_i=1$, for every $i\leq n$. Let $P_{1,k}$ be the interval of the first renormalization of $F_k$ such that that  $(0,1)\in P_{1,k}$.  Let
 $0 = \ell_{1,k} < \dots < \ell_{n,k}$ be the iterations  such that $(0,i_{j,k}) \in F^{\ell_{j,k}}_k(P_{1,k})$ for some $i_{j,k}$.
 Let $P_{j,k}=[-\beta_{j,k},\beta_{j,k}]$ be the symmetrization of $F^{\ell_{j,k}}_k(P_{j,k})$, where $\beta_{j,k}$ is periodic.   Let 
 $$Y_{j,k}=[-\frac{1}{\beta_{j,k}}, \frac{1}{\beta_{j,k}}].$$
  The map
 $$g_{j,k}\colon Y_{j,k}\rightarrow Y_{j,k}$$
 defined by
 $$g_{j,k}(x)=\frac{-1}{\beta_{j,k}} \pi_1(F_k^n(-\beta_{j,k} x, i_{j_k})).$$
 is a multimodal map with a polynomial-like extension of degree $2^n$ and the real trace of its filled-in Julia set is $[-1/\beta_{j,k}, 1/\beta_{j,k}].$  Then $R(F_1)=R(F_2)$ implies that $g_{j,1}^{q_1}=g_{j,2}^{q_2}$ on $[-1,1]$ and for every $j$. Moreover $g_{j,k}^{q_k}$ has a polynomial-like extension of degree 
 $2^n$ whose  real trace of its filled-in Julia set is $[-1, 1].$ Note that if $|\beta_{j,1}| < |\beta_{j,2}|$ then $Y_{j,2}$  is invariant by $g_{j,1}^{q_1}$, that implies that $Y_{j,2}$ would be a restricted interval of $g_{j,1}$, which is not possible since $g_{j,1}^{q_1}$ on $[-1,1]$ is the first renormalization of $g_{j,1}$. So 
 \begin{equation} \label{cond_1} \beta_{j,1}=\beta_{j,2}\text{  and }Y_{j,1}=Y_{j,2}\text{   for  every } j\end{equation}
  Counting the number of restricted intervals associated to $[-\beta_{j,k}, \beta_{j,k}]$ in $Y_{2,k}$ we obtain $p_1=p_2$.
 
 Note that $\ell_{1,k}$ is the number of critical values of $F^{\ell_{j,k}}_k$ in $[-1,1]\times \{1\}$, which  is equal to the number of critical values of $R(F_k)$ in $Y_{1,k}\times \{1\}$. Since  $R(F_1)=R(F_2)$ and $Y_{1,2}=Y_{1,2}$ we conclude that $\ell_{1,1}=\ell_{1,2}$ and $i_{2,k}=1+\ell_{1,1}$ for $k=1,2$. Suppose by induction that $i_{j,1}=i_{j,2}$. Then $\ell_{j,k}$ is the  number of critical values of $R(F_k)$ in $Y_{j,k}\times \{j\}$, so $\ell_{j,1}=\ell_{j,2}$ and consequently $i_{j+1,k}=i_{j+1,1}+\ell_{j,1}.$ So 
\begin{equation}\label{cond_2} i_{j,1}=i_{j,2}\text{  and }\ell_{j,1}=\ell_{j,2}\text{   for  every }  j.\end{equation}
 Finally due (\ref{cond_1}), (\ref{cond_2}) and $R(F_1)=R(F_2)$ we have that 
\begin{equation}\label{cond_3}F^{\ell_{j+1,1}-\ell_{j,1}}_1=F^{\ell_{j+1,2}-\ell_{j,2}}_2\end{equation}
 in a neighborhood of the point $(-1,i_{j,1})$ and  $F^{\ell_{j+1,1}-\ell_{j,1}}_1(-1,i_{j,1})=(-1,i_{j+1,1})$ for every $j$. \\
 
 Let $\lambda_{i,k}=DF_k(-1,i) > 0$.  We claim that $\lambda_{i,1}=\lambda_{i,2}$ for every $i$. Indeed, let $p=p_1=p_2$, $q=q_1=q_2$ and $\ell_i=\ell_{i,1}=\ell_{i,2}$. So $g_{j,1}^{q}=g_{j,2}^{q}$ and (\ref{cond_1}) implies that $F_1^{qn}=F_2^{qn}$ in a neighborhood of $[-1,1]\times \{1,\dots, n\}$. In particular
 $$(\lambda_{1,1}\cdots \lambda_{n,1})^q=(\lambda_{1,2}\cdots \lambda_{n,2})^q,$$
 so 
\begin{equation}\label{cond_4} \lambda_{1,1}\cdots \lambda_{n,1}=\lambda_{1,2}\cdots \lambda_{n,2}.\end{equation}
There is exactly one $j\leq n$ such that $(0,1)\in F_1^{\ell_j}(P_{1,1})$, that is the unique $i_1$  satisfying $\ell_{i_1}= w_1 n+ 1$, for some $w_1\in \mathbb{N}$. In particular
 $$DF^{\ell_{i_1}}_k(-1,0)=(\lambda_{1,k}\cdots \lambda_{n,k})^{w_1}\lambda_{1,k}.$$
Due (\ref{cond_3})  we have $DF^{\ell_{i_1}}_1(-1,0)=DF^{\ell_{i_1}}_2(-1,0)$, so it follows from (\ref{cond_4}) that $\lambda_{1,1}=\lambda_{1,2}$. Suppose by induction that $\lambda_{j,1}=\lambda_{j,2}$ for $j<   j_0< n$. Then there is a unique $\ell_{i_{j_0}}$ such that $\ell_{i_{j_0}}= w_{j_0} n+ j_0$ for some $w_{j_0}\in \mathbb{N}$ and consequently 
$$DF^{\ell_{i_{j_0}}}_k(-1,0)=(\lambda_{1,k}\cdots \lambda_{n,k})^{w_{j_0}}\lambda_{1,k}\lambda_{2,k}\cdots \lambda_{j_0-1,k} \lambda_{j_0,k}.$$
It follows from the induction assumption, $DF^{\ell_{i_{j_0}}}_1(-1,0)=DF^{\ell_{i_{j_0}}}_2(-1,0)$ and (\ref{cond_4}) that $\lambda_{j_0,1}=\lambda_{j_0,2}$. 
So $\lambda_{j,1}=\lambda_{j,2}$ for every $j\leq  n-q$. We conclude that $\lambda_{n,1}=\lambda_{n,2}$ due (\ref{cond_4}).  This concludes the proof of the claim.

Define $f_{i,k}(x)=    \pi_1(F_k(x,i))$ and $x_i=-1$. By Lemma \ref{koenigs} we have that $f_{i,1}(x)=f_{i,2}(x)$ for every $i$ and $x$ close to $-1$, so $F_1=F_2$. 


\end{proof}

\subsection{Complex bounds and  rigidity of  real analytic, infinitely renormalizable  extended maps with bounded combinatorics} Here we summarize results in \cite{sm2}.  

\begin{thm} \label{ri_to} Let $\sigma=(\sigma_i)_{i \in \mathbb{Z}}  \in \mathcal{C}_{p,n}^{\mathbb{Z}}$. Then there exists a unique sequence of real analytic maps $F_{\sigma,i}$, $i \in \mathbb{Z}$, satisfying the following conditions
\begin{itemize}
\item[1.] The map $F_{\sigma,i}$ is renormalizable and $R(F_{\sigma,i})=F_{\sigma,i+1}$. 
\item[2.] The first renormalization of $F_{\sigma,i}$ has combinatorics $\sigma_i$. 
\item[3.] There exist polynomial-like extensions $F_{\sigma,i}\colon U^i_\sigma \rightarrow V^i_\sigma$, where
$$\inf_i \mod(V^i_\sigma \setminus U^i_\sigma) > 0.$$
\end{itemize}
\end{thm}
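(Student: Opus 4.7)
The plan is to establish existence by a compactness/diagonal argument based on the complex \emph{a priori} bounds for bounded combinatorics from \cite{sm2}, and to deduce uniqueness by combining a rigidity argument for infinitely renormalizable polynomial-like maps with the Injectivity of Renormalization (Proposition \ref{inj_ren}).

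\emph{Existence.} Fix $\sigma=(\sigma_i)_{i\in\mathbb{Z}}\in\mathcal{C}_{p,n}^{\mathbb{Z}}$. The realizability statement at the end of Section \ref{re_ex} produces, for each $N\in\mathbb{N}$, a real analytic extended map $G_N$ whose $(j+1)$-th renormalization has combinatorics $\sigma_{j-N}$ for $0\leq j\leq 2N$. Put $F^{(N)}_i:=R^{N+i}(G_N)$ for $|i|\leq N$, so that $F^{(N)}_i$ is again infinitely renormalizable (within the available range) with combinatorial sequence matching $\sigma$ at index $i$. The complex bounds of \cite{sm2} furnish polynomial-like extensions $F^{(N)}_i\colon U^{i,N}\to V^{i,N}$ with $\mod(V^{i,N}\setminus U^{i,N})\geq\mu>0$, where $\mu$ depends only on the bound $p$ on combinatorics. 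The resulting family of polynomial-like extended maps of type $n$, suitably normalized and with definite modulus, is compact by Montel's theorem; a diagonal extraction along $N_k\to\infty$ yields maps $F_{\sigma,i}$, $i\in\mathbb{Z}$, with $F^{(N_k)}_i\to F_{\sigma,i}$ for every $i$. Continuity of $R$ on compact subfamilies, together with the stability of combinatorics under small perturbations on the renormalizable locus, gives $R(F_{\sigma,i})=F_{\sigma,i+1}$ and the prescribed combinatorics; the uniform modulus bound passes to the limit.

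\emph{Uniqueness.} Let $(F_{\sigma,i})$ and $(\tilde F_{\sigma,i})$ both satisfy (1)--(3). It suffices to show $F_{\sigma,i_0}=\tilde F_{\sigma,i_0}$ for a single index $i_0$: forward propagation is automatic, since $F_{\sigma,i_0+1}=R(F_{\sigma,i_0})=R(\tilde F_{\sigma,i_0})=\tilde F_{\sigma,i_0+1}$, and backward propagation is supplied by Proposition \ref{inj_ren} (the hypothesis on orbit closures holds because the critical orbits of each $F_{\sigma,i}$ are contained in the solenoidal attractor generated by the bounded combinatorics). For any fixed $i_0$, both $F_{\sigma,i_0}$ and $\tilde F_{\sigma,i_0}$ are infinitely renormalizable with identical forward combinatorics $(\sigma_{i_0},\sigma_{i_0+1},\dots)$ and uniform polynomial-like geometry. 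The rigidity theorem for infinitely renormalizable polynomial-like extended maps with complex bounds, established in \cite{sm2} via a McMullen-style quasiconformal pullback argument, then identifies the two normalized maps.

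The principal obstacle is the rigidity step. Existence is essentially formal once complex bounds are in hand, and the propagation of uniqueness reduces to Proposition \ref{inj_ren}; but turning the combinatorial coincidence of two infinitely renormalizable maps into dynamical equality requires the full complex-analytic machinery of \cite{sm2}. In the multimodal setting this is substantially more delicate than Lyubich's original unimodal rigidity because several critical orbits must be controlled simultaneously and the pullback argument has to respect the vector structure of extended maps.
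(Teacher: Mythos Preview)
The paper does not give its own proof of Theorem \ref{ri_to}: the result is quoted from \cite{sm2} in the section that ``summarizes results in \cite{sm2}''. So there is no in-paper proof to compare against; the relevant comparison is with the strategy of \cite{sm2}.

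Your sketch captures the two essential ingredients correctly: beau complex bounds give precompactness of deep renormalizations, and combinatorial rigidity (absence of invariant line fields plus the pullback argument) gives uniqueness. A couple of points deserve tightening. First, in your existence step the maps $G_N$ must be \emph{infinitely} renormalizable for Theorem \ref{co_bo} to apply and for $k_0$ to be controlled; the realizability statement at the end of Section \ref{re_ex} lets you do this (realize $\sigma_{-N},\dots,\sigma_N$ followed by any fixed tail in $\mathcal{C}_{p,n}$), but you should say so explicitly. Second, and more substantively, the argument in \cite{sm2} does not proceed via an abstract diagonal extraction but rather through polynomials of type $n$: one builds polynomials $P_N$ with finite combinatorics $\sigma_{-N}\star\cdots\star\sigma_0$, uses straightening and complex bounds to get convergence, and the limit is a polynomial-like map with the prescribed bi-infinite combinatorics after centering. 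You can see this machinery recycled in the proof of Proposition \ref{lprop}. Your compactness argument is a legitimate alternative, but it leans on continuity of $R$ and stability of combinatorics in a way that ultimately requires the same complex-analytic control, so it is not really more elementary.

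Your uniqueness argument is fine. The reduction to a single index via Proposition \ref{inj_ren} (backward) and $R$ (forward) is exactly right, and your verification of the orbit-closure hypothesis is correct. The hard step, as you note, is the rigidity theorem for infinitely renormalizable polynomial-like extended maps, which is precisely Theorem 2 of \cite{sm2}.
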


If $U\subset \mathbb{C}$ is a bounded open set such that $0\in U$,  denote by $\mathcal{B}(U)$ the Banach space of all  
holomorphic functions $g\colon U \rightarrow \mathbb{C}$ that has a continuous extension to $\overline{U}$ and a critical point at $0$, with the sup norm. If  $-1 \in \overline{U}$ let  $\mathcal{B}_{nor}(U)$ be the affine subspace of maps $g\in \mathcal{B}(U)$ such that $g(-1)=-1$.

In an analogous way, let $U \subset \mathbb{C}_n$ be a bounded open set such that $$U_i=~(\mathbb{C}~\times~\{i \})\cap~U~\neq \emptyset$$ and $(0,i) \in U$,  for  every $i$ and consider the set $\mathcal{B}(U)$ of all holomophic functions $G\colon U \rightarrow \mathbb{C}_n$ with the following properties:
\begin{itemize}
\item[1.] $G$ has a continuous extension to $\overline{U}$. 
\item[2.] $G$ has critical points at $(0,i)$, for every $i$.
\item[3.] $G(U_i)\subset \mathbb{C}\times\{i+1 \ mod \ n\}$.
\end{itemize}

\begin{thm}(Complex Bounds \cite{sm1}\cite{sm2}) \label{co_bo} There exists $\epsilon_0 > 0$ with the following property. If  $F$ is  a real analytic extended map that is infinitely renormalizable with   combinatorics in $\mathcal{C}_{p,n}^{\mathbb{N}}$ and with a complex analytic (but not necessarily polynomial-like) extension $F \in \mathcal{B}(U)$  then there exist a neighbourhood $V_F \subset \mathcal{B}(U)$ of $F$ and   $k_0$ with the following property. For every $k \geq k_0$ and every real analytic and infinitely renormalizable $G \in V_F$ with   combinatorics in $\mathcal{C}_{p,n}^{\mathbb{N}}$  the map $G$ has a  polynomial-like $k$th  renormalization $$R^{k}(G)\colon U^k \rightarrow V^k$$
such that $$mod(V^k\setminus U^k) > \epsilon_0.$$ 
\end{thm}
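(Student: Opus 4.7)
The plan is to follow the classical real-to-complex bounds strategy of Sullivan--McMullen--Lyubich, adapted to the multimodal setting, supplemented by an openness argument for the neighborhood $V_F$.

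First, I would establish uniform real \emph{a priori} bounds: for every $F$ as in the hypotheses, the ratios between the length of the restrictive interval $P^k_0$, the lengths of its critical iterates $F^{\ell_j}(P^k_0)$, and the lengths of the surrounding intervals of monotonicity at level $k-1$ are bounded away from $0$ and $1$, with bounds depending only on $p$ and $n$. This is proved by Koebe and cross-ratio estimates applied to the monotone branches of $F^{\ell}$ that land on a critical neighborhood. The bounded combinatorics condition $N_{k+1}/N_k \leq p$ enters here to bound the number of critical passages per renormalization cycle. Second, one upgrades the real bounds to complex bounds: around each critical iterate $F^{\ell_j}(P^k_0)\times\{i_j\}$, take a Poincar\'e-type disc $\hat{V}_j$ of fixed small opening angle based on an interval slightly larger than $F^{\ell_j}(P^k_0)$, and pull $\hat{V}_{j+1}$ back by the branch of $F^{n_k}$ that lands on it, producing the $\hat{U}_j$ described in Section \ref{po_re}. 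Each such pull-back decomposes as diffeomorphic monotone composites, whose shape distortion is controlled by Koebe thanks to step 1, interleaved with quadratic ramifications at the critical points, each contributing a finite controlled loss of modulus. Since the number of critical passages per cycle is at most $p$, one obtains $\mod(\hat{V}_j\setminus\hat{U}_j)\geq \epsilon_0$ for a universal $\epsilon_0=\epsilon_0(p,n)>0$, provided $k\geq k_0$ is large enough that the initial non-universal geometry coming from the specific $F$ has been washed out.

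For the perturbative statement, the construction above is manifestly open in $\mathcal{B}(U)$. Once $k_0$ is fixed, any infinitely renormalizable $G$ with combinatorics in $\mathcal{C}_{p,n}$ sufficiently close to $F$ in $\mathcal{B}(U)$ has its first $k_0$ renormalizations close to those of $F$ in the Banach topology, and its subsequent deep renormalizations inherit the universal real and complex bounds established above. Hence the same $\epsilon_0$ works for every $G$ in a sufficiently small neighborhood $V_F\subset \mathcal{B}(U)$ of $F$.

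The main obstacle I expect is carrying out the pull-back construction in the multimodal setting. In the unimodal case, the branches between consecutive critical passages are diffeomorphic, so Koebe applies directly with no loss; in the multimodal case the branch of $F^{n_k}$ landing on $\hat{V}_{j+1}$ may pass near other critical points of $F$, forcing the pull-back to split into diffeomorphic pieces separated by additional quadratic covers. Controlling the cumulative loss of modulus through these extra ramifications, and ensuring that the universal constant $\epsilon_0$ depends only on the combinatorial class $\mathcal{C}_{p,n}$ and not on the particular itinerary of critical passages, is where the combinatorial bound $p$ is used in an essential way.
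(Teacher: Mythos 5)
The paper does not supply a proof: Theorem \ref{co_bo} is cited directly from \cite{sm1}, and Section 2.4 is an explicit summary of earlier results, so there is no in-paper argument to compare against. Your sketch reflects the high-level strategy that \cite{sm1} follows --- real \emph{a priori} bounds from Koebe and cross-ratio estimates, promotion to complex bounds by pulling back Poincar\'e-type discs with controlled modulus loss at each quadratic ramification, the combinatorial bound $p$ limiting the number of critical passages per cycle --- and you correctly identify the principal multimodal difficulty as quantitative control of the cumulative modulus loss. This is precisely the substance of \cite{sm1}, which your outline acknowledges rather than resolves; what you have is a plan rather than a proof.

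Two specific points in your perturbative argument deserve strengthening. First, the claim that deep renormalizations of $G$ simply ``inherit'' the complex bounds from level $k_0$ conceals the real issue: the modulus of the polynomial-like restriction can in principle decrease from one renormalization level to the next, so one must invoke a modulus-improvement (``beau bounds'') mechanism to keep the bound from degrading below $\epsilon_0$. Second, ``manifestly open'' is not an argument. What actually makes the construction open is that a restrictive interval with repelling periodic boundary persists under small $\mathcal{B}(U)$-perturbation, so the first combinatorial type --- and recursively the first $k_0$ of them, for $V_F$ small enough --- agree between $G$ and $F$; from this, $R^m(G)$ stays uniformly close to $R^m(F)$ for $m \leq k_0$ by finitely many applications of continuity of $R$. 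Only after that can you hand off to the beau-bounds mechanism.
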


If $(-1,i)\in \overline{U}$ for every $i$, we can also consider the subset $\mathcal{B}_{nor}(U)$ of all maps $G \in \mathcal{B}(U)$  such that $G(-1,i)=(-1,i+1 \ mod \ n)$ for every $i$. 

Denote $\pi(x,i)=x$. We  identify  $\mathcal{B}(U)$ with the  Banach space
\begin{equation}\label{id} \mathcal{B}(\pi(U_1))\times \mathcal{B}(\pi(U_2))\times \dots \times \mathcal{B}(\pi(U_n))\end{equation}
 in the following way.  For each  $G \in \mathcal{B}(U)$ there is a unique decomposition  
\begin{equation}\label{de} (g_1, \dots,g_n) \in \mathcal{B}(\pi(U_1))\times \mathcal{B}(\pi(U_2))\times \dots \times \mathcal{B}(\pi(U_n)),\end{equation} where $g_i$ is defined by $g_i(x)=\pi\circ G(x,i)$ and for each $n$-uple as in (\ref{de}) we can associate $G \in \mathcal{B}(U)$ defined by $G(x,i)=(g_i(x),i+1 \ mod \ n)$.  With this identification $\mathcal{B}_{nor} (U)$  turns out to be an affine subspace of $\mathcal{B}(U)$. So given $F\in \mathcal{B}_{nor} (U)$ we can consider the tangent space of $\mathcal{B}_{nor} (U)$ at $F$, denoted by $T_F \mathcal{B}_{nor} (U)$. using the identification (\ref{id}) then $T_F \mathcal{B}_{nor} (U)$ is the subspace of $$(v_1, \dots,v_n) \in \mathcal{B}(\pi(U_1))\times \mathcal{B}(\pi(U_2))\times \dots \times \mathcal{B}(\pi(U_n))$$
such that  $v_i(-1)=0$ for $i=1,\dots,n$. In particular $T_F \mathcal{B}_{nor} (U)$ does not depend on $F$, so sometimes we will write $T \mathcal{B}_{nor} (U)$. 

Given $\delta > 0$ and $\theta > 0$, let $D_{\delta,\theta}$ be the set 
$$ \{ x \in \mathbb{C}: \  dist(x,[-1,1]) < \delta \ and \ |Im(x)| < \theta (Re(x)+1)  \}\times \{1,\dots,n\}$$

Define 

$$\Omega_{p,n} = \{ F_{\sigma,0}   \}_{\sigma \in \mathcal{C}_{p,n}^{\mathbb{Z}}}$$   

Indeed, due  Theorem \ref{ri_to} we have that 
$$\Omega_{p,n} = \{ F_{\sigma,i}   \}_{\sigma \in \mathcal{C}_{p,n}^{\mathbb{Z}}}$$   
for every $i$. 
Using  Theorem \ref{co_bo} and Theorem \ref{ri_to} one can show that there exists $\epsilon_0$ such that for  every $F_{\sigma,0} \in \Omega_{p,n}$  there exists  a polynomial-like extension $F_{\sigma_0}\colon U_\sigma^0 \rightarrow V_\sigma^0$ such that $mod(V_\sigma^0 \setminus U_\sigma^0) > \epsilon_0$.  There  exists $\delta_0$  such that for every simply connected domains $Q \supset W \supset [-1,1]$ such that $mod (Q\setminus W) \geq \epsilon_0/2$ 
we have 
\begin{equation}\label{delta_est} \{ x \in \mathbb{C}: \   dist(x,[-1,1]) \leq \delta_0 \} \subset  Q.\end{equation}
In particular
$$\overline{D_{\delta_0,\theta}} \subset U_\sigma^0$$
for every $\theta > 0$ and for every $\sigma \in \mathcal{C}_{p,n}$. In particular $\Omega_{p,n} \subset \mathcal{B}_{nor}(D_{\delta_0,\theta})$.

Consider the shift operator on $\mathcal{C}_{p,n}^{\mathbb{Z}}$, that is, if $\sigma=(\sigma_i)_{i \in \mathbb{Z}}$ then $S(\sigma)=\sigma'$, where $\sigma'_i =\sigma_{i+1}$.  
\begin{cor}\label{sh}  The set $\Omega_{p,n}\subset \mathcal{B}_{nor}(D_{\delta_0,\theta})$ is a Cantor set.  Indeed the map  $H\colon \mathcal{C}_{p,n}^{\mathbb{Z}} \rightarrow \Omega_{p,n} $ given by 
$$H(\sigma)=F_{\sigma,0},$$
is a homeomorphism.  Moreover $R(F_{\sigma,0})=F_{S(\sigma),0}$.
\end{cor}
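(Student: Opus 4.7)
The plan is to establish three facts: (i) the shift-equivariance $R \circ H = H \circ S$, (ii) that $H$ is a bijection onto $\Omega_{p,n}$, and (iii) that $H$ is continuous. Since $\mathcal{C}_{p,n}$ is a finite set, $\mathcal{C}_{p,n}^{\mathbb{Z}}$ with the product topology is a compact Cantor set, so a continuous bijection onto a Hausdorff space (such as $\mathcal{B}_{nor}(D_{\delta_0,\theta})$) is automatically a homeomorphism, and $\Omega_{p,n}$ inherits the Cantor structure.

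For (i), set $\sigma' = S(\sigma)$ and consider the sequence $G_i := F_{\sigma, i+1}$. It satisfies all three conditions of Theorem \ref{ri_to} relative to $\sigma'$: $R(G_i) = F_{\sigma,i+2} = G_{i+1}$; the first renormalization of $G_i$ has combinatorics $\sigma_{i+1} = \sigma'_i$; and the required polynomial-like extensions with modulus bound $\epsilon_0$ are just the reindexed ones for $F_{\sigma,\cdot}$. The uniqueness clause of Theorem \ref{ri_to} forces $G_i = F_{\sigma', i}$, hence $R(F_{\sigma, 0}) = F_{\sigma, 1} = F_{S(\sigma), 0}$.

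For (ii), surjectivity is the definition of $\Omega_{p,n}$. For injectivity, suppose $F_{\sigma, 0} = F_{\tilde\sigma, 0}$. Iterating (i) gives $F_{\sigma, i} = F_{\tilde\sigma, i}$ for every $i \geq 0$, and reading off the first-renormalization combinatorics yields $\sigma_i = \tilde\sigma_i$ for $i \geq 0$. For $i < 0$, I argue by backwards induction. Assuming $F_{\sigma, i+1} = F_{\tilde\sigma, i+1}$, the maps $F_{\sigma, i}$ and $F_{\tilde\sigma, i}$ are real-analytic extended maps with polynomial-like extensions of type $n$ whose first renormalizations coincide and also admit polynomial-like extensions of type $n$. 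The orbit-recurrence hypothesis of Proposition \ref{inj_ren} is guaranteed by infinite renormalizability, so Proposition \ref{inj_ren} delivers $F_{\sigma, i} = F_{\tilde\sigma, i}$ and therefore $\sigma_i = \tilde\sigma_i$.

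For (iii), let $\sigma^{(k)} \to \sigma$ in $\mathcal{C}_{p,n}^{\mathbb{Z}}$. By Theorems \ref{ri_to} and \ref{co_bo}, every $F_{\sigma^{(k)}, i}$ admits a polynomial-like extension over a domain containing $\overline{D_{\delta_0, \theta}}$ with modulus bounded below by $\epsilon_0$, so by Montel's theorem and a diagonal extraction I pass to a subsequence along which $F_{\sigma^{(k)}, i} \to G_i$ in $\mathcal{B}_{nor}(D_{\delta_0, \theta})$ for every $i \in \mathbb{Z}$. Continuity of renormalization on the compact family of polynomial-like maps with uniform modulus yields $R(G_i) = G_{i+1}$. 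Product-topology convergence forces $\sigma^{(k)}_i = \sigma_i$ for $k$ large, and the combinatorial class of the first renormalization is locally constant on this uniform family because restrictive periodic intervals persist under small perturbations. Hence the first renormalization of $G_i$ has combinatorics $\sigma_i$, the family $(G_i)$ meets every hypothesis of Theorem \ref{ri_to}, and uniqueness yields $G_0 = F_{\sigma, 0}$. Every subsequential limit equaling $F_{\sigma, 0}$ combined with precompactness produces full convergence. The main obstacle is this final step: verifying that the combinatorial label of the first renormalization is stable under uniform limits within the polynomial-like class with fixed complex bounds; the other two parts reduce essentially to invoking Theorem \ref{ri_to} and Proposition \ref{inj_ren}.
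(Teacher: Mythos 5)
Your proof is correct and follows essentially the same route as the paper, which simply cites \cite[Section 7.1]{sm2} for continuity and surjectivity and invokes Proposition \ref{inj_ren} for injectivity (the shift-equivariance $R(F_{\sigma,0})=F_{S(\sigma),0}$ being an immediate consequence of the uniqueness clause in Theorem \ref{ri_to}, exactly as you argue). The obstacle you flag about stability of the first-renormalization combinatorics under limits is the genuine content outsourced to \cite{sm2}, but it is resolvable by the route you sketch: the periodic point on the boundary of the restrictive interval is uniformly repelling by the complex bounds and disjoint from the critical orbit, so its itinerary relative to the critical points, and hence the m.c.d., is locally constant on the compact polynomial-like family with fixed modulus bound.
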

\begin{proof} The map $H$ is continuous  and onto due  \cite[Section 7.1]{sm2}. The injectivity of $H$ follows from Proposition \ref{inj_ren}. 

\end{proof}

\section{Complexification of the renormalization operator $\mathcal{R}$.}

 \label{no_re} Given $\theta_0 > 0$,  by Theorem \ref{co_bo} for each $F \in \Omega_{p,n}$ there exist a neighbourhood $V_F \subset \mathcal{B}_{nor}(D_{\delta_0,\theta_0})$ of $F$ and $k_F$ such that for every real map $G \in V_F$ that is infinitely renormalizable with combinatorics in $\mathcal{C}_{p,n}$ and for every $k \geq k_F$ we have a polynomial-like  $k$th renormalization $R^k(G)\colon \hat{U}\rightarrow \hat{V}$ with $mod(\hat{V}\setminus\hat{U})> \epsilon_0$. In particular $R^k(G)\in \mathcal{B}_{nor}(D_{\delta_0,\theta_0})$.  Since $\Omega_{p,n}$ is a compact set, choose a finite sub cover $\{V_{F_i}\}_{i\leq \ell}$ of $\Omega_{p,n}$. Let $k_0 = \max_{i\leq \ell} k_{F_i}$ and $\mathcal{V}= \cup_{i\leq \ell} V_{F_i}$.

Let $H$ be the homeomorphism defined in Corollary \ref{sh}.  For every $\gamma=(\gamma_1,\dots,\gamma_{k_0})\in \mathcal{C}_{p,n}^{k_0}$ define the compact set 
$$\Omega_{p,n}(\gamma)=H(\{\sigma \in \mathcal{C}_{p,n}^{\mathbb{Z}}\colon \ \sigma_i=\gamma_i \ for \ 1\leq i\leq k_0\}).$$
We have 
 $$d_1=\inf \{ dist_{\mathcal{B}_{nor}(D_{\delta_0,\theta_0})}(G_1,G_2)\colon G_1 \in   \Omega_{p,n}(\hat{\gamma}), G_2 \in   \Omega_{p,n}(\tilde{\gamma}), \ \hat{\gamma}\neq \tilde{\gamma}\} >0.$$

Given $F \in \Omega_{p,n}$, consider the intervals $P_{F,j}$, $j=1, \dots, n$, integers  $n_j$, correponding the restrictive intervals of the $k_0$th renormalization of $F$, as in Section \ref{po_re}. Each interval $P_{F,j}$ contains a unique  repelling periodic  point $(\beta_{F,j},i_j)$ in its boundary. These repelling periodic points  have a complex analytic continuation $(\beta_{G,j},i_j)$ for every $G$ in a connected  neighbourhood $\tilde{W}_F$ of $F$ in $\mathcal{B}_{nor}(D_{\delta_0,\theta_0})$ that is also a repelling periodic point for $G$. Note that for a real map $G$  the point $\beta_{G,j}$ is real  and we can assume that it has the same combinatorics as $\beta_{F,j}$. We can also  assume that $\tilde{W}_F \subset \mathcal{V}$ and that the diameter of $\tilde{W}_F$ is smaller than $d_1/2$.

Let $d_2< d_1$ be a Lebesgue number of the cover $\{\tilde{W}_F\}_{F\in \Omega_{p,n}}$ 
of $\Omega_{p,n}$.
For every $F \in \Omega_{p,n}$ choose a connected neighbourhood  $W_F\subset  \tilde{W}_F$ of $F$ so that 
$$diam_{\mathcal{B}_{nor}(D_{\delta_0,\theta_0})}\ W_F < d_2/4.$$

 Let $F_1,F_2\in \Omega_{p,n}$  and  consider the complex analytic continuations $(\beta_{G,j}^1,i_j^1)$, $(\beta_{G,j}^2,i_j^2)$  of $(\beta_{F_1,j},i_j^1)$ and $(\beta_{F_2,j},i_j^2)$ defined for  every $G\in W_{F_1}$ and $G\in W_{F_2}$ respectively.  Suppose that $W_{F_1}\cap  W_{F_2} \neq \emptyset$. We claim that  $i_j^1=i_j^2$ and  $\beta_{G,j}^1=\beta_{G,j}^2$ for every $ G\in W_{F_1}\cap  W_{F_2}$ and $j$.   Since the diameter of $W_{F_1}\cup  W_{F_2}$ is smaller than $d_2$ we have that $W_{F_1}\cup  W_{F_2}\subset \tilde{W}_{F_3}$, for some $F_3\in \Omega_{p,n}$. Note that the distance between two maps in $\{ F_1, F_2, F_3\}$ is  smaller than $d_1$. In particular the combinatorics of their $k_0$th renormalizations are the same, so $i_j^1=i_j^2=i_j^3$ for every $j$.
Consider the complex analytic continuation $(\beta_{G,j}^3,i_j)$   of $(\beta_{F_3,j},i_j)$  defined for  $G\in \tilde{W}_{F_3}$. Then $(\beta_{F_1,j}^3,i_j)$ and $(\beta_{F_1,j},i_j)$ are repelling periodic points with the same combinatorics. Since $F_1$ has negative schwarzian derivative, the minimal principle implies that  $\beta_{F_1,j}^3=\beta_{F_1,j}$. In an analogous way $\beta_{F_2,j}^3=\beta_{F_2,j}$. The uniqueness of the analytic continuation of a repelling periodic point implies that $\beta_{G,j}^3=\beta_{G,j}^1$ for $G \in W_{F_1}$ and $\beta_{G,j}^3=\beta_{G,j}^2$ for $G \in W_{F_2}$. This concludes the proof of the claim. 

In particular the function 
$$G \mapsto (\beta_{G,j},i_j)$$
is well defined and complex analytic  in $\mathcal{W}= \cup_{F\in \Omega_{p,n}} W_F$. There is a small abuse of notation here since $i_j$ depends on $G$, but it is a locally constant function.

Fix   $G \in W_F$. Let $A_{G,j}\colon \mathbb{C}\times\{i_j\} \rightarrow \mathbb{C}\times\{j\}$  be the affine transformation that maps $(\beta_{G,j},i_j)$ to $(-1,j)$ and $(0,i_j)$ to $(0,j)$, and $A_G\colon \mathbb{C}_n \rightarrow \mathbb{C}_n$ as $A_G(x,i)=(A_{G,i}(x),i)$.   Let $D^{F,j}$ be the set 
$$ \overline{A^{-1}_{F,j} (\{ z \in \mathbb{C}: \  dist(z,[-1,1]) < \delta_0 \ and \ |Im(z)| < \theta_0 (Re(z)+1)   \}\times\{j\}) }.$$
Since  $mod(\hat{V}\setminus \hat{U}) > \epsilon_0$ we have that 
$$D^{F,j} \subset \hat{U}_j \subset  \mathcal{D}^{n_j}_{D_{\delta_0,\theta_0}}(F),$$
Moreover,  due the complex bounds, reducing  $\theta_0$ and $\delta_0$ we can assume that the interior of the sets in the family
$$\{ F^m(D^{F,j})   \}_{m< n_j}$$
are pairwise disjoint, and the intersection of the closure of every two of  those sets is contained in 
$$\{F^m(\beta_{F,j},i_j)  \}_{m< n_j}.$$
Let  $G \in W_F$  and define the set $D^{G,j}$ as 
$$\overline{A^{-1}_{G,j}(\{ z \in \mathbb{C} : \  dist(z,[-1,1]) < \delta_0 \ and \ |Im(z)| < \theta_0 (Re(z)+1)   \} \times \{j\})}.$$
Reducing the neighbourhood $W_F$ of $F$ and $\theta_0$  we can assume that  
$$D^{G,j} \subset  \mathcal{D}^{n_j}_{D_{\delta_0,\theta_0}}(G),$$
for every $G \in W_F$ and furthermore the interior of the sets in the family
$$\{ G^m(D^{G,j})   \}_{m< n_j}$$
are pairwise disjoint, and the intersection of the closure of every two of  those sets  is contained in 
$$\{G^m(\beta_{G,j},i_j)  \}_{m< n_j}.$$

Define the complexification of the renormalization operator  $$\mathcal{R}\colon  \mathcal{W} \rightarrow \mathcal{B}_{nor}(D_{\delta_0,\theta_0})$$ as
\begin{equation} \label{def_ren} \mathcal{R}(G)(x,j)=  A_{G,j+1} \circ G^{n_j}\circ A_{G,j}^{-1}(x,j)\end{equation}
if  $G \in W_F$.  The operator $\mathcal{R}$ is a compact complex analytic map. From now on   denote $U= D_{\delta_0,\theta_0}$.

\begin{rem} \label{sc} Let  $\tilde{U}$ be a little larger  complex open domain that contains $\overline{U}$. Consider the complex analytic transformation 
$$\tilde{\mathcal{R}} \colon  \mathcal{W} \rightarrow \mathcal{B}_{nor}(\tilde{U}).$$
defined exactly as in (\ref{def_ren}). Let $$i\colon \mathcal{B}_{nor}(\tilde{U}) \rightarrow \mathcal{B}_{nor}(U)$$
be the compact linear inclusion between these spaces. Then $\mathcal{R}= i\circ \tilde{\mathcal{R}}$, so the complexification of the renormalization operator is a strongly compact operator as defined in \cite{sm5}. 
\end{rem}

Let $v \in T_G\mathcal{B}_{nor}(U)$. If $z \in U$ and $G^j(z) \in U$ for every $j < i$ then $(G+tv)^i$ is defined in a neighbourhood of $z$ and we can define
\begin{equation}\label{de_it}  a_i(z)= \frac{\partial}{\partial t} (G+tv)^i|_{t=0}(z)= \sum_{j=0}^{i-1} DG^{i-j-1}(G^{j+1}(z))v(G^j(z)).\end{equation}

Given $F \in \Omega_{p,n}$ and $G \in W_F$. For each $v\in T_G\mathcal{B}_{nor}(U)$ and $z \in U_j$ we have

$$(D\mathcal{R}_G\cdot v)(x,j)= \frac{\partial}{\partial t} A_{G+tv,j+1}\circ  (G+tv)^{n_j}\circ A_{G+tv,j}^{-1}(x,j)|_{t=0} $$
$$= - \frac{\partial_G \beta_{G,j+1}\cdot v}{\beta_{G,j+1}}\cdot  A_{G,j+1} \circ  G^{n_j}\circ A_{G,j}^{-1}(x,j)$$
$$- \frac{1}{\beta_{G,j+1}}\big(    a_{n_j}\circ A_{G,j}^{-1}(x,j) + (\partial_x G^{n_j})\circ A_{G,j}^{-1}(x,j)\cdot (-\partial_G \beta_{G,j}\cdot v \ x,j) \big).$$
\begin{thm}\label{dense}  Let $F \in \mathcal{W}$. Then 
$D_F\mathcal{R}(T_{F}\mathcal{B}_{nor}(U))$ is dense in $T_{\mathcal{R}F}\mathcal{B}_{nor}(U)$.\end{thm}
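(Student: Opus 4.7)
My plan is to reduce density to a question about a ``composition-type'' operator obtained from $D_F\mathcal{R}$ by modding out a finite-dimensional piece, and then attack that question via Hahn-Banach. From the explicit formula for $D_F\mathcal{R}(v)$ displayed just before the statement, the first and third summands are scalar multiples of $\mathcal{R}F(x,j)$ and $x\,\partial_x\mathcal{R}F(x,j)$, with coefficients that are continuous linear functionals of $v$ depending only on $v$ evaluated along the periodic orbit of $(\beta_{F,j},i_j)$ (via the implicit function theorem applied to the defining equation of $\beta_{G,j}$). These two terms therefore take values in a fixed finite-dimensional subspace $\mathcal{F}\subset T_{\mathcal{R}F}\mathcal{B}_{nor}(U)$ of dimension at most $2n$. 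Since density modulo a finite-dimensional subspace is the same as density, it suffices to show that the middle summand
$$T(v)(x,j):=-\frac{1}{\beta_{F,j+1}}\,a_{n_j}\bigl(A_{F,j}^{-1}(x,j)\bigr)$$
has dense image in $T_{\mathcal{R}F}\mathcal{B}_{nor}(U)/\mathcal{F}$.

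Next I would apply Hahn-Banach. Suppose $\mu$ is a continuous linear functional on $T_{\mathcal{R}F}\mathcal{B}_{nor}(U)$ that vanishes on $\mathcal{F}$ and annihilates the image of $T$. Expanding the definition of $a_{n_j}$ via (\ref{de_it}) and changing variables $y=F^k(z)$ on each branch where $F^k$ is univalent---such branches exist on a definite complex neighborhood thanks to the complex bounds of Theorem~\ref{co_bo} and the construction of the domains $D^{F,j}$ in Section~\ref{no_re}---the identity $\langle\mu,T(v)\rangle=0$ for every $v\in T_F\mathcal{B}_{nor}(U)$ translates to
$$\int v(y)\,d\nu(y)=0\qquad\text{for every }v\in T_F\mathcal{B}_{nor}(U),$$
where $\nu$ is an explicit weighted sum of pushforwards of the components of $\mu$ by inverse branches of iterates of $F$ dictated by the orbit structure of the restrictive intervals. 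Since $T_F\mathcal{B}_{nor}(U)$ is the codimension-$n$ subspace of $\mathcal{B}(U)$ cut out by the conditions $v(-1,i)=0$, the identity forces $\nu$ to be a linear combination of the Dirac masses $\delta_{(-1,i)}$, $i=1,\dots,n$.

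The main step---and principal obstacle---is to conclude from the pushforward relation $\nu=\sum_{i}a_i\,\delta_{(-1,i)}$ that $\mu=0$. One must show that the only way the weighted sum of pushforwards can collapse onto the finite set $\{(-1,i)\}$ is the trivial way. Here I would exploit the strong expansion of the iterates of $F$ near the repelling periodic orbit $\{F^m(\beta_{F,j},i_j)\}$ to argue that distinct inverse branches of $F^k$ carry mass to pairwise disjoint regions near the boundary of $U$, so no nontrivial algebraic cancellation among them is possible. The local K{\oe}nigs linearization idea already exploited in Lemma~\ref{koenigs} provides the rigidity at the periodic orbit itself, and the injectivity of renormalization of Proposition~\ref{inj_ren} is the global counterpart that rules out any remaining algebraic degeneracy among the different indices $j$. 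The combination forces $\mu$ to vanish, yielding the desired density.
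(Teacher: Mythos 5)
Your plan has a genuine gap at the very first reduction. You pass from $D_F\mathcal{R}$ to the ``middle summand'' $T$ and assert that it suffices to show $T$ has dense image in the quotient $T_{\mathcal{R}F}\mathcal{B}_{nor}(U)/\mathcal{F}$; equivalently, you only consider annihilating functionals $\mu$ that already vanish on $\mathcal{F}$. What you would then obtain is that $\overline{\operatorname{Im}(T)}+\mathcal{F}$ (equivalently $\overline{\operatorname{Im}(D_F\mathcal{R})}+\mathcal{F}$) is the whole space, i.e.\ that $\overline{\operatorname{Im}(D_F\mathcal{R})}$ has finite codimension $\le 2n$ --- \emph{not} that $D_F\mathcal{R}$ has dense image. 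Density modulo a finite-dimensional subspace is strictly weaker than density: a closed hyperplane plus a one-dimensional complement fills the whole space, yet the hyperplane is not dense. The correct way to exploit the finite-dimensional nature of the first and third terms is to restrict the \emph{domain}, not the target: on the closed finite-codimension subspace $\ker(c_1)\cap\ker(c_3)\subset T_F\mathcal{B}_{nor}(U)$ one has $D_F\mathcal{R}=T$, so it would suffice to show that $T$ restricted there is already dense. You do not establish this, and it is not the same as what your Hahn--Banach setup proves. There is a second, softer issue in the final step: identifying $\mu\circ T$ with integration against a ``pushforward measure'' and then reducing to Dirac masses at $\{(-1,i)\}$ requires a concrete description of the dual of $\mathcal{B}(U)$ and a genuine computation with inverse branches; the appeal to the K{\oe}nigs linearization of Lemma~\ref{koenigs} and the injectivity of Proposition~\ref{inj_ren} as ``rigidity'' is not an argument, since neither statement is about annihilation of functionals.

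For comparison, the paper's proof is constructive rather than dual. Given a target $w$ with $w(-1,k)=0$ and $w'(0,k)=0$, one builds a piecewise-holomorphic function $\hat v$ on the union $\Lambda=\bigcup_j\bigcup_{m<n_j}G^m(D^{G,j})\cup\{(-1,k)\}_k$ of dynamically defined domains: $\hat v$ is set to zero on all but the ``initial'' domains $D^{G,j}$, where it is chosen so that the telescoping sum defining $a_{n_j}$ collapses to exactly $w\circ A_{G,j}$. Since $\mathbb{C}\times\{i\}\setminus\Lambda$ is connected, Mergelyan's theorem produces a polynomial approximant $q_i$ to $\hat v$, which after a two-parameter linear correction lies in $T_G\mathcal{B}_{nor}(U)$; because $\hat v$ vanishes on the orbit of the periodic points $\beta_{G,j}$, the correction and the first and third terms of $D_G\mathcal{R}\cdot q$ are controlled by $\epsilon$, and one gets $D_G\mathcal{R}\cdot q\to w$. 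This direct construction avoids entirely the question of identifying the dual and, crucially, never needs to pass to a quotient by $\mathcal{F}$, which is where your argument loses the conclusion.
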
 
\begin{proof}  The proof is quite similar to the proof of the analogous statement in \cite{alm}. 
Let $w \in T_{\mathcal{R}F}\mathcal{B}_{nor}(U)$. Then $w(-1,k)=0$ and $w'(0,k)=0$ for every $k$. We are going to define a function 
$$\hat{v}\colon  \cup_j \cup_{m< n_j} G^m(D^{G,j}) \rightarrow \mathbb{C}$$
in the following way. Define the function $\hat{v}$ as 0 on 
$$\cup_j \cup_{0< m< n_j}  G^m(D^{G,j}),$$
and 
$$\hat{v}(z)= [DG^{n_j-1}(G(z))]^{-1}\cdot w\circ A_{G,j}(z)$$
for $z \in D^{G,j}$. Also define $\hat{v}(-1,k)=0$ for every $k$. Then $\hat{v}$ is well defined,  it is continuous on
$$ \Lambda= \cup_j \cup_{m< n_j} G^m(D^{G,j})  \cup \{(-1,k)\}_k,$$
and it is complex analytic in  the interior of $\Lambda$.

Moreover  $\hat{v}$ vanishes on the orbit of the periodic points $\{\beta_{G,j}\}_j$. Since $\mathbb{C}\times \{i\} \setminus \Lambda$ is a connected set, by Mergelyan's Theorem for each given $\epsilon > 0$ and $i$  we can find a polynomial $q_i$ such that $|\hat{v}(z)-q_i(z)|< \epsilon$ for $z \in \Lambda_i =\Lambda \cap \mathbb{C}\times \{i\}$. Define
$$\hat{q}_i(z) = q_i(z) -q'_i(0,i)z -q_i(-1,i)-q'_i(0,i)$$
Note that  $\hat{q}_i'(0,i)=0$ and $\hat{q}_i(-1,i)=0$. Define $q(x,i)=\hat{q}_i(x)$.  We have that $q \in T_{G}\mathcal{B}_{nor}(U)$ and 
$$|D_G\mathcal{R} \cdot q  - w|_{\mathcal{B}(U)} \rightarrow_{\epsilon \rightarrow 0} 0.$$\end{proof}

\section{Action of $D\mathcal{R}$ on  horizontal directions.} 

\subsection{Horizontal direction} \label{hdd} Let $F\colon I^n_F  \rightarrow I^n_F $ be a real analytic extended map that is either infinitely  renormalizable with bounded combinatorics in $\mathcal{C}_{p,n}$ or whose critical points belongs to the same periodic orbit.  A continuous function 
$$v\colon I^n_F \rightarrow  T \mathbb{C}_n$$
is  a  {\it horizontal direction }  of $F$ if 
\begin{itemize}
\item[1.] For each $x \in I^n_F $ we have $v(x) \in T_{F(x)}\mathbb{C}_n$.
\item[2.] The function $v$ is real analytic in the interior of $I^n_F$.
\item[3.] There is a quasiconformal vector field 
$$\alpha\colon W \rightarrow T\mathbb{C}_n,$$
defined  in a complex neighbourhood $W$ of the post critical set of $F$, such that 
\begin{equation}\label{tce3} v(x)= \alpha(F(x))- DF(x) \cdot \alpha(x)\end{equation}
for every $x$ in the post critical set. 
\item[4.] We have $\alpha(c)=0$ for every critical point $c$ of $F$. 
\end{itemize}

Denote by $E^h_F$ the set of $v \in T_F \mathcal{B}_{nor}(U)$ such that $v$ is horizontal. Of course  $E^h_F$ is a linear subspace of $T_F \mathcal{B}_{nor}(U)$.

 \begin{prop}[Infinitesimal pullback argument. Avila, Lyubich and de Melo  \cite{alm}] \label{ipa} Let  $F \in \Omega_{n,p}$.  Let $$F\colon W\rightarrow V$$ be a polynomial-like extension of $F$ and $v \in   \mathcal{B}(W)\cap T_F\mathcal{B}_{nor}(U)$ such that there exists a quasiconformal vector field $\alpha$, defined in a neighbourhood of the post critical set of $F$, such that 
 \begin{equation} \label{tcc} v(x) = \alpha  \circ F (x) - DF(x)\cdot \alpha(x)\end{equation} 
 for every $x \in P(F)$. In particular  $v \in E^h_F$.  Reducing a little bit the domain $W$, there exists a quasiconformal vector field extension $\alpha\colon W \rightarrow  \mathbb{C}$ such that (\ref{tcc}) holds for every $x\in W$. 
 \end{prop}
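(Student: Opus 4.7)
The plan is an infinitesimal pullback argument, in the style of Lyubich and Avila--Lyubich--de Melo. Solving (\ref{tcc}) formally for $\alpha$ gives
\[
\alpha(x) \;=\; \frac{\alpha(F(x)) - v(x)}{DF(x)},
\]
so once $\alpha$ is prescribed on a set $S$, the equation determines a canonical candidate on $F^{-1}(S)$. I would build the desired extension by iterating this formula and passing to a limit.

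The first step is to check that the formula extends holomorphically across the critical points of $F$. At a critical point $c$, condition~4 gives $\alpha(c)=0$, and since $c \in P(F)$ the hypothesis (\ref{tcc}) forces $\alpha(F(c))=v(c)$. Moreover $v \in T_F\mathcal{B}_{nor}(U)$ inherits the critical-point normalization $v'(c)=0$, and $(\alpha \circ F)'(c)=\alpha'(F(c))\cdot DF(c)=0$. Hence the numerator $\alpha(F(x))-v(x)$ vanishes to order at least two at $c$, while $DF$ has only a simple zero there (quadratic critical point), so the quotient extends holomorphically and vanishes at $c$, matching condition~4.

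Next, extend the given $\alpha$ to a q.c. vector field $\alpha_0$ on $V$ (by standard q.c. extension from its neighborhood of $P(F)$), and define iteratively $\alpha_{n+1}=T\alpha_n$ on $W$, where
\[
T\beta(x) \;:=\; \frac{\beta(F(x)) - v(x)}{DF(x)},
\]
after re-extending each $\alpha_n$ from $W$ back to $V$ by a cutoff that leaves it equal to $\alpha_0$ outside a slightly smaller neighborhood of $K(F)$. Because (\ref{tcc}) already holds on $P(F)$, one has $\alpha_n \equiv \alpha$ on $P(F)$ at every stage. The differences $\alpha_{n+1}-\alpha_n = (\alpha_n - \alpha_{n-1})\circ F / DF$ are supported on the $n$-th preimage of the initial discrepancy region, a set that shrinks to the boundary $\partial K(F)$ of the filled-in Julia set; on orbits that avoid a neighborhood of the critical set the product $\prod_j |DF(F^j x)|$ grows, giving uniform convergence on $W \setminus \partial K(F)$. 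Crucially, pullback by the holomorphic map $F$ does not increase the $L^\infty$ norm of Beltrami coefficients, so the dilatations of the $\alpha_n$ stay uniformly bounded throughout the iteration.

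The main obstacle is to push the limit $\tilde\alpha$ across the Julia set as a genuine q.c. vector field on a slightly shrunken $W'\Subset W$. For this one invokes removability of $\partial K(F)$ for q.c. vector fields with bounded dilatation, which is where the polynomial-like structure and the complex a priori bounds of Theorem~\ref{co_bo} become essential: they supply the definite annular moduli around $K(F)$ needed for an Ahlfors--Beurling style removability argument. Once $\tilde\alpha$ is known to be q.c. on $W'$, the identity $T\tilde\alpha=\tilde\alpha$ passes to the limit, so (\ref{tcc}) holds throughout $W'$; by construction $\tilde\alpha$ agrees with $\alpha$ on a neighborhood of $P(F)$, completing the extension.
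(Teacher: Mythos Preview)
The paper does not give its own proof of this proposition; it cites Avila--Lyubich--de Melo \cite{alm} for the argument and later re-uses it (with details) in the proof of Proposition~\ref{closed} and of Theorem~\ref{keylem}. So the relevant comparison is with the standard ALM pullback argument, whose mechanics are visible in those later proofs.

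Your outline captures the correct architecture: extend $\alpha$ to a q.c.\ vector field $\alpha_0$ on $V$, iterate the pullback $T\beta=(\beta\circ F-v)/DF$, observe that $|\overline\partial\,T\beta|=|(\overline\partial\beta)\circ F|$ so dilatations do not increase, and pass to a limit. Your treatment of the critical points is essentially right in conclusion, though note $\alpha$ is only quasiconformal, so the vanishing of the numerator to second order should be argued via the modulus of continuity of q.c.\ vector fields (one gets $|\alpha(F(x))-\alpha(F(c))|=o(|x-c|)$ from $|F(x)-F(c)|\sim|x-c|^2$), not via classical derivatives of $\alpha$.

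There is, however, a genuine gap in the endgame. Your convergence claim on $K(F)$ relies on growth of $\prod_j|DF(F^jx)|$, which fails along the critical orbit and hence on a large part of $K(F)$ for an infinitely renormalizable map. More seriously, you close the argument by invoking ``removability of $\partial K(F)$ for q.c.\ vector fields with bounded dilatation''; this is not the right tool and is not available here: for $F\in\Omega_{n,p}$ the Julia set need not be removable in that sense, and the a~priori bounds of Theorem~\ref{co_bo} give moduli around $K(F)$, not across $J(F)$. The standard ALM argument---and the version the paper actually carries out in Proposition~\ref{closed}---proceeds differently: one uses McMullen's compactness criterion for q.c.\ vector fields to extract a subsequential limit (of Ces\`aro averages $\frac{1}{j}\sum_{t<j}\alpha_t$, to force the functional equation in the limit), and the fact that the limit is q.c.\ on all of $\mathbb{C}_n$---i.e.\ that no extra dilatation accumulates on $K(F)$---comes from the absence of $F$-invariant line fields on $K(F)$ \cite{sm2}. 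That ingredient is entirely missing from your sketch, and without it the extension across the filled Julia set cannot be justified.
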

 
 \begin{prop}[Invariance] \label{invariance} Let $F\in \Omega_{n,p}$. Then \begin{equation} \label{r1} D_F\mathcal{R}(E^h_F) \subset E^h_{\mathcal{R}F},\end{equation} 
\begin{equation}\label{r2}    (D_F\mathcal{R})^{-1}(E^h_{\mathcal{R}F})\subset E^h_F.\end{equation}\end{prop}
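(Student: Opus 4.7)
The proof plan interprets horizontal directions as infinitesimal quasiconformal conjugacies. Given $v\in E^h_F$ with witness $\alpha$, the family $F_t:=F+tv$ satisfies $F_t=\phi_t\circ F\circ \phi_t^{-1}+O(t^2)$ on the postcritical set, where $\phi_t:=\mathrm{id}+t\alpha$. Since $\mathcal{R}$ is assembled from iteration of $F$ and the affine normalizations $A_{F,\cdot}$---both of which commute with conjugation---this infinitesimal structure should propagate to $\mathcal{R}F$, yielding a witness $\tilde{\alpha}$ for $D_F\mathcal{R}\cdot v$. The reverse inclusion will unwind this construction using the cocycle equation along $F$-orbits.

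For the inclusion $D_F\mathcal{R}(E^h_F)\subset E^h_{\mathcal{R}F}$, I plan to track how the affine maps $A_{F_t,j}$ vary with $t$. Because $\phi_t$ fixes each critical point $(0,i_j)$ ($\alpha$ vanishes there) and infinitesimally transports each boundary periodic point $(\beta_{F,j},i_j)$, one obtains $A_{F_t,j}=A_{F,j}\circ \phi_t^{-1}+O(t^2)$ at these two marked points. Combined with $F_t^{n_j}=\phi_t\circ F^{n_j}\circ \phi_t^{-1}+O(t^2)$, substitution into $\mathcal{R}F_t=A_{F_t,j+1}\circ F_t^{n_j}\circ A_{F_t,j}^{-1}$ gives $\mathcal{R}F_t=\psi_t\circ \mathcal{R}F\circ \psi_t^{-1}+O(t^2)$ on a neighborhood of the postcritical set of $\mathcal{R}F$, where $\psi_t^j$ is built from $\phi_t$, $A_{F,j}$ and the variation of $\beta_{F,j}$. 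Differentiating at $t=0$ identifies the witness
\[
\tilde{\alpha}_j(y)=A_{F,j}'\cdot \alpha(A_{F,j}^{-1}(y))+(\partial_F A_{F,j}\cdot v)(A_{F,j}^{-1}(y)),
\]
which is quasiconformal (affine push-forward of a qc field plus an analytic term affine in $y$) and vanishes at each critical point $(0,j)$ of $\mathcal{R}F$ since $\alpha(0,i_j)=0$ and the second summand is linear in $y$, hence zero at $y=0$.

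For the reverse inclusion, given a witness $\tilde{\alpha}$ for $w:=D_F\mathcal{R}\cdot v\in E^h_{\mathcal{R}F}$, I invert the displayed formula to define $\alpha$ on a complex neighborhood of the postcritical set of $F$ intersected with each $P_{F,j}$, and then extend $\alpha$ along $F$-orbits to a neighborhood of the whole postcritical set of $F$---which is contained in $\bigcup_j \bigcup_{m<n_j} F^m(P_{F,j})$---via $\alpha(F^m(z)):=DF^m(z)\,\alpha(z)+a_m(z)$ with $a_m$ as in~(\ref{de_it}). The telescoping identity for $a_m$ and the chain rule then give $v=\alpha\circ F-DF\cdot \alpha$ throughout, and quasiconformality propagates because $F$ is real-analytic and $a_m$ is an analytic additive correction.

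The main technical obstacle is the well-definedness of this extension where the orbit pieces meet, specifically the identification of the $P_{F,j+1}$-definition of $\alpha$ with the one obtained by iterating $F^{n_j}$ from $P_{F,j}$. This compatibility is precisely the content of the cocycle equation for $\tilde{\alpha}$ under $\mathcal{R}F$, read backwards through the affine conjugacies $A_{F,\cdot}$, so it holds by hypothesis. Vanishing of $\alpha$ at each critical point $(0,i_j)$ of $F$ likewise reduces to $\tilde{\alpha}(0,j)=0$ together with the linear-in-$y$ form of the affine-variation term.
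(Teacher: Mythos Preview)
Your argument for the forward inclusion \eqref{r1} is essentially the paper's: both produce the same witness for $D_F\mathcal{R}\cdot v$, namely the vector field $r(\alpha)$ in \eqref{ralpha}, which is exactly your $\tilde\alpha_j$. The paper obtains it by a direct computation starting from the explicit formula for $D_F\mathcal{R}$; you obtain it from the conjugation picture. One small caveat: your heuristic ``$\phi_t$ infinitesimally transports the boundary periodic point $\beta_{F,j}$'' presumes $\alpha(\beta_{F,j})$ makes sense, but $\beta_{F,j}\notin P(F)$ in general, so the original $\alpha$ need not be defined there. The paper sidesteps this by simply \emph{denoting} $\partial_F\beta_{F,j}\cdot v$ by $\alpha(\beta_{F,j})$ and then verifying \eqref{ralpha2} by direct substitution rather than by the conjugation heuristic.

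For the reverse inclusion \eqref{r2} there is a genuine gap. You define $\alpha$ on neighborhoods of $P(F)\cap P_{F,j}$ and then \emph{push forward} via $\alpha(F^m(z)):=DF^m(z)\,\alpha(z)+a_m(z)$. But each $P_{F,j}$ contains the critical point $(0,i_j)$, so already the first step $F\colon P_{F,j}\to F(P_{F,j})$ is two-to-one near that point. For the push-forward to be well defined on a \emph{neighborhood} (not just on $P(F)$) you would need
\[
DF(z_1)\alpha(z_1)+v(z_1)=DF(z_2)\alpha(z_2)+v(z_2)\qquad\text{whenever }F(z_1)=F(z_2),
\]
and an arbitrary quasiconformal witness $\tilde\alpha$ (hence the resulting $\alpha$) carries no such symmetry. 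You correctly flag the compatibility where the orbit lands back in $P_{F,j+1}$, but you miss this earlier obstruction at the critical fold.

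The paper avoids the problem by extending in the opposite direction: it defines $\alpha$ on the central pieces (those containing the critical points) from $\gamma$, and for any other $z$ near $P(F)$ it \emph{pulls back} via $\alpha(z)=\bigl(v(z)+\alpha(F(z))\bigr)/DF(z)$, iterating until the forward orbit first enters a central piece. Since this recursion is applied only at points $z$ \emph{outside} the central pieces, one never divides by $DF$ at a critical point, and well-definedness on a neighborhood is automatic. Your construction becomes correct if you reverse the direction of the extension in exactly this way.
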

\begin{proof} The proof of (\ref{r1})  is quite similar to the proof of a similar statement in \cite{sm3}.  Indeed, consider $a_i$ as in (\ref{de_it}). Note that 
$$a_i(z) = v(F^{i-1})+DF(F^{i-1}(z))a_{i-1}(z).$$
By an inductive argument one can show that 
$$a_i = \alpha\circ F^i - DF^i\cdot \alpha$$
on $P(F)$.  Denote 
$$ \alpha(\beta_{F,j+1})=\partial_F \beta_{F,j+1}\cdot v$$
then if $z=(x,j) \in P(F)$ we have
\begin{align*} &(D\mathcal{R}_F\cdot v)(x,j)\\
&= - \frac{\partial_F \beta_{F,j+1}\cdot v}{\beta_{F,j+1}}\cdot  A_{F,j+1} \circ  F^{n_j}\circ A_{F,j}^{-1}(x,j) \\
&- \frac{1}{\beta_{F,j+1}}\big(    a_{n_j}\circ A_{F,j}^{-1}(x,j) + (D F^{n_j})\circ A_{F,j}^{-1}(x,j)\cdot (-\partial_F \beta_{F,j}\cdot v \ x,j) \big)        \\
&=- \frac{\alpha(\beta_{F,j+1})}{\beta_{F,j+1}}\cdot  (\mathcal{R}F)(z)\\
&-    \frac{1}{\beta_{F,j+1}} \alpha \circ A_{F,j+1}^{-1} \circ A_{F,j+1}\circ F^{n_j} \circ A_{F,j}^{-1}(x,j)  + \frac{\beta_{F,j}}{\beta_{F,j+1}}  DF^{n_j}\circ A_{F,j}^{-1}(x,j) \cdot \frac{1}{\beta_{F,j}}\alpha \circ A_{F,j}^{-1}(x,j)\\
  &+ D_z(\mathcal{R}F) \cdot (\frac{\alpha(\beta_{F,j})}{\beta_{F,j}}\  x,j) \\
&=- \frac{\alpha(\beta_{F,j+1})}{\beta_{F,j+1}}\cdot  (\mathcal{R}F)(z)\\
&-    \frac{1}{\beta_{F,j+1}} \alpha \circ A_{F,j+1}^{-1} \circ (\mathcal{R}F)(z) + D_z(\mathcal{R}F) \cdot \frac{1}{\beta_{F,j}}\alpha \circ A_{F,j}^{-1}(x,j)\\
 &+ D_z(\mathcal{R}F) \cdot (\frac{\alpha(\beta_{F,j})}{\beta_{F,j}}\  x,j).
 \end{align*}
Define the vector field $r(\alpha)$ as
\begin{equation}\label{ralpha}  r(\alpha)(x,j)= -\frac{1}{\beta_{F,j}}\alpha\circ A_{F,j}^{-1}(x,j) -(\frac{\alpha(\beta_{F,j}) }{\beta_{F,j}}  \cdot x , j)\end{equation} 
for $z=(x,j) \in U_j$.  Then
\begin{equation} \label{ralpha2} (D\mathcal{R}_F\cdot v)(z)=  r(\alpha)\circ (\mathcal{R}F)(z) - D_z(\mathcal{R}F)\cdot r(\alpha)(z) \end{equation} 
for $z$ in the postcritical set of $\mathcal{R}F$.  Note that $r(\alpha)$ is a quasiconformal vector field in a neighbourhood of the post critical set of $\mathcal{R}F$.  So 
$D\mathcal{R}_F\cdot v \in  E^h_{\mathcal{R}F}$.

Now suppose that $v \in  (D_F\mathcal{R})^{-1}(E^h_{\mathcal{R}F})$.  Then $D\mathcal{R}_F\cdot v \in E^h_{\mathcal{R}F}$, so there exists a quasiconformal vector field $\gamma\colon \mathbb{C}_n \rightarrow \mathbb{C}$ such that 
\begin{equation} (D\mathcal{R}_F\cdot v)(z)=  \gamma \circ (\mathcal{R}F)(z) - D_z(\mathcal{R}F)\cdot \gamma(z). \end{equation} 
for every $z$ in a neighbourhood of the post critical set of $\mathcal{R}F$.  Define 
$$\delta_j = \partial_t \beta_{F+tv,j}\big|_{t=0}=\partial_F \beta_{F,j}\cdot v$$
 Define $\alpha$ in  $A_{F,j}(U)$ as 
 $$\alpha(z)= \beta_{F,j} \gamma\circ A_{F,j}^{-1}(z) +  \delta_j A_{F,j}^{-1}(z).$$
 
 Let $z$ be a point very close to the post critical set of $F$. Then
$$\{ k\geq 0  \ s.t. \ F^k(z) \in \cup_{j\leq n} A_{F,j}(U) \} \neq \emptyset$$
Let $k(z)$ be a minimal element of the above set. Not hat $z \mapsto k(z)$ is locally constant.  We define $\alpha(z)$ for $z$ close to the post critical set of $F$ by induction of $k(z)$. We already defined $\alpha(z)$ when $k(z)=0$. If $k(z) > 0$ then $k(F(z))=k(z)-1$ and we define 
$$\alpha(z)= \frac{v(z)+\alpha(F(z))}{DF(z)}.$$

One can check that $\alpha$ is a  quasiconformal vector field and $$v = \alpha(F(z))-DF(z)\alpha(z)$$ in a neighbourhood of the post critical set of $F$, so $v \in E^h_F$. 
 \end{proof}

 \begin{prop} Let  $F \in \Omega_{n,p}$. Then $D\mathcal{R}_F$ is injective.
 \end{prop}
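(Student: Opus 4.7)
The plan is to mimic infinitesimally the proof of Proposition \ref{inj_ren}. Let $v\in T_F\mathcal{B}_{nor}(U)$ satisfy $D\mathcal{R}_F\cdot v=0$ and set $F_t:=F+tv$ for small $t$, so that $\mathcal{R}(F_t)=\mathcal{R}(F)+O(t^2)$. The goal is to deduce $v\equiv 0$.

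The real-analytic identifications (\ref{cond_1})--(\ref{cond_4}) in the proof of Proposition \ref{inj_ren} express the scaling factors $\beta_{F,j}$, the combinatorial integers $\ell_j,\,i_j$, and the multipliers $\lambda_i=Df(-1,i)$ in terms of $\mathcal{R}F$ together with locally constant discrete data. Differentiating each such identity in $t$ at $t=0$ --- the discrete data being constant for small $t$ --- yields
\[
\partial_t\beta_{F_t,j}\big|_0=0,\qquad \partial_t\lambda_{i,t}\big|_0=0,\qquad \partial_tF_t^{qn}\big|_0\equiv 0
\]
on a neighbourhood of $\{-1\}\times\{1,\dots,n\}$, where $q$ is the common return period.

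The final step is an infinitesimal version of Lemma \ref{koenigs}. Writing $f_{i,t}(x):=\pi_1(F_t(x,i))$ and $g_{i,t}:=f_{i-1,t}\circ\cdots\circ f_{i,t}$, and using that $\mu_t:=\prod_i\lambda_{i,t}=\mu_0+O(t^2)$ so that the K{\oe}nigs coordinate of $g_{i,t}$ at $-1$ depends analytically on $t$, one can solve to first order in $t$ the cohomological equation
\[
\dot h_{i+1}(f_i(x))+\dot f_i(x)=f_i'(x)\dot h_i(x)
\]
for analytic germs $\dot h_i$ near $-1$ with $\dot h_i(-1)=\dot h'_i(-1)=0$, exactly as in the construction of the semi-conjugacy $h_{i+1}\circ f_{i,1}=f_{i,2}\circ h_i$ in Lemma \ref{koenigs}. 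Iterating this semi-conjugacy around the cycle gives $h_{i,t}\circ g_{i,t}=g_i\circ h_{i,t}+O(t^2)$; iterating $q$ further times and substituting $g_{i,t}^q=g_i^q+O(t^2)$ produces $h_{i,t}\circ g_i^q=g_i^q\circ h_{i,t}+O(t^2)$. The order-$t$ part of this identity is the homogeneous Schr\"{o}der equation $\dot h_i\circ g_i^q=(g_i^q)'\cdot\dot h_i$. In a K{\oe}nigs coordinate of $g_i^q$ at $-1$ this reads $\tilde\xi(\mu^q w)=\mu^q\tilde\xi(w)$, whose only holomorphic solution is $\tilde\xi(w)=c_1w$; the normalization $\dot h_i(-1)=\dot h'_i(-1)=0$ forces $c_1=0$, so $\dot h_i\equiv 0$. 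Substituting into the first-order semi-conjugacy then gives $\dot f_i\equiv 0$ near $-1$, hence $v(\cdot,i)$ vanishes on an open set, and by real-analyticity $v\equiv 0$.

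The main obstacle is the construction and uniqueness of the first-order K{\oe}nigs semi-conjugacies $\dot h_i$: one must justify the analytic dependence on $t$ of the K{\oe}nigs coordinate of $g_{i,t}$, and carefully propagate the first-order Schr\"{o}der uniqueness around the cycle, mirroring the argument in Lemma \ref{koenigs}.
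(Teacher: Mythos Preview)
Your approach has a genuine gap at the very first step. You claim that the identities (\ref{cond_1})--(\ref{cond_4}) in the proof of Proposition \ref{inj_ren} ``express the scaling factors $\beta_{F,j}$ \dots in terms of $\mathcal{R}F$ together with locally constant discrete data,'' and that differentiating therefore yields $\partial_t\beta_{F_t,j}|_0=0$, $\partial_t\lambda_{i,t}|_0=0$, and $\partial_t F_t^{qn}|_0\equiv 0$ near $-1$. But those identities are not smooth functional relations of the form $\beta_{F,j}=\Phi_j(\mathcal{R}F)$; they are \emph{conclusions} drawn from the exact hypothesis $R(F_1)=R(F_2)$ via a trichotomy argument: if $|\beta_{j,1}|<|\beta_{j,2}|$ then $Y_{j,2}$ would be a restrictive interval of $g_{j,1}$, which is impossible. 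This is a topological, not a differentiable, implication. An injective analytic map need not have injective derivative (think of $x\mapsto x^3$), so you cannot simply linearize the injectivity proof. Concretely, $\mathcal{R}F$ records $F^{n_j}$ only on $A_{F,j}^{-1}(U)$, a neighbourhood of the restrictive interval around the periodic point $\beta_{F,j}$; it carries no direct information about the germ of $F$ near the separate periodic orbit $\{(-1,i)\}_i$, so there is no obvious way to read off $\lambda_i=DF(-1,i)$ or to conclude $\partial_t F_t^{qn}|_0\equiv 0$ there. Every later step in your outline (the first-order K{\oe}nigs semiconjugacies, the Schr\"oder uniqueness) rests on these unproven vanishing statements.

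The paper's proof proceeds along an entirely different, complex-analytic route. From $D\mathcal{R}_F\cdot v=0\in E^h_{\mathcal{R}F}$, Proposition \ref{invariance} (specifically (\ref{r2})) gives $v\in E^h_F$; the infinitesimal pullback argument (Proposition \ref{ipa}) then produces a quasiconformal $\alpha$ solving (\ref{tcc}) on the Julia set of $F$. The induced vector field $r(\alpha)$ of (\ref{ralpha}) satisfies the homogeneous equation $0=r(\alpha)\circ(\mathcal{R}F)-D(\mathcal{R}F)\cdot r(\alpha)$ on the Julia set of $\mathcal{R}F$, forcing $r(\alpha)=0$ there (first at repelling periodic points, then by density). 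Pushing this back through (\ref{ralpha}) and (\ref{tcc}) gives $v=0$ on the small Julia sets of $F$, hence $v\equiv 0$ by analyticity. If you want to repair your argument, the missing ingredient is precisely a mechanism (such as this horizontal-vector machinery) that converts $D\mathcal{R}_F\cdot v=0$ into control of $v$ on a set with accumulation, without passing through the undifferentiable step (\ref{cond_1}).
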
 
 \begin{proof} Let $v$ be such that $D\mathcal{R}_F\cdot v =0$. By Proposition \ref{invariance} we have that $v \in E^h_F$. By Proposition \ref{ipa} there is a quasiconformal  vector field $\alpha$, defined in a neighborhood of  the Julia set of $F$, satisfying (\ref{tcc}) for every $x$ on its Julia set. Let $r(\alpha)$ be the quasiconformal vector field defined by (\ref{ralpha}). Then by (\ref{ralpha2}) we have that $r(\alpha)$ satisfies 
 $$0=  r(\alpha)\circ (\mathcal{R}F)(z) - D_z(\mathcal{R}F)\cdot r(\alpha)(z).$$
 for every $z$ in the Julia set of $\mathcal{R}F$.  We can easily conclude that $r(\alpha)(z)=0$ at every repelling periodic point  $z$ of $\mathcal{R}F$  and consequently at every point of its Julia set.  By (\ref{ralpha}) we have that $\alpha$ is zero at every point of the small Julia sets of $F$ corresponding to this renormalization and, by (\ref{tcc}) we have that $v$ vanishes in these small Julia sets as well. So $v=0$ everywhere. 
 \end{proof}

\begin{prop}[Closedness] \label{closed} Let  $F_k \in \Omega_{n,p}$ and $v_k \in E^h_{F_k} \subset T \mathcal{B}_{nor}(U)$ be sequences such that 
$(F_k,v_k)$ converges to $(F,v) \in \mathcal{B}_{nor}(U)\times T\mathcal{B}_{nor}(U)$. Then $F\in \Omega_{p,n}$  and $v \in E^h_F$. In particular $E^h_F$ is Banach subspace of $T_F\mathcal{B}_{nor}(U)$. 
\end{prop}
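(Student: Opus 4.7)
The statement has two parts. That $F\in\Omega_{p,n}$ is immediate from Corollary \ref{sh}: the map $H\colon\mathcal{C}_{p,n}^{\mathbb{Z}}\to\Omega_{p,n}$ is a homeomorphism from a compact Cantor set, so $\Omega_{p,n}$ is closed in $\mathcal{B}_{nor}(U)$ and $F_k\to F$ forces $F\in\Omega_{p,n}$. For the horizontality of $v$, my plan is to construct a quasiconformal vector field $\alpha$ for $(F,v)$ as a subsequential limit of quasiconformal vector fields $\alpha_k$ associated to $(F_k,v_k)$, and then verify the defining relation by passing to the limit in the twisted cohomological equation.

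By Theorem \ref{co_bo}, each $F_k\in\Omega_{p,n}$ admits a polynomial-like extension $F_k\colon W_k\to V_k$ with $\mathrm{mod}(V_k\setminus W_k)\geq\epsilon_0/2$, and similarly for $F$. Since the $F_k$ converge to $F$ in $\mathcal{B}_{nor}(U)$, the uniform modulus estimate lets me fix an open set $W'\supset [-1,1]\times\{1,\dots,n\}$ that is contained in $W_k$ for all large $k$ and in a polynomial-like domain of $F$. Applying Proposition \ref{ipa} to each pair $(F_k,v_k)$, I obtain quasiconformal vector fields $\alpha_k\colon W_k\to\mathbb{C}$ with
\[
v_k(x)=\alpha_k(F_k(x))-DF_k(x)\cdot\alpha_k(x)
\]
on $W_k$ and $\alpha_k$ vanishing at the critical points of $F_k$.

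The central step is compactness. Because $(v_k)$ is bounded in $T\mathcal{B}_{nor}(U)$ and the polynomial-like geometry is uniform, the IPA construction produces $\alpha_k$ whose Beltrami coefficients have uniformly bounded $L^\infty$ norm and whose sup norm on $W'$ is controlled. Normalizing by the vanishing condition at the critical points (plus, if needed, an additional marked point to remove the residual affine ambiguity) makes the family $\{\alpha_k\}$ relatively compact by the standard Ahlfors--Bers compactness theorem for normalized $K$-quasiconformal vector fields. Extracting a subsequence $\alpha_{k_j}$ converging locally uniformly on $W'$ to a quasiconformal vector field $\alpha$ that vanishes at the critical points of $F$, and passing to the limit in the twisted cohomological equation using $F_{k_j}\to F$ and $v_{k_j}\to v$, I obtain $v(x)=\alpha(F(x))-DF(x)\cdot\alpha(x)$ on $W'$, and in particular on the post-critical set of $F$. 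Hence $v\in E^h_F$. The Banach subspace assertion is then immediate, since $E^h_F$ is already a linear subspace of $T_F\mathcal{B}_{nor}(U)$ and the above shows it is closed.

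The main obstacle is the uniform control of the Beltrami coefficients of the $\alpha_k$ required for the compactness extraction. This must be read off the proof of Proposition \ref{ipa}: the dilatation of the pulled-back extension depends only on the modulus $\mathrm{mod}(V_k\setminus W_k)$ and on $\|v_k\|$, both of which are uniformly bounded here by Theorem \ref{co_bo} and the convergence $v_k\to v$. Once this dependence is made explicit, the passage to the limit and the verification of the defining properties of $E^h_F$ are routine.
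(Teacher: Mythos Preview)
Your overall strategy---produce quasiconformal solutions $\alpha_k$ of the cohomological equation for $(F_k,v_k)$, extract a subsequential limit via compactness of quasiconformal vector fields, and pass to the limit in the equation---is the same as the paper's. Two points, however, separate your outline from a complete proof.

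First, and this is the real gap: you correctly flag the uniform bound on $|\overline{\partial}\alpha_k|$ as the main obstacle, but then defer it to ``the proof of Proposition~\ref{ipa}''. That proposition, as stated, gives existence of a quasiconformal extension for a \emph{single} map; it does not come with an explicit dilatation estimate uniform over a family. The paper does not cite IPA as a black box here. Instead it \emph{reruns} the infinitesimal pullback construction with careful bookkeeping: one first builds an initial extension $\gamma_0^k$ that vanishes outside a fixed $V$, is smooth on the fundamental annulus with $|\overline{\partial}\gamma_0^k|\le C$ for a \emph{fixed} $C$ (this is where the uniform modulus $\mathrm{mod}(V\setminus W_k)\ge\epsilon_0/2$ and the bound on $\|v_k\|$ enter), then iterates the pullback, takes Ces\`aro averages, and extracts a limit $\gamma_\infty^k$ via McMullen's compactness criterion. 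The crucial ingredient you omit is that the uniform dilatation bound on $\gamma_\infty^k$ over the filled Julia set requires the absence of invariant line fields for $\mathcal{R}F_k$; without this, the averaging procedure could in principle blow up the dilatation on $K(\mathcal{R}F_k)$. Only after this does a second compactness extraction in $k$ produce the limiting $\gamma$.

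Second, a structural difference: the paper works at the level of $\mathcal{R}F_k$ rather than $F_k$. This is a convenience---the operator $\mathcal{R}$ is built so that $\mathcal{R}F_k$ has a polynomial-like extension over a \emph{fixed} codomain $V$ with $\overline{U}\subset W_k$, which makes the uniform initial extension step clean. One then concludes $D\mathcal{R}_F\cdot v\in E^h_{\mathcal{R}F}$ and invokes the backward invariance (\ref{r2}) of Proposition~\ref{invariance} to get $v\in E^h_F$. Your direct approach on $F_k$ is not wrong, but you would need to arrange the same uniform geometry by hand.
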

\begin{proof}  Due the definition of the operator $\mathcal{R}$, the map $\mathcal{R}F$ has a polynomial-like extension 
$$\mathcal{R}F\colon W\rightarrow V,$$
with $\overline{U}\subset W$. 
Reducing $V$ a little bit, we can assume $\partial V$ is a finite union of analytic curves and  that for $k$ large enough  the map 
$$\mathcal{R}F_k\colon W_k\rightarrow V,$$
where $W_k \subset \mathbb{C}_n$, $\overline{U} \subset W_k$,  is the set whose connected components  are the connected components of $$(\mathcal{R}F_k)^{-1}V$$
that intersect $\{ (0,j)  \}_j$, is a polynomial-like extension of $\mathcal{R}F_k$.  Since $v_k \in E^h_{F_k}$ we have that $D_{F_k}\mathcal{R}\cdot v_k \in E^h_{\mathcal{R} F_k}$, so there exists a quasiconformal vector field  $\tilde{\gamma}^k$ such that 
$$(D\mathcal{R}_{F_k}\cdot v_k)(z)=  \tilde{\gamma}^k\circ (\mathcal{R}F_k)(z) - D_z(\mathcal{R}F_k)\cdot \tilde{\gamma}^k(z). $$
holds for $z$ in a neighbourhood of the post critical set of $\mathcal{R}F_k$.

Now we use the infinitesimal pullback argument in Avila, Lyubich and de Melo \cite{alm}. For each $k$, there exist $C > 0$ and a quasiconformal vector field $\gamma_0^k\colon \mathbb{C}_n \rightarrow \mathbb{C}$ with the following properties \begin{itemize}
\item[1.] The vector field $\gamma_0^k$ vanishes outside $V$. Moreover $\gamma_0^k(-1)=\gamma_0^k(0)=0$.
\item[2.] It satisfies 
$$(D\mathcal{R}_{F_k}\cdot v_k)(z)=  \gamma^k_0\circ (\mathcal{R}F_k)(z) - D_z(\mathcal{R}F_k)\cdot \gamma_0^k(z). $$
for every $ z\in \partial W_k$. 
\item[3.] The vector field $\gamma_0^k$ is $C^\infty$ in a neighbourhood of 
$$\overline{V\setminus W_k}$$
and
$$|\overline{\partial} \gamma^k_0|\leq C$$
on this set. 
\item[4.]  We have $\gamma^k_0=\tilde{\gamma}^k$ in a neighbourhood  of the post critical set of $\mathcal{R}F_k$.
\end{itemize}
Define by induction $\gamma^k_j$ as $0$ outside $V$ and 
$$\gamma^k_{j+1}(z) = \frac{\gamma^k_j\circ (\mathcal{R}F_k)(z) - (D\mathcal{R}_{F_k}\cdot v_k)(z)   }{D_z(\mathcal{R}F_k)}$$
on $V\setminus \{(0,m)\}_m$, and $\gamma^k_{j+1}(0,m)=0$. 

Using the McMullen compactness criterion for quasiconformal vectors fields \cite[Corollary A.11]{mc2}, one can prove that for each $k$ the sequence

$$\hat{\gamma}_j^k  = \frac{1}{j} \sum_{t=0}^{j-1} \gamma_t^k$$
has a convergent subsequence,  uniform on compact subsets of $\mathbb{C}_n$. Moreover such limits are quasiconformal vectors fields. Let $\gamma^k_\infty$ be one of theses limits. Since  the filled-in Julia sets of the polynomial-like extensions of $\mathcal{R}F_k$ do not support  invariant line fields \cite{sm2} we conclude that $|\overline{\partial} \gamma^k_\infty| \leq C$ on $\mathbb{C}_n$.  Note that 
\begin{equation}\label{limi} (D\mathcal{R}_{F_k}\cdot v_k)(z)=  \gamma^k_{\infty} \circ (\mathcal{R}F_k)(z) - D_z(\mathcal{R}F_k)\cdot \gamma_{\infty}^k(z),  \ z \in \overline{U}.\end{equation}
By the compactness criterion for quasiconformal vectors fields in McMullen \cite{mc2}  we can consider a convergent subsequence $\gamma^{k_t}\infty \rightarrow_t  \gamma$, where $\gamma$ is a quasiconformal vector field on $\mathbb{C}_n$ and the convergence is uniform on compact subsets of $\mathbb{C}_n$ . By (\ref{limi}) we have
\begin{equation} (D\mathcal{R}_{F}\cdot v)(z)=  \gamma \circ (\mathcal{R}F)(z) - D_z(\mathcal{R}F)\cdot \gamma(z),  \ z \in \overline{U},\end{equation}
so $D\mathcal{R}_{F}\cdot v  \in E^h_{\mathcal{R}F}$, so by (\ref{r2}) we have $ v\in E^h_F$.  \end{proof}

\begin{prop}[Contraction on the horizontal directions] \label{contraction}  There exist $K$ and $\Cl[e]{contra} > 1$ such that for every $F\in \Omega_{n,p}$ and $v \in E^h_F$ we have 
$$ |D_F\mathcal{R}^i\cdot v|_{T\mathcal{B}_{nor}(U)} \leq K {\Cr{contra}}^{-i}|v|_{T\mathcal{B}_{nor}(U)}.$$\end{prop}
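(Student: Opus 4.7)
The plan is to combine three ingredients already available in the paper: compactness of $D\mathcal{R}$ from Remark \ref{sc}, invariance and closedness of the horizontal subbundle from Propositions \ref{invariance} and \ref{closed} over the compact base $\Omega_{p,n}$ (Corollary \ref{sh}), and a pointwise exponential contraction on each fiber $E^h_F$ extracted from the cohomological representation of horizontal vectors (Proposition \ref{ipa}). A standard compactness bootstrap then promotes the pointwise bound to the desired uniform exponential estimate.

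First, I would set things up as a compact operator-valued cocycle over a compact base. Remark \ref{sc} gives that $D_F\mathcal{R}$ is a compact operator varying continuously with $F$; Proposition \ref{invariance} provides invariance of $E^h$; Proposition \ref{closed} provides that $E^h$ is closed inside $\Omega_{p,n}\times T\mathcal{B}_{nor}(U)$; and Corollary \ref{sh} guarantees that the base is compact. Consequently, the unit sphere bundle of $E^h$ is compact, and the fiberwise operator norm $\phi_n(F):= \|D_F\mathcal{R}^n|_{E^h_F}\|$ is upper semi-continuous in $F$.

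The central analytic step is to establish pointwise exponential contraction, namely $\limsup_n \phi_n(F)^{1/n}<1$ for every $F\in\Omega_{p,n}$. Using Proposition \ref{ipa}, I would represent $v\in E^h_F$ as a coboundary $v=\alpha\circ F-DF\cdot\alpha$ for a quasiconformal vector field $\alpha$ defined on a polynomial-like domain of $F$ and vanishing at the critical points. The computation from the proof of Proposition \ref{invariance}, captured by (\ref{ralpha}) and (\ref{ralpha2}), identifies $D_F\mathcal{R}\cdot v$ with the coboundary of $r(\alpha)$ for $\mathcal{R}F$; iterating, $D\mathcal{R}^n_F v$ is represented by $r^n(\alpha)$, the nested affine rescaling of $\alpha$ about the shrinking small Julia sets at successive renormalization levels. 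The uniform complex bounds of Theorem \ref{co_bo} give uniform moduli across all levels, and hence uniform quasiconformal control on these nested rescalings; combined with $\alpha(c)=0$ at critical points and the Zygmund-type modulus of continuity of quasiconformal vector fields, this forces $|r^n(\alpha)|_U$ to decay at an exponential rate depending only on the combinatorial constants $p,n$.

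Finally, I would bootstrap the pointwise estimate to uniformity. For each $N$, $\phi_N$ is upper semi-continuous on the compact set $\Omega_{p,n}$; for each $F$, pointwise exponential contraction yields $N_F$ with $\phi_{N_F}(F)<1/2$, and by upper semi-continuity this holds on an open neighborhood of $F$. A finite subcover of $\Omega_{p,n}$ together with the submultiplicative relation $\phi_{n+m}(F)\leq \phi_m(\mathcal{R}^n F)\,\phi_n(F)$ (evaluated along orbits to reach a common iterate $N_0$) produces a uniform bound $\phi_{N_0}(F)\leq \kappa<1$ for every $F\in\Omega_{p,n}$. Iterating the cocycle yields $\phi_{kN_0}(F)\leq \kappa^k$, which gives the claimed estimate with $\Cr{contra}=\kappa^{-1/N_0}$ and a constant $K$ absorbing the uniform bound on $\phi_n$ for $n<N_0$.

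The hard part will be the pointwise exponential contraction in the middle step. The rescaling formula (\ref{ralpha}) contains the potentially expanding factor $1/\beta_{F,j}$, and the decay must come from the vanishing of $\alpha$ at the critical points together with uniform geometric control of the restrictive intervals. Converting the qualitative quasiconformal regularity of $\alpha$ into a quantitative exponential rate is delicate: it requires using the uniform complex bounds and the uniformly bounded geometry of the affine charts $A_{F,j}$ over $F\in\Omega_{p,n}$ in tandem, to beat the expansion at every step. Once this quantitative decay of $r^n(\alpha)$ is secured, the remaining compactness arguments are routine.
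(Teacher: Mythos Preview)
The paper does not give its own proof; it defers to Lyubich's Schwarz lemma argument \cite[Theorem~6.3]{lyu} and to the infinitesimal tower-rigidity argument of McMullen \cite{mc2} as adapted in \cite[Proposition~3.9]{sm3}. Your outline takes a different route, attempting to read off exponential decay directly from iterating the rescaling formula (\ref{ralpha}) together with the Zygmund modulus of quasiconformal vector fields. That is where the gap lies.

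Iterating (\ref{ralpha}) produces vector fields $r^n(\alpha)$ with the \emph{same} $\bar\partial$-bound as $\alpha$ (the maps $A_{F,j}$ are affine) and still vanishing at the critical points. The real and complex bounds pin the ratios $|\beta_{F,j}|$ between two positive constants, so the Zygmund estimate $|\alpha(y)|\lesssim K\,|y|\,|\log|y||$ applied at scale $|y|\asymp|\beta|$ yields $|r(\alpha)|_U\le C\,K\,(1+|\log|\beta||)$: a \emph{uniform bound}, not a contraction. After $n$ steps you obtain an equibounded family $(r^n(\alpha))_n$, hence only $\phi_n(F)=O(1)$. None of the ingredients you list (complex bounds, $\alpha(c)=0$, Zygmund regularity) distinguishes between $\phi_n$ staying of order one and decaying exponentially; what is missing is a \emph{rigidity} input. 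In the arguments the paper cites one proceeds by contradiction: if $|D\mathcal{R}^{n_k}_F v|\not\to 0$, compactness of $D\mathcal{R}$ and of normalized quasiconformal vector fields produces a bi-infinite limit tower carrying a nonzero horizontal deformation, and McMullen's tower rigidity forces it to vanish. Alternatively, Lyubich realizes each hybrid leaf as a Banach ball and applies the Schwarz lemma to $\mathcal{R}$ restricted to it. Your sketch invokes neither mechanism, so the pointwise exponential contraction step is unsupported.

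A smaller point: the unit sphere bundle of $E^h$ is \emph{not} compact, since each $E^h_F$ is infinite-dimensional (it has finite \emph{co}dimension $n$ by Proposition~\ref{codimension}). The upper semi-continuity of $\phi_N$ in your bootstrap therefore needs a separate justification via the strong compactness of $D\mathcal{R}$ (Remark~\ref{sc}) together with Proposition~\ref{closed}; this is recoverable, but not for the reason you state.
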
 

We do not provide a proof for Proposition \ref{contraction} since it can be proven   in exactly the same way it is done in the unimodal setting. One can use  the  argument by Lyubich \cite[Theorem 6.3]{lyu}  using the Schwarz's lemma and the rigidity of McMullen's towers \cite{mc2}. An infinitesimal argument using the rigidity of McMullen's towers and the compactness of the renormalization operator  is given in \cite[Proposition 3.9]{sm3} (in the case of  the  fixed point of the period doubling renormalization) can  be also applied here. We also cite the new methods  by Avila and Lyubich \cite{al33} to prove the contraction in the horizontal directions  in the case of unimodal unbounded combinatorics.

\begin{prop}[Contraction on the hybrid classes] \label{contraction2} There exists $\Cl[c]{contra2}\in (0,1)$ with the following property. Let $F$ be a real-analytic  polynomial-like map of type $n$ that is infinitely renormalizable with combinatorics bounded by $p$.  Then there exist  $G \in \Omega_{n,p}$ and $k_0=k_0(F)$ and $C=C(F)$ such that $\mathcal{R}^kF \in \mathcal{B}(U)$ for every $k\geq k_0$ and 
$$|\mathcal{R}^kF - \mathcal{R}^kG  |_{\mathcal{B}_{nor}(U)}\leq C\Cr{contra2}^k \text{ for } k\geq k_0.$$ 
\end{prop}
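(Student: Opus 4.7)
The plan is to first invoke the complex bounds (Theorem \ref{co_bo}) to obtain $k_0=k_0(F)$ such that for every $k \geq k_0$ the renormalization $\mathcal{R}^k F$ admits a polynomial-like extension with $\operatorname{mod}(V^k\setminus U^k) > \epsilon_0$; in particular $\mathcal{R}^k F \in \mathcal{B}_{nor}(U)$ and the orbit $\{\mathcal{R}^k F\}_{k\geq k_0}$ is pre-compact in $\mathcal{B}_{nor}(U)$.

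Next I would identify the candidate $G$. Let $(\sigma_i)_{i \geq 0}$ be the forward combinatorial sequence of $\mathcal{R}^{k_0}F$ and extend it arbitrarily to a bi-infinite sequence $\sigma \in \mathcal{C}_{p,n}^{\mathbb{Z}}$; set $G := F_{\sigma,-k_0}$, so that by Theorem \ref{ri_to} we have $\mathcal{R}^k G = F_{\sigma,k-k_0} \in \Omega_{n,p}$ for every $k \geq 0$. A diagonal compactness argument (taking an accumulation point of $\mathcal{R}^{k_j}F$ and then backward accumulations of $\mathcal{R}^{k_j-1}F$, $\mathcal{R}^{k_j-2}F$, \ldots to build a bi-infinite $\mathcal{R}$-orbit), combined with the injectivity of $\mathcal{R}$ (Proposition \ref{inj_ren}) and the uniqueness in Theorem \ref{ri_to}, shows that every accumulation point of $\{\mathcal{R}^k F\}$ must coincide with the corresponding $F_{\sigma,k-k_0}$; hence $|\mathcal{R}^k F - \mathcal{R}^k G|_{\mathcal{B}_{nor}(U)} \to 0$ as $k \to \infty$.

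To upgrade this qualitative convergence to exponential decay I would run an infinitesimal pull-back argument. Since $\mathcal{R}^{k_0}F$ and $\mathcal{R}^{k_0}G$ lie in the same hybrid class and both carry polynomial-like extensions of modulus $> \epsilon_0$, the phase-space universality of \cite{sm2} provides a quasiconformal conjugacy between them on their filled-in Julia sets. Straightening as in Proposition \ref{ipa}, I would construct a real-analytic path $t\mapsto H_t \in \mathcal{B}_{nor}(U)$, $t\in[0,1]$, with $H_0=\mathcal{R}^{k_0}G$, $H_1 = \mathcal{R}^{k_0}F$, each $H_t$ infinitely renormalizable with the same combinatorics and a definite-modulus polynomial-like extension, and with velocity $\dot H_t \in E^h_{H_t}$. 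Applying Proposition \ref{contraction} at $H_0 \in \Omega_{n,p}$ and extending to all basepoints along the path by compactness of $\mathcal{R}$ and continuity of $D\mathcal{R}$ on horizontal subspaces (which is available via Proposition \ref{closed}), one gets
$$\bigl|D_{H_t}\mathcal{R}^k \cdot \dot H_t\bigr|_{\mathcal{B}_{nor}(U)} \leq K'\Cr{contra}^{-k}\,|\dot H_t|_{\mathcal{B}_{nor}(U)}.$$
The fundamental theorem of calculus then gives
$$|\mathcal{R}^{k_0+k}F - \mathcal{R}^{k_0+k}G|_{\mathcal{B}_{nor}(U)} = \Bigl|\int_0^1 D_{H_t}\mathcal{R}^k\cdot \dot H_t\,dt\Bigr|_{\mathcal{B}_{nor}(U)} \leq K'\Cr{contra}^{-k}\int_0^1 |\dot H_t|_{\mathcal{B}_{nor}(U)}\,dt,$$
which, after absorbing $K'\Cr{contra}^{k_0}\int_0^1|\dot H_t|\,dt$ into $C=C(F)$ and choosing $\Cr{contra2} = \Cr{contra}^{-1}$, yields the stated bound.

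The main obstacle is the construction of the horizontal path $H_t$. Converting the quasiconformal conjugacy between $\mathcal{R}^{k_0}F$ and $\mathcal{R}^{k_0}G$ into a smooth path of polynomial-like maps with velocities in $E^h_{H_t}$ requires a Beltrami-coefficient deformation in the spirit of Sullivan and McMullen, together with careful bookkeeping ensuring that each $H_t$ remains in $\mathcal{B}_{nor}(U)$ with a polynomial-like extension of modulus $>\epsilon_0/2$, so that a neighbourhood version of Proposition \ref{contraction} applies uniformly along the entire path.
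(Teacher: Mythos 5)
Your plan follows the route the paper cites for this proposition (combine Proposition~\ref{contraction} with the argument of Theorem~1 of \cite{sm3}): establish complex bounds to land in a fixed Banach space, pick $G$ via combinatorics, obtain qualitative convergence, then upgrade to exponential convergence via horizontal contraction along a path. The combinatorial choice of $G$ is correct; it would be marginally cleaner to use the combinatorial sequence of $F$ itself rather than of $\mathcal{R}^{k_0}F$, so that $\mathcal{R}^k G$ shares combinatorics with $\mathcal{R}^k F$ for every $k\geq 0$, but either version works for $k\geq k_0$.

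The step that does not go through as written is the application of Proposition~\ref{contraction} along the path $H_t$. That proposition is stated only at points of $\Omega_{n,p}$, but along your path only $H_0=\mathcal{R}^{k_0}G$ lies in $\Omega_{n,p}$; the interior points and the endpoint $H_1=\mathcal{R}^{k_0}F$ are not a priori close to $\Omega_{n,p}$ in $\mathcal{B}_{nor}(U)$. Hence continuity of the horizontal distribution (Proposition~\ref{closed}) together with compactness of $\mathcal{R}$ does not propagate the uniform constants $K,\Cr{contra}$ from $H_0$ across the whole path; this is a genuine gap, not bookkeeping. What is actually needed is a contraction statement valid at \emph{every} point of the hybrid class with uniform constants, which must be derived directly (e.g.\ from McMullen's tower rigidity on the hybrid slice, as in \cite[Prop.~3.9]{sm3}, which is essentially the paper's first suggested route via \cite{sm2}), or replaced by a compactness-of-towers contradiction argument that dispenses with the path altogether. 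Relatedly, your proposal needs but does not state a uniform-in-$t$ version of the complex bounds along $H_t$, ensuring $\mathcal{R}^m(H_t)$ enters a fixed small neighbourhood of $\Omega_{n,p}$ after a number of steps bounded independently of $t$; this uniformity is ultimately the source of the $F$-dependence of the constant $C$ in the statement.
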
 
\begin{proof} One can prove this in a quite similar way to the proof of the main result in \cite{sm2}.  An alternative proof is obtained using Proposition \ref{contraction} and  the same argument as in the proof of Theorem 1  in \cite{sm3}.
\end{proof}

Next we  show that every map in $\Omega_{p,n}$ can be approximated the hyperbolic  polynomial-like maps of type $n$.  

\begin{prop}\label{lprop} Let $G \in \mathcal{W}$ be such that  there exist domains $\hat{U}$ and $\hat{V}$, whose boundaries are  analytic Jordan curves, such that $\mod \hat{V}\setminus \hat{U} > \epsilon_0/2$ and $$G\colon \hat{U} \rightarrow \hat{V}$$ is a real polynomial-like map of type $n$ that is infinitely renormalizable with combinatorics bounded by $p$. Then there exist polynomial-like maps of type $n$ 
$$G_i\colon \hat{U}^i \rightarrow \hat{V}^i$$
such that  
\begin{itemize}
\item[A.] we have $\mod \hat{V}^i\setminus  \hat{U}^i \geq \epsilon_0/2$ and $G_i \in \mathcal{B}_{nor}(U)$,
\item[B.] all critical points of  $G_i$  belong to the same periodic orbit,
\item[C.] we have 
$$\lim_i |G_i-G|_{\mathcal{B}_{nor}(U)}=0.$$
\end{itemize}
\end{prop}
\begin{proof} We will use the notation introduced in  \cite{sm2}. Let $\sigma=(\sigma_1,\sigma_2,\dots)$  be the combinatorics of $G$. By Proposition 2.2 in \cite{sm2}, there exists a sequence of polynomial $P_i$ of type $n$ with combinatorics $\sigma_i\star \dots \star \sigma_1$. By Corollary 2.3 in \cite{sm2} any accumulation point of this sequence is a polynomial $P$ of type $n$  that is infinitely renormalizable with combinatorics $\sigma$.  By the proof of Theorem 2 in \cite{sm2} there is only one polynomial of type $n$ with combinatorics $\sigma$, so the sequence $P_i$ indeed converges to $P$. Indeed there are now far more general rigidity results for polynomials. See Kozlovski, Shen and van Strien \cite{kss1}\cite{kss2}. 

Since $P_i$ is a convergent sequence of polynomials of type $n$ with connected Julia sets, it is possible to choose domains $\hat{U}^i$ and $\hat{V}^i$ such that 

\begin{itemize}
\item[-] $\inf_i \mod \hat{V}^i\setminus \hat{U}^i > 0$,
\item[-] $P_i\colon \hat{U}^i  \rightarrow \hat{V}^i$ is a polynomial-like map of type $n$.
\end{itemize}
and furthermore for some $K > 0$ there are $K$-quasiconformal maps 
$$\phi_i\colon  \mathbb{C}_n \rightarrow  \mathbb{C}_n $$
such that 
\begin{itemize}
\item[-] $\phi_i(\hat{U}^i)=\hat{U}$ and $\phi_i(\hat{V}^i)=\hat{V}$,
\item[-] $\phi_i(\overline{z}) = \overline{\phi_i(z)}$,
\item[-] $P_i\colon \hat{U}^i  \rightarrow \hat{V}^i$ is a polynomial-like map of type $n$,
\item[-]    $  G\circ \phi_i=  \phi_i\circ P_i $ on $\partial \hat{U}^i$.
\item[-] The sequence $\phi_i$ converges to a $K$-quasiconformal map $\phi$.
\item[-] If $\hat{U}^\infty=\phi^{-1}(\hat{U})$  and $\hat{V}^\infty=\phi^{-1}(\hat{V})$ then $P\colon  \hat{U}^\infty \rightarrow \hat{V}^\infty$ is a polynomial-like map of type $n$.
\end{itemize}
Let $\mu_i$ be the Beltrami field that coincides with $\mu_i=\overline{\partial} \phi_i/\partial \phi_i$ on $\mathbb{C}_n\setminus \hat{U}^i$,  that is invariant under $P_i$, and $\mu_i=0$ on $K(P_i)$.
Let $\psi_i\colon \mathbb{C}_n \rightarrow \mathbb{C}_n$ be the unique quasiconformal map such that  $\psi_i(-1,j)=(-1,j)$ and  $\psi_i(0,j)=(0,j)$ for every $j$,  and $\mu_i=\overline{\partial} \psi_i/\partial \psi_i$ on $\mathbb{C}_n$. Define
$$G_i =\psi_i\circ P_i  \circ   \psi_i^{-1}.$$
Then 
$$G_i\colon  \psi_i(\hat{U}^i)    \rightarrow \psi_i(\hat{V}^i)$$
is a polynomial-like map of type $n$. Note that 
$$\inf_i \mod \psi_i(\hat{V}^i) \setminus \psi_i(\hat{U}^i) > 0.$$
Every subsequence of $G_i$ has  a convergent subsequence. Let $F$ be one these  accumulation points. We claim that $F=G$. Note that every accumulation point is of the form $F= \psi\circ P\circ \psi^{-1}$, where $\psi$ is a $K$-quasiconformal map that is an accumulation point of the sequence $\psi_i$. We can assume, without loss of generality, that $\phi_i$ converges to a $K$-quasiconformal map $\phi$.  

Notice that $$\phi_i \circ    \psi^{-1}_i \circ G_i \circ \psi_i\circ    \phi_i^{-1}(z)= \phi_i \circ    P_i  \circ    \phi_i^{-1} (z)= G(z)$$
for $z \in \phi_i(\partial \hat{U}^i)=\partial \hat{U}$.
Taking the limit on $i$ we obtain
$$\phi \circ    \psi^{-1}\circ F \circ \psi\circ    \phi^{-1}(z)= G(z)$$
for $z \in \partial \hat{U}$. Moreover since $\psi_i\circ    \phi_i^{-1}$ is conformal in $\mathbb{C}_n\setminus \hat{U}$ we conclude that $\psi\circ  \phi^{-1}$ is conformal in $\mathbb{C}_n\setminus \hat{U}$. Since $F$ and $G$ are both infinitely renormalizable with the same combinatorics, one can use the Sullivan's pullback argument to conclude that there is quasiconformal conjugacy $H$ between $F\colon \hat{U}^\infty \rightarrow \hat{V}^\infty $ and $G\colon \hat{U} \rightarrow \hat{V}$ such that $H$ is conformal in $\mathbb{C}_n\setminus K(F)$. Since there are not invariant line fields supported of $K(F)$ we conclude that $H$ in conformal on $\mathbb{C}_n$, so $H$ is affine on each connected component of $\mathbb{C}_n$. Since $H(-1,j)=(-1,j)$ and $H(0,j)=(0,j)$ for every $j$, we conclude that $H$ is the identity.  So $F=G$ and $G_i$ converges to $G$. Itens A., B. and C. of Proposition \ref{lprop} follows easily from this. 
\end{proof}

\subsection{ Vertical directions, codimension of $E^h$ and vector bundles.}\label{vdir}

Let $f\colon V_1 \rightarrow V_2$ be a polynomial-like map. Let $\mathbb{B}_f$ be the vector space of the germs of holomorphic functions defined in a neighborhood of $K(f)$. We say that  $v \in \mathbb{B}_f$ is a  {\it vertical vector}  if  there exists a  holomorphic vector field  $\alpha$ defined on $\overline{\mathbb{C}}\setminus K(f)$ such that 
\begin{equation} \label{vertical} v(x) = \alpha\circ f(x) - Df(x)\alpha(x)\end{equation} 
for every $x$ close to $K(f)$ and in the domain of $\alpha$, and additionally

\begin{equation}\label{ninf} \lim_{z\rightarrow 0} z^2 \alpha(1/z)=0.\end{equation}

We have an analogous definition for polynomial-like extended maps of type $n$.  Denote the set of vertical directions of $f$ as $\hat{E}^v_f$. Recall that $v\in\mathbb{B}_f$ is a  {\it horizontal vector} ($v \in \hat{E}^h_f$) if there is  quasiconformal vector field on $\mathbb{C}$ such that (\ref{vertical}) holds in a neighborhood of $K(f)$ and $\overline{\partial}\alpha=0$ on $K(f)$. Lyubich \cite{lyu} proved that 
\begin{equation}\label{somadi}   \mathbb{B}_f=\hat{E}^h_f+\hat{E}^v_f. \end{equation} 
The same statement holds for polynomial-like extended maps of type $n$. Note that if $F \in \mathcal{W}$ has an extension that is  a real polynomial-like extended map of type $n$  and $F$  is either infinitely  renormalizable with bounded combinatorics in $\mathcal{C}_{p,n}$ or whose critical points belongs to the same periodic orbit then $E^h_F= \hat{E}^h_F\cap T\mathcal{B}_{nor}(U)$. Here $E^h_F$ is as defined in Section \ref{hdd}.

Due the infinitesimal pullback argument, if $f$ does not have invariant line fields on its Julia set $J(f)$ then $\hat{E}^h_f \cap  \hat{E}^v_f$  is exactly the space of vectors $v $ such that  there exists a vector field $\alpha(z)=az+b$ on $\overline{\mathbb{C}}$  that satisfies   (\ref{vertical})  in a neighborhood of $K(f)$. In particular if $F\in \Omega_{n,p}$ we have
$$T\mathcal{B}_{nor}(U)=E^h_F\oplus E^v_F. $$

\begin{prop} \label{v444} If $f\colon V_1 \rightarrow V_2$ is  a polynomial-like  of degree $d$ generated by the restriction of a polynomial of degree $d$ then $\hat{E}^v_f$ is exactly the linear space of polynomials of degree $d$. If $f$ is a polynomial-like extended map of type $n$ such that on each $\mathbb{C}\times\{i\}$ the map   $f$ coincides with a quadratic polynomial then $\hat{E}^v_f$ is the space of vectors that coincides with quadratic polynomials on each $\mathbb{C}\times\{i\}$. 
\end{prop}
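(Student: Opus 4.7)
We prove the first statement; the extended-map case follows componentwise via the same argument applied to each copy of $\mathbb{C}\times\{i\}$. Write $P$ for the polynomial of degree $d$ whose restriction to $V_1$ is $f$.

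\textbf{Vertical implies polynomial of degree $\leq d$.} Given $v\in\hat{E}^v_f$ with associated vector field $\alpha$, the condition $\lim_{z\to 0}z^2\alpha(1/z)=0$ forces a Laurent expansion $\alpha(z) = c_1 z + c_0 + O(1/z)$ at infinity. The plan is to extend $v$ to an entire function on $\mathbb{C}$ with controlled growth. The function $\tilde v(z) := \alpha(P(z)) - P'(z)\alpha(z)$ is holomorphic on $\mathbb{C}\setminus P^{-1}(K(f))$ and agrees with $v$ on their common domain, so the two glue into a single holomorphic function on $(\text{nbhd of } K(f))\cup(\mathbb{C}\setminus P^{-1}(K(f)))$. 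Across the remaining set $P^{-1}(K(f))\setminus(\text{nbhd of }K(f))$, one either invokes Riemann's removable-singularity theorem (when $K(f)$ has empty interior, the common case for infinitely renormalizable maps) or iterates the cohomological equation to extend through further preimages. Once $v$ is globally defined, the direct computation using $P(z)=a_dz^d+O(z^{d-1})$ and $P'(z)=da_dz^{d-1}+O(z^{d-2})$ gives
\[
\tilde v(z) = \alpha\circ P - P'\alpha = -(d-1)\,a_d\,c_1\,z^d + O(z^{d-1}) \quad\text{as }z\to\infty,
\]
and Liouville's theorem forces $v$ to be a polynomial of degree at most $d$.

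\textbf{Polynomials of degree $\leq d$ are vertical.} Given a polynomial $v$ of degree $\leq d$, I would construct $\alpha$ from the one-parameter family $P_t = P + tv$ of polynomials of the same degree. Let $\phi_t$ be their Böttcher coordinates at infinity, normalized so that $\phi_t(z)/z\to 1$, satisfying $\phi_t\circ P_t = \phi_t^d$. The infinitesimal conjugacy $\psi_t := \phi_0^{-1}\circ\phi_t$ between $P_t$ and $P$ is defined in a neighborhood of infinity; set $\alpha := -\partial_t\psi_t|_{t=0}$. Differentiating the relation $\psi_t\circ P_t = P\circ\psi_t$ at $t=0$ yields the desired cohomological equation $v = \alpha\circ P - P'\alpha$ near infinity, while the Böttcher normalization forces $\alpha(z) = O(1/z)$, verifying the growth condition. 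To extend $\alpha$ from the basin of infinity of $P$ to all of $\overline{\mathbb{C}}\setminus K(f)$, use the pull-back recursion $\alpha(z) = (\alpha(P(z)) - v(z))/P'(z)$: for each $z\in\overline{\mathbb{C}}\setminus K(f)$ some forward iterate $P^n(z)$ leaves $V_2$ and, after further iteration, enters the basin of infinity of $P$ where $\alpha$ is known; pulling back through the orbit defines $\alpha(z)$ consistently.

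\textbf{Main obstacle.} The delicate steps in both directions concern extending the relevant object past the boundary of its natural domain of definition. In the first part, glueing the extension of $v$ across $P^{-1}(K(f))\setminus K(f)$ is straightforward when $K(f)$ has empty interior but requires iteration of the cohomological equation in the general case. In the second part, extending $\alpha$ beyond the basin of infinity of $P$ is subtle when $K(f)\subsetneq K(P)$: if $P$ possesses a bounded attracting Fatou component disjoint from $K(f)$, the orbit of a point in that component never reaches infinity, and one must construct $\alpha$ locally there using appropriate linearization coordinates (Koenigs/Böttcher at the attracting cycle) rather than pulling back from infinity, then verify compatibility on overlaps.
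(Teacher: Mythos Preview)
Your approach is essentially the paper's, and both directions are correct in outline, but you have manufactured difficulties that do not exist. Since the polynomial-like map has degree $d$ \emph{equal} to the degree of the polynomial $P$, the domain $V_1$ must be the full preimage $P^{-1}(V_2)$, and hence $K(f)=K(P)$. Consequently $P^{-1}(K(f))=K(f)$, so the set ``$P^{-1}(K(f))\setminus(\text{nbhd of }K(f))$'' over which you worry about gluing is empty, and the basin of infinity of $P$ is already all of $\mathbb{C}\setminus K(f)$, so no extension of $\alpha$ past it is needed. Your ``Main obstacle'' paragraph, with its invocation of Riemann's removable-singularity theorem, iteration of the cohomological equation, and K{\oe}nigs linearization at hypothetical attracting cycles disjoint from $K(f)$, can simply be deleted.

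With that observation your first direction collapses to the paper's one-line argument: the right-hand side $\alpha\circ P-P'\alpha$ is holomorphic on $\mathbb{C}\setminus K(f)$, agreeing with $v$ on an annular neighborhood, so $v$ is entire; your growth computation then gives $v=O(z^d)$, hence a polynomial of degree at most $d$. For the second direction the paper simply notes that all degree-$d$ polynomials share the same external class (Lyubich), which immediately furnishes the required holomorphic $\alpha$ on $\overline{\mathbb{C}}\setminus K(f)$; your explicit B\"ottcher construction is exactly what underlies that statement, so the two arguments coincide once the spurious extension step is removed.
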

\begin{proof} Suppose that $f$ is  a polynomial of degree $d$ and let $v \in \hat{E}^v_f$. Then the r.h.s. of (\ref{vertical}) implies that $v$ extends to an entire holomorphic function.  Of course (\ref{ninf}) implies that
$$|\alpha(y)|\leq C|y|$$
for  some $C$, provided $y\in \mathbb{C}$ has large modulus. Since $Df$ is a polynomial of degree $d-1$ it follows from  (\ref{vertical})  that
$$|v(x)|\leq \tilde{C} |x|^d, $$
for some $ \tilde{C} $, provided $|x|$ is large. So $v$ is a polynomial whose degree is at most $d$. On the other hand, if $v$ is a polynomial of degree at most $d$ we have that $f_t= f + tv$ is a polynomial of degree $d$ for every small $t$. Every $f_t$ have the very same external class (see Lyubich \cite{lyu}). This implies that $v=\partial_t f_t|_{t=0} \in \hat{E}^v_f$. The proof in the case of a polynomial-like extended map of type $n$ is analogous.
\end{proof}

 The following is similar to Lyubich \cite[Lemma 4.10]{lyu}, but for the sake of completeness we provide details. 

\begin{prop}\label{vee} Let $F\colon W \rightarrow V$ 
be a  polynomial-like  map of type $n$ with connected Julia set satisfying 
\begin{itemize}
\item[i.] $ 0< \epsilon_0 <  mod(V\setminus W)<   \epsilon_1,$ 
\item[ii.]  $diam \ K(F)\cap (\mathbb{C}\times \{i\})\geq 1$ for every $i$,
\item[iii.]   $diam \ V \cap (\mathbb{C}\times \{i\}) \leq \Cl{udaim}$  for every $i$.
\end{itemize}

Let $v\in E^v_{F}\cap \mathcal{B}(W)$. Consider the   holomorphic vector field 
$$\alpha\colon \overline{\mathbb{C}}_n\setminus K(F)\rightarrow \mathbb{C}$$ 
such that $ \lim_{z\rightarrow 0} z^2 \alpha(1/z)=0$ and
$$v(z)= \alpha(F(z))-DF(z)\alpha(z)$$
for every $z \in W\setminus K(F)$. Then there is $\Cl{u2} > 0$, that depends only on $\epsilon_0$, $\epsilon_1$  and $\Cr{udaim}$, such that 
$$|\alpha|_{sph\  \overline{\mathbb{C}}_n\setminus F^{-1}W}\leq \Cr{u2} |v|_{\mathcal{B}(W)}.$$
Here $|\cdot|_{sph\  Q}$ denotes the sup norm on $Q$ considering  the spherical metric on each component of $\mathbb{C}_n$. 
\end{prop}
\begin{proof} Define  $K_i(F)=K(F)\cap (\mathbb{C}\times\{i\})$, $1\leq 1\leq n$. Let 
$$\phi_i\colon (\overline{\mathbb{C}}\times \{i\})\setminus K_i(F)\rightarrow \mathbb{D}\times \{i\}$$
be  conformal maps such that $\phi_i(\infty,i)=(0,i)$.  Define the conformal maps 
$$\phi\colon \overline{\mathbb{C}}_n\setminus K(F)\rightarrow  \mathbb{D}\times\{1,\dots,n\}$$
as $\phi(z,i)=\phi_i(z,i).$ 
Define 
$$\tilde{W}=\phi(W\setminus K(F)) ,   \tilde{V}=\phi(V\setminus K(F))$$
and
$$\tilde{F}\colon \tilde{W}\rightarrow \tilde{V}$$
as $$\tilde{F}(z,i)= \phi\circ     F\circ \phi^{-1}(z,i).$$
Let $\psi(x,i)=(z/|z|^2,i)$, $\hat{W}=  \overline{\tilde{W}\cup \psi( \tilde{W})}$ and $\hat{V}=  \overline{\tilde{V}\cup \psi( \tilde{V})}$. Then $\tilde{F}$ has a analytic extension to a covering
$$\hat{F}\colon \hat{W} \rightarrow \hat{V}$$
satisfying $\hat{F}(\mathbb{S}\times\{1,\dots,n\})= \mathbb{S}\times\{1,\dots,n\}.$ Note that each connected component of $$\hat{V} \setminus  \hat{W}$$ is an annulus with modulus larger than $\epsilon_0$.  This implies that there is $k$ (that depends  only on  $\epsilon_0$,  $\epsilon_1$, $\Cr{udaim}$ and $n$) such that 
$$|D\hat{F}^{kn}(z,i)|\geq 2$$
for every $(z,i)\in \hat{F}^{-(kn+1)} \hat{W}.$ Note that  there is $\Cl{bdist} >1$, that depends  only on  $\epsilon_0$,  $\epsilon_1$, $\Cr{udaim}$, such that 
\begin{itemize}
\item[-]  $|D\phi(z,i)| \in [1/\Cr{bdist}, \Cr{bdist}]$ for every $(z,i)~\in~W\setminus F^{-(kn+1)} W$,
\item[-] $|DF(z,i)|\in [1/\Cr{bdist}, \Cr{bdist}]$ for every $(z,i)~\in~F^{-1}W\setminus F^{-(kn+1)} W$, 
\item[-]  $|D\hat{F}(z,i)| \in [1/\Cr{bdist}, \Cr{bdist}]$ for every $(z,i)~\in~\hat{F}^{-1}\hat{W}\setminus \hat{F}^{-(kn+1)}\hat{W}$.
\item[-] We have $1/\Cr{bdist}\leq |z|\leq \Cr{bdist}$ for every $z \in \partial F^{-1}W.$
\end{itemize}

Define $$\hat{\alpha}\colon  \mathbb{D}\times\{1,\dots,n\} \rightarrow \mathbb{C}$$
as
$$\hat{\alpha}(z,i)=  D\phi(\phi^{-1}(z,i)\alpha(\phi^{-1}(z,i))$$
and
$$\hat{v}(z,i)=  D\phi(F(\phi^{-1}(z,i))) v(\phi^{-1}(z,i)).$$
Then $\hat{\alpha}(0)=0$,
$$\hat{v}= \hat{\alpha}\circ \hat{F}- D  \hat{F}\cdot  \hat{\alpha},$$
and consequently
$$\sum_{j=0}^{nk} D\hat{F}^{nk-j}(\hat{F}^{j+1}(z,i))\hat{v}(\hat{F}^j(z,i))=  \hat{\alpha}\circ \hat{F}^{nk}(z,i)- D  \hat{F}^{nk}(z,i)\cdot  \hat{\alpha}(z,i).$$
for  $(z,i)\in \partial \hat{F}^{-(kn+1)} \hat{W}.$ Let $(z_0,i_0)$ be such that 
$$|\hat{\alpha}(z_0,i_0)|=\max_{(z,i)\in \partial \hat{F}^{-(kn+1)}\tilde{W}}|\hat{\alpha}(z,i)|.$$
Then
$$|\sum_{j=0}^{nk} D\hat{F}^{nk-j}(\hat{F}^{j+1}(z,i))\hat{v}(\hat{F}^j(z_0,i_0))|\geq  2\max_{(z,i)\in \partial \hat{F}^{-(kn+1)} \tilde{W}}|\hat{\alpha}(z,i)| - \max_{(z,i)\in \partial \hat{F}^{-1}\tilde{W}}|\hat{\alpha}(z,i)|.$$
Since $\hat{\alpha}$ is holomorphic  the maximum principle implies
$$\max_{(z,i)\in \partial \hat{F}^{-(kn+1)}\tilde{W}}|\hat{\alpha}(z,i)| \geq \max_{(z,i)\in \partial \hat{F}^{-1}\tilde{W}}|\hat{\alpha}(z,i)|,$$
so
$$\max_{(z,i)\in \partial \hat{F}^{-(kn+1)}\tilde{W}}|\hat{\alpha}(z,i)| \leq \Cr{2bdist} \sup_{(z,i)\in   \hat{F}^{-1}\tilde{W} \setminus \hat{F}^{-(kn+1)} \tilde{W} } |\hat{v}(z,i)|.$$
Here $\Cl{2bdist}= nk\Cr{bdist}^{nk}$. Consequently 
\begin{eqnarray*}
 \max_{(z,i)\in \partial F^{-1}W}|\alpha(z,i)| 
&\leq& \Cr{bdist}  \max_{(z,i)\in \partial \hat{F}^{-1}\tilde{W}}|\hat{\alpha}(z,i)| \\
&\leq& \Cr{bdist}  \max_{(z,i)\in \partial \hat{F}^{-(kn+1)}\tilde{W}}|\hat{\alpha}(z,i)| \\
&\leq& \Cr{bdist} \Cr{2bdist}   \max_{(z,i)\in   \hat{F}^{-1}\tilde{W} \setminus \hat{F}^{-(kn+1)} \tilde{W} } |\hat{v}(z,i)|\\
&\leq& \Cr{bdist}^2 \Cr{2bdist} \max_{(z,i)\in   F^{-1}W \setminus F^{-(kn+1)}W } |v(z,i)|\\
&\leq& \Cr{bdist}^2 \Cr{2bdist} \max_{(z,i)\in   F^{-1}W} |v(z,i)|.
\end{eqnarray*} 
Note that
\begin{eqnarray*}
\sup_{(z,i)\in  \overline{\mathbb{C}}_n\setminus F^{-1}W}|\alpha(z,i)|_{sph \ \overline{\mathbb{C}}}&=& \sup_{(z,i)\in  \overline{\mathbb{C}}_n\setminus F^{-1}W}\frac{2|\alpha(z,i)|}{1+|z|^2}\\
&\leq& \sup_{(z,i)\in  \overline{\mathbb{C}}_n\setminus F^{-1}W}\big|\frac{2\alpha(z,i)}{z^2}\big|\\
&\leq& \max_{(z,i)\in  \partial F^{-1}W}\big|\frac{2\alpha(z,i)}{z^2}\big|\\
&\leq& \Cr{bdist}^2 \max_{(z,i)\in \partial F^{-1}W}|\alpha(z,i)|.
\end{eqnarray*} 

\end{proof}

\begin{prop}[Codimension of $E^h_G$] \label{codimension} For every $G \in \Omega_{n,p}$ the codimension of $E^h_G$ is $n$. 
\end{prop}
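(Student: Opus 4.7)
The plan is to reduce the codimension computation to a dimension count of $E^v_G$ and to perform that count by approximation. The direct-sum decomposition $T\mathcal{B}_{nor}(U) = E^h_G \oplus E^v_G$ noted just before the statement (which uses the absence of invariant line fields on $K(G)$ together with the decomposition $\mathbb{B}_G = \hat{E}^h_G + \hat{E}^v_G$ from (\ref{somadi})) gives $\operatorname{codim}(E^h_G) = \dim E^v_G$, so it suffices to show $\dim E^v_G = n$.

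For the lower bound, I would first analyse the polynomial-like maps $G_i$ produced by Proposition \ref{lprop}, whose critical points lie on a single super-attracting periodic orbit and which converge to $G$ in $\mathcal{B}_{nor}(U)$. From the construction in the proof of Proposition \ref{lprop}, $G_i = \psi_i \circ P_i \circ \psi_i^{-1}$ where $P_i$ is a polynomial of type $n$ with quadratic critical points and $\psi_i$ is a quasiconformal map conformal outside a compact neighbourhood of $K(P_i)$. By Proposition \ref{v444}, $\hat{E}^v_{P_i}$ is the $3n$-dimensional space of vectors quadratic on each sheet; the normalizations $v_j'(0) = 0$ and $v_j(-1) = 0$ cut $2n$ dimensions, leaving an $n$-dimensional subspace spanned by the vectors $e^j = (0,\ldots,0,z^2-1,0,\ldots,0)$ with the nonzero entry in the $j$-th slot. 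Since $\psi_i$ is conformal near $\infty$, pushing forward a polynomial compensating vector field by $\psi_i$ produces a compensator for the corresponding element of $E^v_{G_i}$ which is holomorphic on $\overline{\mathbb{C}}_n \setminus K(G_i)$ with the required decay condition; hence $\dim E^v_{G_i} = n$.

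Next I would pass to the limit. After fixing, for each $i$, a basis $\{v^k_i\}_{k \le n}$ of $E^v_{G_i}$ normalized in an inner product on a finite-dimensional transversal to $E^h_G$, compactness produces a subsequence along which $v^k_i \to v^k$ in $T\mathcal{B}_{nor}(U)$, with the $v^k$ linearly independent. The uniform modulus bound $\operatorname{mod}(\hat V^i \setminus \hat U^i) \geq \epsilon_0/2$ from Proposition \ref{lprop} part A lets me control the compensators $\alpha^k_i$ uniformly. A McMullen-type compactness argument for holomorphic vector fields on the complement of the filled Julia set---the analogue in the vertical setting of the one for quasiconformal vector fields in the proof of Proposition \ref{closed}---then produces limit compensators $\alpha^k$ on $\overline{\mathbb{C}}_n \setminus K(G)$ that satisfy the cohomological equation and the decay condition at $\infty$. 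This gives $v^k \in E^v_G$ and the lower bound $\dim E^v_G \geq n$.

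For the matching upper bound I would exploit that $D\mathcal{R}_G$ is injective and preserves the horizontal direction on both sides (Proposition \ref{invariance}), so it induces an injective linear map on the quotient $T\mathcal{B}_{nor}(U)/E^h_G$; combined with the strong compactness of $\mathcal{R}$ (Remark \ref{sc}) and the invariance of $\Omega_{n,p}$ under $\mathcal{R}$, this forces $\dim E^v_G$ to be constant along the $\mathcal{R}$-orbit of $G$. An approximation argument dual to the lower bound---this time passing to the limit in the horizontal projections onto $E^h_{G_i}$---should then yield $\dim E^v_G \leq n$. The main obstacle will be the McMullen-type compactness for the compensators $\alpha^k_i$: the filled Julia sets $K(G_i)$ have non-empty interior while $K(G)$ does not, so the domains of $\alpha^k_i$ vary discontinuously with $i$. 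Handling this will likely require restricting the compensators to a fixed annular domain inside the common basin of infinity, where uniform Koebe-type estimates together with the rigidity of the McMullen tower associated to $G$ give the required convergence.
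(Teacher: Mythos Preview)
Your overall strategy---approximate $G$ by the super-attracting maps $G_i$ of Proposition~\ref{lprop} and compute the codimension there first---matches the paper's. But the execution has a genuine error and a genuine gap.

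\textbf{The error.} You assert that $\psi_i$ is conformal outside a compact neighbourhood of $K(P_i)$, and then invoke Proposition~\ref{v444} by pushing forward polynomial compensators. This is backwards. In the construction of Proposition~\ref{lprop} the Beltrami coefficient $\mu_i$ vanishes \emph{on} $K(P_i)$ and equals $\overline{\partial}\phi_i/\partial\phi_i$ on $\mathbb{C}_n\setminus \hat U^i$, so $\psi_i$ is conformal on $K(P_i)$ and genuinely quasiconformal near $\infty$. Hence the pushforward of a holomorphic compensator on $\overline{\mathbb{C}}_n\setminus K(P_i)$ is not holomorphic on $\overline{\mathbb{C}}_n\setminus K(G_i)$, and your computation of $\dim E^v_{G_i}$ collapses. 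The paper avoids this entirely: instead of looking at $E^v_{G_i}$ it computes $\operatorname{codim} E^h_{G_i}$ directly. For a map whose critical points all lie on one periodic orbit, the post-critical set is finite, so the horizontal condition (existence of $\alpha$ with $\alpha(q)=0$ at each critical point $q$ satisfying the coboundary equation on the orbit) is a finite linear-algebra problem. Solving it forward from each critical point gives a unique $\alpha$ along the orbit, and $v$ is horizontal precisely when the $n$ values $\alpha_{q,m_q}$ (one per critical point) vanish; these $n$ conditions are independent, so $\operatorname{codim} E^h_{G_i}=n$.

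\textbf{The gap.} Your upper bound is not an argument: invoking injectivity of $D\mathcal{R}_G$ and constancy along orbits gives no inequality against $n$, and the ``dual approximation argument'' is not specified. The paper does not attempt any such thing. Having established $\operatorname{codim} E^h_{G_i}(\tilde U)=n$ on a slightly larger domain $\tilde U$ compatible with the $G_i$, it cites \cite[Proposition 10.4]{sm4} (a semicontinuity result for $\dim E^v$ under Carath\'eodory limits of polynomial-like maps) to pass to $G$ and obtain $\operatorname{codim} E^h_G(\tilde U)=n$. A separate short argument via the dense-image inclusion $\mathcal{B}_{nor}(\tilde U)\hookrightarrow \mathcal{B}_{nor}(U)$ then transfers this to the original domain $U$, which is needed because $-1\in\partial U$ so $U$ is not compatible with $G$. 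Your proposal does not address this domain issue at all.
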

\begin{proof} 
By Proposition \ref{lprop} for every  $G \in \Omega_{n,p}$ one can  find a polynomial-like extension $G\colon \tilde{U}\rightarrow \tilde{V}$ of type $n$  and a sequence of  polynomial-like maps $G_i \colon \hat{U}^i \rightarrow \hat{V}^i$ of type $n$  whose periodic points belongs to the same critical orbit and such that $U$ is compactly contained in $\hat{U}^i$ and $\hat{U}^i$ is compactly contained in $\tilde{U}$. Denote $E^j_{G_i}(\tilde{U})=\hat{E}^j_{G_i}\cap T\mathcal{B}_{nor}(\tilde{U})$, where $j\in\{v,h\}$. We claim that $codim \ E^h_{G_i}(\tilde{U})=n$. 

\noindent Indeed  for each $q \in C(G_i)$, let $m_q^i> 0$ be such that $G_i^{m_q^i}(q)\in C(G_i)$ and $G_i^{k}(q)\not\in C(G_i)$ for every $0< k < m_q^i$. Given $v\in T\mathcal{B}_{nor}(U)$, let $v_{q,k}=v(G_i^k(q))$.  Then there is a unique solution $\{  \alpha_{q,k}\}_{q \in C(G_i), 1\leq   k \leq m_q^i }$ for the homogeneous system of linear equations
$$v_{q,k}=   \alpha_{q,k+1}-        DG_i(G_i^k(q))\cdot \alpha_{q,k}, \ \   1\leq  k < m_q^i, \ q \in C(G_i),$$
 $$v_{q,0}=   \alpha_{q,1},\ q \in C(G_i).$$
\noindent In particular 
$$(v_{q,k})_{q\in C(G_i), 0\leq  k < m_q^i}\mapsto   (\alpha_{q,k})_{q\in C(G_i), 1\leq  k \leq  m_q^i}$$
is a linear bijection. Note that  $v \in E^h_{G_i}$ if and only if $\alpha_{q,m_q^i}=0$ for every $q \in C(G_i)$, that is, if and only if $v$ belongs to the kernel of the linear map
$$v\mapsto (\alpha_{q,m_q^i})_{q\in C(G_i)}.$$
Since
$$v\mapsto (v_{q,k})_{q\in C(G_i), 0\leq  k < m_q^i}$$
and
$$(v_{q,k})_{q\in C(G_i), 0\leq  k < m_q^i}\mapsto   (\alpha_{q,m_q^i})_{q\in C(G_i)}$$
are onto continuous  linear maps  it follows  that $codim \ E^h_{G_i}(\tilde{U})=n$ (Recall that  $G_i$ has $n$ critical points). So $dim \ E^v_{G_i}(\tilde{U})=n$. Since $\tilde{U}$ is compatible with $G$,  we have that \cite[Propositon 10.4]{sm4} implies $dim \ E^v_{G}(\tilde{U})=codim \ E^h_{G}(\tilde{U})=n$. 

Unfortunatelly $U$ is not  compatible with $G$ (the repelling fixed point $-1$ of $G$ does not belong to the interior of $U$), so we need to be a little more careful to conclude that $codim \ E^h_{G}=n$. Consider  the natural affine inclusion
$$\pi\colon \mathcal{B}_{nor}(\tilde{U})\rightarrow \mathcal{B}_{nor}(U)$$
Then $D \pi$ is continuous, injective map, it has dense image and moreover
$$(D \pi)^{-1} E^h_{G}= E^h_{G}(\tilde{U}).$$
Note that  if $codim \ E^h_{G} \geq  k$ then there is a  bounded linear onto map  $\psi\colon T_G\mathcal{B}_{nor}(U)\rightarrow \mathbb{R}^{k}$ such that $E^h_{G}\subset Ker \ \psi$. Since $D \pi$ have dense image we have that $\psi\circ  D \pi$  is also a  bounded linear onto map such that $E^h_{G}(\tilde{U})\subset Ker \ \psi\circ  D \pi$, so $codim \ E^h_{G}(\tilde{U}) \geq  codim \ Ker \ \psi\circ D\pi=k$. So $codim \ E^h_{G} \leq codim \ E^h_{G}(\tilde{U})=n.$

On the other hand, since $\pi$ is injective we have that $\dim \pi(E^v_{G}(\tilde{U}))=n$. If $v \in \pi(E^v_{G}(\tilde{U}))\cap E^h_G$ then $v \in E^v_{G}(\tilde{U})\cap E^h_G(\tilde{U})=\{0\}$. So $codim \ E^h_{G}\geq n$. 
\end{proof}
The following lemma is elementary. We included it here for the sake of completeness.

\begin{lem} \label{f1}Let $(\mathcal{B}_i, |\cdot|_i)$, $i=1,2$,  be  Banach spaces. Let $\Omega$ be a compact metric space and suppose that for every $f \in \Omega$ we associate vector subspaces $E^v_f\subset \mathcal{B}_2\subset \mathcal{B}_1$, $E^h_{f,i}\subset \mathcal{B}_i$ satisfying
\begin{itemize}
\item[A.] For every $f \in \Omega$ we have $\mathcal{B}_2=E^v_f\oplus E^h_{f,2}$ and $E^v_f\cap E^h_{f,1}=\{0\}$.
\item[B.] The set
$$\{(f,v)\colon \ f \in \Omega, \ v \in E^h_{f,i}  \}$$ 
is a  closed subset of $\Omega\times \mathcal{B}_i$.
\item[C.]  We have that 
$$\{(f,v)\colon \ f \in \Omega, \ v \in E^v_f, \ |v|_i\leq 1   \}$$
is a compact subset of $\Omega\times \mathcal{B}_i$, $i=1,2$.
\item[D.]  There exists $n\in \mathbb{N}$ such that $\dim E^v_f=n$ for every $f \in \Omega$.
\item[E.] The inclusion $\imath\colon \mathcal{B}_2 \rightarrow \mathcal{B}_1$ is a compact linear operator and $$\imath^{-1}(E^h_{f,1})=E^h_{f,2}.$$
\end{itemize}
Then
\begin{itemize}
\item[I.] The set 
$$E^v=\{(f,v)\colon \ f \in \Omega, \ v \in E^v_f \},$$
with the topology induced by $\Omega\times \mathcal{B}_2$, is a topological vector bundle (with the obvious linear structure on the fibers $E^v_f$) with fibers of dimension $n$.
\item[II.] Let $\sim$ be the  equivalent relation on $\Omega\times \mathcal{B}_2$ defined by $(f,v)\sim(g,w)$ if and only if $f=g$ and $v-w\in E^h_f$. Then the  quotient topological space 
$$E = \{ (f,[v]) \ s.t. \ f \in \Omega \  and \  [v] \in \mathcal{B}/E^h_{f,2} \}.$$
is a topological vector bundle with fibers of dimension $n$.
\item[III.]  Define 
$$|(f,[v])| = dist_{\mathcal{B}_2}(v, E^h_{f,2})=\inf \{  |v-w|_2 \colon \ w \in E^h_{f,2}\}.$$
then 
$$(f,[v]) \in E\mapsto |(f,[v])|_2$$
is continuous.  
\end{itemize}
\end{lem}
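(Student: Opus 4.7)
The plan is to base everything on a single key technical ingredient: the map $(f, v) \mapsto (f, P_f(v))$ from $\Omega \times \mathcal{B}_2$ into $E^v$, where $P_f \colon \mathcal{B}_2 \to E^v_f$ is the projection along $E^h_{f,2}$ (well defined by A), is continuous. To establish this, I would first rule out $|P_{f_k}(v_k)|_2 \to \infty$ when $(f_k, v_k) \to (f_0, v_0)$: normalizing $\tilde v_k := P_{f_k}(v_k)/|P_{f_k}(v_k)|_2 \in E^v_{f_k}$, compactness C with $i=2$ produces a subsequential limit $\tilde v \in E^v_{f_0}$ of unit norm, while $v_k/|P_{f_k}(v_k)|_2 - \tilde v_k \in E^h_{f_k,2}$ converges to $-\tilde v$, forcing $\tilde v \in E^h_{f_0,2}$ by closedness B in $\mathcal{B}_2$, which contradicts $E^v_{f_0} \cap E^h_{f_0,2} = \{0\}$ from A. Granted boundedness, extract via C a subsequential limit $u \in E^v_{f_0}$; then $v_k - P_{f_k}(v_k) \to v_0 - u$ lies in $E^h_{f_0,2}$ by closedness, yielding $u = P_{f_0}(v_0)$ by uniqueness of the direct sum decomposition.

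For Part I, I would fix $f_0$ and a basis $v_1^0, \ldots, v_n^0$ of $E^v_{f_0}$ and consider the restriction $T_f := P_{f_0}|_{E^v_f}$. An analogous contradiction argument shows $T_f$ is injective, hence an isomorphism by D, for $f$ near $f_0$. Continuity of the sections $s_i(f) := T_f^{-1}(v_i^0)$ follows by the same template: rule out blow-up by rescaling, then observe that any $\mathcal{B}_2$-subsequential limit of $s_i(f_k)$ lies in $E^v_{f_0}$ with $P_{f_0}$-image $v_i^0$, so must equal $v_i^0$ by A. The map $(f,(a_i)) \mapsto (f, \sum a_i s_i(f))$ then provides the local trivialization. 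For Part II, I would define $\Phi \colon E^v \to E$ by $\Phi(f, v) = (f, [v])$; this is bijective by A and continuous as the restriction of the quotient map. Continuity of $\Phi^{-1}$ follows from the universal property of the quotient topology, since the composition $\Phi^{-1} \circ \pi$ is precisely $(f, v) \mapsto (f, P_f(v))$, handled in the preceding paragraph. The bundle atlas from Part I transports through $\Phi$.

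For Part III, under the identification $E \cong E^v$, I would reduce to showing that $F(f, u) := |u|_f = \mathrm{dist}_{\mathcal{B}_2}(u, E^h_{f,2})$ is continuous on $E^v$. Upper semicontinuity is direct: pick $w_0 \in E^h_{f_0,2}$ with $|u_0 - w_0|_2 \leq F(f_0, u_0) + \epsilon$ and take the candidate $w(f) := w_0 - P_f(w_0) \in E^h_{f,2}$; continuity of $P$ gives $P_f(w_0) \to P_{f_0}(w_0) = 0$, so $w(f) \to w_0$ in $\mathcal{B}_2$ and $\limsup F(f_k, u_k) \leq F(f_0, u_0) + \epsilon$.

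The main obstacle is lower semicontinuity, and this is precisely where assumption E enters. Taking near-optimal $w_k \in E^h_{f_k, 2}$ with $|u_k - w_k|_2 \to L := \liminf F(f_k, u_k)$, the sequence $w_k$ is bounded in $\mathcal{B}_2$. The compact inclusion $\imath \colon \mathcal{B}_2 \to \mathcal{B}_1$ lets me extract a subsequence along which $\imath w_k \to w_*$ in $\mathcal{B}_1$, and closedness B in $\mathcal{B}_1$ places $w_* \in E^h_{f_0,1}$. A Montel-type lower semicontinuity of $|\cdot|_2$ under $\mathcal{B}_1$-convergence on bounded sets, valid in the concrete function-space setting of the paper, lifts $w_*$ to some $w_{**} \in \mathcal{B}_2$ with $|u_0 - w_{**}|_2 \leq L$; the identity $\imath^{-1}(E^h_{f_0,1}) = E^h_{f_0,2}$ from E then places $w_{**} \in E^h_{f_0, 2}$, yielding $F(f_0, u_0) \leq L$ and completing the proof.
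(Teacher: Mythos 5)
Your Parts I and II match the paper's route: both hinge on continuity of the projection $(f,v)\mapsto P_f(v)$ onto $E^v_f$ along $E^h_{f,2}$, established by a normalization-plus-compactness contradiction using C, B and A, and both build the local trivialization from a fixed basis of $E^v_{f_0}$ (the paper uses the images $P_g(v_i^0)\in E^v_g$; you invert $P_{f_0}\restriction_{E^v_f}$ --- dual phrasings of the same idea). Your upper semicontinuity in Part III is also the same: push a near-optimal $w\in E^h_{f_0,2}$ into $E^h_{f_k,2}$ by subtracting $P_{f_k}(w)$, which tends to $P_{f_0}(w)=0$.

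The gap is in the lower semicontinuity of Part III. You appeal to a ``Montel-type lower semicontinuity of $|\cdot|_2$ under $\mathcal{B}_1$-convergence on bounded sets, valid in the concrete function-space setting of the paper,'' to lift the $\mathcal{B}_1$-limit $w_*$ back into $\mathcal{B}_2$ with controlled norm. That property is not a consequence of A through E: compactness of $\imath$ carries no such information, and $\mathcal{B}(U)$ with the sup norm is not reflexive, so weak compactness is also unavailable abstractly. Since the lemma is stated in abstract Banach-space terms, this step would need an added hypothesis. The paper sidesteps the issue entirely: rather than seeking a $\mathcal{B}_2$-limit of the near-minimizers $w_k\in E^h_{f_k,2}$, it splits $w_k=u_k+y_k$ with $u_k=P_{f_0}(w_k)\in E^v_{f_0}$ and $y_k\in E^h_{f_0,2}$, and shows $u_k\to 0$ \emph{in $\mathcal{B}_2$}. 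The argument is that $\sup_k|w_k|_2<\infty$ forces $\sup_k|u_k|_2,\sup_k|y_k|_2<\infty$; by E and B a subsequence of $\imath y_k$ converges in $\mathcal{B}_1$ to some $y\in E^h_{f_0,1}$, by C a subsequence of $u_k$ converges in \emph{both} topologies to some $u\in E^v_{f_0}$, and since $\imath w_k\in E^h_{f_k,1}$ (by E) and B is closed in $\mathcal{B}_1$ the limit $\imath u+y$ lies in $E^h_{f_0,1}$, so $u\in E^v_{f_0}\cap E^h_{f_0,1}=\{0\}$. Then $y_k\in E^h_{f_0,2}$ and $|v-y_k|_2\le |v-w_k|_2+|u_k|_2$, giving $\operatorname{dist}(v,E^h_{f_0,2})\le L$ directly. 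The decisive trick is that compactness is only ever demanded of $u_k$, which lives in the finite-dimensional $E^v_{f_0}$ where assumption C supplies $\mathcal{B}_2$-convergence; the infinite-dimensional sequence $w_k$ never needs a $\mathcal{B}_2$-limit.
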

\begin{proof}[Proof of I] All limits  in the proof of I.  and II are in the topology of $(\mathcal{B}_2, |\cdot|_2)$. Let $u \in \mathcal{B}_2$.   Then for every $g\in \Omega$ there are  unique vectors  $v^{g,u}\in E^v_g$ and $w^{g,u}\in E^h_{g,2}$ such that
$$u = v^{g,u}+w^{g,u}.$$
 First note that 
\begin{equation}\label{rf} \sup \ \{   |v^{g,u}|_2, g \in \Omega, \ |u|_2\leq 1  \} \cup \{   |w^{g,u}|_2, g \in \Omega,\ |u|_2\leq 1  \}  < \infty.\end{equation}
Otherwise there is a sequence $g_i \in \Omega$, $|u_i|\leq 1$ such that $$r_i=\max \{ |v^{g_i,u_i}|_2, |w^{g_i,u_i}|_2\}\rightarrow_i  \infty.$$  Since $\Omega$ is compact, without loss of generality we can assume that 
$$\lim_i g_i=g\in \Omega$$
and C. implies that we can assume 
$$\lim_i \frac{v^{g_i,u_i}}{r_i} =v \in E^v_g.$$
and consequently
\begin{equation}\label{limh} \lim_i \frac{w^{g_i,u_i}}{r_i} =-v.\end{equation}
 On the other hand by (\ref{limh}) and B. we have $v \in E^h_g$, so by A. we conclude $v=0$. This is a contradiction with the definition of $r_i$. So (\ref{rf}) holds. 
We claim that  the map
$$S\colon \Omega \times \mathcal{B}_2  \rightarrow \Omega \times \mathcal{B}_2 $$
defined by 
$$S(g,u)=(g,v^{g,u}) \in \mathcal{B}_2$$
is a continuous linear map. Indeed suppose $\lim_i g_i=g$ and $\lim_i u_i=u$. By assumption C. and (\ref{rf}), taking a subsequence  we may assume that 
$$\lim_i v^{g_i,u_i}= v\in E^v_g$$
and consequently by B.
$$\lim_i w^{g_i,u_i}= \lim_i u_i - \lim_i v^{g_i,u_i}= u-v \in E^h_{g,2}$$
By A. we conclude that $v=v^{g,u}$ and $u-v=w^{g,u}$. This proves the claim.
Let $f\in \Omega$ and choose a basis $v_1, \dots, v_n \in E^v_f$. Let $v_i^g=S(g,v_i) \in E^v_{g}$ and $w_i^g=v_i-S(g,v_i) \in E^h_{g,2}$, with $g\in \Omega$. Of course $$v_i = v_i^g+ w^g_i.$$
So for every $i$ we have that $g\mapsto v_i^g$ is continuous and moreover $v_i^f=v_i$ and $w_i^f=0$. In particular there exists an open  neighborhood $O_1$ of $f$ in $\Omega$  such that $\{v_i^g\}_i$ is a basis of $E^v_g$ for every $g \in O$.  

Reducing the neighborhood $O$, we may assume that $E^h_{g,2}\cap E^v_f=\{0\}$ for every $g \in O$. Otherwise it would exists a sequence $g_i\rightarrow_i f$ 
and $w_i \in E^h_{g_i,2}\cap E^v_f$ satisfying $|w_i|=1$. By D. we may assume $\lim_i w_i=w \in E^v_f$. By B. we have $w \in E^h_{f,2}$, which contradicts A.

Define the map
$$H\colon O\times E^v_f \rightarrow \{(g,v)\colon \ g \in O, \ v \in E^v_g\}$$
as 
$$H(g,\sum_i c_i v_i)= (g, \sum_i c_i v^g_i).$$
We have that $H$ is a continuous and bijective map. Note that  
$$H^{-1}(g,u)= (g,v),$$
where $v$ is the only vector in $E^h_{f,2}$ such that $v-u \in E^h_{g,2}$. 

We claim that $H^{-1}$ is continuous. Indeed, suppose that $\lim_i (g_i,u_i)=(g,u)$, with$g, g_i\in O$ and $u,u_i\in E^v_{g_i}$. If $H^{-1}(g_i,u_i)=(g_i,v_i)$ then $w_i=v_i-u_i\in E^h_{g_i,2}$ and $v_i \in E^v_f$. Let $r_i=\max \{ |v_i|_2,   |w_i|_2 \}$. We claim that 
\begin{equation}
\label{supri} \sup_i r_i < \infty.
\end{equation}
Otherwise without loss of generality we may assume $\lim_i r_i=\infty$.  By D. we may assume that 
$\lim_i v_i/r_i = \tilde{v}\in E^v_f$ and consequently $\lim_i w_i/r_i=\tilde{v}.$ By B. we have $\tilde{v}\in E^h_{g,2}$. So $\tilde{v}=0$, in contradiction with the definition of $r_i$. This proves (\ref{supri}). In particular without loss of generality we can  assume $\lim_i v_i=\hat{v}\in E^v_f$ and consequently by B. we have $\lim_i w_i = \hat{w}=\hat{v}-u\in E^h_{f,2}$. In particular $H^{-1}(g,u)=(g,\hat{v}).$ So $H^{-1}$ is continuous. So
$$\hat{H}\colon  O\times \mathbb{R}^n   \rightarrow    \{(g,v)\colon \ g \in O, \ v \in E^v_g\}   $$
defined by 
$$\hat{H}(g,(c_i)_i)= (g, \sum_i c_i v^g_i)$$
is a local trivialization of the vector bundle $E^v$  in the open set  $\{(g,v)\colon \ g \in O, \ v \in E^v_g\}$.
\end{proof}
\begin{proof}[Proof of II]  Let $\pi\colon \Omega\times \mathcal{B}_2\rightarrow E$ be a natural projection 
$$(f,v) \mapsto (f,[v]_f),$$
where $[v]_f$ represents the equivalent class of $v$ in $\mathcal{B}/E^h_{f,2}$. We will define a  homeomorphism 
$$T\colon E^v \rightarrow E$$
that is a vector bundle homeomorphism. Indeed let $T$ be  the restriction of $\pi$ to $E^v$. Of course $T$ is a continuous  map that preserves the linear structure in the fibers. It is also a bijection, since $T(f,v)=T(g,w)$ implies $f=g$, with $v, w\in E^v_f$ and $v-w \in E^h_{f,2}$, so by A. we have $v=w$. Note that $T^{-1}(f,[u]_f)=(f,v)$, where $v$ is the unique vector that satisfies $v \in E^v_f$ and $u-v\in E^h_{f,2}$. Note that $T^{-1}(f,[u]_f)=S(f,u)$,  where $S$ was defined in the proof of I. Consequently $T^{-1}$ is continuous, since $S$ descends to the quotient space $E$ as $T^{-1}$.
\end{proof}

\begin{proof}[Proof of III]  We claim that 
\begin{equation} \label{hjk} (f,v) \in \Omega\times \mathcal{B}_2 \rightarrow dist_{\mathcal{B}_2}(v,E^h_f)\end{equation}
is continuous. Indeed suppose that  $\lim_k (f_k,v_k)=(f,v)$.  We have
$$|dist_{\mathcal{B}_2}(v_k,E^h_{f_k,2})-dist_{\mathcal{B}_2}(v,E^h_{f_k,2})|\leq dist_{\mathcal{B}_2}(v_k-v,E^h_{f_k,2})\leq |v-v_k|_2\rightarrow_k 0,$$
in particular 
$$\lim_k dist_{\mathcal{B}_2}(v_k,E^h_{f_k,2})-dist_{\mathcal{B}_2}(v,E^h_{f_k,2}) =0,$$
so to prove the claim it is enough to show that
\begin{equation}\label{limfk} \lim_k dist_{\mathcal{B}_2}(v,E^h_{f_k,2})=dist_{\mathcal{B}_2}(v,E^h_{f,2}).\end{equation}
Fix $\epsilon > 0$ and let $w \in E^h_{f,2}$ be such that 
 $$ |v-w|_2 <  dist_{\mathcal{B}_2}(v,E^h_{f,2})+\epsilon.$$
Then $\lim_k S(f_k,w)=0$, where $S$ is as defined in the proof of I. In particular $w_k=w - S(f_k,w)\in E^h_{f_k,2}$ and for large $k$ we have $|v-w_k|< dist_{\mathcal{B}_2}(v,E^h_{f,2})+2\epsilon$.  Since $\epsilon > 0$ is arbitrary 
$$\limsup_k  dist_{\mathcal{B}_2}(v,E^h_{f_k,2})\leq dist_{\mathcal{B}_2}(v,E^h_{f,2}).$$
On the other hand, if 
$$\liminf_k  dist_{\mathcal{B}_2}(v,E^h_{f_k,2})\leq  dist_{\mathcal{B}_2}(v,E^h_{f,2})-2\epsilon$$
then we can assume (taking a subsequence)  that there is $w_k \in E^h_{f_k,2}$ such that 
$$|v-w_k|_2\leq dist_{\mathcal{B}_2}(v,E^h_{f,2})-\epsilon.$$

Let $u_k= S(f,w_k)\in E^v_{f,2}$. Note that $\sup_k |w_k|_2 < \infty$, which implies $\sup_k |u_k|_2 < \infty$ and  consequently $y_k=w_k~-u_k\in~E^h_{f,2}$ satisfies 
$$\sup_k |y_k|_2 < \infty.$$   
By B. and E. we can find $y \in E^h_{f,1}$ and a subsequence of $y_k$ such that $y_k$ converges to $y$ in $\mathcal{B}_1$. Taking a subsequence we can assume that $\lim_k u_k =u\in E^v_f$ (in the topologies of $\mathcal{B}_i$, $i=1,2$).  So $w_k$ converges to $u+y$ in $\mathcal{B}_1$. By A., B. and E. we have $u=0$.  We conclude that
$$dist_{\mathcal{B}_2}(v,E^h_{f,2})\leq \liminf_k |v-y_k|_2\leq dist_{\mathcal{B}_2}(v,E^h_{f,2})-\epsilon,$$
which is a contradiction. So (\ref{limfk}) holds. This proves the claim.  Since $(f,v)\sim(g,\tilde{v})$ implies  $dist_{\mathcal{B}_2}(v,E^h_f)=dist_{\mathcal{B}_2}(\tilde{v},E^h_g)$ the function (\ref{hjk}) descends to the quotient topological space $E$ as a continuous function.

\end{proof}

Proposition \ref{closed} implies that 
$$\{(F,v), \ F \in \Omega_{n,p} \ and \ v \in E^h_F\}$$
is a closed subset of $\Omega_{n,p} \times T\mathcal{B}_{nor}(U)$. We also have
\begin{lem}\label{f2} The set
\begin{equation} \label{cofv} \mathcal{E}=\{(F,v), \ F \in \Omega_{n,p} \ and \ v \in E^v_F, \ |v|_{T\mathcal{B}_{nor}(U)} \leq 1\}\end{equation}
is a compact  subset of $\Omega_{n,p} \times T\mathcal{B}_{nor}(U)$.
\end{lem}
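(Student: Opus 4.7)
The plan is to extract a convergent subsequence from any given $(F_k,v_k) \in \mathcal{E}$. By Corollary \ref{sh}, $\Omega_{n,p}$ is a compact Cantor set, so first I would extract a subsequence with $F_k \to F$ in $\Omega_{n,p}$. The task then reduces to producing a further subsequence of $v_k$ converging in $T\mathcal{B}_{nor}(U)$ to some $v \in E^v_F$, which will automatically satisfy $|v|_{T\mathcal{B}_{nor}(U)} \leq 1$ by lower semicontinuity of the norm.

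For a first round of extraction I would apply Montel's theorem on $U$: Cauchy estimates from $|v_k|_{T\mathcal{B}_{nor}(U)} \leq 1$ give uniform control of derivatives on compact subsets of $U$, so a subsequence converges uniformly on compact subsets of $U$ to some holomorphic $v$. To identify $v$ as a vertical vector of $F$, I would use the defining equation (\ref{vertical}): for each $k$ there is a holomorphic vector field $\alpha_k$ on $\overline{\mathbb{C}}_n \setminus K(F_k)$ satisfying the growth condition (\ref{ninf}) and $v_k = \alpha_k \circ F_k - DF_k \cdot \alpha_k$ near $K(F_k)$. The uniform complex bounds from Theorem \ref{co_bo} force the Hausdorff convergence $K(F_k) \to K(F)$. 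The next step is to derive uniform $L^\infty$ bounds on $\alpha_k$ over compact subsets of $\overline{\mathbb{C}}_n \setminus K(F)$ for large $k$; a second application of Montel then yields $\alpha_k \to \alpha$ on such compacts, and passage to the limit in the cocycle equation gives $v = \alpha \circ F - DF \cdot \alpha$ near $K(F)$, so $v \in E^v_F$.

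To upgrade the convergence from compact subsets of $U$ to $\mathcal{B}_{nor}(U)$ I would invoke Remark \ref{sc}: on a slightly larger domain $\tilde{U} \supset \overline{U}$ all maps in $\Omega_{n,p}$ still admit polynomial-like extensions with modulus $\geq \epsilon_0/2$, and the inclusion $\mathcal{B}_{nor}(\tilde{U}) \hookrightarrow \mathcal{B}_{nor}(U)$ is compact. The cocycle relation $v_k = \alpha_k \circ F_k - DF_k \cdot \alpha_k$ already defines $v_k$ holomorphically on $\tilde{U}$, and combining the uniform bounds on $\alpha_k$ with those on $F_k$ provides a uniform bound on $\|v_k\|_{\mathcal{B}_{nor}(\tilde{U})}$. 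The compactness of the inclusion then concludes the extraction in $\mathcal{B}_{nor}(U)$.

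The main obstacle is establishing the uniform bounds on $\alpha_k$ over compact subsets of $\overline{\mathbb{C}}_n \setminus K(F)$ from $|v_k|_{T\mathcal{B}_{nor}(U)} \leq 1$. The growth condition (\ref{ninf}) anchors $\alpha_k$ at infinity, removing the one-parameter gauge freedom $\alpha \mapsto \alpha + \text{const}$, and the cocycle equation together with the uniform fundamental annulus of modulus $\geq \epsilon_0$ should allow one to propagate a priori bounds from $\partial V^k_\sigma$ inward to the Julia set and outward to infinity by iteration, exploiting distortion estimates for the uniform polynomial-like extensions provided by $\Omega_{n,p} \subset \mathcal{B}_{nor}(D_{\delta_0,\theta_0})$. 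This is the technical heart of the proof and the place where compactness of $\Omega_{n,p}$ must be combined with the uniform complex bounds.
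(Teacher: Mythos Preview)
Your overall strategy is essentially the paper's: pass to a subsequence in $F_k$, use the vertical representation $v_k=\alpha_k\circ F_k-DF_k\cdot\alpha_k$ with $\alpha_k$ holomorphic on $\overline{\mathbb{C}}_n\setminus K(F_k)$, extract a limit $\alpha$ by normal-family arguments, and pass to the limit in the cocycle equation. You also correctly single out the uniform bound on $\alpha_k$ as the technical heart.

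There is, however, a genuine circularity in your plan that you have not resolved. The a~priori estimate for $\alpha_k$ (this is Lyubich \cite[Lemma~4.10]{lyu}, which the paper invokes) has the form
\[
|\alpha_k|_{\mathrm{sph},\ \overline{\mathbb{C}}_n\setminus F_k^{-2}V^j}\ \leq\ C_j\,|v_k|_{\mathcal{B}(F_k^{-1}V^j)},
\]
where $F_k^{-1}V^j$ is a neighbourhood of the filled-in Julia set $K(F_k)$. Because $U=D_{\delta_0,\theta_0}$ has a cusp at $-1$, the sets $K(F_k)$ and their neighbourhoods $F_k^{-1}V^j$ are \emph{not} contained in $U$, so the hypothesis $|v_k|_{\mathcal{B}(U)}\leq 1$ does not directly control the right-hand side. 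Your proposal to ``propagate bounds from $\partial V_\sigma^k$ inward by iteration'' runs into the same problem: iterating the cocycle relation to bound $\alpha_k$ near $K(F_k)$ requires $|v_k|$ on the whole polynomial-like domain $W_k\supset\overline{U}$, not just on $U$. Likewise, your step of bounding $\|v_k\|_{\mathcal{B}_{nor}(\tilde U)}$ via the cocycle relation and the bounds on $\alpha_k$ presupposes exactly what you are trying to prove.

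The paper breaks this circularity by a normalization--contradiction argument (the claim $\sup_k|v_k|_{\mathcal{B}(\tilde W)}<\infty$): if $r_k=|v_k|_{\mathcal{B}(\tilde W)}\to\infty$, set $\hat v_k=v_k/r_k$ and $\hat\alpha_k=\alpha_k/r_k$; Lyubich's estimate now gives uniform control of $\hat\alpha_k$, so a subsequence converges to some $\hat\alpha$, and hence $\hat v_k\to\hat v=\hat\alpha\circ F-DF\cdot\hat\alpha$ uniformly on compacta of $W$ (using the maximum principle to cross $K(F)$). One then has $|\hat v|_{\mathcal{B}(\tilde W)}=1$ but $|\hat v|_{\mathcal{B}(U)}=\lim_k 1/r_k=0$, so $\hat v\equiv 0$, a contradiction. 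Once $\sup_k|v_k|_{\mathcal{B}(\tilde W)}<\infty$ is known, the same compactness argument applied to $v_k$ itself gives convergence on compact subsets of $W$, and since $\overline{U}\subset W$ this yields convergence in $\mathcal{B}(U)$ directly, without needing the compact inclusion from Remark~\ref{sc}. This normalization step is the missing idea in your outline.
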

\begin{proof}  Let $(F_k,v_k)$ be a sequence in the set $\mathcal{E}$.  Due the complex bounds there exist  domains $W_k, V_k \in \mathbb{C}_n$ such that $\overline{W_k}\subset V_k$
and 
$$F_k\colon W_k \rightarrow V_k$$ 
are polynomial-like  maps of type $n$ satisfying $$mod(V_k\setminus W_k)\geq \epsilon_0.$$
Using the same argument as McMullen \cite[Theorem 5.8]{mc1}  there is a polynomial-like  map of type $n$  $F\colon W\rightarrow V$ with $mod(V\setminus W)\geq \epsilon_0$ such that $\lim_k F_k=F$ in the topology defined by McMullen. In particular $\lim_k W_k =W$ and $\lim_k V_k=V$  in the Carath\'eodory topology and $F_k$ converges to $F$ uniformly on compact subsets of $W$. Consequently $\lim_k F_k=F$ in $\mathcal{B}_{nor}(U)$ and we can find $\tilde{V}$ compactly contained in $V$ such that 
$$F_k\colon F_k^{-1}\tilde{V} \rightarrow \tilde{V}$$ 
are polynomial-like  maps of type $n$ satisfying 
$$mod( \tilde{V}\setminus F_k^{-1}\tilde{V} )\geq \epsilon_0/2.$$
Let $\tilde{W}=F^{-1}\tilde{V}$.
\noindent Since $v_k\in E^v_{F_k}$ there  exist holomorphic vectors fields 
$$\alpha_k\colon \overline{\mathbb{C}}_n\setminus K(F_k)\rightarrow \mathbb{C}$$ 
such that $\alpha_k(\infty)=0$ and
$$v_k(z)= \alpha_k(F_k(z))-DF_k(z)\alpha_k(z)$$
for every $z \in W_k\setminus K(F_k)$.

Finally, for every large  $j > 0$ it is possible to find a domain $V^j$ such that 
$$K(F) \subset V^j \subset \overline{\{z \in \mathbb{C}_n\colon \ dist(z,K(F)) < 1/j \}} \subset \tilde{V}$$
and
$$F\colon F^{-1}V^j \rightarrow V^j$$
is a polynomial-like  map of type $n$. Consequently there is $k_0=k_0(j)$ such that  for every $k\geq k_0$ we have that 
$$F_k\colon F^{-1}_kV^j \rightarrow V^j$$
is a polynomial-like  map of type $n$. Note that $F^{-1}_kV^j \subset \tilde{W}$ for large $k$. Due Proposition \ref{vee} we have that 
$$|\alpha_k|_{sph\  \overline{\mathbb{C}}_n\setminus F_k^{-2}V^j}\leq C_j |v_k|_{\mathcal{B}(V^j)}\leq C_j |v_k|_{\mathcal{B}(\tilde{W})}.$$
for large $k$. We claim that
\begin{equation}\label{limvk} \sup_k |v_k|_{\mathcal{B}(\tilde{W})} < \infty.\end{equation}
Indeed, otherwise  we may assume that  $r_k=|v_k|_{\mathcal{B}(\tilde{W})}$ diverges to infinity.  Then $\hat{\alpha}_k=\alpha_k /r_k$ satisfies
$$|\hat{\alpha}_k|_{sph\  \overline{\mathbb{C}}_n\setminus F_k^{-2}V^j}\leq C_j $$
This implies that a subsequence of  $\hat{\alpha}_k$ converges uniformly on compact subsets  of $\mathbb{C}_n\setminus K(F)$  to a holomorphic vector field $\hat{\alpha}\colon \overline{\mathbb{C}}_n\setminus K(F)\rightarrow \mathbb{C}$ and the  corresponding subsequence of  $\hat{v}_k = v_k/r_k$ converges uniformly on compact subsets of $W\setminus K(F)$ to
$$\hat{v}=\hat{\alpha}\circ F- DF\cdot \hat{\alpha}.$$
By the maximum principle we have that $\hat{v}_k$ is a uniform Cauchy sequence on  compact subsets of  $W$, so $\hat{v}$ extends to a holomorphic function on $W$  and $\lim_k \hat{v}_k=\hat{v}$ uniformly on compact subsets of $W$. Since $|\hat{v}_k|_{\mathcal{B}(\tilde{W})}=1$ for every $k$ we have $|\hat{v}|_{\mathcal{B}(\tilde{W})}=1$. On the other hand $$|\hat{v}|_{\mathcal{B}(U)}=\lim_k |\hat{v}_k|_{\mathcal{B}(U)}=\lim_k 1/r_k=0,$$
so $\hat{v}=0$ everywhere, in contradiction with $|\hat{v}|_{\mathcal{B}(\tilde{W})}=1$. This proves the claim.

\noindent Now we can use the same argument as in  the previous paragraph to conclude that there is a subsequence of $\alpha_k$ that converges uniformly on compact subsets  of $\mathbb{C}_n\setminus K(F)$  to a holomorphic vector field $\alpha\colon \overline{\mathbb{C}}_n\setminus K(F)\rightarrow \mathbb{C}$ and the  corresponding subsequence of  $v_k$ converges in $\mathcal{B}(\tilde{W})$ (and in particular in $\mathcal{B}(U)$) so a vector $v$ that satisfies 
$$v= \alpha \circ F - Df\cdot \alpha$$
on $W\setminus K(F)$. This concludes the proof.
\end{proof}

As an immediate consequence of Lemmas \ref{f1} and \ref{f2} we have

\begin{prop} \label{qts} The quotient topological space 
$$E = \{ (F,[v]) \ s.t. \ F \in \Omega_{n,p} \  and \  [v] \in T\mathcal{B}_{nor}(U)/E^h_F \}.$$
is a topological vector bundle with fibers of dimension $n$. Moreover
\begin{equation} \label{nomvb}(F,[v])\mapsto |(F,[v])|=dist_{T\mathcal{B}_{nor}(U)}(v, E^h_F)\end{equation}
is a continuous function. 
\end{prop}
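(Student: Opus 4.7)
The plan is to apply Lemma \ref{f1} to the following data: $\Omega=\Omega_{n,p}$, $\mathcal{B}_2=T\mathcal{B}_{nor}(U)$, and $\mathcal{B}_1=T\mathcal{B}_{nor}(U_0)$, where $U_0\Subset U$ is an auxiliary open set still containing the post-critical set of every $F\in\Omega_{n,p}$. The restriction $\imath\colon\mathcal{B}_2\to\mathcal{B}_1$ is then a compact operator by the Montel argument, and it is injective because any $v\in\mathcal{B}_2$ vanishing on $U_0$ must vanish on $U$ by analytic continuation. To each $F\in\Omega_{n,p}$ we associate the vertical subspace $E^v_F$ defined in Section \ref{vdir}, the horizontal subspace $E^h_{F,2}=E^h_F$, and $E^h_{F,1}\subset\mathcal{B}_1$ defined by the same cohomological equation (which, being supported on the post-critical set, is in particular an equation on $U_0$).

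Verification of the hypotheses of Lemma \ref{f1} is essentially a compilation of previously established facts. Hypothesis (A) on $\mathcal{B}_2$ is the splitting $T\mathcal{B}_{nor}(U)=E^h_F\oplus E^v_F$ noted at the end of Section \ref{vdir}, a consequence of Lyubich's decomposition (\ref{somadi}) together with the absence of invariant line fields on the small Julia sets of $F\in\Omega_{n,p}$; the second half $E^v_F\cap E^h_{F,1}=\{0\}$ reduces to the first, since horizontality of $v$ is determined solely by the values of $v$ on the post-critical set. Hypothesis (B) for $i=2$ is Proposition \ref{closed}, and the same infinitesimal pullback argument applies verbatim on $\mathcal{B}_1$. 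Hypothesis (C) for $i=2$ is Lemma \ref{f2}. Hypothesis (D) follows from Proposition \ref{codimension}, which shows $E^h_F$ has codimension $n$, combined with (A) to give $\dim E^v_F=n$. For (E), compactness of $\imath$ is Montel, and $\imath^{-1}(E^h_{F,1})=E^h_{F,2}$ holds because horizontality is characterized by an equation supported in $U_0$.

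To establish hypothesis (C) at $i=1$, consider a sequence $(F_k,v_k)$ with $v_k\in E^v_{F_k}$ and $|v_k|_1\leq 1$. Passing to a subsequence we may assume $F_k\to F\in\Omega_{n,p}$. If $|v_k|_2$ remains bounded, Lemma \ref{f2} supplies a subsequence converging in $\mathcal{B}_2$, hence in $\mathcal{B}_1$ by continuity of $\imath$. Otherwise, after extracting, $|v_k|_2\to\infty$; setting $\tilde v_k=v_k/|v_k|_2$, Lemma \ref{f2} provides a further subsequence with $\tilde v_k\to\tilde v$ in $\mathcal{B}_2$, $\tilde v\in E^v_F$, and $|\tilde v|_2=1$. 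But $|\tilde v_k|_1=|v_k|_1/|v_k|_2\to 0$, so $|\imath(\tilde v)|_1=0$, and the injectivity of $\imath$ forces $\tilde v=0$, contradicting $|\tilde v|_2=1$.

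With all five hypotheses of Lemma \ref{f1} verified, conclusions II and III of the lemma give precisely Proposition \ref{qts}: the quotient $E$ is a topological vector bundle over $\Omega_{n,p}$ with fibers of dimension $n$, and the norm $(F,[v])\mapsto dist_{T\mathcal{B}_{nor}(U)}(v,E^h_F)$ is continuous. The substantive content therefore already lies in Propositions \ref{closed}, \ref{codimension} and Lemma \ref{f2}; the role of Lemma \ref{f1} is to repackage these ingredients into the vector bundle conclusion, and the only step requiring any work beyond citation is the bootstrapping argument above for (C) at $i=1$, which is the one mildly delicate obstacle.
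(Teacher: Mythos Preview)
Your proof is correct and follows essentially the same approach as the paper: choose a smaller domain $U_0\Subset U$ (the paper calls it $\hat{U}$) still containing the post-critical sets, set $\mathcal{B}_2=T\mathcal{B}_{nor}(U)$, $\mathcal{B}_1=\mathcal{B}(\hat{U})$, and apply Lemma~\ref{f1}. Your explicit verification of the hypotheses, and in particular the bootstrapping argument deducing (C) at $i=1$ from Lemma~\ref{f2}, fills in a step the paper leaves implicit; one minor point is that since $-1$ lies on $\partial U$, the smaller domain $U_0\Subset U$ need not contain $-1$ in its closure, so strictly speaking $\mathcal{B}_1$ should be taken as $\mathcal{B}(U_0)$ rather than $T\mathcal{B}_{nor}(U_0)$, as the paper does.
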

\begin{proof} Let $\hat{U}$ be a symmetric  domain with respect to the real trace of $\mathbb{C}_n$, that is compactly contained  in the interior of $U$, such that the $\hat{U}\cap \mathbb{R}$ is an interval that contain in its interior the convex closure of the postcritical set of every $F \in \Omega_{n,p}$. Let $\mathcal{B}_2= T\mathcal{B}_{nor}(U)$, $\mathcal{B}_1 = \mathcal{B}(\hat{U})$, $E^h_{F,2}= E^h_F$ and $E^h_{F,1}$ be the set of horizontal vectors of $\mathcal{B}(\hat{U})$. Apply Lemma \ref{f1}. 
\end{proof}

\section{Hyperbolicity of the $\omega$-limit set $\Omega_{n,p}$ of $\mathcal{R}$.}\label{hypsection}

Given  $F \in \Omega_{n,p}$, denote 
$$\mathfrak{B}_+(F)=\{ v \in   T_F\mathcal{B}_{nor}(U) \ s.t.  \ \sup_{i \in \mathbb{N}} |D\mathcal{R}^i_f\cdot v_i| < \infty\}.$$
Recall  we choose $U=D_{\delta_0,\theta_0}.$ The goal of this section is to prove 

\begin{prop}\label{key} Suppose that for every $F \in \Omega_{n,p}$ we have 
\begin{equation}\label{bounded}
\mathfrak{B}_+(F) \subset  E^h_F.
\end{equation}
Then $\Omega_{n,p}$ is a hyperbolic set. Moreover its stable direction is exactly $E^h$. 
\end{prop}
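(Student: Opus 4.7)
The plan is to analyze the finite-dimensional quotient bundle $E = T\mathcal{B}_{nor}(U)/E^h$ from Proposition \ref{qts}, whose fibers are $n$-dimensional over the compact base $\Omega_{n,p}$ on which $\mathcal{R}$ acts as a homeomorphism (Corollary \ref{sh}), and to exhibit uniform expansion of the induced bundle isomorphism $\hat{D\mathcal{R}}$ there. Using the reference splitting $T\mathcal{B}_{nor}(U)|_{\Omega_{n,p}} = E^h \oplus E^v$ from Section \ref{vdir} and the invariance $D\mathcal{R}(E^h)\subset E^h$ (Proposition \ref{invariance}), the cocycle has the block upper-triangular form
\begin{equation*}
D\mathcal{R}|_F = \begin{pmatrix} A_F & B_F \\ 0 & D_F \end{pmatrix} : E^h_F\oplus E^v_F \longrightarrow E^h_{\mathcal{R}F}\oplus E^v_{\mathcal{R}F},
\end{equation*}
where $A_F$ is the contraction of Proposition \ref{contraction}, $B_F$ is uniformly bounded on $\Omega_{n,p}$, and $D_F$ is an $n\times n$ block conjugate to $\hat{D\mathcal{R}}$ on $E_F$; the norms on $E$ and on $E^v$ are uniformly equivalent by continuity of the projections.

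The first key step is to convert the hypothesis $\mathfrak{B}_+(F)\subset E^h_F$ into the statement that every nonzero vector in $E$ has unbounded $\hat{D\mathcal{R}}$-orbit. Writing $v_n = D\mathcal{R}^n v = v_n^h + v_n^v$, the upper-triangular recursion unfolds to
\begin{equation*}
v_n^h = (A_{n-1}\cdots A_0)v_0^h + \sum_{k=0}^{n-1} (A_{n-1}\cdots A_{k+1})\, B_k\, v_k^v,
\end{equation*}
and the uniform geometric estimate $|A_{n-1}\cdots A_{k+1}|\leq K\,\Cr{contra}^{-(n-k-1)}$ from Proposition \ref{contraction} makes this sum absolutely summable once $\sup_k|v_k^v|<\infty$. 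Hence a bounded vertical orbit produces a bounded full orbit, so by hypothesis $v\in E^h_F$, i.e.\ $v^v=0$ and $[v]=0$ in $E_F$.

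From this pointwise unboundedness one passes to uniform exponential expansion on $E$ by a classical compactness argument on the unit sphere bundle $S\subset E$, which is compact because the fibers are $n$-dimensional and $\Omega_{n,p}$ is compact. For each $(F,[v])\in S$ the previous step yields an $m(F,[v])$ with $|\hat{D\mathcal{R}}^m[v]|_E\geq 2$, which extends by continuity of $\hat{D\mathcal{R}}$ and of the norm (Proposition \ref{qts}, part III) to a neighborhood with growth $\geq 3/2$; a finite subcover supplies uniform $N$ and $\lambda>1$ so that every unit vector grows by $\lambda$ within $N$ iterates, and iterating yields constants $\tilde C>0$ and $\mu>1$ with $|\hat{D\mathcal{R}}^n[v]|_E\geq \tilde C\mu^n|[v]|_E$ for every $n\geq 0$.

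Finally, the invariant unstable subbundle $E^u$ is produced by graph transform: it is the graph of the unique continuous section $\phi_F : E^v_F\to E^h_F$ solving the cohomological equation $\phi_{\mathcal{R}F}\,D_F = A_F\phi_F + B_F$. Iterating this equation backwards from $\phi^{(0)}=0$ yields a telescoping series whose $j$-th term has norm bounded (combining contraction of $A$ with expansion of $D$) by a constant multiple of $\Cr{contra}^{-j}\mu^{-(j+1)}$, hence uniform convergence on $\Omega_{n,p}$ to a bounded continuous $\phi$. The subbundle $E^u_F=\{(\phi_F w,w):w\in E^v_F\}$ is a $D\mathcal{R}$-invariant, closed, $n$-dimensional complement of $E^h_F$, and $D\mathcal{R}$ expands it uniformly since $\hat{D\mathcal{R}}$ expands $E$; combined with Proposition \ref{contraction} on $E^h$, this gives the hyperbolic splitting $T\mathcal{B}_{nor}(U)|_{\Omega_{n,p}}=E^h\oplus E^u$ with stable direction $E^h$. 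The main obstacle is the second paragraph: the manipulation of the upper-triangular cocycle together with the contraction of $A$ is the only place where the hypothesis (\ref{bounded}) actually enters, converting an abstract statement about bounded full orbits into the usable quotient statement; the remaining steps are standard dominated-splitting arguments adapted to the Banach-bundle setting, legitimated by the strong compactness of $\mathcal{R}$ (Remark \ref{sc}).
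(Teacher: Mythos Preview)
Your proof is correct and follows the same overall strategy as the paper: pass to the finite-dimensional quotient bundle $E$, use the hypothesis together with the contraction on $E^h$ (Proposition~\ref{contraction}) to show that every nonzero vector in $E$ has unbounded orbit under $\hat{D\mathcal{R}}$, upgrade this to uniform exponential expansion on $E$, and then build the unstable bundle $E^u$. The crucial step---your second paragraph, showing that a bounded quotient orbit forces a bounded full orbit via the block upper-triangular form and the geometric series coming from the contraction of the $A$-block---is exactly the computation the paper performs just before deriving~(\ref{expanding_f}).

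The implementations differ at two points. For the passage from ``no bounded orbits'' to uniform expansion, the paper invokes the Sacker--Sell dichotomy theorem (Proposition~\ref{crit}) as a black box, whereas you give the standard direct argument via compactness of the unit sphere bundle; your version is more self-contained but relies on the fact, proved in the paper but not cited by you, that $\hat{D\mathcal{R}}$ is a vector bundle \emph{isomorphism} (so that single-step shrinkage is uniformly bounded below). For the construction of $E^u$, the paper develops a field of forward-invariant unstable cones $C^u_F(K)$ (Proposition~\ref{est_cone2} through Corollary~\ref{unstable_cones}) and obtains $E^u$ as a limit of pushed-forward transversals inside these cones, while you run the graph transform directly over the reference splitting $E^h\oplus E^v$, solving the cohomological equation $\phi_{\mathcal{R}F}D_F=A_F\phi_F+B_F$ by the convergent backwards series. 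Both routes are standard in hyperbolic dynamics; the paper's cone construction has the advantage that the cones are robust under perturbation of the base point (Corollary~\ref{unstable_cones}), a feature the paper actually uses later (Propositions~\ref{nuca} and~\ref{hip2}), whereas your graph-transform argument is cleaner but would require a separate perturbation step to recover that robustness.
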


Proposition \ref{key} reduces the study of the hyperbolicity of $\Omega_{n,p}$ to the study of the existence and regularity of the solutions $\alpha$ of the  cohomological equation (\ref{tce3}).   So to show that $\Omega_{n,p}$ is a hyperbolic set  it remains to prove 

\begin{thm}[Key Lemma] \label{keylem} If $F \in \Omega_{n,p}$ then 
\begin{equation}\label{boundedl}
\mathfrak{B}_+(F) \subset  E^h_F.
\end{equation}
\end{thm}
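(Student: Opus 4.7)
The plan is to exploit the direct sum decomposition $T_F\mathcal{B}_{nor}(U) = E^h_F \oplus E^v_F$, available for every $F \in \Omega_{n,p}$ (see the discussion after Proposition \ref{v444}), in order to reduce the statement to showing that any vertical vector with bounded forward orbit must be zero. Writing $v = v^h + v^v$ with $v^h \in E^h_F$ and $v^v \in E^v_F$, Proposition \ref{contraction} gives $|D\mathcal{R}^i_F \cdot v^h| \leq K \Cr{contra}^{-i}|v^h|$, so the hypothesis $v \in \mathfrak{B}_+(F)$ implies $\sup_i |D\mathcal{R}^i_F \cdot v^v| < \infty$. Since $E^h_F \cap E^v_F = \{0\}$, it suffices to prove that such a vertical $v^v$ actually lies in $E^h_F$.

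For this, I would fix a holomorphic vector field $\alpha$ on $\overline{\mathbb{C}}_n \setminus K(F)$, vanishing at infinity in the sense of (\ref{ninf}), with $v^v = \alpha \circ F - DF \cdot \alpha$ on a neighbourhood of $K(F)$. The transformation rule (\ref{ralpha})--(\ref{ralpha2}) of Proposition \ref{invariance} shows that $D\mathcal{R}^i_F \cdot v^v$ is realized as the coboundary of the iterated rescaling $r^i(\alpha)$ obtained from $\alpha$ by successively pulling back through the affine charts $A$ that blow up each restrictive interval to unit scale. Theorem \ref{co_bo} guarantees that every $\mathcal{R}^i F$ admits polynomial-like extensions with a uniform modulus $\epsilon_0$, so the $\mathcal{B}_{nor}(U)$-norm of $D\mathcal{R}^i_F \cdot v^v$ controls $r^i(\alpha)$ on a definite annulus around each corresponding small filled-in Julia set. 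Undoing the changes of scale, this furnishes uniform estimates on $\alpha$ in a definite Koebe annulus around every small postcritical piece of $F$, at every renormalization depth.

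The remaining task is to convert these scale-by-scale estimates into a genuine quasiconformal extension of $\alpha$ across $K(F)$. Mimicking the infinitesimal pull-back argument employed in the proof of Proposition \ref{closed}, I would define a sequence of auxiliary fields $\gamma_j$ that vanish outside the range $V$ of a polynomial-like extension, agree with the desired coboundary on its boundary, and are pulled back one step of $F$ at each stage. The Ces\`aro averages $\hat\gamma_j = \frac{1}{j}\sum_{t=0}^{j-1}\gamma_t$ are then, by McMullen's compactness criterion \cite{mc2}, normal in a suitable topology and yield a limit quasiconformal vector field $\hat\alpha$ on all of $\mathbb{C}_n$ still realizing $v^v$ as a coboundary near the postcritical set; because the small filled-in Julia sets support no invariant line fields \cite{sm2}, the Beltrami coefficient of $\hat\alpha$ is $L^\infty$-bounded, so $\hat\alpha$ witnesses that $v^v \in E^h_F$. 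Combined with $E^h_F \cap E^v_F = \{0\}$ this forces $v^v = 0$ and hence $v = v^h \in E^h_F$.

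The principal obstacle, and the place where the bounded-orbit hypothesis is actually used, is this last step: turning sup-norm boundedness of the Banach-space iterates $D\mathcal{R}^i_F \cdot v^v$ into uniform quasiconformal control of $\alpha$ across $K(F)$, rather than merely holomorphic control on an unbounded nested tower of Koebe annuli. The key inputs should be the rigidity of McMullen's towers with \emph{a priori} bounds \cite{mc2}, the exponential contraction on the horizontal direction (Proposition \ref{contraction}) that lets one absorb lower-order error terms, and the normalizations $\alpha(-1,j) = 0$ and $\alpha(c_j) = 0$ imposed by belonging to the tangent space $T\mathcal{B}_{nor}(U)$ together with the horizontality requirement.
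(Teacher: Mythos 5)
Your approach is genuinely different from the paper's, which works via the induced expanding map $G$ on the postcritical set and a decomposition $V = V_1 + V_2$ (Lemma \ref{div}), but your sketch has a real gap at exactly the place you flag as "the remaining task." The vector field $\alpha$ witnessing the verticality of $v^v$ lives on $\overline{\mathbb{C}}_n\setminus K(F)$ and satisfies the coboundary equation only off $K(F)$. To apply the infinitesimal pullback / Ces\`aro argument from Proposition \ref{closed}, one needs as input a quasiconformal vector field that \emph{already} satisfies the cohomological equation on a neighbourhood of the postcritical set; in Proposition \ref{closed} that starting datum is supplied by $v_k\in E^h_{F_k}$, i.e. horizontality is a hypothesis there, not a conclusion. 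You do not have that here, so the Ces\`aro-average machinery does not get off the ground.

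More substantively, when you "undo the changes of scale," the formula (\ref{ralpha}) for $r^i(\alpha)$ is not just a rescaling of $\alpha$; it carries a linear correction term whose coefficient is $\partial_t\beta / \beta$, and these accumulate over levels to $\sum_{j\le k} \partial_t\beta^j_q/\beta^j_q = \partial_t\delta^k_{i_k}/\delta^k_{i_k}$, which in general grows linearly in $k$ (Lemma \ref{div2} bounds only the increments). Your sketch treats boundedness of $|D\mathcal{R}^i_F\cdot v^v|$ as giving uniform control of $\alpha$ on definite annuli at all depths, but the coboundary estimate controls $r^i(\alpha)\circ\mathcal{R}^iF - D(\mathcal{R}^iF)\,r^i(\alpha)$, not $r^i(\alpha)$ itself, and the linear drift in the affine correction is precisely the obstruction to such uniform control. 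The paper absorbs this drift by constructing $\alpha_2$ from a telescoping cutoff series, using the logarithmic-type quasiconformal vector field of Lemma \ref{qvec}; this is the genuinely new technical device and it has no analogue in your outline. Without an argument dealing with that accumulating term, the claim that a bounded vertical vector must be horizontal is not established.
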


We will prove Theorem \ref{keylem} in Section \ref{keysec}. As an immediate consequence of Proposition \ref{key} and Theorem \ref{keylem} we have 

\begin{thm}[Theorem B]\label{hip1}  $\Omega_{n,p}$ is a hyperbolic set. Moreover its stable direction is exactly $E^h$. 
\end{thm}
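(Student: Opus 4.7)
The plan is to obtain Theorem \ref{hip1} by combining, in essentially one step, the two preceding results: Proposition \ref{key} and Theorem \ref{keylem}. The hypothesis of Proposition \ref{key} is the inclusion $\mathfrak{B}_+(F)\subset E^h_F$ for every $F\in \Omega_{n,p}$, and that inclusion is exactly the content of the Key Lemma, Theorem \ref{keylem}. So I would simply feed the conclusion of the Key Lemma into Proposition \ref{key} as a black box and read off both assertions of Theorem \ref{hip1}: that $\Omega_{n,p}$ is hyperbolic, and that its stable direction coincides with $E^h$.

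It is worth noting how this matches the machinery developed earlier, to be confident that nothing is missing. Proposition \ref{contraction} shows that $D\mathcal{R}$ is already a uniform contraction along $E^h$, which is consistent with $E^h$ playing the role of the stable bundle. For the transverse direction, Proposition \ref{codimension} gives $\mathrm{codim}\, E^h_F = n$, and Proposition \ref{qts} shows that the quotient $T\mathcal{B}_{nor}(U)/E^h$ is a genuine topological vector bundle of dimension $n$ over $\Omega_{n,p}$, on which the induced action of $D\mathcal{R}$ is well defined and continuous. Hyperbolicity amounts to uniform expansion of $D\mathcal{R}$ on this quotient bundle, and vectors that fail to expand would produce bounded forward orbits; it is precisely this possibility that the bounded-orbit hypothesis rules out, and it is what Proposition \ref{key} must convert into genuine expansion (presumably via a compactness/Mañé-type argument, using the strong compactness of $\mathcal{R}$ noted in Remark \ref{sc} and the vector bundle structure of Proposition \ref{qts}).

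The main obstacle in the overall argument is therefore not in the present theorem but in Theorem \ref{keylem} itself, whose proof is deferred to Section \ref{keysec} and which rests on the analysis of the cohomological equation (\ref{tce3}) advertised in the introduction. Once the Key Lemma is in hand, Theorem \ref{hip1} is a one-line deduction, which is exactly the way it is announced in the text as ``an immediate consequence'' of Proposition \ref{key} and Theorem \ref{keylem}.
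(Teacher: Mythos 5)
Your proposal is correct and matches the paper exactly: the paper states Theorem~\ref{hip1} as ``an immediate consequence of Proposition~\ref{key} and Theorem~\ref{keylem},'' with no further argument needed. Your supplementary remarks about why $E^h$ is the natural stable candidate (Propositions~\ref{contraction}, \ref{codimension}, \ref{qts}) are accurate context but not required for the deduction itself.
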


\subsection{ A criterium for hyperbolicity of cocycles} Let $E$ be a topological vector bundle with base $\Omega$ and fiber $\mathbb{R}^n$ and projection $p\colon E \rightarrow \Omega$.   We denote elements of $E$ by $(x,v)$, where $x \in \Omega$  and  $v \in p^{-1}(x)$. We also assume that $\Omega$ is compact.  Additionally, assume that $|\cdot|$ is a continuous function 
$$(x,v)\in E\mapsto |(x,v)|\in \mathbb{R}$$
so  that $|\cdot|$ is a norm on each fiber   $p^{-1}(x)$. We will abuse the notation writing $|v|$ instead of $(x,v)$.

Let $L\colon E \rightarrow  E$ be a fiber-preserving homeomorphism that is linear on the fibers.  The map $L$ is called a linear cocycle on $E$.  Define

$$\mathfrak{B}_+=\{ (x,v) \in E \ s.t.  \sup_{i \in \mathbb{N}} |v_i| < \infty, \ where \ L^i(x,v)=(x_i,v_i)\}.$$
$$\mathfrak{B}=\{ (x,v) \in E \ s.t.  \sup_{i \in \mathbb{Z}} |v_i| < \infty, \ where \ L^i(x,v)=(x_i,v_i)\}.$$
$$\mathcal{S}=\{ (x,v) \in E \ s.t. \lim_{i\rightarrow +\infty}  |v_i|=0, \ where \ L^i(x,v)=(x_i,v_i)\}.$$
$$\mathcal{U}=\{ (x,v) \in E \ s.t. \lim_{i\rightarrow -\infty}  |v_i|=0, \ where \ L^i(x,v)=(x_i,v_i)\}.$$ 
and the zero section

$$E_0  = \{ (x,0) \in E\}.$$

We  say that the cocycle $L$ is uniformly expanding  if   there exist $K > 0$ and $\theta > 1$ such that for every $(x,v) \in E$ we have
\begin{equation} \label{ue} |v_i|\geq K \theta^i |v|\end{equation}
for every $i\geq 0$, where $L^i(x,v)=(x_n,v_n)$. 

\begin{prop} \label{crit}The cocycle $L\colon E \rightarrow E$ is  uniformly  expanding  if and only if 
\begin{equation} \label{bb} \mathfrak{B}_+=E_0.\end{equation} 
\end{prop}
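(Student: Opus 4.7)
The plan is to prove the two implications separately; the forward direction is immediate and the reverse requires a compactness-plus-iteration argument. Uniform expansion at once forces $\mathfrak{B}_+\subset E_0$, since $|v_i|\geq K\theta^i|v|$ with $\theta>1$ drives $|v_i|\to\infty$ whenever $v\neq 0$, while $E_0\subset\mathfrak{B}_+$ is trivial.

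For the converse I assume $\mathfrak{B}_+=E_0$ and argue by contrapositive. I first record two standing compactness facts. The unit sphere bundle $S=\{(x,v)\in E:|v|=1\}$ is compact (closed and fiberwise bounded in a finite-dimensional vector bundle over the compact base $\Omega$, via local trivializations and continuity of $|\cdot|$). Also $M:=\sup_{x\in\Omega}\|L_x\|<\infty$ by continuity of $L$ together with compactness of $\Omega$ and finite-dimensionality of fibers, where $\|L_x\|$ denotes the operator norm of the fiber map $L_x$ with respect to $|\cdot|$.

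The heart of the proof is the following first-hit lemma: there is $N_0\in\mathbb{N}$ such that for every $(x,v)\in S$ some $1\leq i\leq N_0$ satisfies $|v_i|>2$. I would set
$$B_N=\{(x,v)\in S:|v_i|\leq 2\text{ for all }0\leq i\leq N\},$$
noting that each $B_N$ is closed in $S$ by continuity of $|\cdot|$ and $L$. The hypothesis $\mathfrak{B}_+=E_0$ forces $\bigcap_N B_N\subset\mathfrak{B}_+\cap S=\emptyset$, so by the finite-intersection property in the compact space $S$ some $B_{N_0}$ is already empty, which is the lemma (the case $i=0$ is ruled out because $|v|=1<2$).

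Iterating the first-hit lemma via fiberwise linearity produces times $0=i_0<i_1<\cdots$ with $i_{k+1}-i_k\leq N_0$ and $|v_{i_k}|\geq 2^k|v|$. To interpolate to an arbitrary $i$, pick $k$ with $i_{k-1}<i\leq i_k$ and use $|v_{i_k}|\leq M^{i_k-i}|v_i|\leq M^{N_0}|v_i|$ to obtain $|v_i|\geq M^{-N_0}2^k|v|\geq M^{-N_0}\theta^i|v|$ with $\theta=2^{1/N_0}>1$, since $k\geq i/N_0$. This is \eqref{ue} with $K=M^{-N_0}$. The only subtle step is the first-hit lemma itself; everything else is linear-algebra bookkeeping combined with compactness of $S$ and finiteness of $M$, so I do not expect a genuine obstacle, just care in verifying that the topological-vector-bundle structure cooperates with the chosen norm.
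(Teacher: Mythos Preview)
Your argument is correct and takes a genuinely different route from the paper. The paper disposes of the nontrivial implication in one line by observing that $\mathfrak{B}_+=E_0$ forces $\mathcal{S}=\mathfrak{B}=E_0$ and then invoking Theorem~2 of Sacker and Sell \cite{ss} (or the more elementary \cite[Lemma~9 and Theorem~2]{ss2}) on dichotomies for linear skew-product systems as a black box. You instead give a self-contained compactness argument: the unit-sphere bundle $S$ is compact, the nested closed sets $B_N$ have empty intersection, hence some $B_{N_0}$ is already empty, and then linearity plus the operator-norm bound $M$ let you interpolate the doubling estimate $|v_{i_k}|\geq 2^k|v|$ to all times. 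This is exactly the kind of direct proof the paper alludes to when it remarks that the Sacker--Sell results it cites have ``more elementary'' proofs. What you gain is that no external reference is needed and the mechanism (compactness of $S$ replaces any spectral or dichotomy theory) is transparent; what the paper gains is brevity and a pointer to the general framework where such statements live. One small point you might make explicit: the hypothesis $\mathfrak{B}_+=E_0$ forces $M\geq 1$ (otherwise every orbit is bounded), so your constant $K=M^{-N_0}\leq 1$ and the case $i=0$ is automatic.
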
 
\begin{proof} Of course if $L\colon E \rightarrow E$  is uniformly expanding then (\ref{bb}) holds. To prove the reverse implication, note that (\ref{bb}) implies $\mathcal{S}=\mathfrak{B}=E_0$.  By Theorem 2 in Sacker and Sell \cite{ss} (see also Section 7 there) we have that $L\colon E \rightarrow E$ is uniformly expanding. 
\end{proof}

One can also prove Proposition \ref{crit} applying  Sacker and Sell's results in \cite[Lemma 9 and Theorem 2]{ss2}. We just refer to that because the proof of these results in \cite{ss2} seems to be more elementary than the proof of Theorem 2 in \cite{ss}.

\subsection{Unstable invariant cones} Let $F \in \Omega_{n,p}$, Denote 
$$\mathfrak{B}_+(F)=\{ v \in   T_F\mathcal{B}_{nor}(U) \ s.t.  \ \sup_{i \in \mathbb{N}} |D\mathcal{R}^i_f\cdot v_i| < \infty\}.$$

Consider the topological vector bundle $E$  (see Proposition \ref{qts})
$$E = \{ (F,[v]) \ s.t. \ F \in \Omega_{n,p} \  and \  [v] \in T_F\mathcal{B}_{nor}(U)/E^h_F \}.$$
with  the continuous function (\ref{nomvb}) that restriced to each  $T_F\mathcal{B}_{nor}(U)/E^h_F$ is the usual quotient norm. 
Recall that due Proposition \ref{qts} we have that 
$$\dim T_F\mathcal{B}_{nor}(U)/E^h_F =n.$$
By Proposition \ref{invariance} the linear transformation
 \begin{equation}\label{derr} D\mathcal{R}_F \colon T_F\mathcal{B}_{nor}(U) \rightarrow T_{\mathcal{R}F} \mathcal{B}_{nor}(U)\end{equation}
 induces a bounded linear transformation 
 $$L_F \colon T_F\mathcal{B}_{nor}/E^h_F \rightarrow T_{\mathcal{R}F} \mathcal{B}_{nor}(U)/E^h_{\mathcal{R}F}.$$
 \begin{lem} The map 
$$L(F,v) = (\mathcal{R}F, L_F\cdot v)$$
is a vector bundle isomorphism in the vector bundle $E$, that is, it is a homeomorphism of $E$ onto itself that preserves the linear structure on the fibers.
 \end{lem}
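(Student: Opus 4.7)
The plan is to verify bijectivity of $L$ (on the base and fiberwise) and then bicontinuity; most ingredients are already established in the paper.

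First I would show that each $L_F$ is a well-defined linear isomorphism. Proposition \ref{invariance} gives $D_F\mathcal{R}(E^h_F)\subset E^h_{\mathcal{R}F}$, so $D_F\mathcal{R}$ descends to a well-defined linear map $L_F$ on the quotients. The reverse inclusion $(D_F\mathcal{R})^{-1}(E^h_{\mathcal{R}F})\subset E^h_F$ in the same Proposition shows that $L_F$ is injective: $L_F[v]=0$ forces $D_F\mathcal{R}\cdot v\in E^h_{\mathcal{R}F}$, hence $v\in E^h_F$, i.e.\ $[v]=0$. By Proposition \ref{codimension}, both $T_F\mathcal{B}_{nor}(U)/E^h_F$ and $T_{\mathcal{R}F}\mathcal{B}_{nor}(U)/E^h_{\mathcal{R}F}$ have dimension $n$, so $L_F$ is a linear bijection. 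On the base, $\mathcal{R}|_{\Omega_{n,p}}$ is conjugate via $H$ to the shift $S$ on $\mathcal{C}_{p,n}^{\mathbb{Z}}$ by Corollary \ref{sh}, hence a homeomorphism of $\Omega_{n,p}$ onto itself. Combining, $L$ is a fiber-preserving bijection of $E$ that is linear on each fiber.

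For continuity I would first observe that $\mathcal{R}$ is a compact complex-analytic operator on the neighbourhood $\mathcal{W}$ of $\Omega_{n,p}$, so the map $(F,v)\mapsto(\mathcal{R}F,D_F\mathcal{R}\cdot v)$ is continuous on $\Omega_{n,p}\times T\mathcal{B}_{nor}(U)$. Descending to the quotient vector bundle $E$, whose topology is the quotient topology and which is locally trivial by Proposition \ref{qts}, preserves continuity, yielding continuity of $L$. For $L^{-1}$, I would pick local trivializations of $E$ over neighbourhoods of a point $F_0$ and of $\mathcal{R}F_0$; in these trivializations $L$ is represented by a continuous family of matrices in $M_n(\mathbb{R})$, which by the fiberwise bijectivity established above take values in $GL(n,\mathbb{R})$. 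Since inversion is continuous in $GL(n,\mathbb{R})$ and $\mathcal{R}^{-1}|_{\Omega_{n,p}}$ is continuous (again by Corollary \ref{sh}), $L^{-1}$ is continuous.

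The main point requiring care is the continuity of $L^{-1}$: one cannot argue directly in the infinite-dimensional tangent space, since $D_F\mathcal{R}$ is a compact operator and so never invertible there. The resolution is precisely why the quotient bundle $E$ was constructed, namely that its fibers are the finite-dimensional spaces identified by Propositions \ref{codimension} and \ref{qts}, in which invertibility is an open condition and inversion is continuous.
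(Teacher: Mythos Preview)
Your proof is correct and the overall structure matches the paper's, but two steps are handled by different ingredients. For the invertibility of each $L_F$, the paper invokes Theorem~\ref{dense}: since $D_F\mathcal{R}$ has dense image in $T_{\mathcal{R}F}\mathcal{B}_{nor}(U)$, its composition with the quotient projection has dense image in the $n$-dimensional quotient, hence is onto; bijectivity then follows from equality of dimensions. You instead use the reverse inclusion $(D_F\mathcal{R})^{-1}(E^h_{\mathcal{R}F})\subset E^h_F$ from Proposition~\ref{invariance} to get injectivity directly, and conclude surjectivity from Proposition~\ref{codimension}. Both are valid; your route is slightly more elementary in that it avoids Theorem~\ref{dense} entirely, while the paper's route makes explicit use of a result it proved for other purposes as well.

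For the continuity of $L^{-1}$, the paper does not pass to local trivializations. Instead it argues on the sphere bundle $E^1=\{(F,[v]):|[v]|=1\}$, which is compact; the continuous function $(F,[v])\mapsto |L(F,[v])|$ therefore attains a positive minimum $C$, giving a uniform lower bound on the fiberwise norms of $L_F$. A sequential argument (bounded preimages, convergent subsequence, apply $L$ and use injectivity) then yields continuity of $L^{-1}$. Your local-trivialization-plus-$GL(n,\mathbb{R})$ argument is the more standard textbook approach and is perfectly valid here since Proposition~\ref{qts} supplies the trivializations; the paper's compactness argument is more intrinsic and avoids choosing charts, at the cost of being a bit less transparent.
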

 \begin{proof} Let $\pi\colon \Omega_{n,p} \times T\mathcal{B}_{nor}(U)\rightarrow E$ be a natural projection 
$$(F,v) \mapsto (F,[v]_F),$$
where $[v]_F$ represents the equivalent class of $v$ in $T\mathcal{B}_{nor}(U)/E^h_F$. Of course
$$\tilde{L}\colon \Omega_{n,p} \times T\mathcal{B}_{nor}(U) \rightarrow  E$$
defined by $\tilde{L}(F,v)= \pi(\mathcal{R}F,D\mathcal{R}_F\cdot v)$ is continuous. Then  by Proposition \ref{invariance} the map $\tilde{L}$ descends to the topological quotient space $E$ as the continuous map $L$.

By Theorem  \ref{dense} we have that the linear transformation (\ref{derr}) has dense image in $T_{\mathcal{R}F} \mathcal{B}_{nor}(U)$. This implies that  for every $F$ the linear map $L_F$ is invertible. By Corollary \ref{sh} we have that $\mathcal{R}\colon \Omega_{n,p}  \rightarrow \Omega_{n,p} $ is a homeomorphism. We conclude that $L$ is invertible. It remains to show that its inverse is continuous.  Since 
$$E^1=\{ (F,[v]_F) \colon  F \in \Omega_{n,p} \ and \ |[v]_F|=1\}$$
is compact, $L$ is invertible, and the function 
$$\psi\colon E^1 \rightarrow  \mathbb{R}^+$$
defined  by $\psi(F,[v]_F)=|L(F,[v]_F)|$  is continuous, we have that 
\begin{equation}\label{infc}C=\min_{(F,[v]_F)\in E^1} \psi(F,[v]_F) > 0.\end{equation}
So suppose $\lim_k (F_k,[v_k]_{F_k})= (F,[v]_F)$ and 
$$L^{-1}(F_k,[v_k]_{F_k})=(\mathcal{R}^{-1}F_k,[w_k]_{\mathcal{R}^{-1}F_k})$$
 Then $\lim_k \mathcal{R}^{-1}F_k= \mathcal{R}^{-1}F$  and by (\ref{infc}) we have $\sup_k |[w_k]_{\mathcal{R}^{-1}F_k}|\leq \hat{C}$ for some constant $\hat{C}$. Taking a subsequence we can assume that $\lim_k [w_k]_{\mathcal{R}^{-1}F_k}= [\tilde{w}]_{\mathcal{R}^{-1}F}$. Since $L$ is continuous $$(F,[v]_{F})=\lim_k L(\mathcal{R}^{-1}F_k,[w_k]_{\mathcal{R}^{-1}F_k})=(F,L_F[\tilde{w}]_{\mathcal{R}^{-1}F}).$$
 From the injectivity of $L$ we conclude that $[\tilde{w}]_{\mathcal{R}^{-1}F}=L^{-1}_F[v]_{F}$. So $L^{-1}$ is continuous.
\end{proof} 

\begin{prop} Suppose that for every $F \in \Omega_{n,p}$ we have 
\begin{equation}
\mathfrak{B}_+(F) \subset  E^h_F.
\end{equation}
Then the cocycle $L$ is uniformly expanding, that is, there is $C > 0$ and $\Cl[e]{eq} > 1$ such that for every $v \in \mathcal{B}_{nor}(U)$ and $F \in \Omega_{n,p} $ we have 
\begin{equation}\label{expanding_f}  d(D\mathcal{R}^i_{F}\cdot v, E^h_{\mathcal{R}^i F})\geq C{\Cr{eq}}^i d( v, E^h_{F}).\end{equation}\end{prop}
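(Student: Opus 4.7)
The plan is to apply the Sacker--Sell criterion (Proposition \ref{crit}) to the cocycle $L$ on the finite-dimensional topological vector bundle $E$ from Proposition \ref{qts}. By Proposition \ref{crit} it suffices to verify that $\mathfrak{B}_+(L) = E_0$; that is, whenever $v \in T_F\mathcal{B}_{nor}(U)$ satisfies
$$\sup_{i\in\mathbb{N}} d(D\mathcal{R}^i_F\cdot v,\, E^h_{\mathcal{R}^iF}) < \infty,$$
one must conclude $v \in E^h_F$. Since $E^h$ is forward-invariant (Proposition \ref{invariance}), the decomposition $T_F\mathcal{B}_{nor}(U) = E^h_F \oplus E^v_F$ lets us replace $v$ by its $E^v_F$-component $v^v$ without affecting either the hypothesis or the conclusion.

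For each $i \geq 0$, write $D\mathcal{R}^i_F\cdot v^v = \alpha_i + \beta_i$ with $\alpha_i \in E^h_{\mathcal{R}^iF}$ and $\beta_i \in E^v_{\mathcal{R}^iF}$, so that $\alpha_0 = 0$ and $\beta_0 = v^v$. The continuity of the splitting $G \mapsto (E^h_G, E^v_G)$ on the compact base $\Omega_{n,p}$ (provided by Lemma \ref{f1}) shows that the projections $\pi^h_G$ and $\pi^v_G$ have uniformly bounded operator norm, and that on each fiber $d(w, E^h_G)$ is equivalent to $|\pi^v_G w|$. Hence the hypothesis translates to $\sup_i |\beta_i| \leq C_0 < \infty$. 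Applying $D_{\mathcal{R}^iF}\mathcal{R}$ to the decomposition and using the invariance of $E^h$ once more produces the recursion
$$\alpha_{i+1} = D_{\mathcal{R}^iF}\mathcal{R} \cdot \alpha_i + \gamma_i,$$
where $\gamma_i \in E^h_{\mathcal{R}^{i+1}F}$ is the horizontal part of $D_{\mathcal{R}^iF}\mathcal{R}\cdot \beta_i$. The uniform bounds on the projections and on $D\mathcal{R}$ over the compact set $\Omega_{n,p}$ give $\sup_i |\gamma_i| \leq C_1 < \infty$.

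The main point, and the delicate step, is to propagate boundedness from the vertical to the horizontal component, which is exactly where Proposition \ref{contraction} enters in an essential way. Iterating the recursion from $\alpha_0 = 0$ gives $\alpha_k = \sum_{j=0}^{k-1} D\mathcal{R}^{k-1-j}\cdot \gamma_j$, each differential taken at the appropriate iterate of $F$; since every $\gamma_j$ lies in a horizontal subspace, the uniform contraction on horizontals yields
$$|\alpha_k| \leq \sum_{j=0}^{k-1} K\, \Cr{contra}^{-(k-1-j)} |\gamma_j| \leq \frac{K C_1}{1 - \Cr{contra}^{-1}}.$$
Thus $\sup_i |D\mathcal{R}^i_F\cdot v^v| \leq \sup_i(|\alpha_i| + |\beta_i|) < \infty$, i.e.\ $v^v \in \mathfrak{B}_+(F)$. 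The standing hypothesis then forces $v^v \in E^h_F$, and since $E^h_F \cap E^v_F = \{0\}$ we obtain $v^v = 0$, hence $v \in E^h_F$ as required. This verifies $\mathfrak{B}_+(L) = E_0$ and so, via Proposition \ref{crit}, yields the uniform expansion (\ref{expanding_f}). The subtle point lies precisely in this vertical-to-total boundedness step: the hypothesis controls only the vertical component of each iterate, and it is only the uniform horizontal contraction from Proposition \ref{contraction} that allows one to absorb the horizontal errors generated by the iteration.
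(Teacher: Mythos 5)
Your proof is correct and follows essentially the same strategy as the paper's: reduce to the Sacker--Sell criterion (Proposition \ref{crit}), then combine the horizontal contraction (Proposition \ref{contraction}) with the compactness of $\Omega_{n,p}$ and the continuity of the splitting (Lemma \ref{f1}) to upgrade boundedness of the quotient orbit $\sup_i|L^i_F[v]|$ to boundedness of the full orbit $\sup_i|D\mathcal{R}^i_F\cdot v|$, and finally invoke the hypothesis $\mathfrak{B}_+(F)\subset E^h_F$. The paper achieves the bound by iterating with a fixed lag $j$ large enough that $K\theta^{-j}<1$, whereas you choose explicit vertical representatives $\beta_i\in E^v$ and resum a geometric series of one-step horizontal errors $\gamma_i$; the two accountings are interchangeable, and your version, which makes the role of the $E^h\oplus E^v$ splitting and the uniform bound on $D\mathcal{R}$ explicit, is arguably the more transparent write-up of the same idea.
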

\begin{proof}   Indeed, suppose that $[v] \in T_F\mathcal{B}_{nor}/E^h_F$ satisfies
$$ \sup_i |L^i_F \cdot [v]| < \infty.$$
By Propostion \ref{crit}, it is enough  to show that $[v]=0$, that is, $v \in E^h_F$. Firstly note that 
$D\mathcal{R}^i_F\cdot v =     u_i + w_i$, where  $\sup_i |u_i|=C < \infty$ and $w_i \in E^h_{\mathcal{R}^iF}$. Note that 
$$D\mathcal{R}_{\mathcal{R}^iF}\cdot (u_i + w_i) = u_{i+1} + w_{i+1},$$
so 
$$w_{i+1}= D\mathcal{R}_{\mathcal{R}^iF}\cdot u_i - u_{i+1} + D\mathcal{R}_{\mathcal{R}^iF}\cdot w_i.$$
So
$$w_{i+j}= D\mathcal{R}^j_{\mathcal{R}^iF}\cdot u_i - u_{i+j} + D\mathcal{R}^j_{\mathcal{R}^iF}\cdot w_i,$$
in particular
$$|w_{i+j}| \leq C(1+K\theta^{-j}) + K \theta^{-j} |w_i|,$$
where $K$ and $\theta >  1 $ are as in Proposition \ref{contraction}. This implies that $\sup_i |w_i| < \infty$ and consequently 
$$\sup_i | D\mathcal{R}^i_F\cdot v | < \infty.$$ 
By (\ref{boundedl}) we have that $v \in E^h_F$.  So $L$ is uniformly expanding.\end{proof}

Let $C > 0$ and $\theta > 1$ be as in (\ref{expanding_f}). Choose $j_0 > 0$ such that 
$$ C  \theta^{j_0} > 1.$$
If  $\epsilon > 0 $ is small enough we have  that 
$$\tilde{\theta}= C e^{-\epsilon}  \theta^{j_0} > 1.$$
Denote
$$\tilde{C} = \sup_{F \in \Omega_{n,p}} |D\mathcal{R}_F^{j_0}|.$$
Define the cone $C^u_F(K)$ as the set of all $v \in T\mathcal{B}_{nor}(U)$ that can be written as $v= u + w$,  where
\begin{itemize}
\item[A.]  $|u|\leq e^{\epsilon} d( v, E^h_{F})$, 
\item[B.]  $w \in E^h_F$  and 
\item[C.] $|w| \leq K |u|$. 
\end{itemize}
Note that
\begin{equation} \label{cobre}  \bigcup_{K > 0} C^u_F(K) = \big(T\mathcal{B}_{nor}(U)\setminus E^h_F\big) \cup \{ 0\}.\end{equation} 

Our goal is to show that if (\ref{expanding_f}) holds then there is $K > 0$ such that 
$$F\in \Omega_{n,p}\mapsto C^u_F(K)$$
is a field of unstable $\mathcal{R}$-invariant  cones on $\Omega_{n,p}$.

 \begin{prop} \label{est_cone2}Assume that (\ref{expanding_f}) holds. Then for $\epsilon >0$ small enough the following holds. Let $F \in \Omega_{n,p}$. If 
$v_0= u_0 + w_0$, with 
$$0< |u_0|\leq  e^{\epsilon} d( v_0, E^h_{F})\   and \  w_0\in E^h_F.$$
then for every $w_1 \in E^h_{\mathcal{R}^{j_0}F}$ and $u_1$ satisfying $D\mathcal{R}^{j_0}_{F}\cdot v_0 = u_1 + w_1$  we  have 
\begin{equation}\frac{|w_1|}{|u_1|}\leq  \label{cone_est}   \tilde{C} \tilde{\theta}^{-1} + 1 + \tilde{\theta}^{-2}  \frac{|w_0|}{|u_{0}|}. \end{equation} 
 \end{prop}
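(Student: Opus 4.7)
The plan is to combine three ingredients already in place: the invariance of $E^h$ under $D\mathcal{R}$ (Proposition \ref{invariance}), the uniform expansion in the quotient direction (\ref{expanding_f}), and the horizontal contraction estimate of Proposition \ref{contraction}.

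First I would exploit invariance to control $u_{1}$ modulo $E^h$. By (\ref{r1}), $D\mathcal{R}^{j_{0}}_{F}w_{0}\in E^{h}_{\mathcal{R}^{j_{0}}F}$, and since $w_{1}\in E^{h}_{\mathcal{R}^{j_{0}}F}$ by hypothesis, the identity $u_{1}+w_{1}=D\mathcal{R}^{j_{0}}_{F}u_{0}+D\mathcal{R}^{j_{0}}_{F}w_{0}$ gives $u_{1}-D\mathcal{R}^{j_{0}}_{F}u_{0}\in E^{h}_{\mathcal{R}^{j_{0}}F}$. In particular, $u_{1}$ and $D\mathcal{R}^{j_{0}}_{F}v_{0}$ represent the same class in $T\mathcal{B}_{nor}(U)/E^{h}_{\mathcal{R}^{j_{0}}F}$, so
\[
|u_{1}|\ \geq\ d\bigl(u_{1},E^{h}_{\mathcal{R}^{j_{0}}F}\bigr)\ =\ d\bigl(D\mathcal{R}^{j_{0}}_{F}v_{0},E^{h}_{\mathcal{R}^{j_{0}}F}\bigr).
\]

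Next I would plug in the uniform expansion. The assumption $|u_{0}|\leq e^{\epsilon}d(v_{0},E^{h}_{F})$ together with (\ref{expanding_f}) gives
\[
d\bigl(D\mathcal{R}^{j_{0}}_{F}v_{0},E^{h}_{\mathcal{R}^{j_{0}}F}\bigr)\ \geq\ C\theta^{j_{0}}\,d(v_{0},E^{h}_{F})\ \geq\ Ce^{-\epsilon}\theta^{j_{0}}|u_{0}|\ =\ \tilde\theta\,|u_{0}|,
\]
so $|u_{1}|\geq \tilde\theta|u_{0}|$. This is the expansion half of the cone estimate.

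For the upper bound on $|w_{1}|$, I would rewrite
\[
w_{1}\ =\ D\mathcal{R}^{j_{0}}_{F}w_{0}+\bigl(D\mathcal{R}^{j_{0}}_{F}u_{0}-u_{1}\bigr),
\]
apply the triangle inequality, bound $|D\mathcal{R}^{j_{0}}_{F}u_{0}|\leq \tilde C|u_{0}|$ by definition of $\tilde C$, and use Proposition \ref{contraction} (valid since $w_{0}\in E^{h}_{F}$) to get $|D\mathcal{R}^{j_{0}}_{F}w_{0}|\leq K\Cr{contra}^{-j_{0}}|w_{0}|$. Thus
\[
|w_{1}|\ \leq\ K\Cr{contra}^{-j_{0}}|w_{0}|+\tilde C|u_{0}|+|u_{1}|.
\]
Dividing by $|u_{1}|$ and using $|u_{1}|\geq\tilde\theta|u_{0}|$ yields
\[
\frac{|w_{1}|}{|u_{1}|}\ \leq\ \tilde C\tilde\theta^{-1}+1+\frac{K\Cr{contra}^{-j_{0}}}{\tilde\theta}\cdot\frac{|w_{0}|}{|u_{0}|},
\]
which matches the claim once we know $K\Cr{contra}^{-j_{0}}/\tilde\theta\leq\tilde\theta^{-2}$, i.e.\ $KCe^{-\epsilon}(\theta/\Cr{contra})^{j_{0}}\leq 1$.

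The main technical point, and the only real obstacle, is securing this last inequality. It forces us to enlarge $j_{0}$ beyond the minimal choice $C\theta^{j_{0}}>1$, using that $\Cr{contra}^{-j_{0}}$ decays exponentially as $j_{0}\to\infty$ while $\tilde\theta$ only grows polynomially in $\theta^{j_{0}}$. Hence I would take $j_{0}$ large enough (and then $\epsilon>0$ small enough) to simultaneously ensure $\tilde\theta>1$ and $K\Cr{contra}^{-j_{0}}\tilde\theta\leq 1$; both conditions are compatible because Proposition \ref{contraction} provides a fixed contraction rate $\Cr{contra}>1$ independent of the combinatorics, and one may always pass to a higher iterate of $\mathcal{R}$ (absorbing any prefactor) before starting the cone argument. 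With $j_{0}$ so chosen, the desired bound (\ref{cone_est}) follows directly from the display above.
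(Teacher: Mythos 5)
Your argument is essentially the paper's own proof: both first establish $|u_1|\geq\tilde\theta|u_0|$ from (\ref{expanding_f}), then bound $|w_1|/|u_1|$ by the triangle inequality applied to $D\mathcal{R}^{j_0}_F u_0 + D\mathcal{R}^{j_0}_F w_0 = u_1+w_1$ together with the horizontal contraction. Two small remarks are worth making. The appeal to Proposition \ref{invariance} in your first step is an unnecessary detour: since $u_1-D\mathcal{R}^{j_0}_Fv_0=-w_1\in E^h_{\mathcal{R}^{j_0}F}$ directly from the hypothesis, the inequality $|u_1|\geq d(D\mathcal{R}^{j_0}_Fv_0,E^h_{\mathcal{R}^{j_0}F})$ is simply the definition of distance to a subspace. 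And your flag that $K\Cr{contra}^{-j_0}\tilde\theta\leq1$ must be separately arranged is a legitimate point the paper passes over silently (it writes $C\theta^{-j_0}$ for the horizontal contraction bound, tacitly conflating the two pairs of constants); the clean way to secure the inequality is to note that (\ref{expanding_f}) remains valid after weakening $\theta$ to any rate in $(1,\Cr{contra})$, so that $KCe^{-\epsilon}(\theta/\Cr{contra})^{j_0}\to0$ as $j_0\to\infty$ — the reason is the exponential decay of the ratio $(\theta/\Cr{contra})^{j_0}$, not that "$\tilde\theta$ grows only polynomially" (it grows exponentially in $j_0$).
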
 
 \begin{proof}  Let $v_1= D\mathcal{R}^{j_0}_{F}\cdot v_0$. 
 In particular 
  
  $$\frac{|u_{1}|}{|u_0|}\geq e^{-\epsilon} \frac{d(D\mathcal{R}^{j_0}_{F}\cdot v_0 , E^h_{\mathcal{R}^{j_0}F})}{d(v_0, E^h_{F})}\geq C e^{-\epsilon}  \theta^{j_0}=\tilde{\theta} > 1.$$
 Then
 $$D\mathcal{R}^{j_0}_F v_0 = D\mathcal{R}^{j_0}_F u_0 + D\mathcal{R}^{j_0}_F w_0 = u_{1}+ w_{1}.$$
  So
  $$ D\mathcal{R}^{j_0}_F \frac{u_0}{|u_{1}|}  + D\mathcal{R}^{j_0}_F \frac{w_0}{|u_{1}|} = \frac{u_{1}}{|u_{1}|}+ \frac{w_{1}}{|u_1|}$$
  and
 \begin{eqnarray}  \frac{|w_{1}|}{|u_1|}&\leq&   |D\mathcal{R}^{j_0}_F| \frac{|u_0|}{|u_{1}|}+ 1   + |D\mathcal{R}^{j_0}_F \frac{w_0}{|u_{1}|}| \nonumber \\
 &\leq&    \tilde{C} \tilde{\theta}^{-1} + 1 + C \theta^{-{j_0}}  \frac{|w_0|}{|u_{1}|}\nonumber \\
 &\leq&  \tilde{C} \tilde{\theta}^{-1} + 1 + \tilde{\theta}^{-1}C \theta^{-{j_0}}  \frac{|w_0|}{|u_{0}|} \nonumber \\
 &\leq& \tilde{C} \tilde{\theta}^{-1} + 1 + \tilde{\theta}^{-2}  \frac{|w_0|}{|u_{0}|}.
 \end{eqnarray} 
 \end{proof}
 
 \begin{cor}[Invariant Cones]\label{invariant_cones}Assume that (\ref{expanding_f}) holds.   If $v_0 \in C^u_F(K_0)$ then $v_1 = D\mathcal{R}_F^{j_0}\cdot v_0 \in C^u_{\mathcal{R}F}(K_1)$, where
 $$K_1= \tilde{C} \tilde{\theta}^{-1} + 1 + \tilde{\theta}^{-2}K_0.$$ 
 \end{cor}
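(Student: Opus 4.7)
The plan is to reduce the corollary directly to Proposition \ref{est_cone2}, since that proposition already supplies the estimate $|w_1|/|u_1| \leq \tilde{C}\tilde{\theta}^{-1} + 1 + \tilde{\theta}^{-2}|w_0|/|u_0|$ for \emph{any} decomposition $v_1 = u_1 + w_1$ with $w_1 \in E^h_{\mathcal{R}^{j_0}F}$. What remains is to produce a decomposition of $v_1 = D\mathcal{R}_F^{j_0}\cdot v_0$ that also satisfies condition (A) of the cone $C^u_{\mathcal{R}^{j_0}F}(K_1)$.

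First, I would dispose of the trivial case $v_0 = 0$, for which $v_1 = 0$ lies in every cone. For $v_0 \neq 0$, conditions (A) and (C) of $C^u_F(K_0)$ together force $u_0 \neq 0$ and $v_0 \notin E^h_F$; applying the inclusion $(D\mathcal{R}_F)^{-1}(E^h_{\mathcal{R}F}) \subset E^h_F$ from Proposition \ref{invariance} iteratively $j_0$ times yields $v_1 \notin E^h_{\mathcal{R}^{j_0}F}$, so $d(v_1, E^h_{\mathcal{R}^{j_0}F}) > 0$. Using the definition of the quotient distance, I pick $w_1 \in E^h_{\mathcal{R}^{j_0}F}$ with $|v_1 - w_1| \leq e^{\epsilon} d(v_1, E^h_{\mathcal{R}^{j_0}F})$ and set $u_1 = v_1 - w_1$. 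Condition (A) for $v_1$ is then built into this choice, and condition (B) is automatic.

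Finally, I invoke Proposition \ref{est_cone2} with the decompositions $v_0 = u_0 + w_0$ and $v_1 = u_1 + w_1$ just described. The hypotheses of that proposition — namely $0 < |u_0| \leq e^{\epsilon} d(v_0, E^h_F)$ and $w_0 \in E^h_F$ — are precisely conditions (A) and (B) of $v_0 \in C^u_F(K_0)$, so its conclusion gives
\[
\frac{|w_1|}{|u_1|} \;\leq\; \tilde{C}\tilde{\theta}^{-1} + 1 + \tilde{\theta}^{-2}\,\frac{|w_0|}{|u_0|} \;\leq\; \tilde{C}\tilde{\theta}^{-1} + 1 + \tilde{\theta}^{-2} K_0 \;=\; K_1,
\]
where the second inequality uses condition (C) for $v_0$. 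This verifies condition (C) for $v_1$ and so $v_1 \in C^u_{\mathcal{R}^{j_0}F}(K_1)$, as required.

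There is no substantial obstacle here: the heavy analytic work (the expansion in the quotient bundle and the tracking of the horizontal correction) is already encapsulated in Proposition \ref{est_cone2}, and Proposition \ref{invariance} guarantees that $D\mathcal{R}_F^{j_0}$ preserves the partition $E^h_F$ vs.\ its complement. The only mild point of care is ensuring that $d(v_1, E^h_{\mathcal{R}^{j_0}F}) > 0$, so that the infimum approximation producing the witness decomposition of $v_1$ is nontrivial; this is handled by the invariance statement just invoked.
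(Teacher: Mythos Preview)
Your proof is correct and follows essentially the same approach as the paper's own proof: dispose of $v_0=0$, extract the cone decomposition $v_0=u_0+w_0$, choose a decomposition $v_1=u_1+w_1$ with $|u_1|\le e^\epsilon d(v_1,E^h_{\mathcal{R}^{j_0}F})$, and apply Proposition~\ref{est_cone2}. The only cosmetic difference is that you invoke Proposition~\ref{invariance} to see $v_1\notin E^h_{\mathcal{R}^{j_0}F}$, whereas the paper leaves this implicit (it follows equally well from the expansion hypothesis~(\ref{expanding_f}), since $d(v_1,E^h_{\mathcal{R}^{j_0}F})\ge C\theta^{j_0}d(v_0,E^h_F)>0$).
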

 \begin{proof} We can assume that $v_0\neq 0$. Since $v_0 \in C^u_F(K_0)$   there exist   $w_0 \in E^h_F$ and $u_0$ such that $v_0=u_0+w_0$ and 
 $$ 0< |u_0|\leq e^\epsilon d(v_0,E^h_F) \  and \ |w_0| \leq K_0 |u_0|.$$
Moreover there exist  $w_1 \in E^h_{\mathcal{R}^{j_0}F}$ and $u_1$ satisfying $v_1 = u_1 + w_1$, with $$|u_1|\leq e^{\epsilon} d( v_1, E^h_{\mathcal{R}^{j_0}F}).$$  
By Proposition \ref{est_cone2} we have that $|w_1|\leq K_1 |u_1|$, so $v_1 \in C^u_{\mathcal{R}^{j_0}F}(K_1)$. 
 \end{proof}

To simplify the notation, we will replace the operator $\mathcal{R}$ by its iteration $\mathcal{R}^{j_0}$. The following two corollaries are an immediate consequence of Corollary \ref{invariant_cones}.

 \begin{cor}[Forward Invariant Cones] \label{finvariant_cones}Assume that (\ref{expanding_f}) holds.  If 
 $$K \geq \frac{\tilde{C}\tilde{\theta}^{-1}+1}{1-\tilde{\theta}^{-2}}.$$
 then for every $F \in \Omega_{n,p}$ 
 \begin{equation} D\mathcal{R}_F  C^u_F(K) \subset  C^u_{\mathcal{R}F}(K).\end{equation}
 \end{cor}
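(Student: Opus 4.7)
The plan is to derive this corollary as a direct arithmetic consequence of Corollary~\ref{invariant_cones}, since the author has already set up the cone invariance estimate in the quantitative form that we need and has replaced $\mathcal{R}$ by $\mathcal{R}^{j_0}$ to absorb the iterate. So the task reduces to solving a one-variable fixed-point inequality for the cone parameter $K$.

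Concretely, given $F \in \Omega_{n,p}$ and $v_0 \in C^u_F(K)$, Corollary~\ref{invariant_cones} tells us that $D\mathcal{R}_F \cdot v_0 \in C^u_{\mathcal{R}F}(K_1)$ with
\[
K_1 \;=\; \tilde{C}\tilde{\theta}^{-1} + 1 + \tilde{\theta}^{-2} K.
\]
To get invariance of the single cone $C^u_F(K)$, it suffices to require $K_1 \leq K$, because the cones $C^u_{\mathcal{R}F}(\cdot)$ are nested monotonically in the parameter (a vector $v = u+w$ with $|w|\leq K_1 |u|$ also satisfies $|w| \leq K|u|$ whenever $K_1 \leq K$, the first two conditions A and B in the definition being independent of the parameter).

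Solving $\tilde{C}\tilde{\theta}^{-1} + 1 + \tilde{\theta}^{-2}K \leq K$ for $K$: since $\tilde{\theta} > 1$ we have $1 - \tilde{\theta}^{-2} > 0$, so the inequality is equivalent to
\[
K \;\geq\; \frac{\tilde{C}\tilde{\theta}^{-1}+1}{1-\tilde{\theta}^{-2}},
\]
which is precisely the hypothesis. Hence for such $K$ we have $K_1 \leq K$ and therefore $D\mathcal{R}_F \cdot v_0 \in C^u_{\mathcal{R}F}(K)$, giving the claimed inclusion $D\mathcal{R}_F\, C^u_F(K) \subset C^u_{\mathcal{R}F}(K)$.

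There is essentially no obstacle: all the real work is in Proposition~\ref{est_cone2}, which establishes the affine recursion $K_1 = \tilde{C}\tilde{\theta}^{-1} + 1 + \tilde{\theta}^{-2}K_0$. The only sanity check worth making explicit is that the choice $v_0 = 0$ is handled trivially (since $0 \in C^u_F(K)$ for every $K$ and $D\mathcal{R}_F \cdot 0 = 0$), so one only needs to argue the nonzero case, which is exactly what Corollary~\ref{invariant_cones} covers.
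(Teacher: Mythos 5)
Your proof is correct and follows exactly the route the paper intends: the paper states this corollary (together with the Absorbing Cones corollary) as an immediate consequence of Corollary~\ref{invariant_cones} without writing out the elementary arithmetic, and your argument simply supplies that arithmetic, namely solving the fixed-point inequality $\tilde{C}\tilde{\theta}^{-1}+1+\tilde{\theta}^{-2}K\leq K$ and using the monotonicity of the cones $C^u_F(\cdot)$ in the parameter. Nothing further is needed.
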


 \begin{cor}[Absorbing  Cones] \label{absorbing_cones}Assume that (\ref{expanding_f}) holds.  For each 
 $$K_0 >  \frac{\tilde{C}\tilde{\theta}^{-1}+1}{1-\tilde{\theta}^{-2}}$$
the following holds:  for every $K > 0$ there exists $i$ such that for all 
 $F \in \Omega_{n,p}$ 
 \begin{equation} D\mathcal{R}_F^{i} C^u_F(K) \subset  C^u_{\mathcal{R}^{i}F}(K_0).\end{equation}
 \end{cor}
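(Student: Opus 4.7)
The plan is to iterate the one-step estimate from Corollary \ref{invariant_cones} and show that the induced recursion on the cone opening parameter contracts to a fixed point strictly below $K_0$, with a rate that is uniform in the base point $F$.

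More precisely, set $a = \tilde{C}\tilde{\theta}^{-1}+1$ and $b = \tilde{\theta}^{-2}$, so that Corollary \ref{invariant_cones} (after the replacement of $\mathcal{R}$ by $\mathcal{R}^{j_0}$, which is being assumed in force) reads: if $v \in C^u_F(K)$ then $D\mathcal{R}_F v \in C^u_{\mathcal{R}F}(aK^{in} + \cdots)$, i.e.\ the parameter evolves by the affine map $\Phi(K)=a+bK$. Iterating, starting from $K_0^{init}=K$, one obtains
\begin{equation*}
K_j = \Phi^j(K) = a\,\frac{1-b^j}{1-b} + b^j K.
\end{equation*}
Since $b = \tilde{\theta}^{-2}\in(0,1)$, the sequence $K_j$ converges to the fixed point $K^\star = a/(1-b) = (\tilde{C}\tilde{\theta}^{-1}+1)/(1-\tilde{\theta}^{-2})$, which by hypothesis is strictly less than $K_0$.

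Now I would proceed in two steps. First, given $K>0$, choose $i\ge 0$ large enough that $K_i \le K_0$; this choice depends only on $K$, $K_0$, $\tilde{C}$ and $\tilde{\theta}$, and in particular is independent of the base point $F$. Applying Corollary \ref{invariant_cones} inductively $i$ times along the orbit $F, \mathcal{R}F, \dots, \mathcal{R}^{i-1}F$ yields
\begin{equation*}
D\mathcal{R}_F^{\,i}\, C^u_F(K)\ \subset\ C^u_{\mathcal{R}^{i}F}(K_i).
\end{equation*}
Second, since $K_i \le K_0$ and clearly $C^u_G(K')\subset C^u_G(K_0)$ whenever $K'\le K_0$ (by the definition of the cones, condition C.\ only becomes weaker when the bound is enlarged), we conclude
\begin{equation*}
D\mathcal{R}_F^{\,i}\, C^u_F(K)\ \subset\ C^u_{\mathcal{R}^{i}F}(K_0),
\end{equation*}
which is the desired statement. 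No genuine obstacle is expected: the result is a purely formal consequence of the contraction of the affine recursion $\Phi$, and uniformity in $F$ is automatic because both $a$ and $b$ are global constants that do not depend on $F$. The only point worth being explicit about is the monotonicity of the cones in the parameter $K$, which follows at once from inspecting conditions A.\, B.\ and C.\ in the definition preceding Proposition \ref{est_cone2}.
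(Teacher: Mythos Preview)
Your proposal is correct and is exactly the argument the paper has in mind: the paper states that this corollary is ``an immediate consequence of Corollary \ref{invariant_cones}'' and gives no further details, and you have simply spelled out that immediate consequence by iterating the affine recursion $K \mapsto \tilde{C}\tilde{\theta}^{-1}+1+\tilde{\theta}^{-2}K$ and using that it contracts to the fixed point $(\tilde{C}\tilde{\theta}^{-1}+1)/(1-\tilde{\theta}^{-2}) < K_0$. The monotonicity $C^u_G(K')\subset C^u_G(K_0)$ for $K'\le K_0$ is indeed immediate from condition C.\ in the cone definition, and the uniformity in $F$ holds for the reason you give.
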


\begin{cor}[Unstable  Cones] \label{uunstable_cones} Assume that (\ref{expanding_f}) holds.  For each 
 $K_0 >  0$ there exists $C > 0$  such that for all 
 $F \in \Omega_{n,p}$, $v \in C^u_F(K_0)$  and $i\geq 0$
 \begin{equation} |D\mathcal{R}_F^{i} v| \geq   C\theta^i |v|.\end{equation}
 \end{cor}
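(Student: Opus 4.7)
The plan is to read the desired lower bound straight off the expanding estimate (\ref{expanding_f}) together with the definition of the cone $C^u_F(K_0)$, bypassing the forward-invariance machinery of Corollaries \ref{finvariant_cones}--\ref{absorbing_cones}. The key observation is that on the cone the quotient seminorm $\mathrm{dist}(\cdot, E^h_F)$ is comparable to the full norm, which lets us transfer the expansion from the quotient bundle $E$ back to $T\mathcal{B}_{nor}(U)$.

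First I would record the trivial upper bound
\[
\mathrm{dist}(w, E^h_G) \leq |w|, \qquad w \in T_G\mathcal{B}_{nor}(U),
\]
obtained by taking $0 \in E^h_G$ as a competitor. Applied with $w = D\mathcal{R}_F^i v$ and combined with (\ref{expanding_f}), this yields
\[
|D\mathcal{R}_F^i v| \;\geq\; \mathrm{dist}\bigl(D\mathcal{R}_F^i v, E^h_{\mathcal{R}^i F}\bigr) \;\geq\; C\,\theta^{i}\,\mathrm{dist}(v, E^h_F),
\]
valid for every $v \in T_F\mathcal{B}_{nor}(U)$ and every $i \geq 0$.

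Next I would convert the cone hypothesis into a comparison between $|v|$ and $\mathrm{dist}(v, E^h_F)$. Writing $v = u + w$ with $w \in E^h_F$, $|u| \leq e^{\epsilon}\mathrm{dist}(v, E^h_F)$ and $|w| \leq K_0|u|$ (cf.\ items A--C preceding Proposition \ref{est_cone2}), the triangle inequality gives
\[
|v| \;\leq\; |u| + |w| \;\leq\; (1+K_0)|u| \;\leq\; (1+K_0)\,e^{\epsilon}\,\mathrm{dist}(v, E^h_F),
\]
so that $\mathrm{dist}(v, E^h_F) \geq |v|/\bigl((1+K_0)e^{\epsilon}\bigr)$. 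Substituting into the previous display produces the claimed inequality with $C' = C\,e^{-\epsilon}/(1+K_0)$.

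I do not anticipate any real obstacle here: once the expansion estimate (\ref{expanding_f}) is in hand, this corollary is essentially a bookkeeping consequence of the cone definition and the trivial bound $\mathrm{dist}(\cdot, E^h) \leq |\cdot|$. Neither the invariance statement of Corollary \ref{invariant_cones} nor the absorbing property of Corollary \ref{absorbing_cones} enters the argument, and the parameter $K_0$ affects the estimate only through the denominator of the final constant. The real content is already in the expansion on the quotient bundle $E$, which itself rests on the Key Lemma (Theorem \ref{keylem}).
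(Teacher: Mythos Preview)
Your proof is correct and essentially identical to the paper's own argument: the paper also writes $v=u+w$ from the cone definition to get $|v|\leq(1+K_0)e^{\epsilon}\,d(v,E^h_F)$, then chains this with $|D\mathcal{R}^i_F v|\geq d(D\mathcal{R}^i_F v,E^h_{\mathcal{R}^iF})\geq C\theta^i d(v,E^h_F)$ to obtain the constant $C/((1+K_0)e^{\epsilon})$. Your observation that Corollaries \ref{finvariant_cones}--\ref{absorbing_cones} are not used here is also accurate.
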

 \begin{proof} If $v \in C^u_F(K_0)$ then $v = u +w$, with $ w \in E^h_F$,  
 $$|u|\leq e^\epsilon d(v,E^h_F)$$
 and
 $$|w|\leq K_0 |u|.$$
 So 
 $$|v|\leq |u| + |w| \leq (1+ K_0 )e^\epsilon d(v,E^h_F) .$$
 By (\ref{expanding_f}) we have
 $$|D\mathcal{R}^i_F\cdot v| \geq d(D\mathcal{R}^i_F\cdot v,E^h_{\mathcal{R}^iF})\geq C\theta^i d(v,E^h_F) \geq \frac{C}{(1+ K_0 )e^\epsilon}\theta^i |v|.$$
 \end{proof}
 
Now fix 
$$K_0> \frac{\tilde{C}\tilde{\theta}^{-1}+1}{1-\tilde{\theta}^{-2}}.$$
Choose $i > 0$ such that 
$$\theta_1 = \frac{C}{(1+ K_0 )e^\epsilon}\theta^i > 1.$$
Replace (once again)  the operator $\mathcal{R}$ by its iteration $\mathcal{R}^i$.

\begin{cor}[Unstable  Invariant Cones near  $\Omega_{n,p}$] \label{unstable_cones} Assume that (\ref{expanding_f}) holds.  For each 
 \begin{equation}\label{uns} K_0> \frac{\tilde{C}\tilde{\theta}^{-1}+1}{1-\tilde{\theta}^{-2}}.\end{equation} 
and $\hat{\theta} \in (1, \theta_1)$  there exists $\delta > 0$ such that if 
 $$dist(F,G_0) < \delta $$ 
 for some  $G_0 \in \Omega_{n,p}$ then 
$$ D\mathcal{R}_FC^u_{G_0}(K_0) \subset C^u_{\mathcal{R}G_0}(K_0)$$
and 
 \begin{equation} |D\mathcal{R}_F \cdot v| \geq  \hat{\theta} |v|.\end{equation}
  for every $v \in C^u_{G_0}(K_0)$.
 \end{cor}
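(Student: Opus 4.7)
The proof is a perturbation argument. The key observation is that at points $G_0 \in \Omega_{n,p}$, the cone inclusion is \emph{strict}. Indeed, Proposition \ref{est_cone2} (applied to the current iterate of $\mathcal{R}$) gives $D\mathcal{R}_{G_0} C^u_{G_0}(K_0) \subset C^u_{\mathcal{R}G_0}(K_1)$ with
$$K_1 = \tilde{C}\tilde{\theta}^{-1} + 1 + \tilde{\theta}^{-2} K_0 < K_0,$$
the strict inequality following from the hypothesis (\ref{uns}) that $K_0$ lies above the fixed point of the affine contraction $K \mapsto \tilde{C}\tilde{\theta}^{-1}+1+\tilde{\theta}^{-2}K$. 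Combined with the expansion bound $|D\mathcal{R}_{G_0} v| \geq \theta_1 |v|$ from Corollary \ref{uunstable_cones} and the choice $\hat{\theta} < \theta_1$, we have uniform positive margins both in the cone aperture ($K_0 - K_1 > 0$) and in the expansion rate ($\theta_1 - \hat{\theta} > 0$) over all $G_0 \in \Omega_{n,p}$.

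Next, I would invoke continuity. By Remark \ref{sc} the operator $\mathcal{R}$ is a strongly compact complex analytic operator on $\mathcal{W}$, so $F \mapsto D\mathcal{R}_F$ is continuous from $\mathcal{W}$ into the space of bounded linear operators on $T\mathcal{B}_{nor}(U)$ with the operator norm. By Proposition \ref{qts} the quotient bundle $E$ is a topological vector bundle and the quotient norm $(F, [v]) \mapsto |(F,[v])|$ is continuous; hence so is the distance function $(F, v) \mapsto d(v, E^h_F)$ on $\Omega_{n,p} \times T\mathcal{B}_{nor}(U)$. Fix $G_0 \in \Omega_{n,p}$. Exploiting the fact that the cone conditions are homogeneous of degree $0$ in $v$, one reduces to controlling $D\mathcal{R}_F v - D\mathcal{R}_{G_0} v$ uniformly over $\{v \in C^u_{G_0}(K_0): |v| = 1\}$; this is controlled by $\|D\mathcal{R}_F - D\mathcal{R}_{G_0}\|_{\mathrm{op}}$. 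If this operator-norm distance is small enough, then $D\mathcal{R}_F v$ is close enough to $D\mathcal{R}_{G_0}v \in C^u_{\mathcal{R}G_0}(K_1)$ (in a way compatible with the decompositions $v = u + w$) to still belong to the strictly wider cone $C^u_{\mathcal{R}G_0}(K_0)$, and its norm exceeds $\hat{\theta}|v|$ since the gap $\theta_1 - \hat{\theta}$ absorbs the perturbation. This yields an open neighborhood $V_{G_0}$ of $G_0$ on which the desired conclusion holds.

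Finally, compactness of $\Omega_{n,p}$ (Corollary \ref{sh}) gives a finite subcover $V_{G_1}, \dots, V_{G_N}$ of $\{V_{G_0}\}_{G_0 \in \Omega_{n,p}}$, and the associated Lebesgue number furnishes the uniform $\delta > 0$. The main technical point is the passage from pointwise continuity at a single $G_0$ to uniform cone invariance over the whole unit sphere of $T_{G_0}\mathcal{B}_{nor}(U)$—not automatic in infinite dimensions—but this is secured by the scale invariance of the cone conditions together with the operator-norm continuity of $F \mapsto D\mathcal{R}_F$ and the uniform strict margins established in the first paragraph.
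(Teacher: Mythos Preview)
Your proposal is correct and follows essentially the same approach as the paper: exploit the strict margins $K_1 < K_0$ and $\theta_1 > \hat{\theta}$, then absorb the perturbation $D\mathcal{R}_F - D\mathcal{R}_{G_0}$ (controlled in operator norm) into those margins. The paper carries out explicitly the step you summarize as ``close enough \dots\ to still belong to the strictly wider cone'': it redecomposes $D\mathcal{R}_F\cdot v = \big(u_1 + (D\mathcal{R}_{G_0}-D\mathcal{R}_F)\cdot v\big) + w_1$ and checks conditions A, B, C of the cone definition by hand, with an auxiliary parameter $\gamma\in(0,\epsilon)$ chosen so that $K_1 e^\gamma < K_0$; your compactness/Lebesgue-number step is implicit there, since the paper's smallness constraints on $|D\mathcal{R}_{G_0}-D\mathcal{R}_F|$ involve only universal constants.
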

\begin{proof} 
Define $K_1= \tilde{C} \tilde{\theta}^{-1} + 1 + \tilde{\theta}^{-2}K_0.$ Then 
$$\frac{\tilde{C}\tilde{\theta}^{-1}+1}{1-\tilde{\theta}^{-2}} <   K_1 < K_0.$$
Choose $\gamma \in (0, \epsilon)$ small enough such that 
$$K_1e^\gamma < K_0.$$
Let $v \in C^u_{G_0}(K_0)$. Then there exist     $w_0 \in E^h_{G_0}$ and $u_0$ such that  $v = u_0+w_0$ and 
 $$|u_0|\leq e^\epsilon d(v_0,E^h_{G_0}) \  and \ |w_0| \leq K_0 |u_0|.$$ 
 Moreover  there exist  $w_1 \in E^h_{\mathcal{R}G_0}$ and $u_1$ satisfying
 $$|u_1|\leq e^{\gamma/3} d(D\mathcal{R}_{G_0}\cdot v,E^h_{\mathcal{R}G_0})$$ 
 and $D\mathcal{R}_{G_0}\cdot v = u_1 + w_1$.   By  Proposition \ref{est_cone2}
$$|w_1| \leq K_1 |u_1|.$$
Then 
\begin{equation} \label{a1} D\mathcal{R}_{F}\cdot v = u_1  + (D\mathcal{R}_{G_0}-D\mathcal{R}_{F})\cdot v + w_1.\end{equation} 
Note that
\begin{eqnarray} && d((D\mathcal{R}_{G_0}-D\mathcal{R}_{F})\cdot v,E^h_{\mathcal{R}G_0}) \nonumber  \\
 &\leq&  | (D\mathcal{R}_{G_0}-D\mathcal{R}_{F})\cdot v| \nonumber  \\
 &\leq&  |D\mathcal{R}_{G_0}-D\mathcal{R}_{F}| e^\epsilon (1+K_0) d(v,E^h_{G_0})\nonumber  \\
  &\leq& |D\mathcal{R}_{G_0}-D\mathcal{R}_{F}| \frac{e^\epsilon (1+K_0)}{\theta} d(D\mathcal{R}_{G_0}\cdot v,E^h_{\mathcal{R}G_0}) 
\end{eqnarray} 
Let $\delta _1 > 0$ be such that $|F-G_0| < \delta_1$ implies 
$$ 1 - |D\mathcal{R}_{G_0}-D\mathcal{R}_{F}| \frac{e^\epsilon (1+K_0)}{\theta} \geq e^{-\gamma/3},$$
$$e^{\gamma/3} + |D\mathcal{R}_{G_0}-D\mathcal{R}_{F}| \frac{e^\epsilon (1+K_0)}{\theta} \leq e^{2\gamma/3}, $$
and
$$\tilde{\theta}= \theta_1-  |D\mathcal{R}_{G_0}-D\mathcal{R}_{F}|> \hat{\theta}  > 1. $$
Then 
\begin{eqnarray} &&d(D\mathcal{R}_{F}\cdot v,E^h_{\mathcal{R}G_0})\nonumber  \\ 
&\geq& d(D\mathcal{R}_{G_0}\cdot v,E^h_{\mathcal{R}G_0}) - d((D\mathcal{R}_{G_0}-D\mathcal{R}_{F})\cdot v,E^h_{\mathcal{R}G_0}) \nonumber  \\
 &\geq&  e^{-\gamma/3}  d(D\mathcal{R}_{G_0}\cdot v,E^h_{\mathcal{R}G_0}) 
\end{eqnarray} 
so 
\begin{eqnarray} |u_1  + (D\mathcal{R}_{G_0}-D\mathcal{R}_{F})\cdot v|&\leq& e^{\gamma/3} d(D\mathcal{R}_{G_0}\cdot v,E^h_{\mathcal{R}G_0}) +  | (D\mathcal{R}_{G_0}-D\mathcal{R}_{F})\cdot v| \nonumber \\
&\leq& e^{2\gamma/3} d(D\mathcal{R}_{G_0}\cdot v,E^h_{\mathcal{R}G_0}) \nonumber \\ 
 &\leq& e^\gamma d(D\mathcal{R}_{F}\cdot v,E^h_{\mathcal{R}G_0}) \nonumber \\
\label{a2} &\leq& e^\epsilon d(D\mathcal{R}_{F}\cdot v,E^h_{\mathcal{R}G_0}),
\end{eqnarray} 
and moreover
\begin{eqnarray} |u_1  + (D\mathcal{R}_{G_0}-D\mathcal{R}_{F})\cdot v|&\geq& |u_1| -  | (D\mathcal{R}_{G_0}-D\mathcal{R}_{F})\cdot v| \nonumber \\
&\geq& d(D\mathcal{R}_{G_0}\cdot v,E^h_{\mathcal{R}G_0}) -   | (D\mathcal{R}_{G_0}-D\mathcal{R}_{F})\cdot v|   \nonumber \\ 
 &\geq& e^{-\gamma/3} d(D\mathcal{R}_{G_0}\cdot v,E^h_{\mathcal{R}G_0}).
\end{eqnarray}  
Finally 
\begin{eqnarray} |w_1|&\leq& K_1 |u_1|  \nonumber \\
&\leq& K_1 e^{\gamma/3}  d(D\mathcal{R}_{G_0}\cdot v,E^h_{\mathcal{R}G_0}) \nonumber \\
 &\leq& K_1 e^{2\gamma/3} |u_1  + (D\mathcal{R}_{G_0}-D\mathcal{R}_{F})\cdot v|.\nonumber \\
\label{a3} &\leq& K_0 |u_1  + (D\mathcal{R}_{G_0}-D\mathcal{R}_{F})\cdot v|.
\end{eqnarray} 
So (\ref{a1}), (\ref{a2}) and (\ref{a3}) implies that $D\mathcal{R}_{F}\cdot v \in C^u_{\mathcal{R} G_0}(K_0)$.  Furthermore
\begin{eqnarray}  |D\mathcal{R}_{F}\cdot v| &\geq&   |D\mathcal{R}_{G}\cdot v|  -  |(D\mathcal{R}_{G_0}-D\mathcal{R}_{F})\cdot v| \nonumber \\
&\geq&   \theta_1 |v|  -  |D\mathcal{R}_{G_0}-D\mathcal{R}_{F}||v| \nonumber \\
&\geq&  \hat{\theta} |v|.
\end{eqnarray} 
\end{proof}

\begin{proof}[Proof of Proposition  \ref{key}]  Let $K_0$ be as in Corollary \ref{unstable_cones}.  We claim that every cone $C^u_{F}(K_0)$, with $F \in \Omega_{n,p}$, contains a subspace $S_F$ of dimension $n$. Indeed, since $E^h_{G}$, $G \in \Omega_{n,p}$,  has finite codimension $n$ there is a subspace $E_G \subset T\mathcal{B}_{nor}(U)$ of dimension $n$  such that 
$$E_G\oplus E^h_{G}= T\mathcal{B}_{nor}(U).$$
Since $\Omega_{n,p}$ is a Cantor set and $G\mapsto E^h_G$ is a continuous distribution, it is easy to see that we can find a finite  covering $\{O_j\}_j$ by compact  subsets of $\Omega_{n,p}$ and subspaces $E_j$ with dimension  $n$ such that 
$$E_j\oplus E^h_{G}= T\mathcal{B}_{nor}(U)$$
for every $G \in O_j$. 
By (\ref{cobre}), Proposition \ref{est_cone2} and Corollary \ref{absorbing_cones}  there exists $i_0$ such that 
$$D\mathcal{R}^{i_0}_GE_j\subset C^u_{\mathcal{R}^{i_0}G}(K_0).$$
for every $G \in O_j$.  
Moreover  $\mathcal{R}$ is invertible on $\Omega_{n,p}$, so we can choose $G$ such that $\mathcal{R}^{i_0}G=F$. Since  $D\mathcal{R}_G$ is injective for every $G \in \Omega_{n.p}$ we conclude that $S_F=D\mathcal{R}^{i_0}_GE_j$ is a subspace of dimension $n$ in $C^u_{F}(K_0)$.  This concludes the proof of the claim. Note that
$$S_F\oplus E^h_{F}= T\mathcal{B}_{nor}(U).$$

Let $G \in \Omega_{n,p}$. Since $\mathcal{R}$ is invertible on $\Omega_{n,p}$ there is a unique sequence $G_{i} \in \Omega_{n,p}$ such that $\mathcal{R}G_{i+1}=G_i$ and $G_0=G$. Let $S'_i$ be an arbritrary subspace of dimension $n$ contained in $C^u_{G_i}(K_0)$,  Then $D\mathcal{R}_{G_i}^i (S'_i)$ is a subspace of dimension $n$ contained in $C^u_{G}(K_0)$. Since 
\begin{equation}\label{intu}  C^u_{G}(K_0)\cap E^h_G=\{0\}\end{equation} 
we have that there is a linear function
$$\mathcal{H}_i\colon S_{G} \rightarrow E^h_G$$
such that $\{v+\mathcal{H}_i(v), \ v \in S_G\}= D\mathcal{R}_{G_i}^i (S'_i).$  Since $v+\mathcal{H}_i(v) \in C^u_G(K_0)$ we have $v+\mathcal{H}_i(v)= u_1+w_1$, with $w_1 \in E^h_G$ and
$$|u_1|\leq e^\epsilon d(v+\mathcal{H}_i(v),E^h_G)= e^\epsilon d(v,E^h_G)< e^\epsilon |v|.$$
and $ |w_1|\leq K_0 |u_1|.$
In particular
$$|\mathcal{H}_i(v)|=|u_1+w_1-v|\leq (1+ e^\epsilon(1+K_0))|v|,$$
So the family of functions $\mathcal{H}_i$ is an equicontinuous family of functions.  By the strong compactness of the operator $\mathcal{R}$ and Arzela-Ascoli  Theorem there exists a subsequence that converges to a bounded linear function $\mathcal{H}^G\colon S_G \rightarrow E^h_G$ satisfying
$$\{v+\mathcal{H}^G(v), \ v \in S_G\} \subset \bigcap_{i=0}^\infty  \overline{D\mathcal{R}_{G_i}^i C^u_{G_i}(K_0)} \subset  C^u_G(K_0).$$
Due (\ref{intu}) and the contraction in the horizontal directions we have that there is only one possible accumulation point $\mathcal{H}^G$ for sequences as $(\mathcal{H}_i)_i$. Let 
$$E^u_G=\{v+\mathcal{H}^G(v), \ v \in S_G\}.$$
 Then we can easily conclude that $D\mathcal{R}_G(E^u_G)=E^u_{\mathcal{R}G}$.  Then $G\mapsto E^u_G$ is the unstable direction of $\mathcal{R}$. 
\end{proof}

\section{Induced expanding  maps.} 

Let $F \in \Omega_{n,p}$.  We are going to define a real induced map $G_\mathbb{R}\colon D\rightarrow \mathbb{R}$ for $F$ whose domain $D$ is the union of intervals $R_{-i}^k$, $k\geq 0$ and $0< i < n_r^k$, $r \in C(F)$, satisfying 
$$R_{-i}^{k}\subset  \bigcup_{q \in C(F)} Q_0^{k-1}.$$
If $R_{-i}^{k} \subset  Q_0^{k-1}$ and  $s \in C(F)$ is the sucessor of $q$ at the $k-1$ level  we  define  $G_\mathbb{R}(x)=F^{n_s^{k-1}}(x)$ for every $x \in R_{-i}^{k}$. Note that  $$G_\mathbb{R}\colon R_{-i}^{k}\rightarrow R_{-i+n_s^{k-1}}^k$$
is a diffeomorphism. Due the real bounds there exists $\epsilon_0$ such that 

\begin{equation} \label{esc_ep} 0< \epsilon_0< \inf_{r \in C(F), k}  \frac{2 dist(P(F),\partial R_0^k) }{|R_0^k|}.\end{equation} 

We will now define a complex-analytic extension $G$ of $G_\mathbb{R}$ that is a conformal iterated dynamical system. Suppose that  $R_{-i}^{k}\subset Q^{k-1}_0$. Let $\mathscr{R}_0^k$ be the ball $B(r,d^k_r)$, where 
\begin{equation}\label{dkr}  d^k_r=(1-\frac{\epsilon_0}{16})\frac{|R_0^k|}{2}.\end{equation}
 Since $F$ belongs to the Epstein class, there exists a simply connected domain $\mathscr{R}_{-i}^k$ such that  $\mathscr{R}_{-i}^k\cap \mathbb{R} \subset  R_{-i}^k$ and $\mathscr{R}_{-i}^k $ is contained in the ball whose diameter is $R_{-i}^k\cap \mathbb{R}$ and moreover 
$$F^i\colon \mathscr{R}_{-i}^k \rightarrow \mathscr{R}_{0}^k $$
is univalent. Due the real bounds, we can reduce $\epsilon_0$ if necessary in such way that 
$$\mathscr{R}_{-i}^k \subset \mathcal{Q}^{k-1}_0$$
for every $R_{-i}^k \subset Q^{k-1}_0$ and 
$$\inf_k \inf_{\substack{\{q,r \} \subset C(F) \\  R_{-i}^k \subset Q^{k-1}_0}} dist(\mathscr{R}_{-i}^k,  \partial \mathcal{Q}^{k-1}_0) > 0$$

\begin{figure}
\psfrag{g}{$G$}
\psfrag{q}[][][0.8]{$q$}
\psfrag{r}[][][0.8]{$r$}
\psfrag{s}[][][0.8]{$s$}
\psfrag{sk1}{$\mathcal{S}^{k-1}_0$}
\psfrag{rk1}{$\mathscr{R}^{k-1}_0$}
\psfrag{qk1}{$\mathcal{Q}^{k-1}_0$}
\psfrag{sk}[][][0.8]{$\mathcal{S}^{k}_0$}
\psfrag{sk-1}[][][0.8]{$\mathcal{S}^{k}_{-i_1}$}
\psfrag{sk-2}[][][0.8]{$\mathcal{S}^{k}_{-i_2}$}
\psfrag{sk-3}[][][0.8]{$\mathcal{S}^{k}_{-i_3}$}
\psfrag{sk-4}[][][0.8]{$\mathcal{S}^{k}_{-i_4}$}
\psfrag{rk}[][][0.8]{$\mathscr{R}^{k}_0$}
\psfrag{rk-1}[][][0.8]{$\mathscr{R}^{k}_{-j_1}$}
\psfrag{qk}[][][0.8]{$\mathcal{Q}^{k}_0$}
\psfrag{qk-1}[][][0.8]{$\mathcal{Q}^{k}_{-\ell_1}$}


\includegraphics[scale=0.35]{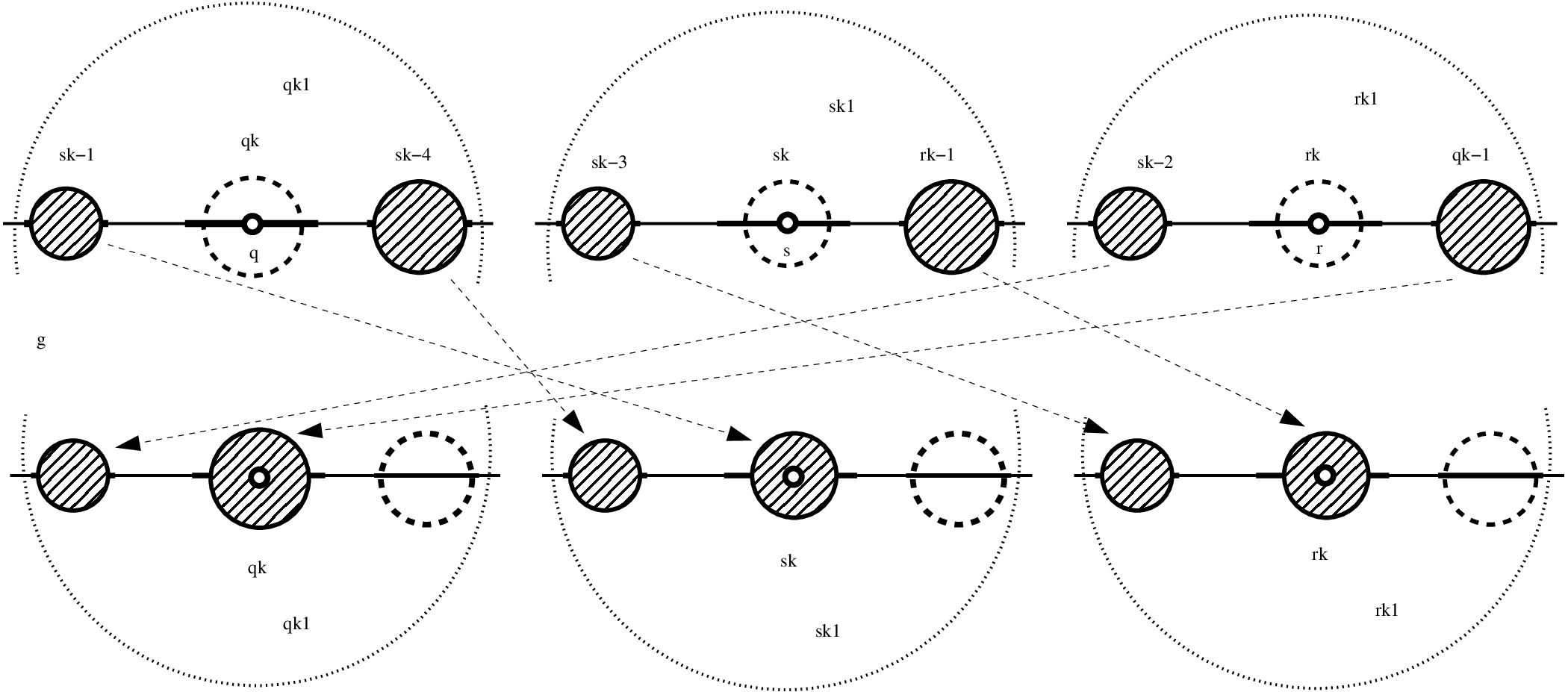}
\caption{How $G$ acts on the domains $\mathscr{R}_{-i}^k \subset \cup_{q \in C(F)} \mathcal{Q}^{k-1}_0$ if the combinatorics of the $k$-th renormalization is the same as the combinatorics of the renormalization $R(F)$   in Fig. 1. At level $k-1$ we have that  $s$ is the sucessor of $q$,  $r$ is the sucessor of $s$ and $q$ is the sucessor of $r$.  At level $k$ we have that  $r$ is the sucessor of $q$, $s$ is the sucessor of $r$  and  $q$ is the sucessor of $s$. Moreover  $\ell_1, j_1 > 0$ and $0< i_1< i_2< i_3 < i_4$.  }
\end{figure}

We define $G$ on
$$\mathscr{D}=\bigcup_k \bigcup_{r,q \in C(F)} \bigcup_{R_{-i}^{k} \subset Q^{k-1}_0} \mathscr{R}_{-i}^k$$
as $G(z)=F^{n_s^{k-1}}(z)$ for every $z \in \mathscr{R}_{-i}^{k}$, where $R_{-i}^k  \subset Q_0^{k-1}$ and $s \in C(F)$ is the sucessor of $q$ at level $k-1$.  Note that $P(F)\setminus C(F)  \subset \mathscr{D}$ and $$G(P(F)\setminus C(F) )= P(F).$$

\begin{lem} [Markovian property of the induced map]\label{lemd} Let  $r_j \in C(F)$, $m_i \in \mathbb{N}$ and $0\leq  i_j < n_{r_j}^{m_i}$, $j\leq \ell$ be such that 

\begin{itemize}
\item[A.] either we have that $m_{j+1}=m_{j}$, $i_{j+1} <  i_j$, $r_{j+1}=r_j=r$ for some $r \in C(F)$   and moreover
  $$R_{-i_{j}}^{m_{j}}, R_{-i_{j+1}}^{m_{j}} \subset \bigcup_{q \in C(F)} Q_0^{m_j-1}.$$
and
$$R_{-i}^{m_{j}} \not\in \bigcup_{q \in C(F)} Q_0^{m_j-1}$$
for every $i$ satisfying $i_{j+1} < i < i_{j}$. In particular $$G_{\mathbb{R}}\colon R_{-i_{j}}^{m_{j}} \mapsto R_{-i_{j+1}}^{m_{j+1}}.$$
is a diffeomorphism.

\item[B.] or 
$$m_{j+1} > m_j,$$ 
with
$$ R_{-i_{j}}^{m_{j}} \subset \bigcup_{q \in C(F)} Q_0^{m_j-1},$$
and
$$R_{-i}^{m_{j}} \not\in \bigcup_{q \in C(F)} Q_0^{m_j-1}$$
for every $i$ satisfying $0 < i < i_j$. Here $r=r_j$. In particular $$G\colon R_{-i_{j}}^{m_{j}} \mapsto R_{0}^{m_{j}}.$$
is a diffeomorphism.
Moreover  $i_{j+1} > 0$ and 
$$S_{-i_{j+1}}^{m_{j+1}} \subset R_0^{m_{j+1}-1},$$
where $s=r_{j+1}$. In particular $$G_{\mathbb{R}}\colon R_{-i_{j}}^{m_{j}} \mapsto R_{0}^{m_{j}}.$$
is a diffeomorphism and $$S_{-i_{j+1}}^{m_{j+1}}  \subset R_{0}^{m_{j}}=G_{\mathbb{R}}(R_{-i_{j}}^{m_{j}} ).$$
\end{itemize}

Then there exists a unique  interval $W$ such that $$G^\ell_\mathbb{R}\colon W \rightarrow R_{-i_\ell}^{m_\ell}$$
is a diffeomorphism and $W$ is the set of points $z$ such that for every $j\leq \ell$ we have  $G^j_\mathbb{R} (z) \in Q_{-i_{j}}^{m_{j}}$, where $q=r_j$.  Moreover  $W= R_{-i}^{m_\ell}$, for some $i$, where $r=r_\ell$.  
\end{lem}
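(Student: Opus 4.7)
The plan is to argue by induction on $\ell$, exploiting the fact that each branch of $G_{\mathbb{R}}$ is (by construction) a specific iterate of $F$ that maps its domain diffeomorphically onto one of the indexed intervals $R^k_{-j}$ or $R^k_0$. The content of the lemma is essentially that this single-step Markov property is preserved under composition, so that the symbolic coding by the sequence $(r_j, m_j, i_j)$ determines a unique cylinder set, which turns out again to be an indexed interval.

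For the base case $\ell = 1$ one takes $W = R^{m_1}_{-i_1}$; the dynamic characterization is the tautology $z \in R^{m_1}_{-i_1}$. For the inductive step, assume the statement for sequences of length $\ell$, giving an interval $W' = R^{m_\ell}_{-i}$ (with $r = r_\ell$) on which $G^\ell_{\mathbb{R}}$ is a diffeomorphism onto $R^{m_\ell}_{-i_\ell}$. I then need to intersect $W'$ with the set of points whose image under $G^\ell_{\mathbb{R}}$ lies in the preimage of $R^{m_{\ell+1}}_{-i_{\ell+1}}$ under the appropriate branch of $G_{\mathbb{R}}$; since $G^\ell_{\mathbb{R}}|_{W'}$ is a diffeomorphism onto $R^{m_\ell}_{-i_\ell}$, it suffices to identify this intersection on the final branch.

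I would then split into the two cases of the transition from $j=\ell$ to $j+1$. In Case A the relevant branch of $G_{\mathbb{R}}$ already maps $R^{m_\ell}_{-i_\ell}$ onto $R^{m_\ell}_{-i_{\ell+1}} = R^{m_{\ell+1}}_{-i_{\ell+1}}$ as a diffeomorphism, so the additional constraint is automatic and $W = W'$; since $m_{\ell+1} = m_\ell$, the form $R^{m_\ell}_{-i}$ is the desired form $R^{m_{\ell+1}}_{-i}$. In Case B the branch maps $R^{m_\ell}_{-i_\ell}$ diffeomorphically onto $R^{m_\ell}_0$, and by the nested structure of the renormalization intervals $m_{\ell+1} > m_\ell$ gives $S^{m_{\ell+1}}_{-i_{\ell+1}} \subset R^{m_{\ell+1}-1}_0 \subset R^{m_\ell}_0$; pulling this inclusion back along the branch produces an interval $J \subset R^{m_\ell}_{-i_\ell}$ on which some iterate of $F$ is a diffeomorphism onto $S^{m_{\ell+1}}_0$, hence $J$ is itself of the form $S^{m_{\ell+1}}_{-\tilde\imath}$ by the defining monotonicity property of those intervals. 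Pulling $J$ back once more through the diffeomorphism $G^\ell_{\mathbb{R}}|_{W'}$ gives the desired $W = R^{m_{\ell+1}}_{-i'}$ inside $W'$, and uniqueness follows from the injectivity of each branch.

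The main obstacle is the identification step in Case B: one has to be sure that pulling $S^{m_{\ell+1}}_{-i_{\ell+1}}$ back through both the terminal branch $F^{n^{m_\ell-1}_{s_\ell}}$ and the composed diffeomorphism $G^\ell_{\mathbb{R}}|_{W'}$ gives an interval in the indexed family at level $m_{\ell+1}$ rather than merely some interval with a monotone iterate. This is exactly where the recursive definition of the $R^k_{-i}$ as the $F^i$-preimages of $R^k_0$ containing the appropriate pieces of the postcritical orbit is used, together with the fact (from the $k$-th renormalization structure) that $G^\ell_{\mathbb{R}}|_{W'}$ is itself a single iterate of $F$, so composing the pullbacks only sums the preimage orders and preserves membership in the family at the deepest level.
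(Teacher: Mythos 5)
Your induction runs in the opposite direction from the paper's, and this is a genuine (if small) difference in the decomposition. The paper applies the induction hypothesis to the \emph{shifted} sequence $(r_1,m_1,i_1),\dots,(r_{\ell+1},m_{\ell+1},i_{\ell+1})$, which immediately produces an interval $R^{m_{\ell+1}}_{-b}$ at the \emph{deepest} renormalization level $m_{\ell+1}$, contained in $S^{m_1}_{-i_1}$; the inductive step is then a pullback through a \emph{single} branch of $G_{\mathbb{R}}$ (the transition $j=0 \to j=1$), and the claim that the pullback is again a member of the indexed family $R^{m_{\ell+1}}_{-(b+\,\cdot\,)}$ is a one-step statement about a single $F$-preimage. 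You instead apply the hypothesis to the \emph{truncated} sequence $(r_0,\dots,r_\ell)$, obtaining $W'=R^{m_\ell}_{-i}$ at the shallower level $m_\ell$; in Case B you then must pull $S^{m_{\ell+1}}_{-i_{\ell+1}}$ back first through the terminal branch and then through the $\ell$-fold composition $G^\ell_{\mathbb{R}}|_{W'}$, and the identification that this lands you in the indexed family $R^{m_{\ell+1}}_{-i'}$ at the deepened level is exactly the point you flag as the main obstacle. Both routes rest on the same fact --- each branch, and hence each finite composition of branches, is a single monotone iterate of $F$ whose domain is chosen to contain the correct piece of the postcritical orbit, so pulling back only sums the preimage orders --- and you correctly identify it, so the proof can be completed. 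The paper's order of peeling just makes that identification a one-step rather than a multi-step affair, which is why its write-up is terser. One cosmetic slip: your base case is stated as ``$\ell=1$, $W=R^{m_1}_{-i_1}$'', but with the paper's indexing $j=0,\dots,\ell$ the trivial case is $\ell=0$ with $W=R^{m_0}_{-i_0}$ (for $\ell=1$ the interval $W$ is a strict preimage of $R^{m_1}_{-i_1}$, not $R^{m_1}_{-i_1}$ itself); you should either start the base case at $\ell=0$ or re-index consistently so that $G^{\ell-1}_{\mathbb{R}}$, not $G^\ell_{\mathbb{R}}$, is the diffeomorphism in the inductive hypothesis.
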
 
\begin{proof} If $\ell=0$, there is nothing to prove, since $W=R^{m_0}_{-i_0}$, with $r =r_0$. Suppose by induction on $\ell$  that   Lemma \ref{lemd} holds for $\ell$. 
 Let  $r_j \in C(F)$, $m_i \in \mathbb{N}$ and $0< i_j < n_{r_j}^{m_i}$, $j\leq \ell+1$ be as in the statement of  the lemma.  By the induction assumption there exists $b$ such that 
$$G^\ell_\mathbb{R}\colon R_{-b}^{m_{\ell+1}} \rightarrow R_{-i_{\ell+1}}^{m_{\ell+1}}$$
is a diffeomorfism and  for every $j\leq \ell$ we have  $G^j_\mathbb{R}R_{-b}^{m_{\ell+1}} \subset Q_{-i_{j+1}}^{m_{j+1}} $, where $q=r_{j+1}$.
In particular $R_{-b}^{m_{\ell+1}} \subset S_{-i_{1}}^{m_{1}}$, with $s=r_1$.  There are two cases. If $m_0=m_1$ then $S_{-i_{0}}^{m_{0}}=S_{-i_{0}}^{m_{1}}$, $i_0 > i_1$, and 
$$G_{\mathbb{R}}\colon S_{-i_{0}}^{m_{1}}\rightarrow S_{-i_{1}}^{m_{1}}$$
is  a diffeomorphism, and $W=R^{m_\ell}_{-(b+ i_0-i_1)}$  is the unique interval $W \subset S_{-i_{0}}^{m_{1}}$ such that  $G_\mathbb{R}(W)=R^{m_\ell}_{-b}$. If $m_1 > m_0$ then $S_{-i_{1}}^{m_{1}}\subset Q_{0}^{m_0}$, with $q=m_0$, and 
$$G\colon Q_{-i_0}^{m_0} \rightarrow Q_0^{m_0}$$
is a diffeomorphism.  Then $W=R^{m_\ell}_{-(b+i_0)}$ is the unique interval $W \subset Q_{-i_0}^{m_0}$ such that $G(W)=R^{m_\ell}_{-b}$. 
\end{proof}

\begin{figure}
\psfrag{sk1}{$\mathcal{S}^{k+1}_{-i}$}
\psfrag{rk1}{$\mathscr{R}^{k+1}_{-j}$}
\psfrag{qk2}[][][0.3]{$\mathcal{Q}^{k+2}_0$}
\psfrag{qk1}[][][0.8]{$\mathcal{Q}^{k+1}_0$}
\psfrag{qk0}{$\mathcal{Q}^{k}_0$}
\psfrag{sk2}[][][0.5]{$\mathcal{S}^{k+2}_{-\ell}$}
\psfrag{qk2}[][][0.3]{$\mathcal{Q}^{k+2}_0$}
\psfrag{qk2-1}[][][0.3]{$\mathcal{Q}^{k+2}_{-a}$}
\psfrag{sk2-1}[][][0.3]{$\mathscr{R}^{k+2}_{-b}$}
\psfrag{rk3}[][][0.2]{$\mathcal{Q}^{k+3}_{-d}$}


\includegraphics[scale=0.6]{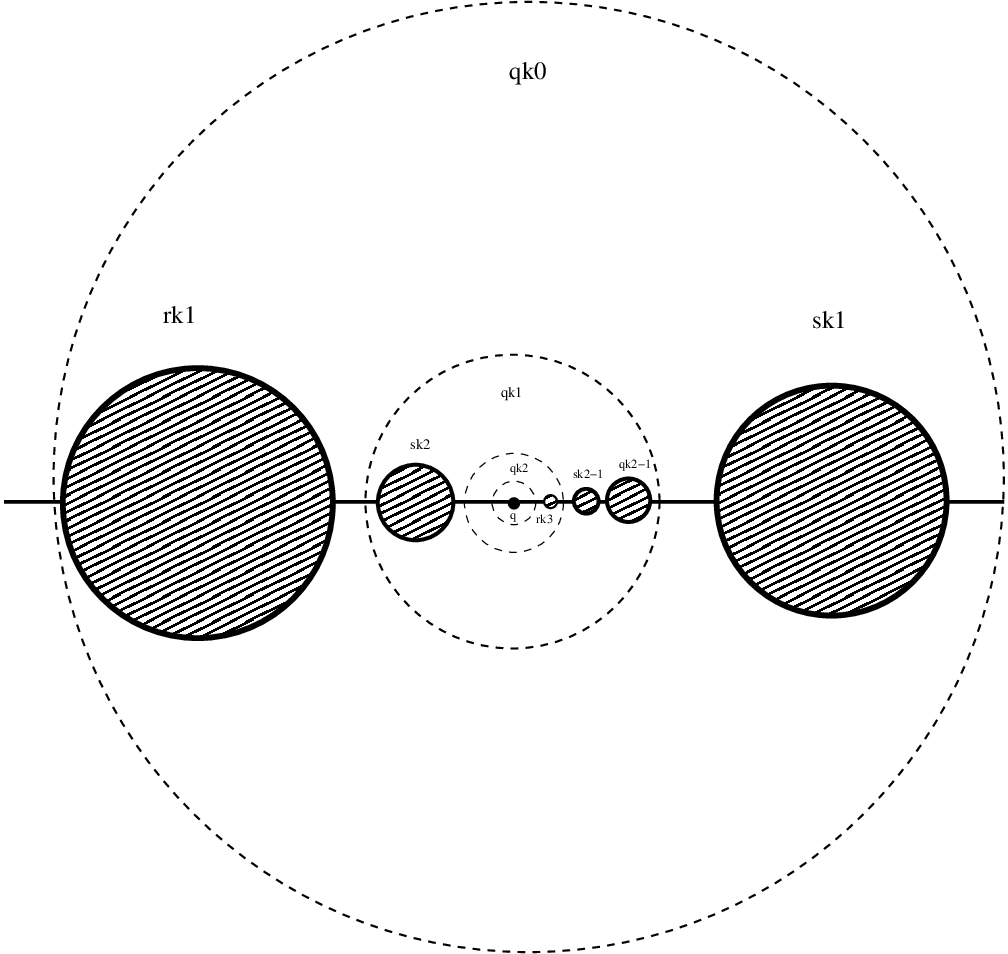}
\caption{The domain $\mathscr{D}$ of $G$ close to a critical point $q$ (the point in the center). It contains infinitely many topological disks accumulating on the critical point $q$.  }
\end{figure}

The next proposition says that the postcritical set $P(F)$ of $F$ is the maximal invariant set of the induced map $G$.
\begin{prop} \label{pulafora} Given $(z,j) \in \mathscr{D}$ we have that  $(z,j)$ belongs to $\mathscr{D}\setminus P(F)$ if and only if   there exists $k_0\geq 0$ such that $G^k(z,j) \in \mathscr{D}$ for every $k< k_0$ and $G^{k_0}(z,j) \not\in \mathscr{D}$.
\end{prop}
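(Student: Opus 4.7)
The plan is to prove the two implications separately, each via its contrapositive.

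For the direction ``orbit escapes $\Longrightarrow (z,j)\notin P(F)$'', I will show that $P(F)\cap \mathscr{D}$ is forward invariant under $G$. The excerpt already records that $P(F)\setminus C(F)\subset \mathscr{D}$ and $G(P(F)\setminus C(F))=P(F)$. What remains is to observe that, because $F\in \Omega_{n,p}$ is infinitely renormalizable with bounded combinatorics, $F^{m}(q)\notin C(F)$ for every $m\geq 1$ and every $q\in C(F)$: otherwise the critical orbit would be eventually periodic, contradicting the existence of arbitrarily deep renormalizations. Hence $G(P(F)\setminus C(F))\subset P(F)\setminus C(F)\subset \mathscr{D}$, so an orbit starting in $P(F)\cap \mathscr{D}$ remains in $\mathscr{D}$ for all time.

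For the reverse direction, assume $G^k(z,j)\in \mathscr{D}$ for every $k\geq 0$; I aim at $(z,j)\in P(F)$. For each $k$ choose a triple $(m_k,i_k,r_k)$ with $G^k(z,j)\in \mathscr{R}^{m_k}_{-i_k}$ and $R^{m_k}_{-i_k}\subset Q^{m_k-1}_0$ for some $q\in C(F)$, picking $m_k$ maximal (the ``deepest'' level that still places $G^k(z,j)$ in the domain of $G$). Inspection of the definition of $G$ shows that consecutive triples fall under either Case A (same level, strictly smaller index) or Case B (strictly deeper level) of Lemma \ref{lemd}. Applying Lemma \ref{lemd} at each finite $\ell$ then produces a nested tower of intervals $W_0\supset W_1\supset W_2\supset \cdots$, each of the form $R^{a_\ell}_{-b_\ell}$, with $(z,j)\in W_\ell$ and $G^\ell_{\mathbb{R}}\colon W_\ell\to R^{m_\ell}_{-i_\ell}$ a diffeomorphism.

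The next step is to show $a_\ell\to \infty$. In Case A, the level is preserved but $i_j$ strictly decreases while required to remain positive (else $R^{m_j}_{-i_j}$ would leave $\mathscr{D}$), so only finitely many consecutive Case A transitions are possible before either a Case B transition raises the level or the orbit must exit $\mathscr{D}$. By hypothesis the orbit never exits, hence Case B occurs infinitely often and $a_\ell\to \infty$. Each $W_\ell$ is contained in a single interval of the period-$N_{a_\ell}$ cycle of the $a_\ell$-th renormalization, whose maximal length tends to zero by the real bounds; therefore $|W_\ell|\to 0$ and $\bigcap_\ell W_\ell=\{z^{*}\}$. Since the intersection of the nested level-$a_\ell$ cycles is the solenoidal attractor $\Lambda$, and since for a bounded-combinatorics infinitely renormalizable map the $\omega$-limit of every critical point coincides with $\Lambda$ (so that $P(F)=\Lambda$), we obtain $(z,j)=z^{*}\in P(F)$, as required.

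The main obstacle I anticipate is the final identification $\bigcap_\ell W_\ell\subset P(F)$: the geometric shrinkage $|W_\ell|\to 0$ is a routine consequence of the real bounds, but the equality $P(F)=\Lambda$ is the substantive input and is specific to the bounded-combinatorics setting. A secondary technical point is verifying that the chosen sequence $(m_k,i_k,r_k)$ really does satisfy the Case A/Case B dichotomy of Lemma \ref{lemd}; this should be immediate from the structure of $G$ described in Section \ref{po_re}, but requires a careful bookkeeping of indices.
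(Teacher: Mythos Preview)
Your proof of the easy direction (if $(z,j)\in P(F)\cap\mathscr{D}$ then the $G$-orbit never leaves $\mathscr{D}$) is correct, and your observation that $F^m(q)\notin C(F)$ for $m\ge 1$ is exactly what is needed to upgrade $G(P(F)\setminus C(F))=P(F)$ to $G(P(F)\setminus C(F))\subset P(F)\setminus C(F)$.

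The hard direction has a genuine gap. The domain $\mathscr{D}$ is a \emph{complex} domain (a union of the simply-connected sets $\mathscr{R}^k_{-i}\subset\mathbb{C}_n$), and the proposition is stated for an arbitrary $(z,j)\in\mathscr{D}$, not just for real points. Your argument, however, invokes Lemma~\ref{lemd} to produce \emph{real} intervals $W_\ell=R^{a_\ell}_{-b_\ell}$ and then asserts $(z,j)\in W_\ell$. That assertion is only legitimate once you know $(z,j)\in\mathbb{R}$; for a non-real $(z,j)$ the inclusion $(z,j)\in W_\ell$ simply fails, and the real bounds on $|W_\ell|$ say nothing about where $(z,j)$ sits. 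The complex case is not a technicality: the proposition is used later (in the proof of the Key Lemma) precisely to show that $\alpha^k$ stabilizes almost everywhere on $\mathbb{C}_n$, so one really needs the statement for complex $(z,j)$.

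The paper closes this gap by working directly with complex domains. For each $\ell$ it considers the set $D_\ell$ of points whose first $\ell$ iterates follow the itinerary $\mathscr{R}^{m_j}_{-i_j}$; these are nested, simply-connected, symmetric about $\mathbb{R}$, and---crucially---whenever the level jumps ($m_{\ell+1}>m_\ell$) one has $\mod(D_\ell\setminus D_{\ell+1})=\mod(\mathcal{Q}^{m_\ell}_0\setminus\mathscr{R}^{m_{\ell+1}}_{-i_{\ell+1}})\geq M>0$ by conformal invariance of modulus. Since level jumps occur infinitely often (your Case A/Case B count is correct here), $\operatorname{diam}(D_\ell)\to 0$ and $\bigcap_\ell D_\ell=\{z_0\}$. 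Symmetry then forces $z_0\in\mathbb{R}$, and only \emph{after} this does the paper invoke Lemma~\ref{lemd} to place $z_0$ in the nested real intervals $R^{m_j}_{-b_j}$, hence in $P(F)$. Your real-bounds argument could replace the very last step, but it cannot replace the modulus argument that pins down a unique (hence real) intersection point.
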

\begin{proof} Let  $r_j \in C(F)$, $m_i \in \mathbb{N}$ and $0< i_j < n_{r_j}^{m_i}$ such that either  $$m_{j+1} > m_j$$ and
$$R_{-i_{j+1}}^{m_{j+1}} \subset Q_0^{m_j}.$$
where  $r=r_{j+1}$ and $q=r_j$, or  $m_{j+1}=m_{j}$ and $i_{j+1}=i_j-1$. We claim that there exists a unique $z \in \mathbb{C}_n$ such that 
\begin{equation}\label{it34}  G^j(z) \in \mathscr{R}_{-i_{j}}^{m_{j}}\end{equation} 
for every $j\geq 0$. Indeed, for each $\ell$, let $D_\ell$ be the set of points such that (\ref{it34}) holds for every $j\leq \ell$.  Of course $D_{\ell+1}\subset D_\ell$. If $m_{\ell+1}=m_\ell$ then $D_{\ell+1}=D_\ell$. If $m_{\ell+1}> m_\ell$ then 
$$G^{\ell+1}\colon D_\ell \rightarrow \mathcal{Q}_0^{m_{\ell}},$$
with  $q=r_{\ell}$ and
$$G^{\ell+1}\colon D_{\ell+1} \rightarrow \mathcal{R}^{m_{\ell+1}}_{-i_{\ell+1}},$$
with $r=r_{\ell+1}$, are univalent (and onto). By the definition of $G$ 
$$M=\inf_{m} \inf_{\mathcal{R}^{m+1}_{-i} \subset \mathcal{Q}_0^m} \mod( \mathcal{Q}_0^m\setminus \mathcal{R}^{m+1}_{-i} ) > 0,$$
so
$$\mod(D_\ell\setminus D_{\ell+1})= \mod(\mathcal{Q}_0^{m_{\ell}}\setminus \mathcal{R}^{m_{\ell+1}}_{-i_{\ell+1}})\geq M,$$
in particular, since there exists infinitely many $\ell$ such that $m_{\ell+1} > m_\ell$ we have that 
$$\lim_\ell \ diam(D_\ell)=0$$
and
$$\cap_\ell D_\ell=\{z_0\}$$
for some $z_0 \in \mathscr{D}$. This completes the proof of the claim. Note that since $D_\ell$ are symmetric with respect to the $\mathbb{R}$  we have that $z_0 \in \mathbb{R}$ and by (\ref{it34})
\begin{equation}\label{iit34}  G^j(z_0) \in \mathscr{R}_{-i_{j}}^{m_{j}}\cap \mathbb{R} \subset R_{-i_{j}}^{m_{j}},\end{equation} 
where $r=r_j$, for every $j$. By Lemma \ref{lemd} there exists $b_j$ such that 
$$z_0 \in R^{m_j}_{-b_j},$$
where $r=r_j$, for every $j\geq 0$. In particular $z_0 \in P(F)$. 
\end{proof}

\section{Induced problem.} 

Let $a_i$ be as in (\ref{de_it}). Define the function
$$V\colon \mathscr{D} \rightarrow \mathbb{C}$$ as $$V(z)=a_i(z) =\frac{\partial}{\partial t} (F_t)^{n_s^{k-1}}(z)\big|_{t=0} $$ for every $z \in \mathscr{R}_{-i}^k\subset \mathscr{D}$, provided $R_{-i}^k \subset Q_0^{k-1}$ and $s$ is the sucessor of $q$ at the $k-1$ level. Here $F_t =F+tv$.

\begin{lem} \label{ind_lem}Suppose that $\alpha$ is a continuous vector field  in a neighbourhood of $P(F)$, such that
\begin{itemize}
\item[A1.] $\alpha(c)=0$ for every $c \in C(F)$, and\\
\item[A2.]  We have
\begin{equation} \label{ind_eq1} V(z)=\alpha\circ G(z) - DG(z)\cdot \alpha(z)\end{equation}
for every $z \in P(F)\setminus C(F)$.
\end{itemize}
 Let  $x \in P(F)$ and $\ell > 0$ such that $G^j(x)\not\in  C(F)$ for every $0\leq j < \ell$.  Then for each $j < \ell$ there is $r, q  \in C(F)$, with $b_j > 0$   such that $G^j(x) \in R_{-b_j}^{k_j}\subset Q^{k_j-1}_0$. Define $i_j = n^{k_j-1}_s$, where $s \in C(F)$ is the sucessor of $q$ at level $k-1$. Note that the critical points $q,s,r$ may depend on $x$ and $j$. Let    $$m= \sum_{j=0}^{\ell-1} i_j.$$
Then 
\begin{itemize}
\item[B1.] For every $z \in \mathbb{C}$ close enough to $x$ we have that $G^\ell(z)$ is well defined and $G^\ell(z)=F^m(z)$.
\item[B2.] For every  $z \in P(F)$ close enough to $x$  we have that $\alpha(G^\ell(z))$ is well defined and 
$$\alpha(z) = \frac{\alpha(G^\ell(z))}{DG^\ell(z)} -\sum_{a=1}^{m} \frac{v(F^{a-1}(z))}{DF^a(z)}.$$
\item[B3.] In particular for every  $z \in P(F)$ close enough to $x$ such that  $G^\ell(x) \in C(F)$ we have 
$$\alpha(x) = -\sum_{a=1}^{m} \frac{v(F^{a-1}(x))}{DF^a(x)}.$$
\end{itemize}
\end{lem}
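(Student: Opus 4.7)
\medskip

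\noindent \textbf{Proof plan for Lemma \ref{ind_lem}.}  The argument is essentially a bookkeeping exercise: iterate the cohomological equation (\ref{ind_eq1}) along the $G$--orbit of $x$ and then expand $V$ back into $F$--iterates via (\ref{de_it}).

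For B1 I would unwind the definition of $G$. By hypothesis $G^j(x)\in \mathscr{R}^{k_j}_{-b_j}\subset \mathcal{Q}^{k_j-1}_0$ with $q=r_j$, and on this domain $G$ agrees with $F^{i_j}$ where $i_j=n^{k_j-1}_s$ is the return time used in the definition of $G$ on $\mathscr{R}^{k_j}_{-b_j}$. By continuity there is a complex neighbourhood of $x$ on which $G^j$ maps into $\mathscr{R}^{k_j}_{-b_j}$ for each $j<\ell$, and on this neighbourhood
$$G^\ell(z)=F^{i_{\ell-1}}\circ\cdots\circ F^{i_0}(z)=F^{m}(z),\qquad m=\sum_{j=0}^{\ell-1} i_j,$$
which is B1.

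For B2, since $G^j(x)\notin C(F)$ for every $j<\ell$ and $\alpha$ is continuous, for $z\in P(F)$ sufficiently close to $x$ the points $G^j(z)$ all lie in $P(F)\setminus C(F)$ and in the domain of $\alpha$, so (\ref{ind_eq1}) applies at each of them. Solving (\ref{ind_eq1}) for $\alpha(z)$ gives $\alpha(z)=(DG(z))^{-1}\alpha(G(z))-(DG(z))^{-1}V(z)$, and iterating this identity $\ell$ times yields the telescoping formula
$$\alpha(z)=\frac{\alpha(G^\ell(z))}{DG^\ell(z)}-\sum_{j=0}^{\ell-1}\frac{V(G^j(z))}{DG^{j+1}(z)}.$$
It then remains to convert the $G$--sum into an $F$--sum. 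From the definition of $V$ and (\ref{de_it}), $V(G^j(z))=a_{i_j}(G^j(z))=\sum_{k=0}^{i_j-1}DF^{i_j-k-1}(F^{k+1}(G^j(z)))\,v(F^k(G^j(z)))$. Using $G^j(z)=F^{m_j}(z)$ with $m_j=\sum_{k<j}i_k$ and the chain--rule identity $DG^{j+1}(z)=DF^{m_{j+1}}(z)=DF^{m_j+k+1}(z)\cdot DF^{i_j-k-1}(F^{m_j+k+1}(z))$, each term in $V(G^j(z))/DG^{j+1}(z)$ collapses to $v(F^{m_j+k}(z))/DF^{m_j+k+1}(z)$. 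Reindexing by $a=m_j+k+1$ makes the range of $a$ traverse $\{m_j+1,\dots,m_{j+1}\}$, so summing over $j$ gives exactly $\sum_{a=1}^{m} v(F^{a-1}(z))/DF^a(z)$, which is B2.

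Finally B3 is immediate: evaluate B2 at $z=x$ and use hypothesis A1 together with $G^\ell(x)\in C(F)$ to conclude $\alpha(G^\ell(x))=0$, killing the first term on the right-hand side. No serious obstacle arises in this lemma; the only points requiring care are (i) keeping the combinatorial indices $i_j$, $m_j$, $b_j$, $k_j$ consistent so that $G$ really does act as the claimed power of $F$, and (ii) restricting to $z\in P(F)$ when invoking (\ref{ind_eq1}), which is only assumed to hold on the postcritical set.
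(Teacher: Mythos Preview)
Your proof is correct and follows essentially the same route as the paper: iterate the cohomological equation (\ref{ind_eq1}) along the $G$--orbit to obtain the telescoping sum $\alpha(z)=\frac{\alpha(G^\ell(z))}{DG^\ell(z)}-\sum_{j=0}^{\ell-1}\frac{V(G^j(z))}{DG^{j+1}(z)}$, then expand each $V(G^j(z))$ via (\ref{de_it}) and use the chain rule to collapse the double sum into $\sum_{a=1}^{m} v(F^{a-1}(z))/DF^a(z)$. The paper in fact leaves B1 to the reader and writes the computation for B2 more tersely, so your version is if anything a more detailed rendering of the same argument.
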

\begin{proof} We let to the reader to show that $G^\ell(z)=F^m(z)$ for $z$ close enough to $x$.  By (\ref{ind_eq1}) we get
$$\alpha(x) = \frac{\alpha(G^\ell(z))}{DG^\ell(z)} -\sum_{j=1}^{\ell} \frac{V(G^{j-1}(x))}{DG^j(z)}.$$
So
$$\sum_{j=1}^{\ell} \frac{V(G^{j-1}(x))}{DG^j(z)} = \sum_{j=1}^{\ell} \frac{(\partial_t (F_t)^{i_j})(F^{i_0+i_1+\cdots+i_{j-1}}(z))}{DF^{i_0+i_1+\cdots+i_{j}}(z)} $$ 
$$ =  \sum_{j=1}^{\ell} \sum_{b=0}^{i_j-1}  \frac{DF^{i_j-b-1}(F^{b+1}(F^{i_0+i_1+\cdots+i_{j-1}}(z)))v(F^b(F^{i_0+i_1+\cdots+i_{j-1}}(z)))}{DF^{i_0+i_1+\cdots+i_{j}}(z)}   $$ 
$$=  \sum_{j=1}^{\ell} \sum_{b=0}^{i_j-1}  \frac{DF^{i_j-b-1}(F^{b+1}(F^{i_0+i_1+\cdots+i_{j-1}}(z)))v(F^{i_0+i_1+\cdots+i_{j-1}+b}(z))}{ DF^{i_j-b-1}(F^{b+1}(F^{i_0+i_1+\cdots+i_{j-1}}(z)))   DF^{i_0+i_1+\cdots+i_{j-1}+b+1}(z)}    $$ 
$$=  \sum_{j=1}^{\ell} \sum_{b=0}^{i_j-1}  \frac{v(F^{i_0+i_1+\cdots+i_{j-1}+b}(z))}{   DF^{i_0+i_1+\cdots+i_{j-1}+b+1}(z)}    = \sum_{a=1}^{m} \frac{v(F^{a-1}(z))}{DF^a(z)}.$$ 

\end{proof}

\begin{prop} \label{ind_ipa} Suppose that $\alpha$ is a continuous vector field  in a neighbourhood of $P(F)$, such that $\alpha(r)=0$ for every $r \in C(F)$ and  
\begin{equation} \label{ind_eq} V(z)=\alpha\circ G(z) - DG(z)\cdot \alpha(z).\end{equation}
for every $z \in P(F)\setminus C(F)$. Then

\begin{equation} \label{or_eq} v(z)=\alpha\circ F(z) - DF(z)\cdot \alpha(z)\end{equation}
for every $z \in P(F)$. 
\end{prop}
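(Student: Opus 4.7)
The strategy is to derive an explicit series formula for $\alpha$ on $P(F)\setminus C(F)$ using the induced cohomological equation, and then verify (\ref{or_eq}) pointwise on a dense subset of $P(F)$ by telescoping that series; continuity will do the rest. Throughout, write $\beta(y)=\alpha(F(y))-DF(y)\alpha(y)-v(y)$, so that the goal is $\beta\equiv 0$ on $P(F)$.

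\textbf{Step 1 (series for $\alpha$).} Fix $y\in P(F)\setminus C(F)$. Applying Lemma \ref{ind_lem}(B2) with $x=z=y$, for every $\ell\geq 1$ with $G^j(y)\notin C(F)$ for $0\leq j<\ell$,
\[
\alpha(y)=\frac{\alpha(G^\ell(y))}{DG^\ell(y)}-\sum_{a=1}^{m^\ell}\frac{v(F^{a-1}(y))}{DF^a(y)},
\]
where $m^\ell=m_0+\cdots+m_{\ell-1}$ and $G^{j+1}(y)=F^{m_j}(G^j(y))$. I now pass to the limit $\ell\to\infty$. If the $G$-orbit of $y$ meets $C(F)$ at a first index $\ell_0$, then $\alpha(G^{\ell_0}(y))=0$ by hypothesis and Lemma \ref{ind_lem}(B3) collapses the identity to a finite sum. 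Otherwise, the $G$-orbit remains in $P(F)\setminus C(F)$; uniform expansion of $G$ on $P(F)$ (a consequence of the real bounds, the bounded combinatorics in $\mathcal{C}_{p,n}$, and Koebe distortion on the univalent branches $F^{n^{k-1}_s}\colon R^k_{-i}\to R^k_0$, using the definite space (\ref{esc_ep})) gives $|DG^\ell(y)|\to\infty$. Since $\alpha$ is continuous on the compact set $P(F)$, the first term tends to $0$. In both cases I obtain
\[
\alpha(y)=-\sum_{a=1}^{M(y)}\frac{v(F^{a-1}(y))}{DF^a(y)},\qquad (\star)
\]
where $M(y)\in\mathbb{N}\cup\{\infty\}$ is the minimal positive integer with $F^{M(y)}(y)\in C(F)$ (and $M(y)=\infty$ if no such integer exists). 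By the Markovian property (Lemma \ref{lemd}), $F^a(y)\notin C(F)$ for $0\leq a<M(y)$, so every $DF^a(y)$ in $(\star)$ is nonzero.

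\textbf{Step 2 (telescoping on a dense set).} Let $z\in P(F)\setminus\bigl(C(F)\cup F^{-1}(C(F))\bigr)$. Then $F(z)\in P(F)\setminus C(F)$ and, comparing the defining conditions, $M(F(z))=M(z)-1$ (with the convention $\infty-1=\infty$). Applying $(\star)$ at $F(z)$ and using $DF^a(F(z))=DF^{a+1}(z)/DF(z)$, a reindexing yields
\[
\alpha(F(z))=-DF(z)\sum_{b=2}^{M(z)}\frac{v(F^{b-1}(z))}{DF^b(z)}.
\]
Subtracting $DF(z)\,\alpha(z)$, computed from $(\star)$ at $z$, telescopes to
\[
\alpha(F(z))-DF(z)\alpha(z)=DF(z)\cdot\frac{v(z)}{DF(z)}=v(z),
\]
which is exactly (\ref{or_eq}) at $z$, i.e.\ $\beta(z)=0$.

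\textbf{Step 3 (continuity).} The set $C(F)\cup F^{-1}(C(F))$ is finite (each critical point has finitely many $F$-preimages), and $P(F)$ is a Cantor set with no isolated points under bounded combinatorics, so its complement in $P(F)$ is dense. Since $\alpha$ is continuous in a neighborhood of $P(F)$ and $F,DF,v$ are holomorphic, $\beta$ is continuous on a neighborhood of $P(F)$. Being zero on a dense subset of $P(F)$, it vanishes on all of $P(F)$, finishing the proof.

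\textbf{Main obstacle.} The only nontrivial analytic input is the uniform expansion of $G$ used in Step 1 to dispose of the remainder $\alpha(G^\ell(y))/DG^\ell(y)$. Everything else reduces to algebraic manipulation of the identities in Lemma \ref{ind_lem} and a soft density/continuity argument.
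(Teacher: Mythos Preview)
Your approach coincides with the paper's in spirit: both obtain the explicit formula
\[
\alpha(z)=-\sum_{a=1}^{m}\frac{v(F^{a-1}(z))}{DF^a(z)}
\]
on a dense subset of $P(F)$, telescope it to verify (\ref{or_eq}) there, and extend by continuity. The paper, however, works only on the set $\Gamma=\{r^k_{-m}\}$ of $F$-preimages of critical points, where these sums are always \emph{finite}, and this is precisely the step where your argument has a gap.

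In your Step~1 you allow $M(y)=\infty$. Even granting the (plausible but unproved here) expansion $|DG^\ell(y)|\to\infty$, Lemma~\ref{ind_lem}(B2) only gives convergence of the partial sums $\sum_{a=1}^{m^\ell}$ along the particular subsequence $(m^\ell)_\ell$ determined by the $G$-orbit of $y$. The full series $\sum_{a=1}^{\infty}\frac{v(F^{a-1}(y))}{DF^a(y)}$ need not converge in the ordinary sense: between two consecutive $G$-returns the $F$-orbit can pass arbitrarily close to $C(F)$, so individual factors $1/DF^a(y)$ can be large. Your telescoping in Step~2 reindexes the series for $F(z)$ term by term against that for $z$, which requires genuine convergence of the full series; moreover the $G$-orbit of $F(z)$ is in general \emph{not} a shift of the $G$-orbit of $z$ (only when the first $G$-branch has $n_s^{k-1}=1$), so the two subsequences $m^\ell(z)$ and $m^\ell(F(z))$ do not line up either.

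The fix is exactly what the paper does: restrict from the outset to points $z$ with $M(z)<\infty$, i.e.\ to $z=r^k_{-m}\in\Gamma$ with $m\geq 2$. This subset is still dense in $P(F)$, the sums in $(\star)$ are finite, your telescoping becomes pure algebra, and the expansion claim for $G$ is no longer needed. With that restriction your Steps~2--3 are identical to the paper's argument.
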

\begin{proof} For each $R_{-m}^k$, with $r \in C(F)$ and  $0< m < n_r^k$,  there exists a unique $r^k_{-m} \in R_{-m}^k$ such that 
$$F^m(r^k_{-m})=r.$$
The set 
$$\Gamma = \{  r^k_{-m} \}_{r \in C(F), 0< m < n_r^k} \subset P(F)$$
is dense on $P(F)$ and
$$F(\Gamma)=\Gamma \cup C(F).$$

 We claim that   (\ref{or_eq}) holds for every $z \in \Gamma$. Indeed, given $r^k_{-m} \in \Gamma$, there exists $\ell >0$ such that $G^\ell(r^k_{-m})=0$. Moreover by Lemma \ref{ind_lem} we have that $G^\ell = F^m$ in a neighbourhood of $r^k_{-m}$ and 
\begin{equation}\label{formula} \alpha(r^k_{-m} ) = -\sum_{a=1}^{m} \frac{v(F^{a-1}(r^k_{-m} ))}{DF^a(r^k_{-m} )}.\end{equation}

Suppose that $m=1$. Then $\ell=1$, $G=F$ in a neighbourhood of $r_{-1}^k$, $V(r_{-1}^k)=v(r_{-1}^k)$ and  $G(r_{-1}^k) \in C(F)$. By (\ref{ind_eq}) it follows that 
$$v(r_{-1}^k)= \alpha\circ G(r_{-1}^k) - DG(r_{-1}^k)\cdot \alpha(r_{-1}^k)= \alpha\circ F (r_{-1}^k) - DF(r_{-1}^k)\cdot \alpha(r_{-1}^k).$$
If $m > 1$ then $F(r_{-m}^k)= r_{-(m-1)}^k \in \Gamma$, so by (\ref{formula}) we have

$$\alpha(F(r_{-m}^k))-DF(r_{-m}^k)\alpha(r_{-m}^k)$$
$$ = -\sum_{a=1}^{m-1} \frac{v(F^{a-1}(r^k_{-(m-1)} ))}{DF^a(r^k_{-(m-1)} )}+ DF(r_{-m}^k) \sum_{b=1}^{m} \frac{v(F^{b-1}(r^k_{-m} ))}{DF^b(r^k_{-m} )}$$
$$ = -\sum_{a=1}^{m-1} \frac{v(F^{a-1}(r^k_{-(m-1)} ))}{DF^a(r^k_{-(m-1)} )}+\sum_{b=1}^{m} \frac{v(F^{b-1}(r^k_{-m} ))}{DF^{b-1}(r^k_{-(m-1)} )}$$ 
$$ = -\sum_{a=1}^{m-1} \frac{v(F^{a-1}(r^k_{-(m-1)} ))}{DF^a(r^k_{-(m-1)} )}+v(r^k_{-m}) + \sum_{b=2}^{m} \frac{v(F^{b-2}(r^k_{-(m-1)} ))}{DF^{b-1}(r^k_{-(m-1)} )}$$ 
$$ = -\sum_{a=1}^{m-1} \frac{v(F^{a-1}(r^k_{-(m-1)} ))}{DF^a(r^k_{-(m-1)} )}+v(r^k_{-m}) + \sum_{a=1}^{m-1} \frac{v(F^{a-1}(r^k_{-(m-1)} ))}{DF^{a}(r^k_{-(m-1)} )}$$ $$ = v(r^k_{-m}).$$ 

So (\ref{or_eq}) holds for every $z \in \Gamma$. Since $v$, $\alpha$ and $F$  are  continuous in a neighbourhood of $P(F)$ and $\Gamma$ is dense in $P(F)$, it follows that  (\ref{or_eq}) holds for every $z \in P(F)$.  

\end{proof}

\begin{cor}[Induced problem] \label{main_cor}  Let  $F \in \Omega_{n,p}$ and $v \in \mathfrak{B}_+(F)$. If  there exists a quasiconformal vector field $\alpha$, defined in a neighbourhood  of $P(F)$, 
such that $\alpha(r)=0$ for every $r \in C(F)$ and  
\begin{equation}V(z)=\alpha\circ G(z) - DG(z)\cdot \alpha(z)\end{equation}
for every $z \in P(F)\setminus C(F)$, then $v \in E^h(F)$.
\end{cor}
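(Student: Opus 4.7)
The plan is to deduce this corollary as a nearly immediate consequence of Proposition \ref{ind_ipa}, which does the real work: it converts a continuous vector field $\alpha$ near $P(F)$ satisfying the induced cohomological equation
$$V(z) = \alpha\circ G(z) - DG(z)\cdot\alpha(z), \quad z \in P(F)\setminus C(F),$$
together with the boundary condition $\alpha|_{C(F)}=0$, into a solution of the original cohomological equation
$$v(z) = \alpha\circ F(z) - DF(z)\cdot\alpha(z), \quad z \in P(F).$$
Since all the hypotheses of Proposition \ref{ind_ipa} are included in the hypotheses of the corollary (the assumed $\alpha$ is in particular continuous), this identity is available for free.

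It then remains only to check that the four defining properties of a horizontal direction from Section 4.1 are in place for our $v$. Conditions 1 and 2 are automatic: any $v \in T_F \mathcal{B}_{nor}(U)$ is holomorphic on a complex neighbourhood of $I_F^n$, hence continuous on $I_F^n$ and real-analytic on its interior, and takes values in $T\mathbb{C}_n$. For condition 3, the corollary's hypothesis already furnishes a quasiconformal vector field $\alpha$ defined on a complex neighbourhood of $P(F)$, and Proposition \ref{ind_ipa} yields the required functional equation $v = \alpha\circ F - DF\cdot\alpha$ on $P(F)$. Condition 4, the vanishing $\alpha(c)=0$ on each critical point, is supplied explicitly in the hypothesis (note that $C(F)\subset P(F)$, so this is compatible with $\alpha$ being defined on a neighbourhood of $P(F)$). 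Thus $v \in E^h_F$.

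There is essentially no obstacle in the proof of this corollary: it is a repackaging of Proposition \ref{ind_ipa} together with the unwinding of the definition of $E^h_F$. Its purpose is to serve as a clean interface between the induced cohomological equation, which is the object actually analysed in the Key Lemma, and the invariant subspace $E^h_F$ that controls the hyperbolic splitting of $\mathcal{R}$ on $\Omega_{n,p}$. The hypothesis $v \in \mathfrak{B}_+(F)$ is not used in this step; it appears in the statement only because the subsequent application (in the proof of Theorem \ref{keylem}) will start from such a $v$ and construct the quasiconformal $\alpha$ from regularity estimates on the induced dynamics.
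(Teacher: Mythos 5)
Your proof is correct and is essentially the paper's argument made explicit: the paper disposes of the corollary in one line, citing Proposition \ref{ipa} and Proposition \ref{ind_ipa}, while you apply Proposition \ref{ind_ipa} to transfer the solution of the induced cohomological equation to the original one on $P(F)$ and then verify directly that the four conditions defining a horizontal direction (and hence membership in $E^h_F$) are satisfied. Your additional remark that the hypothesis $v \in \mathfrak{B}_+(F)$ plays no role in this step and is present only to match the setting where the corollary is later invoked is also accurate.
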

\begin{proof} This is an immediate consequence of Proposition \ref{ipa} and Proposition \ref{ind_ipa}. 

\end{proof}

\section{Solving the induced problem.}\label{keysec}

We are going to change a little bit the notation. Let $P^k_i$, $i=1, \dots, n$   be the set of restrictive intervals for $F$ associated to the renomalization $\mathcal{R}^kF$, that is
\begin{itemize}
\item[A.] These intervals are pairwise disjoint, $C(F)\cap P^k_i \neq \emptyset $ and 
$$C(F)\subset \cup_i P^k_i.$$
\item[B.] There are integers $n^k_i$ such that 
$$F^{n^k_i}\colon P^k_i \rightarrow P^k_{i+1 \mod n}$$
is an unimodal map. 
\item[C.] The have that $P^k_i= [\delta^k_i,b^k_i]$, where $\delta^k_i$ is a $\mu^{(k)}$-periodic repelling fixed point, with
$$\mu^{(k)} =\sum_i n_i^k,$$
and  $F^{n^k_i}(\delta^k_i)= F^{n^k_i}(b^k_i).$
\item[D.] The renormalization associated to the restrictive interval $P^k_1$ is $\mathcal{R}^kF$. 
\end{itemize}


If $t$ is small enough then  $F_t=F+tv$ is close to $F$ and  $\delta^k_i$ has an analytic continuation $\delta^k_i(t)$. Denote by $\partial_t \delta^k_i$ the derivative of this continuation with respect to $t$ at $t=0$.     Consider $ \{q_i^k\}= C(F)\cap P^k_i$. Then  $q_{i+1}^k$ is the successor of $q_i^k$ at level $k$.  
Let $j_i^k \in \{1, \dots, n\}$ be such that  $q_i^k \in I_{j_i^k}$, and 
$$A^k_i\colon \mathbb{C}\times\{j_i^k\} \rightarrow \mathbb{C}\times\{ i\}$$
be the only affine transformation such that $A^k_i(\delta^k_i)=(-1,i)$ and $A^k_i(q^k_i)=(0,i)$.  

\begin{lem}\label{div} We have $V =V_1 + V_2$, where
\begin{equation} V_1(z,j_i^k)= -\delta_{i+1}^k v^k\circ A_i^k (z,j_i^k),\end{equation} 
and
\begin{equation} V_2(z,j_i^k)=  \frac{ \partial_t \delta_{i+1}^k}{\delta^k_{i+1}}  F^{n_i^k}(z,j_i^k) -  D F^{n_i^k}(z,j_i^k) \cdot  \frac{ \partial_t \delta_{i}^k}{\delta^k_{i}}  (z,j_i^k)\end{equation}
for every $(z,j_i^k) \in \mathscr{R}_{-\ell}^{k+1}$, with $R_{-\ell}^{k+1} \subset Q_0^{k}$, $q=q_i^k$.
\end{lem}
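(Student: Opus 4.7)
The plan is to derive the decomposition directly by inverting the renormalization identity. By definition of $\mathcal{R}^k$ in \eqref{def_ren}, for the perturbed map $F_t = F + tv$ we have
\begin{equation*}
\mathcal{R}^k F_t(x,i) = A^k_{t,i+1} \circ F_t^{n^k_i} \circ (A^k_{t,i})^{-1}(x,i),
\end{equation*}
where $A^k_{t,i}$ is the affine map sending $\delta^k_i(t)$ to $-1$ and $q^k_i$ to $0$. Since $q^k_i$ does not depend on $t$ and we may assume critical points are at the origin, $A^k_{t,i}$ has the explicit form $A^k_{t,i}(x) = -x/\delta^k_i(t)$ on the appropriate copy of $\mathbb{C}$, so $(A^k_{t,i})^{-1}(y) = -\delta^k_i(t)\, y$. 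Solving for $F_t^{n^k_i}$ yields
\begin{equation*}
F_t^{n^k_i}(z, j^k_i) \;=\; (A^k_{t,i+1})^{-1} \circ \mathcal{R}^k F_t \circ A^k_{t,i}(z, j^k_i).
\end{equation*}

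Next I differentiate at $t=0$. Writing $v^k := \partial_t \mathcal{R}^k F_t|_{t=0} = D\mathcal{R}^k_F\cdot v$, the chain and product rules give three contributions, corresponding to the three occurrences of $t$:
\begin{align*}
V(z,j^k_i) \;=\; & \bigl[\partial_t (A^k_{t,i+1})^{-1}\bigr]_{t=0} \circ \mathcal{R}^k F \circ A^k_i(z,j^k_i) \\
& \;+\; (A^k_{i+1})^{-1}\cdot v^k\bigl(A^k_i(z,j^k_i)\bigr) \\
& \;+\; (A^k_{i+1})^{-1}\cdot D\mathcal{R}^k F\bigl(A^k_i(z,j^k_i)\bigr)\cdot \bigl[\partial_t A^k_{t,i}\bigr]_{t=0}(z,j^k_i).
\end{align*}
Since $(A^k_{i+1})^{-1}$ is multiplication by $-\delta^k_{i+1}$, the middle term is exactly
$-\delta^k_{i+1}\, v^k\circ A^k_i(z,j^k_i) = V_1(z,j^k_i).$

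For the remaining two terms I use the identity
\begin{equation*}
F^{n^k_i}(z,j^k_i) = -\delta^k_{i+1}\cdot \mathcal{R}^k F\bigl(A^k_i(z,j^k_i)\bigr),
\end{equation*}
(from setting $t=0$ in the inverted formula) together with its derivative in $z$, namely $DF^{n^k_i}(z,j^k_i) = \delta^k_i\, D\mathcal{R}^k F(A^k_i(z,j^k_i))$. A direct computation of the derivatives of the affine maps with respect to $t$ (using $\partial_t (A^k_{t,i+1})^{-1} = -\partial_t\delta^k_{i+1}\cdot$ and $\partial_t A^k_{t,i}(z,j^k_i) = (\partial_t\delta^k_i/(\delta^k_i)^2)\, z$) turns the first and third terms into
\begin{equation*}
\frac{\partial_t \delta^k_{i+1}}{\delta^k_{i+1}}\, F^{n^k_i}(z,j^k_i) \;-\; DF^{n^k_i}(z,j^k_i)\cdot \frac{\partial_t \delta^k_i}{\delta^k_i}\,(z,j^k_i),
\end{equation*}
which is exactly $V_2(z,j^k_i)$.

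This is essentially a bookkeeping argument; the only potentially delicate point is to verify that the differentiations are justified on the domain $\mathscr{R}^{k+1}_{-\ell}\subset Q^k_0$. But since $F_t^{n^k_i}$ is well defined and holomorphic on this domain for small $t$ (by the complex bounds of Theorem \ref{co_bo} applied to $\mathcal{R}^k F_t$, together with the analytic continuation of the repelling periodic points $\delta^k_i(t)$), the interchange of $\partial_t$ with composition is legitimate, and the identity extends from the postcritical set to all of $\mathscr{R}^{k+1}_{-\ell}$ by analyticity.
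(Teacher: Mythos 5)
Your proposal is correct and takes essentially the same approach as the paper: both differentiate the identity $\mathcal{R}^k F_t = A^k_{t,i+1}\circ F_t^{n^k_i}\circ(A^k_{t,i})^{-1}$ at $t=0$, the paper starting from the resulting expression for $v^k=D\mathcal{R}^k_F\cdot v$ and solving for $a_{n^k_i}$, while you invert the identity first and then differentiate. (One parenthetical slip: the chain rule gives $DF^{n^k_i}(z,j^k_i) = \frac{\delta^k_{i+1}}{\delta^k_i}\, D\mathcal{R}^k F(A^k_i(z,j^k_i))$, not $\delta^k_i\, D\mathcal{R}^k F(A^k_i(z,j^k_i))$; your final expressions for $V_1$ and $V_2$ are nonetheless correct.)
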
 
\begin{proof} Note that  
$$v^k(x,i)= \partial_t (\mathcal{R}^k(F_t)) \big|_{t=0}(x,i)=(D\mathcal{R}_F^k\cdot v)(x,i)$$
$$= - \frac{\partial_t \delta_{i+1}^k}{\delta_{i+1}^k}\cdot  A_{i+1}^k \circ  F^{n_i^k}\circ (A_{i}^k)^{-1}(x,i)$$
$$- \frac{1}{\delta_{i+1}^k}\big(    a_{n_i^k}\circ (A_{i}^k)^{-1}(x,i) + (D F^{n_i^k})\circ (A_{i}^k)^{-1}(x,i)\cdot (-\partial_t \delta_{i}^k \ x,i) \big),$$

So if $(x,i)= A^k_i(z,j_i^k)$, with $(z,j_i^k) \in \mathscr{R}_{-\ell}^{k+1}$, with $R_{-\ell}^{k+1} \subset Q_0^{k}$  we have that 

$$V(z,j_i)= a_{n_i^k}(z,j_i^k)$$
$$ = -\delta_{i+1}^k  v^k\circ A_i^k (z,j_i^k) + \frac{ \partial_t \delta_{i+1}^k}{\delta^k_{i+1}}  F^{n_i^k}(z,j_i^k) -  D F^{n_i^k}(z,j_i^k) \cdot  \frac{ \partial_t \delta_{i}^k}{\delta^k_{i}}  (z,j_i^k) .$$
\end{proof}

\begin{lem} \label{div2} Let $v \in \mathfrak{B}_+(F)$. There exists $C_3 > 0$ such that for every $k$ and every $i_{k},i_{k+1}$ such that  $q_{i_k}^k=q_{i_{k+1}}^{k+1}$ we have  
\begin{equation}
\big| \frac{\partial_t \delta^{k+1}_{i_{k+1}}}{\delta^{k+1}_{i_{k+1}}} -\frac{\partial_t \delta^{k}_{i_k}}{\delta^{k}_{i_k}} \big| \leq C_3
\end{equation}
\end{lem}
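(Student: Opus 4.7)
The plan is to exploit a simple scaling identity between $\delta^{k+1}_{i_{k+1}}$ and $\delta^k_{i_k}$ arising from the affine normalization built into the renormalization operator. Because $q^{k}_{i_k}=q^{k+1}_{i_{k+1}}$, the critical-point indices agree, $j^k_{i_k}=j^{k+1}_{i_{k+1}}$, and the nesting of restrictive intervals forces $P^{k+1}_{i_{k+1}}\subset P^{k}_{i_k}$. The affine map $A^k_{i_k}$ (which acts on the real line as $x\mapsto -x/\delta^k_{i_k}$) sends $P^{k+1}_{i_{k+1}}$ onto the $i_k$-th restrictive interval of the first renormalization of $G_k:=\mathcal{R}^k F$. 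Denote by $\tilde\delta^{(k)}$ its left endpoint; then
$$\delta^{k+1}_{i_{k+1}} \;=\; -\,\delta^{k}_{i_k}\,\tilde\delta^{(k)}.$$

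Everything in this identity depends real-analytically on $t$ along the family $F_t=F+tv$, and $G_k^{(t)}:=\mathcal{R}^k(F_t)$ satisfies $\partial_t G_k^{(t)}|_{t=0}=v^k:=D\mathcal{R}_F^k\cdot v$. Taking logarithmic derivatives at $t=0$ yields
$$\frac{\partial_t \delta^{k+1}_{i_{k+1}}}{\delta^{k+1}_{i_{k+1}}}-\frac{\partial_t \delta^{k}_{i_k}}{\delta^{k}_{i_k}} \;=\; \frac{\partial_t \tilde\delta^{(k)}}{\tilde\delta^{(k)}},$$
so the claim reduces to a uniform bound on the right-hand side. The denominator is bounded below: by Corollary \ref{sh}, $G_k\in\Omega_{p,n}$, and the map $G\mapsto\tilde\delta$ is continuous on the compact set $\Omega_{p,n}$, giving $|\tilde\delta^{(k)}|\geq c>0$ independently of $k$ and of $F\in\Omega_{p,n}$.

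For the numerator, $\tilde\delta^{(k)}(t)$ is a repelling periodic point of $G_k^{(t)}$ of period $\tilde n^{(k)}$ bounded in terms of $p$ and $n$ (by bounded combinatorics). Implicit differentiation of $(G_k^{(t)})^{\tilde n^{(k)}}(\tilde\delta^{(k)}(t))=\tilde\delta^{(k)}(t)$ at $t=0$ gives
$$\partial_t \tilde\delta^{(k)} \;=\; \frac{1}{1-\lambda_k}\;\partial_t (G_k^{(t)})^{\tilde n^{(k)}}(\tilde\delta^{(k)})\Big|_{t=0},$$
where $\lambda_k=D(G_k^{\tilde n^{(k)}})(\tilde\delta^{(k)})$ is the multiplier. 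Expanding the numerator through the chain rule as in (\ref{de_it}), but with $G_k$ and $v^k$ in place of $F$ and $v$, controls it by a constant (depending only on $p$, $n$, and uniform bounds on iterates of $G_k\in\Omega_{p,n}$) times $|v^k|_{\mathcal{B}(U)}$. The hypothesis $v\in\mathfrak{B}_+(F)$ says exactly that $\sup_k |v^k|_{\mathcal{B}(U)}<\infty$, finishing the bound.

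The only non-routine step is the uniform estimate $|\lambda_k-1|\geq\kappa>0$: this is where the compactness of $\Omega_{p,n}$ is essential. Indeed, the pair $(G,\tilde\delta(G))$ ranges over a compact space on which $G\mapsto \lambda(G)$ is continuous and takes values in $\{|\lambda|>1\}$ (since $\tilde\delta^{(k)}$ is always the repelling endpoint of a unimodal return map), so its distance to $1$ is bounded below. Combining the four ingredients --- the scaling identity, the lower bound on $|\tilde\delta^{(k)}|$, the spectral gap $|\lambda_k-1|\geq\kappa$, and the uniform bound on $|v^k|_{\mathcal{B}(U)}$ --- delivers the desired constant $C_3$.
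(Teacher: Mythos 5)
Your proof is correct and follows essentially the same route as the paper: both reduce the difference of logarithmic derivatives to $\partial_t\tilde\delta^{(k)}/\tilde\delta^{(k)}$ via the scaling identity $\delta^{k+1}_{i_{k+1}}=-\delta^k_{i_k}\tilde\delta^{(k)}$ (the paper denotes your $\tilde\delta^{(k)}$ by $\beta^{k+1}_q$ and writes the telescoping sum explicitly), and both then bound that ratio by implicit differentiation at the repelling periodic point, using compactness of $\Omega_{n,p}$ for the uniform multiplier gap and $v\in\mathfrak{B}_+(F)$ for $\sup_k|v^k|<\infty$. The only cosmetic difference is that the paper expands $1/(1-\lambda_k)$ as a geometric series $-\sum_\ell \lambda_k^{-\ell}$ rather than quoting a lower bound on $|1-\lambda_k|$ directly; the content is identical.
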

\begin{proof}
Let $q = q_{i_k}^k=q_{i_{k+1}}^{k+1}$ and  $i_j$ be such that  $q^j_{i_j}=q$. Note that $\beta^{k+1}_{q} = A^k_i(\delta^{k+1}_{i_{k+1}})$ is a periodic point for $\mathcal{R}^kF$ with period $y^k= \mu^{(k+1)}/\mu^{(k)}$. Indeed
$$\beta^{k+1}_q = (\frac{\delta^{k+1}_{i_{k+1}}}{\delta^{k}_{i_k}},i_{k}).$$  If $t$ is small enough then there is an analytic continuation $\beta^{k+1}_q(t)$ for $\beta^{k+1}_q$, that is a periodic point for $\mathcal{R}^kF_t$. Since

$$ \partial_t (\mathcal{R}^k(F_t)) \big|_{t=0}(x,i)=(D\mathcal{R}_F^k\cdot v)(x,i)= v^k(x,i),$$
by the Implicit Function Theorem we have that

$$ \partial_t (\mathcal{R}^k(F_t))^{y^k} \big|_{t=0}(\beta^{k+1}_q)  + D(\mathcal{R}^kF)^{y^k}(\beta^{k+1}_q) \partial_t \beta^{k+1}_q = \partial_t \beta^{k+1}_q.$$
So
$$\partial_t  \beta^{k+1}_q$$
$$ = \frac{1}{1-D(\mathcal{R}^kF)^{y^k}(\beta^{k+1}_q) } \sum_{j=0}^{y^k-1} D(\mathcal{R}^kF)^{y^k-j-1}((\mathcal{R}^kF)^{j+1}(\beta^{k+1}_q) )v^k((\mathcal{R}^kF)^j(\beta^{k+1}_q) )$$
$$ = \sum_{\ell=1}^{\infty } \frac{-1}{D(\mathcal{R}^kF)^{y^k \ell}(\beta^{k+1}_q)} \sum_{j=0}^{y^k-1} D(\mathcal{R}^kF)^{y^k-j-1}((\mathcal{R}^kF)^{j+1}(\beta^{k+1}_q) )v^k((\mathcal{R}^kF)^j(\beta^{k+1}_q) )$$
$$ = \sum_{\ell=1}^{\infty } \sum_{j=0}^{y^k-1} -\frac{v^k((\mathcal{R}^kF)^{y^k(\ell-1) + j}(\beta^{k+1}_q) )}{D(\mathcal{R}^kF)^{y^k(\ell-1)+j+1}(\beta^{k+1}_q )}$$
$$= \sum_{a=0}^{\infty} -\frac{v^k((\mathcal{R}^kF)^a(\beta^{k+1}_q) )}{D(\mathcal{R}^kF)^{a+1}(\beta^{k+1}_q)}$$
Due the real bounds and $v \in \mathfrak{B}_+(F)$   there exist $C_1, C_2$ such that
$$\sup_k \{|D(\mathcal{R}^kF)|_{\mathcal{B}(V)} , \ |\beta^k_q|, \ \frac{1}{|\beta^k_q|}, \ |v^k|_{\mathcal{B}(V)} \} \leq C_1$$
and
$$1 < C_2 < \inf_{k,q} | D(\mathcal{R}^kF)^{y^k}(\beta^{k+1}_q)|.$$
So
\begin{equation}\label{der_beta} \sup_{k,q} |\frac{\partial_t \beta^{k+1}_q}{\beta^{k+1}_{q} }|= C_3 < \infty.\end{equation}
If $i_j$ satisfies $q^j_{i_j}=q$ then 
$$\delta^{k+1}_{i_{k+1}}(t) = \prod_{j=0}^{k} \frac{\delta^{j+1}_{i_{j+1}}(t)}{\delta^j_{i_j}(t)}= \prod_{j=1}^{k+1} \beta^j_q(t),$$
if we derive with respect to $t$ at $t=0$ we obtain
$$\partial_t \delta^{k+1}_{i_{k+1}} = \sum_{j=1}^{k+1} \partial_t \beta^j_q \prod_{\ell\neq j} \beta^\ell_q$$
and we conclude that
\begin{equation} \frac{\partial_t \delta^{k+1}_{i_{k+1}}}{\delta^{k+1}_{i_{k+1}}} = \sum_{j=1}^{k+1} \frac{\partial_t \beta^j_q }{ \beta^j_q }\end{equation}
and
\begin{equation}
\big| \frac{\partial_t \delta^{k+1}_{i_{k+1}}}{\delta^{k+1}_{i_{k+1}}} -\frac{\partial_t \delta^{k}_{i_k}}{\delta^{k}_{i_k}} \big| = \big|  \frac{\partial_t \beta^{k+1}_{q} }{ \beta^{k+1}_{q} } \big|\leq C_3
\end{equation}
\end{proof}

\begin{lem}\label{divv} For every $(z,j_i^k) \in \mathscr{R}_{-\ell}^{k+1}$, with $R_{-\ell}^{k+1} \subset Q_0^{k}$, $q=q_i^k$ we have 
\begin{equation} |V_1(z,j_i^k)| \leq |\delta_{i+1}^k| \sup_k |v^k|,\end{equation} 
and
\begin{equation} |V_2(z,j_i^k)|\leq  C_3 k    |\delta_{i+1}^k|  +   C_3 k   |\delta_{i}^k|  \sup |D G| \end{equation}
In particular
$$\lim_{w\rightarrow C(F)} V(w)=0.$$
\end{lem}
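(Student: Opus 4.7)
The strategy is to estimate $V_1$ and $V_2$ separately from the decomposition in Lemma~\ref{div}, then combine them to extract the limit as $w\to C(F)$. For $V_1(z,j_i^k)=-\delta^k_{i+1}\,v^k\circ A_i^k(z,j_i^k)$, the first step is to note that by the construction of the disks $\mathscr{R}^{k+1}_{-\ell}$ (via the contraction factor $1-\epsilon_0/16$) and the renormalization charts $A_i^k$, the image $A_i^k(\mathscr{R}^{k+1}_{-\ell})$ lands inside $U$, so that $v^k=D\mathcal R_F^k\cdot v$ is being evaluated in its domain.  Since $v\in\mathfrak B_+(F)$ by hypothesis, $\sup_k|v^k|_{\mathcal B(U)}<\infty$, and the first bound follows immediately upon multiplying by $|\delta^k_{i+1}|$.

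For $V_2$, I would apply the triangle inequality to its two summands. The factors $|\partial_t\delta^k_\bullet/\delta^k_\bullet|$ are bounded by $kC_3$: the proof of Lemma~\ref{div2} yields the telescoping identity $\partial_t\delta^k_i/\delta^k_i=\sum_{j=1}^k \partial_t\beta^j_q/\beta^j_q$, to which the uniform bound $|\partial_t\beta^j_q/\beta^j_q|\le C_3$ established there applies. The geometric factors $|(z,j^k_i)|$ and $|F^{n_i^k}(z,j^k_i)|$ are controlled by $|\delta^k_i|$ and $|\delta^k_{i+1}|$ respectively, up to an absolute constant that can be absorbed into $C_3$: with the normalization $q^k_\bullet=0$, the set $\mathcal Q^k_0$ has radius comparable to $|\delta^k_i|$, and $F^{n_i^k}$ sends $\mathcal Q^k_0$ into the symmetrization around the successor critical point, whose radius is comparable to $|\delta^k_{i+1}|$. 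Finally $|DF^{n_i^k}(z,j^k_i)|\le\sup|DG|$ because $F^{n_i^k}$ agrees with $G$ on the branch $\mathscr{R}^{k+1}_{-\ell}$. Combining these ingredients gives the second bound.

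For the limit, the key observation is that if $w\to c\in C(F)$ with $w\in\mathscr D$, then the level $k=k(w)$ of the branch $\mathscr{R}^{k+1}_{-\ell}$ containing $w$ must diverge to $\infty$, since at each fixed $k$ there are only finitely many disks $\mathscr{R}^{k+1}_{-\ell}$, all uniformly separated from $C(F)$. The real bounds under bounded combinatorics provide $|\delta^k_i|\le C_0\lambda^k$ for some $\lambda\in(0,1)$; together with the finiteness of $\sup_k|v^k|_{\mathcal B(U)}$ and $\sup|DG|$, both terms in the bound on $|V(w)|$ decay at least like $k\lambda^k\to 0$, so $V(w)\to 0$. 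The only slightly delicate input is the uniform bound $\sup|DG|<\infty$, which comes from Koebe distortion applied on each branch $\mathscr{R}^{k+1}_{-\ell}$ together with the real a priori bounds; the rest of the argument is a routine assembly of previously established material.
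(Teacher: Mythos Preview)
Your proof is correct and is precisely the intended argument: the paper's own proof is the one-line ``It follows from Lemma~\ref{div} and Lemma~\ref{div2}'', and you have accurately unpacked what those citations entail --- the formula for $V_1$, the telescoping identity $\partial_t\delta^k_i/\delta^k_i=\sum_{j=1}^k\partial_t\beta^j_q/\beta^j_q$ together with the bound~(\ref{der_beta}), the geometric control of $|z|$ and $|F^{n_i^k}(z)|$ inside $\mathcal Q^k_0$, and the exponential decay of $|\delta^k_i|$ from the real bounds. Nothing further is needed.
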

\begin{proof} It follows from Lemma \ref{div} and Lemma \ref{div2}.
\end{proof}

\begin{lem} \label{qvec} Let $\psi\colon \mathbb{R}_+^\star \rightarrow \mathbb{R}$ be a $C^\infty$ function. Define $\gamma\colon \mathbb{C}\setminus \{0\} \rightarrow \mathbb{C}$ as
$$\gamma(z)=\psi(|z|) z.$$
Then 
$$|\overline{\partial} \gamma(z)| =  \frac{|z D\psi(|z|)|}{2}.$$
If $|D \psi(t)|\leq  C t^{-1}$  then $\gamma$ can be extended as a quasiconformal  vector field on $\mathbb{C}$ such that $\gamma(0)=0$.
\end{lem}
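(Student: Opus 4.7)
My plan is essentially a direct computation followed by an elementary integration argument, as the statement is really about checking the definition of a quasiconformal vector field.

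For the first identity I would write $|z| = \sqrt{z\bar z}$ and use the standard Wirtinger calculus identity $\overline{\partial}|z| = \partial_{\bar z}\sqrt{z\bar z} = z/(2|z|)$, valid on $\mathbb{C}\setminus\{0\}$. Since $z$ is holomorphic so $\overline{\partial} z = 0$, the product and chain rules give
\[
\overline{\partial}\gamma(z) \;=\; D\psi(|z|)\cdot\overline{\partial}|z|\cdot z + \psi(|z|)\cdot\overline{\partial} z \;=\; \frac{z^{2}\,D\psi(|z|)}{2|z|}.
\]
Taking absolute values yields $|\overline{\partial}\gamma(z)| = |z||D\psi(|z|)|/2$, as claimed. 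This is smooth on $\mathbb{C}\setminus\{0\}$, so no subtlety there.

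For the second part, the hypothesis $|D\psi(t)|\leq Ct^{-1}$ immediately gives $|\overline{\partial}\gamma(z)|\leq C/2$ on $\mathbb{C}\setminus\{0\}$. To extend $\gamma$ to a quasiconformal vector field on $\mathbb{C}$ with $\gamma(0)=0$, I need two things: (i) a continuous extension at the origin, and (ii) a uniform bound on the distributional $\overline{\partial}$ on all of $\mathbb{C}$. For (i), integrate $|D\psi|$ from $1$ down to $|z|$ to obtain $|\psi(|z|)|\leq |\psi(1)|+C|\log|z||$, so that $|\gamma(z)|\leq |z|(|\psi(1)|+C|\log|z||)\to 0$ as $z\to 0$; hence setting $\gamma(0)=0$ produces a continuous extension.

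For (ii), I would observe that $\gamma$ is continuous on $\mathbb{C}$ and locally in $W^{1,p}_{\mathrm{loc}}$ for every $p<\infty$ (the pointwise bound $|D\gamma|\lesssim 1+|\log|z||$ on $\mathbb{C}\setminus\{0\}$ is locally integrable to any power near $0$, and the singleton $\{0\}$ is removable for the distributional derivative of a continuous $W^{1,p}_{\mathrm{loc}}$ function). Therefore the pointwise computation above represents the distributional $\overline{\partial}\gamma$ on all of $\mathbb{C}$, and $\|\overline{\partial}\gamma\|_{L^\infty(\mathbb{C})}\leq C/2$, which is exactly the definition of a quasiconformal vector field. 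The only step that requires any real care is this removable-singularity argument justifying that there is no singular distribution supported at $0$; everything else is just Wirtinger bookkeeping.
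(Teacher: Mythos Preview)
Your proof is correct and follows the same overall plan as the paper: compute $\overline{\partial}\gamma$ pointwise, bound $|\psi|$ logarithmically to get continuity at $0$, and then handle the removable singularity at the origin. The only genuine difference is in this last step. The paper invokes Ahlfors' lemma to produce an auxiliary quasiconformal vector field $\tilde\gamma$ with $\overline{\partial}\tilde\gamma=\overline{\partial}\gamma$ on the unit disc, applies Weyl's lemma to conclude $\gamma-\tilde\gamma$ is holomorphic on the punctured disc, and then uses Riemann's removable singularity theorem. Your route is more self-contained: you argue directly that the pointwise $\overline{\partial}\gamma$ represents the distributional derivative on all of $\mathbb{C}$, since $\gamma$ is continuous and the singleton has zero capacity (or, concretely, a cutoff $\chi_\epsilon$ with $|\nabla\chi_\epsilon|\lesssim\epsilon^{-1}$ on an annulus of area $\sim\epsilon^2$ kills the boundary term because $|\gamma(z)|\lesssim|z|\,|\log|z||$). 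Your argument avoids the external reference; the paper's argument is more in the spirit of classical quasiconformal theory. Both are standard and the computations in parts one and two are essentially identical, just written in Wirtinger notation versus real coordinates.
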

\begin{proof} If $z = x+ iy$, $x, y \in \mathbb{R}$ then
$$\psi(|z|) = \psi(\sqrt{x^2 + y^2}),$$
In particular 
$$\overline{\partial} (\psi (|z|)) = \frac{2x D\psi(\sqrt{x^2+y^2})}{4 \sqrt{x^2+y^2}} + i \frac{2y D\psi(\sqrt{x^2+y^2})}{4 \sqrt{x^2+y^2}},$$
and
$$|\overline{\partial} (\psi (|z|)) |= \frac{|D\psi(|z|)|}{2}.$$
In particular
$$|\overline{\partial} (\psi(|z|) z)| =  \frac{|z D\psi(|z|)|}{2}.$$
Let $\epsilon \in (0,1)$. Note that 
$$|\psi(1)-\psi(\epsilon)|= |\int_{\epsilon}^1 D\psi(t)\ dt| \leq \int_{\epsilon}^1 \frac{1}{t} \ dt = -\ln \epsilon.$$ 
$$ |\psi(\epsilon)| \leq - \ln \epsilon + |\psi(1)|.$$
In particular
\begin{equation} \label{cont2} \lim_{z \rightarrow 0} z \psi(|z|) =0,\end{equation}
so defining $\gamma(0)=0$ we obtain a  continuous extension to $\mathbb{C}$ of $\gamma$. To show that $\gamma$ is a quasiconformal vector field, note that 
$$|\overline{\partial} \gamma(z)| =\frac{|zD\psi(|z|)}{2}\leq \frac{C}{2}$$
for every $z \in \mathbb{C}\setminus\{0\}$. By \cite[Lemma 3, page 53]{a_book} there exists a quasiconformal vector field $\tilde{\gamma}$ on $\mathbb{C}$ such that  its distributional derivative belongs to $L^2(\mathbb{C})$, it satisfies  $\overline{\partial} \tilde{\gamma}(z)= 0$ if $|z| \geq 1 $ and 
 $$\overline{\partial} \tilde{\gamma}(z)= \overline{\partial} \gamma(z)$$
 for almost every $z$ satisfying $|z|< 1$. So $\gamma - \tilde{\gamma}$ is continuous on $\{ z \in \mathbb{C}\colon \ |z| < 1\}$ and (by Weyl's  Lemma)  holomorphic on $\{ z \in \mathbb{C}\colon \  0< |z| < 1\}$. By Riemann's Theorem on removable singularities we have that  $0$ is a  removable singularity, so $\gamma - \tilde{\gamma}$ is holomorphic on $|z| < 1$, and therefore  $\gamma$ is quasiconformal on $\mathbb{C}$. 
\end{proof} 

An illustrative example of application of Lemma \ref{qvec} is obtained considering $\psi(x)= \log (x).$

\begin{prop} \label{alpha2}Let $v \in \mathfrak{B}_+(F)$. There exists a quasiconformal vector field  $\alpha_2\colon \mathbb{C}_n \rightarrow \mathbb{C}$ such that
\begin{equation}\label{v2} V_2(z,j) = \alpha_2\circ G (z,j) - DG(z,j)\cdot \alpha_2(z,j)\end{equation}
for every $(z,j) \in \partial \mathscr{D}$ and moreover $\alpha_2(0,j)=0$ for every $j$. 
\end{prop}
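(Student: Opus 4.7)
The starting observation is that the formula in Lemma \ref{div} already exhibits $V_2$ as a coboundary on each piece $\mathscr{R}_{-\ell}^{k+1}$. Writing $\psi_i^k := \partial_t \delta_i^k/\delta_i^k$, the identity
\[
V_2(z,j_i^k) \;=\; \psi_{i+1}^k \cdot F^{n_i^k}(z,j_i^k)\;-\; DF^{n_i^k}(z,j_i^k)\cdot \psi_i^k\, z
\]
says $V_2 = \alpha\circ G - DG\cdot \alpha$ provided $\alpha(z,j_i^k)=\psi_i^k z$ on $\mathscr{R}_{-\ell}^{k+1}$ and $\alpha(w,j_{i+1}^k)=\psi_{i+1}^k w$ on its image $\mathscr{R}_0^{k+1}$, that is, provided $\alpha$ is a linear radial field based at the critical point, whose coefficient depends on the scale. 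The task is to glue these level-dependent pieces into one globally defined quasiconformal vector field on $\mathbb{C}_n$, vanishing on $C(F)$.

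I will construct $\alpha_2$ as a radial vector field around each critical point. For each $c\in C(F)$, let $i_k(c)$ be the unique index with $q_{i_k(c)}^k=c$, and set $\Psi_k(c):=\psi_{i_k(c)}^k$ and $\rho_k(c):=|\delta_{i_k(c)}^k|$. Lemma \ref{div2} furnishes the uniform bound $|\Psi_{k+1}(c)-\Psi_k(c)|\leq C_3$, while the real bounds ensure $\rho_{k+1}(c)/\rho_k(c)\in[\lambda,1-\lambda]$ for some $\lambda\in(0,1)$. Choose a smooth function $\phi_c\colon(0,\infty)\to\mathbb{C}$ which is constantly equal to $\Psi_k(c)$ on a radial plateau of scale $\rho_k(c)$ enclosing all pieces $\mathscr{R}_{-\ell}^{k+1}\subset Q_0^k$ at that level, and interpolates linearly between $\Psi_k$ and $\Psi_{k+1}$ across a transition of width comparable to $\rho_k(c)-\rho_{k+1}(c)$. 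The bounded jumps together with the geometric shrinking of $\rho_k(c)$ produce $|t\phi_c'(t)|\leq C$. Define $\alpha_2(w,j_c):=\phi_c(|w|)\,w$ in a fixed neighborhood of each critical point $(0,j_c)$, and extend by a smooth cutoff to all of $\mathbb{C}_n$. Lemma \ref{qvec} then guarantees that $\alpha_2$ is quasiconformal on $\mathbb{C}_n$ with $\alpha_2(0,j)=0$.

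Finally I verify the cohomological equation: by construction, on each piece $\mathscr{R}_{-\ell}^{k+1}\subset Q_0^k$ (with $q=q_i^k$) one has $\alpha_2(z,j_i^k)=\psi_i^k z$ exactly, and on the target $\mathscr{R}_0^{k+1}$ one has $\alpha_2(w,j_{i+1}^k)=\psi_{i+1}^k w$; substituting into Lemma \ref{div} yields $V_2 = \alpha_2\circ G - DG\cdot \alpha_2$ on $\partial\mathscr{D}$. The main technical obstacle is the compatibility of the purely radial Ansatz with the geometry of the pieces $\mathscr{R}_{-\ell}^{k+1}$ themselves: a priori these are not concentric annuli around $q$, so one must use the real bounds to show that all pieces at combinatorial level $k$ lie in a common radial shell on which $\phi_c$ can be taken constant, while still leaving enough logarithmic room in the complementary shells for the $C/t$ transition required by Lemma \ref{qvec}. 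If the pieces turn out to straddle several radial scales, one falls back on imposing the identity only along $P(F)\setminus C(F)$, which is enough for Corollary \ref{main_cor}.
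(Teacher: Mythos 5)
Your proposal follows the paper's strategy exactly: a radial ansatz $\alpha_2(z,j)=\psi_q(|z|)\,z$ near each critical point $q$, where $\psi_q$ takes the value $\partial_t\delta^k_{i_k}/\delta^k_{i_k}$ on a radial shell at scale $|\delta^k_{i_k}|$, with the bounded increments from Lemma~\ref{div2} together with the geometric decay of the scales $|\delta^k_{i_k}|$ giving the $C/t$ derivative bound needed to invoke Lemma~\ref{qvec}. The paper builds $\psi_q$ explicitly as an infinite sum $\sum_j \frac{\partial_t\beta^j_q}{\beta^j_q}\,\phi(x/\delta^j_{i_j})$ of scaled copies of a single bump function; this is merely a cleaner packaging of your interpolation scheme, with the telescoping identity giving the constancy of $\psi_q$ on each plateau.

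The ``main technical obstacle'' you flag is indeed the crux, and it must be resolved rather than circumvented: you need to know that every boundary $\partial\mathscr{R}^{k+1}_{-\ell}$ (and its $G$-image) sits entirely inside the plateau annulus at scale $k$. The paper secures this via the real-bounds constant $\epsilon_1$ of (\ref{ep1}), a compatible choice $\epsilon_0<\epsilon_1/4$, and the explicit nesting estimates (\ref{dome})--(\ref{dome1}), which place all such boundaries in $(1-\epsilon_1/64)|\delta^{k+1}_{i_{k+1}}|\le |z| \le (1-\epsilon_1)|\delta^k_{i_k}|$, comfortably inside the plateau $[(1-\epsilon_1/8)|\delta^{k+1}_{i_{k+1}}|, (1-\epsilon_1/2)|\delta^k_{i_k}|]$ defined by $\phi$. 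Your proposed fallback---imposing the identity only on $P(F)\setminus C(F)$---does not suffice: the proof of Theorem~\ref{keylem} needs (\ref{v2}) on all of $\partial\mathscr{D}$ to ensure that the iterated pullbacks $\alpha^{k+1}$ match $\alpha^k$ across $\partial\mathscr{D}$ and hence remain continuous, which is what makes them quasiconformal. So the geometric shell estimate is not optional; you should carry it out as in the paper rather than deferring to the weaker requirement.
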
 
\begin{proof} Due the real bounds we have that 

\begin{equation}\label{ep1} \inf_{q \in C(F)} \inf_k \min_{R^{k+1}_{-\ell}\subset Q^k_0} \frac{dist(R^{k+1}_{-\ell},\partial Q^k_0) }{|Q^{k}_0|}=\epsilon_1 > 0 .\end{equation} 
Without loss of generality we can choose $\epsilon_0$ in (\ref{esc_ep}) satisfiying  $\epsilon_0 < \epsilon_1/4$. Let $\phi\colon \mathbb{R} \rightarrow \mathbb{R}$ be a $C^\infty$ function such that 
\begin{itemize}
\item[i.] $\phi(x) \in [0,1]$ for every $x \in \mathbb{R}$.
\item[ii.] If  $|x| < 1-\epsilon_1/4$ then  $\phi(x)=1$.
\item[iii.] If $|x| > 1 - \epsilon_1/8$ then $\phi(x)=0$. 
\end{itemize}
Given $q \in C(F)$, let $i_j$ be such that $q^j_{i_j}=q$ and $\delta^j_{i_j}$ and $\beta_q^j$, $j \in \mathbb{N}$, be as in the  proof of Lemma \ref{div2}. For every $x \in \mathbb{R}^\star$ define
$$\psi_q(x) = \sum_{j=1}^\infty \frac{\partial_t \beta^j_q}{\beta^j_{q}} \cdot \phi(\frac{x}{\delta^{j}_{i_j}})$$
The function $\psi_q$ is well defined in $\mathbb{R}\setminus\{0\}$, it is $C^\infty$ on $\mathbb{R}\setminus \{0\}$ and  if 
\begin{equation} \label{cond_p}  (1-\epsilon_1/8) |\delta^{k+1}_{i_{k+1}}| \leq  |x|\leq (1-\epsilon_1/2) |\delta^k_{i_k}|\end{equation}
then 
\begin{equation} \label{piecewise} \psi_q(x) = \sum_{j=1}^k \frac{\partial_t \beta^j_q}{\beta^j_q} \cdot \phi(\frac{x}{\delta^{j}_{i_j}})=\sum_{j=1}^k \frac{\partial_t \beta^j_q}{\beta^j_q} = \frac{\partial_t \delta^k_{i_k}}{\delta^k_{i_k}}.\end{equation}
Moreover, notice that if
\begin{equation}\label{sel_x}   |\delta^{k+1}_{i_{k+1}}| \leq  |x|\leq |\delta^k_{i_k}| \end{equation} 
then 
$$\psi_q(x) = \frac{\partial_t \beta^{k}_q}{\beta^{k}_q} \cdot \phi(\frac{x}{\delta^{k}_{i_k}}) +\sum_{j=1}^{k-1} \frac{\partial_t \beta^j_q}{\beta^j_q},$$
so
$$D\psi_q(x)= \frac{\partial_t \beta^{k}_q}{\delta^k_{i_k} \beta^{k}_q} \cdot D\phi(\frac{x}{\delta^{k}_{i_k}}),$$
and by (\ref{der_beta}) and (\ref{sel_x})
\begin{equation} \label{eqvec}
|D\psi_q(x)| \leq \frac{1}{|\delta^k_{i_k}|}  C_3\max |D\phi| 
\leq  \frac{  C_3\max |D\phi| }{|x|}
\end{equation}

If $q=(0,j)$, define 
\begin{equation} \alpha_2(z,j)= \psi_{q}(|z|) z.\end{equation}

By (\ref{eqvec}) and Lemma \ref{qvec} we conclude that $\alpha_2$ is a quasiconformal vector field with $\alpha_2(0,j)=0$ for every $j$. 

It remains to show that $\alpha_2$ satisfies (\ref{v2}). Indeed, let  $(z,j)\in \partial \mathscr{R}_{-\ell}^{k+1}$, with $R_{-\ell}^{k+1} \subset Q_0^{k}$ and $\ell > 0$. Since  $\mathscr{R}_{-\ell}^{k+1}$ belongs to a disc with diameter given by the interval $R_{-\ell}^{k+1}$ and it does not intercept $\mathcal{Q}_0^{k+1}$, we have
\begin{equation}\label{dome}  (1- \epsilon_1/64)|\delta^{k+1}_{i_{k+1}}|   \leq d^{k+1}_q \leq |z|\leq (1-\epsilon_1) |\delta^k_{i_k}|.\end{equation}
for every $(z,j)\in \partial \mathscr{R}_{-\ell}^{k+1}$. Here $d^{k+1}_q$ is as defined in (\ref{dkr}). 
By (\ref{piecewise}) we have that 
$$\alpha_2(z,j)= \frac{\partial_t \delta^{k}_{i_k}}{\delta^{k}_{i_k}} z $$
for every
$$(z,j) \in \partial \mathscr{R}_{-\ell}^{k+1}.$$
Indeed, let  $(z,\tilde{\j})\in \mathscr{R}_{-\ell}^{k+1}$, with $R_{-\ell}^{k+1} \subset S_0^{k}$, with  $\ell > 0$ and such that $q$ is the sucessor of $s$ at level $k$. Then
$$G(\mathscr{R}_{-\ell}^{k+1}) \subset \mathcal{Q}_0^{k}.$$
If  $G(\mathscr{R}_{-\ell}^{k+1})=\mathscr{R}_{-a}^{k+1}$, for some $a > 0$,  then the points in this image also satisfies (\ref{dome}). Otherwise $r=q$ and  $G(\mathscr{R}_{-\ell}^{k+1})=\mathcal{Q}_0^{k+1}$, so if $(w,j) \in G(\partial \mathscr{R}_{-\ell}^{k+1})$ then 
\begin{equation}\label{dome1}  (1- \epsilon_1/64)|\delta^{k+1}_{i_{k+1}}|\leq  |d_q^{k+1}|=|w| \leq |\delta^{k+1}_{i_{k+1}}| \leq (1-\epsilon_1) |\delta^k_{i_k}|.\end{equation}
By (\ref{piecewise}) we have that 
$$\alpha_2(w,j)= \frac{\partial_t \delta^{k}_{i_k}}{\delta^{k}_{i_k}} w $$
for every
$$(w,j) \in  G(\partial \mathscr{R}_{-\ell}^{k+1}),$$
Since these estimates holds for every $q \in C(F)$ we have that for  every $(z,j)\in \partial \mathscr{R}_{-\ell}^{k+1}$, with $R_{-\ell}^{k+1} \subset Q_0^{k}$ and $\ell > 0$ we have 
\begin{eqnarray}
 \alpha_2\circ G (z,j) - DG(z,j)\cdot \alpha_2(z,j) 
 &=& \frac{\partial_t \delta^{k}_{i_k+1}}{\delta^{k}_{i_k+1}} G(z,j)-   DG(z,j) \frac{\partial_t \delta^{k}_{i_k}}{\delta^{k}_{i_k}}  (z,j) \nonumber \\
&=& \frac{\partial_t \delta^{k}_{i_k+1}}{\delta^{k}_{i_k+1}} F^{n^{k}_{i_k}}(z,j)-   DF^{n^{k}_{i_k}}(z,j) \frac{\partial_t \delta^{k}_{i_k}}{\delta^{k}_{i_k}}  (z,j) \nonumber \\
&=& V_2(z,j).
\end{eqnarray}
\end{proof}

\begin{prop}\label{alpha1} Let $v \in \mathfrak{B}_+(F)$. There exists a quasiconformal vector field  $\alpha_1\colon \mathbb{C}_n \rightarrow \mathbb{C}$ such that
\begin{equation}\label{v1} V_1(z,j) = \alpha_1\circ G (z,j) - DG(z,j)\cdot \alpha_1(z,j)\end{equation}
for every $(z,j) \in \partial \mathscr{D}$ and moreover $\alpha_1(q)=0$ for every $q \in C(F)$. 
\end{prop}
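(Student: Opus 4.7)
The plan is to parallel the construction of $\alpha_2$ in Proposition \ref{alpha2}, replacing the scalar multipliers $\partial_t\delta^k_{i_k}/\delta^k_{i_k}$ by vector fields coming from infinitesimal pullback at each renormalization level. First, for each $k$ I would produce a quasiconformal vector field $\gamma^k\colon\mathbb{C}_n\to\mathbb{C}$ satisfying
\[ v^k(w)=\gamma^k\circ\tilde F_k(w)-D\tilde F_k(w)\cdot\gamma^k(w),\qquad w\in\overline{U}, \]
where $\tilde F_k:=\mathcal{R}^kF$. This is the infinitesimal Sullivan pullback argument of Avila--Lyubich--de Melo, executed exactly as in the proof of Proposition \ref{closed}: a Beltrami-averaging combined with McMullen's compactness criterion, applied to the polynomial-like extension of $\tilde F_k$ provided by the complex bounds (Theorem \ref{co_bo}). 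Since $v\in\mathfrak{B}_+(F)$ one has $\sup_k|v^k|_{\mathcal{B}(U)}<\infty$, and the uniform complex bounds then give uniform bounds on $|\overline{\partial}\gamma^k|_\infty$ and, after normalizing $\gamma^k(-1,i)=0$, on $\sup|\gamma^k|$ over any fixed compact subset of $\mathbb{C}_n$.

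Fix $q\in C(F)$; for each $k$ let $i_k$ satisfy $q=q^k_{i_k}$ and write $\delta_k=\delta^k_{i_k}$, $A_k=A^k_{i_k}$. The \emph{scale-$k$ candidate} is $\alpha^{(k)}(z):=-\delta_k\,\gamma^k(A_k(z))$ on the copy of $\mathbb{C}$ containing $q$. Using $\tilde F_k\circ A^k_i=A^k_{i+1}\circ F^{n^k_i}$, the chain rule (each $DA^k_i$ is the real constant $-1/\delta^k_i$), and the cohomological equation for $\gamma^k$, a direct computation shows that if $\alpha_1$ equals the scale-$k$ candidate at $q$ on $\partial\mathscr{R}^{k+1}_{-\ell}\subset\mathcal{Q}^k_0$ and equals the scale-$k$ candidate at the successor on $G(\partial\mathscr{R}^{k+1}_{-\ell})$, then $V_1=\alpha_1\circ G-DG\cdot\alpha_1$ on $\partial\mathscr{R}^{k+1}_{-\ell}$. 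Glue the candidates across scales with the radial cutoff $\phi$ of Proposition \ref{alpha2}, setting, in a neighborhood of $q$,
\[ \alpha_1(z):=\sum_{k\geq0}\bigl[\phi(|z|/|\delta_k|)-\phi(|z|/|\delta_{k+1}|)\bigr]\alpha^{(k)}(z). \]
This is a telescoping partition of unity adapted to the scales $|\delta_k|$; by the geometric estimates (\ref{dome}) and (\ref{dome1}), already verified in the proof of Proposition \ref{alpha2}, on every $\partial\mathscr{R}^{k+1}_{-\ell}$ (and on $G(\partial\mathscr{R}^{k+1}_{-\ell})$, including the critical case $G(\mathscr{R}^{k+1}_{-\ell})=\mathcal{Q}^{k+1}_0$) exactly one summand is non-zero and equals $\alpha^{(k)}(z)$, so the required cohomological identity holds on $\partial\mathscr{D}$. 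Away from $C(F)$ only finitely many disks accumulate and $\alpha_1$ is defined by the obvious finite construction.

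Finally, each $\alpha^{(k)}$ has uniformly bounded dilatation, because the $\gamma^k$'s are uniformly quasiconformal and affine rescaling by the real $\delta_k$ preserves the Beltrami coefficient; the $\overline{\partial}$-contributions from the cutoffs are controlled by $|\alpha^{(k)}(z)|/|\delta_k|\lesssim1$ on the support of the $k$-th cutoff, in exact analogy with Lemma \ref{qvec}, and at each $z$ only finitely many summands contribute, so $\alpha_1$ extends to a quasiconformal vector field on $\mathbb{C}_n$. Moreover $|\alpha^{(k)}(z)|\leq|\delta_k|\cdot\sup|\gamma^k|\to0$ as $k\to\infty$, so $\alpha_1(z)\to0$ as $z\to q$ and we set $\alpha_1(q)=0$. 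The hard part will be producing the fields $\gamma^k$ with the uniform quasiconformal bounds needed throughout the construction; this is precisely where the hypothesis $v\in\mathfrak{B}_+(F)$ enters decisively (through $\sup_k|v^k|<\infty$), together with the uniform complex bounds and McMullen's compactness machinery, as in the proof of Proposition \ref{closed}.
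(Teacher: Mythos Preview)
Your proposal has a genuine circularity in its first step. You want to produce, for each $k$, a quasiconformal vector field $\gamma^k$ satisfying
\[
v^k(w)=\gamma^k\circ\tilde F_k(w)-D\tilde F_k(w)\cdot\gamma^k(w)\qquad\text{on }\overline{U},
\]
via the infinitesimal pullback argument ``executed exactly as in the proof of Proposition \ref{closed}.'' But look at that proof: it starts from the hypothesis $v_k\in E^h_{F_k}$, which supplies a quasiconformal solution $\tilde\gamma^k$ on a neighbourhood of the post-critical set (item 4 of the listed properties of $\gamma_0^k$). Without that seed datum the iterated pullbacks $\gamma^k_{j+1}$ cannot be continuously extended across the critical points, and the averaging process never gets off the ground. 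Here you have no such hypothesis: knowing $v^k\in E^h_{\tilde F_k}$ is, by Proposition \ref{invariance}, equivalent to $v\in E^h_F$, which is precisely the conclusion of the Key Lemma that Proposition \ref{alpha1} is a step toward. In fact, your $\gamma^0$ alone (for $k=0$, where $\tilde F_0=F$ and $v^0=v$) would already give the cohomological equation on $P(F)\subset\overline{U}$ and hence $v\in E^h_F$ directly, making the rest of the construction superfluous. So either the first step is impossible without assuming what you want to prove, or it is by itself the entire Key Lemma.

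The paper avoids this trap by never solving the cohomological equation for $v^k$ globally. Instead it builds, at each level $k$ and for each $r\in C(F)$, a finite chain of explicit functions $\psi^k_{r,-j}$ (with $j\leq \ell^k_r\leq \sup_k\mu^{(k)}/\mu^{(k-1)}$) via the recursion (\ref{tce_zero}), supported on the domains $D^k_{r,-j}$. These are designed so that the required identity (\ref{tce_um}) holds only on the specific curves $\partial\mathscr{R}^k_{-i_j}$, and the $\overline{\partial}$-bound (\ref{dbar_zero_est}) comes from the explicit formula together with the real/complex bounds, using nothing more about $v$ than $\sup_k|v^k|<\infty$. The construction is a finite, pointwise pullback sum at each scale rather than an infinite Beltrami-averaged limit, and that is exactly what keeps it non-circular.
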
 
\begin{proof} Let $\epsilon_1 > 0$ be as in (\ref{ep1}). 
Let $\phi\colon \mathbb{C} \rightarrow \mathbb{R}$ be a $C^\infty$ function such that 
\begin{itemize}
\item[i.] $\phi(x) \in [0,1]$ for every $x \in \mathbb{C}$.
\item[ii.] If  $|x|=1 -\epsilon_0/16$ and then  $\phi(x)=1$.
\item[iii.] If either $|x| > 1-\epsilon_0/32 $, $|x| < 1 - \epsilon_1/2$ then $\phi(x)=0$. 
\end{itemize}
Given $r \in C(F)$, let  
$$i_0=0 < i_1 < \dots < i_{\ell^k_r} < n^k_r$$ 
be the sequence of integers $i < n^k_r$ such that 
$$R^k_{i} \subset \bigcup_{q \in C(F)} Q^{k-1}_0.$$
Denote by $\delta^k_r$ the periodic point in the boundary of $R_0^k$. There is an onto univalent  extension 
$$G^j \colon D_{r,-j}^k\rightarrow B(r,|\delta^k_r|),$$
where $D_{r,-j}^k\cap \mathbb{R} =R_{-i_j}^k$.  Note that 
$\mathscr{R}_{-i_j}^k \subset D_{r,-j}^k$ and
\begin{equation} \label{zero} G^j(\mathscr{R}_{-i_j}^k)=B(r,(1-\epsilon_0/16)|\delta^k_r|)=\mathscr{R}_{0}^k.\end{equation} 
Let $q_j \in C(F)$ such that $q_j$ and $R^k_{-i_j}$ belongs to the same connected component  of $I^n_F$. Define a function
$$\psi^k_{r,-j}\colon \mathbb{C}_n \rightarrow \mathbb{C}$$
in the following way.  Let  $$\psi^k_{r,0}(z,a)=0$$
for every $(z,a) \in \mathbb{C}_n$, and define by induction on $j$  
\begin{equation}\label{tce_zero} \psi^k_{r,-(j+1)}(z,a)=\phi\Big(\frac{G^{j+1}(z,a)}{\delta_r^k}\Big) \frac{\psi^k_{r,-j}(G(z,a)) - \delta_{q_{j}}^{k-1}v^{k-1}\circ A_{q_{j+1}}^{k-1}   (z,a)}{DG(z,a)}  \end{equation} 
for every $(z,a)\in D^k_{r,-(j+1)}$. Note that $\psi^k_{r,-(j+1)}(z,a)=0$ for $(z,a)$ in a neighbourhood of $\partial D^k_{r,-(j+1)}$, so we can extend $\psi^k_{r,-(j+1)}$ to a $C^\infty$ function on $\mathbb{C}_n$ defining $\psi^k_{r,-(j+1)}(z,a)=0$ for $(z,a) \not\in  D^k_{r,-(j+1)} $. 
\noindent Finally note that by  (\ref{zero}) 
$$\phi\Big(\frac{G^{j+1}(z,a)}{\delta_r^k}\Big)  =1$$ 
for every $(z,a) \in \partial \mathscr{R}_{-i_{j+1}}^k$, so by (\ref{tce_zero})
 \begin{equation}\label{tce_um} -V_1(z,a)= \delta_{q_{j}}^{k-1}v^{k-1}\circ A_{q_{j+1}}^{k-1}   (z,a) =\psi^k_{r,-j}(G(z,a)) - DG(z,a) \psi^k_{r,-(j+1)}(z,a)   \end{equation} 
 for every $(z,a) \in \partial \mathscr{R}_{-i_j}^k$. Note that  given $(z,a) \in \mathbb{C}_n\setminus C(F)$, there exists an open neighbourhood of $(z,a)$ that intersects only one of the supports of  a function in the family
 $$\mathcal{F}= \{   \psi^k_{r,-j}    \}_{k, \ r \in C(F), j \leq \ell^k_r}.$$
 In particular the function 
 $$\alpha_1(z,a)=-\sum_k \sum_{r  \in C(F)} \sum_{j=0}^{\ell^k_r} \psi^k_{r,-j}(z,a) $$
is well defined and it is $C^\infty$ on $\mathbb{C}_n\setminus C(F)$. Moreover  given $\mathscr{R}_{-i_j}^k$, the function $\psi^k_{r,-j}$ is the unique function in this family whose support intersects  $\partial \mathscr{R}_{-i_j}^k$.   By (\ref{tce_um}), this implies that
 $$V_1(z,a)= \alpha_1 \circ G(z,a)) - DG(z,a) \alpha_1(z,a).$$  
 for every $(z,a) \in \partial \mathscr{D}$.  It is easy to prove by induction that 
\begin{equation} \label{cf2} |\psi^k_{r,-j}(z,a)| \leq   \sup_k |v^{k-1}| \sum_{\ell=0}^{j-1} \frac{|\delta^{k-1}_{q_{j-1-\ell}}|}{|DG|^{\ell+1}(z,a)},\end{equation}
in particular 
\begin{equation} \label{cf} |\psi^k_{r,-j}| \leq  \max_{q \in C(F)} |\delta^{k-1}_q| \sup_k |v^{k-1}| \sum_{j=0}^{\sup_k \frac{p_k}{p_{k-1}} } \frac{1}{(\inf |DG|)^j}.\end{equation}
Due the real bounds
\begin{equation}\label{zerok}  |\psi^k_{r,-j}| \leq  C \theta^{-k}\end{equation}
If $(z,a) \in \mathcal{Q}^{k-1}_0\setminus \{ q\}$, for some $q \in C(F)$ then either $\alpha_1(z,a)=0$ or $(z,a)$ belongs to the support of a unique function $\psi^b_{r,-j}$, with $b \geq k$. We conclude that 
\begin{equation}\label{cont1}  \lim_{(z,a)\rightarrow q} \alpha_1(z,a)=0,\end{equation}
so we can extend $\alpha_1$ as a continuous function on $\mathbb{C}_n$.
Note that
\begin{eqnarray}\label{dbar_zero} &\ & \overline{\partial} \psi^k_{r,-(j+1)}(z,a) \nonumber \\
&=& \overline{\partial}\phi\Big(\frac{G^{j+1}(z,a)}{\delta_r^k}\Big) \frac{\overline{DG^{j+1}}(z,a)}{\overline{\delta_r^k}} \frac{\psi^k_{r,-j}(G(z,a)) - \delta_{q_{j}}^{k-1}v^{k-1}\circ A_{q_{j+1}}^{k-1}   (z,a)}{DG(z,a)} \nonumber \\  
&+& \phi\Big(\frac{G^{j+1}(z,a)}{\delta_r^k}\Big) \overline{\partial}  \psi^k_{r,-j}(G(z,a)) \frac{\overline{DG}(z,a)}{DG(z,a)}
 \end{eqnarray} 
 By the real bounds there exists $C > 0$ such that 
 $$ \frac{|DG^{j-\ell}|(G^{\ell+1}(z,a))}{|\delta_r^k|} \leq  \frac{C}{|R^k_{-i_{j-\ell}}|}$$
 for every $r \in C(F)$, $K$ and $j$ and $(z,a) \in D^k_{r, -(j+1)}$. In particular
\begin{eqnarray} &&\frac{|DG^{j+1}|(z,a)}{|\delta_r^k|} \frac{|\psi^k_{r,-j}(G(z,a)) - \delta_{q_{j}}^{k-1}v^{k-1}\circ A_{q_{j+1}}^{k-1}   (z,a)|}{|DG(z,a)|}   \nonumber \\
&\leq&  \frac{|DG^{j+1}|(z,a)}{|\delta_r^k|}\Big( \sum_{\ell=0}^{j-1} \frac{|\delta^{k-1}_{q_{j-1-\ell}}|}{|DG|^{\ell+2}(z,a)}+  \frac{|\delta_{q_{j}}^{k-1}|}{|DG|(z,a)}\Big) \nonumber \\
&\leq&  \frac{|DG^{j+1}|(z,a)}{|\delta_r^k|} \sum_{\ell=0}^{j} \frac{|\delta^{k-1}_{q_{j-\ell}}|}{|DG|^{\ell+1}(z,a)}\nonumber \\
&\leq& \sum_{\ell=0}^{j}   \frac{|DG^{j-\ell}|(G^{\ell+1}(z,a))|\delta^{k-1}_{q_{j-\ell}}|}{|\delta_r^k|} \leq C\sum_{\ell=0}^{j}    \frac{|\delta^{k-1}_{q_{j-\ell}}|}{|R^k_{-i_{j-\ell}}|} \nonumber \\
\label{est55} &\leq& C \sup_k \frac{\mu^{(k)}}{\mu^{(k-1)}}. \end{eqnarray}
The last inequality follows from the fact that $R^k_{-i_{j-\ell}}\subset Q^{k-1}_0$, with $q = q_{j-\ell}$, and $2\delta^{k-1}_{q_{j-\ell}} = |Q^{k-1}_0|$ and by the real bounds there exists  $C > 0$ such that 
$$\frac{|Q_0^{k-1}|}{|R_{-i}^k|}\leq C$$
for every $R_{-i}^k \subset Q^{k-1}_0$.
 So by (\ref{est55})
 \begin{equation}\label{dbar_zero_est} \sup |\overline{\partial} \psi^k_{r,-(j+1)}|
\leq  C  \sup  |\overline{\partial}\phi|  \sup_k \frac{\mu^{(k)}}{\mu^{(k-1)}} +\sup |\overline{\partial}  \psi^k_{r,-j}|.
 \end{equation} 
Since $\psi^k_0=0$ we obtain 
$$\sup  |\overline{\partial} \psi^k_{r,-j}|\leq  C \sup_k \frac{\mu^{(k)}}{\mu^{(k-1)}}$$
Since for every  $(z,a) \in \mathbb{C}_n\setminus C(F)$, there exists an open neighbourhood of $(z,a)$ that intersects only one  of the supports of  the functions in the family $\mathcal{F}$, we  conclude that $\alpha_1$ is a quasiconformal vector field on $\mathbb{C}_n\setminus C(F)$. Since $\alpha_1$ is continuous at $C(F)$, we can use the argument in the end of the proof of Lemma \ref{qvec} to conclude that $\alpha_1$ is a quasiconformal vector field on $\mathbb{C}_n$. 
\end{proof}  

\begin{proof}[Proof of Theorem \ref{keylem}] Let $v \in \mathfrak{B}_+(F)$. Let $\alpha_1$ and $\alpha_2$ as in Proposition \ref{alpha2} and \ref{alpha1}. Define $\alpha^0 = \alpha_1 + \alpha_2$. Then by Lemma \ref{div}
\begin{equation}\label{v} V(z,j) = \alpha^0\circ G (z,j) - DG(z,j)\cdot \alpha^0(z,j)\end{equation}
for every $(z,j) \in \partial \mathscr{D}\setminus C(F)$ and moreover $\alpha^0(q)=0$ for every $q \in C(F)$. Now we will use  an argument similar to the infinitesimal pullback argument for polynomial-like maps \cite{alm}. We define  by induction on $m$ a sequence of quasiconformal vector fields $$\alpha^m\colon \mathbb{C}_n \rightarrow \mathbb{C}$$
such that $\alpha^m(q)=0$ for every $q \in C(F)$,
\begin{equation}\label{v5} V(z,j) = \alpha^m\circ G (z,j) - DG(z,j)\cdot \alpha^m(z,j)\end{equation}
for every $(z,j) \in \partial \mathscr{D}$ and moreover
\begin{equation}\label{v6} V(z,j) = \alpha^{m-1}\circ G (z,j) - DG(z,j)\cdot \alpha^{m}(z,j)\end{equation}
for every $(z,j) \in \mathscr{D}$, $m\geq 1$ and
$$\sup_{\mathbb{C}_n} |\overline{\partial} \alpha^{m+1}|\leq  \sup_{\mathbb{C}_n} |\overline{\partial} \alpha^{m}|.$$
 Indeed, suppose by induction we have defined $\alpha^{k}$. Define $\alpha^{k+1}(z,j)=\alpha^{k}(z,j)$ for every $(z,j)\not\in \mathscr{D}$ and 
\begin{equation}\label{m23} \alpha^{k+1}(z,j)=\frac{\alpha^{k}(G(z,j))-V(z,j)}{DG(z,j)}\end{equation}
for every $(z,j) \in \mathscr{D}$.  Note that 
$$|\overline{\partial} \alpha^{k+1}(z,j)|= |\overline{\partial} \alpha^{k}(z,j)|,$$
for every $(z,j) \not\in \overline{\mathscr{D}}$ and 
$$|\overline{\partial} \alpha^{k+1}(z,j)|= |\overline{\partial} \alpha^{k}(G(z,j))|$$ 
for $(z,j)\in \mathscr{D}$. So $\alpha^{k+1}$ is a quasiconformal vector field in $\mathbb{C}_n\setminus \partial \mathscr{D}$. 
Moreover  (\ref{v6}) holds for $m=k+1$ and due (\ref{v5}) with $m=k$ we have that $\alpha^{k+1}=\alpha^k$ on $\partial \mathscr{D}\setminus C(F)$, so $\alpha^{k+1}$ is continuous at points in $$\partial \mathscr{D}\setminus C(F).$$ 
If $(z,j) \in   \partial \mathscr{D}\setminus C(F)$ then $(z,j) \in \partial \mathscr{R}^n_{-i}$, for some $r \in C(F)$. In particular
there exists a  neighbourhood $W$ of $(z,j)$ such that $\alpha^{k+1}$ is continuous on $W$ and a quasiconformal vector field 
on $W\setminus \partial \mathscr{R}^n_{-i}$.  Since  $\partial \mathscr{R}^n_{-i}$ is an analytic curve we conclude that $\alpha^{k+1}$ is a quasiconformal vector field on $W$. So $\alpha^{k+1}$ is a quasiconformal vector field on $\mathbb{C}_n\setminus C(F)$. Finally notice that $\alpha^{k+1}$ is continuous at points in $C(F)$. Indeed, suppose that 
$$(z_\ell,i)\rightarrow_\ell (0,i).$$
If $(z_\ell,i) \not\in \mathscr{D}$ for every $\ell$ then
$$\lim_{\ell\rightarrow \infty} \alpha^{k+1}(z_\ell,i)= \lim_{\ell\rightarrow \infty} \alpha^{k}(z_\ell,i) =0.$$
If $(z_\ell,i) \in \mathscr{D}$ for every $\ell$ then
\begin{equation} \label{lim3} \lim_{\ell\rightarrow \infty} \alpha^{k+1}(z_\ell,i)= \lim_{\ell\rightarrow \infty} \frac{\alpha^{k}(G(z_\ell,i))-V(z_\ell,i)}{DG(z_\ell,i)}.\end{equation}
Since the accumulation points of the sequence 
$G(z_\ell,i)$ belongs to $C(F)$ we have
$$\lim_{\ell\rightarrow \infty} \alpha^{k}(G(z_\ell,i)) =0.$$
By Lemma \ref{divv}  it follows that 
$$\lim_{\ell\rightarrow \infty} V(z_\ell,i) =0.$$
Since by the complex  bounds we have that 
$$\inf_{(z,j) \in \mathscr{D}} |DG(z,j)| > 0$$
we conclude by (\ref{lim3}) that 
$$ \lim_{\ell\rightarrow \infty} \alpha^{k+1}(z_\ell,i)=0.$$
so $\alpha^{k+1}$ is continuous at points in $C(F)$. By the same argument in the end of the proof of Lemma \ref{qvec} we conclude that $\alpha^{k+1}$ is a quasiconformal vector field on $\mathbb{C}_n$ and
\begin{equation} \label{lim_qc} \sup_{\mathbb{C}_n} |\overline{\partial} \alpha^{k+1}|\leq  \sup_{\mathbb{C}_n} |\overline{\partial} \alpha^{k}|.\end{equation} 
Given a  point $(z,j) \in \mathbb{C}_n$ such that  there exists $k_0\geq 0$ such that $G^k(z,j) \in \mathscr{D}$ for every $k< k_0$ and $G^{k_0}(z,j) \not\in \mathscr{D}$, we claim that $\alpha^k(z,j)=\alpha^{k_0}(z,j)$ for every $k\geq k_0$.  Note that $\alpha^k(z,j)=\alpha^0(z,j)$ for every $(z,j) \not\in \mathscr{D}$, so the claim holds for $k_0=0$. Suppose by induction on $k_0$ that the claim hold for $k_0$. If $G^i(z,j) \in \mathscr{D}$ for $i\leq k_0$ and $G^{k_0+1}(z,j) \not\in \mathscr{D}$ then $G^i(G(z,j)) \in \mathscr{D}$ for $i < k_0$ and $G^{k_0}(G(z,j))\not\in \mathscr{D}$, so by the induction assumption $\alpha^{k}(G(z,j))=\alpha^{k_0}(G(z,j))$ for every $k\geq k_0$. Since $G(z,j) \in \mathscr{D}$ we have by (\ref{m23}) that 
$$\alpha^{k+1}(z,j)=\frac{\alpha^{k}(G(z,j))-V(z,j)}{DG(z,j)}= \frac{\alpha^{k_0}(G(z,j))-V(z,j)}{DG(z,j)}=\alpha^{k_0+1}(z,j),$$
for every $k\geq k_0$, which proves the claim. \\
By Proposition  \ref{pulafora}  the  sequence $\alpha^k$ converges almost everywhere.  By (\ref{lim_qc})  and McMullen \cite{mc2} we have that every subsequence $\alpha^k$ has a subsequence that converges uniformly to some quasiconformal vector field.  So $\alpha^k$ converges uniformly to a quasiconformal vector field $\alpha$. Taking $m\rightarrow \infty$ in (\ref{v6}) we conclude that 
\begin{equation} V(z,j) = \alpha\circ G (z,j) - DG(z,j)\cdot \alpha(z,j),\end{equation}
for every $(z,j) \in \mathscr{D}$ and in particular for every $(z,j) \in P(F)\setminus C(F)$. Since $\alpha(q)=0$ for every $q \in C(F)$, by Corollary \ref{main_cor} we conclude that $v \in E^h(F)$.
 \end{proof}

\section{Transversal families have  hyperbolic parameters.}\label{teisection}
From now on we will consider only {\it real maps}. In particular $\mathcal{B}_{nor}^{\mathbb{R}}(U)$ will now denote the real Banach space of all $F\in \mathcal{B}_{nor}(U)$ which are real on the real line.  Since  $\Omega_{n,p}$ is a hyperbolic set we can define the stable 
$$G\mapsto E^h_G$$
and unstable 
$$G\mapsto E^u_G$$ 
subspace distributions defined for $G \in \Omega_{n,p}$, and the corresponding projections on the spaces
$$\pi_G^h \colon T \mathcal{B}_{nor}^{\mathbb{R}}(U) \rightarrow E^h_G$$
and
$$\pi_G^u \colon T \mathcal{B}_{nor}^{\mathbb{R}}(U) \rightarrow E^u_G.$$
Recall that $U=D_{\delta_0,\theta_0}$. As defined in \cite{sm5} we also have the adapted norms $|\cdot|_{G,0}$ that satisfy
 $$|v|_{G,0}= |\pi_G^h(v)|_{G,0}+  |\pi_G^u(v)|_{G,0},$$
and  the family of cones $C^u_{\epsilon}(G)$, with $\epsilon > 0$, for which  $v \in  C^u_\epsilon(G)$ if and only if 
$$|\pi_G^h(v)|_{G,0}\leq \epsilon  |\pi_G^u(v)|_{G,0}.$$
Those cones are  unstable and forward-invariant for the action of $\mathcal{R}$ on  $\Omega_{n,p}$ provided  $\epsilon$ is  small enough. In particular if $\epsilon$ is small there is $\theta > 1$ such that for every $ F \in \mathcal{B}_{nor}^{\mathbb{R}}(U)$ close enough to  some $G \in \Omega_{n,p}$, and $v \in C^u_{2\epsilon}(G)$ 
$$|D\mathcal{R}_{F}\cdot v|_{\mathcal{R}G,0}\geq  \theta |v|_{G,0}.$$
and moreover there is $\epsilon' \in (0,\epsilon)$ such that 
$$|\pi_{\mathcal{R}G}^h(D\mathcal{R}_F\cdot v)|_{\mathcal{R}G,0}\leq 2\epsilon'  |\pi_{\mathcal{R}G}^u(D\mathcal{R}_F\cdot v)|_{\mathcal{R}G,0}.$$
Furthermore  there is $\lambda \in (0,1)$   such that for every $G \in \Omega_{n,p}$, $v \in E^h_G$ and $k \in \mathbb{N}$ 
$$|D\mathcal{R}_{G}^k\cdot v|_{\mathcal{R}^kG,0}\leq \lambda^k |v|_{G,0}.$$

 Define the $\delta$-shadow of $G$ as
$$W^s_\delta(G)= \{F \in \mathcal{W} \colon \ dist_{\mathcal{B}_{nor}^{\mathbb{R}}(U)}(\mathcal{R}^k F,  \mathcal{R}^k G)\leq \delta, \ for \ every \ k\geq 0 \},$$ 
and the $\delta$-shadow of $\Omega_{p,n}$ as
$$W^s_\delta(\Omega_{p,n})=\cup_{G \in \Omega_{p,n}} W^s_\delta(G).$$
We also define 
$$\mathbb{B}^u_G(v_0,\delta) = \{v \in E^u_G\cap \mathcal{B}^{\mathbb{R}}(U) \colon \ |v-v_0|_{G,0} \leq \delta \},$$
where $v_0\in E^u_G\cap \mathcal{B}^{\mathbb{R}}(U)$,
and 
$$E^h_G+G = \{v+G\colon \ v \in E^h_G\}.$$ 
 Let  $\delta_3  > 0$ (we will use this notation to follow \cite{sm5}). Define  $\mathcal{T}^1_0(G,\delta,\epsilon)$,  with $\delta \in (0, \delta_3)$, as  the set of $C^1$ functions 
$$\mathcal{H}\colon \mathbb{B}^u_G(v_0,\delta) \rightarrow E^h_G+G,$$
with $v_0\in E^u_G\cap \mathcal{B}^{\mathbb{R}}(U)$, such that 
$$|D\mathcal{H}|_{(G,0),(G,0)}\leq \epsilon.$$
and
$$F_0 = v_0 +\mathcal{H}(v_0) \in W^s_{\delta_3}(G).$$
We will call $F_0$ the {\it base  point} of $\mathcal{H}$. In particular $w+ D_x\mathcal{H}\cdot w \in C^u(G)$, for every $x \in \mathbb{B}^u_G(v_0,\delta)$, $w \in E^u(G)$, and  $G \in \Omega_{p,n}$.

Denote
$$\hat{\mathcal{H}}=\{ v+\mathcal{H}(v)\colon \ v \in \mathbb{B}^u_G(v_0,\delta)\}.$$

 The Transversal Empty Interior assumption  for the renormalization operator is the main result of this section.

\begin{cor}[Transversal Empty Interior Assumption]\label{tei}  For every small $\epsilon > 0$  we can choose $\delta_3$ small enough such that the following holds.   For every $G\in \Omega_{p,n}$ and for every $C^{1+Lip}$  function $\mathcal{H}\in  \mathcal{T}^1_0(G,\delta',\epsilon)$, with $\delta' < \delta_3$, we have that $\hat{\mathcal{H}}\cap W^s_{\delta_3}(\Omega)$ has empty interior in $\hat{\mathcal{H}}$.
\end{cor}

This property  is closely  related with the fact that maps $F$ that are infinitely renormalizable with bounded combinatorics can be approximated by hyperbolic maps.  

We are going to introduce notation from \cite{sm5}. Denote by  $C^{\omega_{\mathbb{R}}}([-1,1]^j,\mathcal{B}_{nor}^{\mathbb{R}}(U))$ the  space of    functions 
$$\gamma\colon (-1,1)^j \rightarrow \mathcal{B}_{nor}^{\mathbb{R}}(U)$$ 
which  can be extended to a complex analytic function  
$$\gamma\colon \mathbb{D}^j \rightarrow \mathcal{B}_{nor}(U),$$ 
and moreover there is  a continuous extension of $\gamma$  to $\overline{\mathbb{D}}^j$. Endowed with the sup norm on $\overline{\mathbb{D}}^j$ the space  $C^{\omega_{\mathbb{R}}}([-1,1]^j,\mathcal{B}_{nor}^{\mathbb{R}}(U))$ is a real Banach space.  

Endow $T_{\mathbb{C}}=\overline{\mathbb{D}}^{\mathbb{N}}$ with the product topology. Let $\Gamma^\omega(\mathcal{B}_{nor}(U))$   be the set of continuous  functions $\gamma\colon T_{\mathbb{C}} \mapsto \mathcal{B}_{nor}(U)$ which  are holomorphic when we fix all but a finite number of entries of $\lambda \in T_{\mathbb{C}}$ and $|\lambda_i| < 1$ for every $i$.  Endowing  $\Gamma^\omega(\mathcal{B}_{nor}(U))$ with the sup norm we obtain  a complex Banach space.

Note that since $U$ is symmetric with respect to the real line, that is, $(z,i)\in U$ iff $(\overline{z},i) \in U$, there is a complex conjugation on the complex Banach space $\mathcal{B}(U)$ defined by $\overline{f}(z)=\overline{f(\overline{z})}$ for $f \in \mathcal{B}(U)$. Define $\Gamma^{\omega_{\mathbb{R}}}(\mathcal{B}_{nor}^{\mathbb{R}}(U))$ as the {\it real } Banach space  that consists of the restrictions to $T=[-1,1]^{\mathbb{N}}$ of  functions $\gamma \in \Gamma^{\omega}(\mathcal{B}_{nor}(U))$ satisfying $\gamma(\overline{\lambda})= \overline{\gamma(\lambda)}$.

 We say that a set   $\Theta \subset \mathcal{B}_{nor}(U)$ is a {\it $\Gamma^{\omega_{\mathbb{R}}}(\mathcal{B}_{nor}^{\mathbb{R}}(U))$-null set}  is there exists a residual subset $\mathcal{F} \subset \Gamma^{\omega_{\mathbb{R}}}(\mathcal{B}_{nor}^{\mathbb{R}}(U))$ such that 
 $$m(\lambda \in [-1,1]^\mathbb{N}\colon \gamma(\lambda)\in \Theta)=0$$ for every $\gamma \in \mathcal{F}$. Here  $m$ is the product measure obtained considering the normalised  Lebesgue measure on each copy of  $[-1,1]$.

The Transversal Empty Interior property will allows us to   apply \cite[Theorem  1]{sm5} to the renormalization operator. Indeed  we already verified that 

\begin{itemize}
\item[-] $\mathcal{R}\colon  \mathcal{W}^{\mathbb{R}} \rightarrow  \mathcal{B}_{nor}^{\mathbb{R}}(U)$ is a real analytic map. Here $\mathcal{W}^{\mathbb{R}}=\mathcal{W}\cap \mathcal{B}_{nor}^{\mathbb{R}}(U)$.
\item[-] The map $\mathcal{R}$ is a strongly compact operator (Remark \ref{sc}),
\item[-] $\Omega_{n,p}$ is a hyperbolic set (Theorem \ref{hip1}),
\item[-]  For every $F \in \mathcal{R}^{-i}W^s_\delta(\Omega_{n,p})$, with $i\in \mathbb{N}$ and $\delta > 0$,  we have that 
$D_F\mathcal{R}^i(T_{F}\mathcal{B}_{nor}^{\mathbb{R}}(U))$ is dense in $T_{\mathcal{R}^iF}\mathcal{B}_{nor}^{\mathbb{R}}(U)$. This is  an easy  consequence  of Theorem  \ref{dense}. 
\end{itemize}
so  \cite[Theorem  1]{sm5}  in our setting boils down to   
 
 \begin{thm}{( \cite[Theorem  1]{sm5})} Suppose that the renormalization operator $\mathcal{R}$ satisfies  additionally  \begin{itemize}
\item[A.] There exists $\delta_3 > 0$   such that $W^s_{\delta_3}(\Omega_{n,p})$ satisfies  the  Transversal Empty Interior assumption.
\end{itemize}
Then 
$W^s(\Omega_{n,p})$  is a  $\Gamma^{\omega_{\mathbb{R}}}(\mathcal{B}_{nor}^{\mathbb{R}}(U))$-null set. Indeed  for every $j$  there exists a residual set of  real-analytic  maps $\gamma\in C^{\omega_{\mathbb{R}}}([-1,1]^j,\mathcal{B}_{nor}^{\mathbb{R}}(U))$ such that 
$$m(t \in [-1,1]^j \colon \ \gamma(t) \in W^s(\Omega_{n,p}))=0.$$
Here $m$ is  the  Lebesgue measure on $[-1,1]^j$.  \end{thm} 

In particular this implies  that  a generic  real-analytic finite-dimensional   family  in $\mathcal{W}^{\mathbb{R}}$ intersects $W^s(\Omega_{n,p})$ on a subset with zero Lebesgue measure. So we have a version of Theorem A. for real-analytic families of extended maps  that {\it belong} to $\mathcal{W}^{\mathbb{R}}$. Indeed  the full-blown  version of Theorem A. is proven in Section \ref{multimodalfamilies}.

\begin{prop} \label{nuca} For every $\epsilon > 0$ small enough there is $\gamma > 0$ with the following property. Suppose that  $F \in \mathcal{B}_{nor}^{\mathbb{R}}(U)$, that  $F$ has a polynomial-like extension  $F\colon \hat{V}^0 \rightarrow \hat{V}^1$, with $\overline{U}\subset \hat{V}^0$,  and 
$$dist_{\mathcal{B}_{nor}^{\mathbb{R}}(U)}(F,G) < \gamma,$$
for some $G \in \Omega_{p,n}$. If  $$v\in \hat{E}^h_F\cap C^u_{2\epsilon}(G)$$ and $v \in \mathcal{B}^{\mathbb{R}}(\hat{V}^0)\cap \mathcal{B}^{\mathbb{R}}(U)$ then $v=0$. 
\end{prop}
\begin{proof} Suppose by contradiction that there exist  sequences $G_i \in \Omega_{p,n}$, $F_i \in \mathcal{B}_{nor}^{\mathbb{R}}(U)$ and $v_i \in \mathcal{B}^{\mathbb{R}}(\hat{V}^0_i )$ such that  
\begin{itemize}
\item[-] We have
$$dist_{\mathcal{B}_{nor}^{\mathbb{R}}(U)}(F_i,G_i)\rightarrow_i 0,$$
\item[-] The maps $F_i$ have a polynomial-like of type $n$ extension $F_i\colon \hat{V}_i^0 \rightarrow \hat{V}_i^1$ and $\overline{U}\subset \hat{V}^0_i$.
\item[-] The vectors satisfy $v_i \in \hat{E}^h_{F_i}\cap C^u_{2\epsilon}(G_i)$,  $|v_i|_{\mathcal{B}^{\mathbb{R}}(U)}\neq 0$ and $v_i \in \mathcal{B}^{\mathbb{R}}(\hat{V}_i^0)$.
\end{itemize}
In particular for large $i$  the critical points of $F_i$ belongs to $K(F_i)$ and $F_i$ is renormalizable. Without loss of generality  we can assume that $|v_i|_{\mathcal{B}(U)}=1$ for every $i$.
Since $v_i \in \hat{E}^h(F_i)$ and $F_i$ are very close to $\Omega_{n,p}$ we have that  $\mathcal{R}F_i$ has a polynomial-like extension of type $n$
$$\mathcal{R}F_i\colon V_i^0 \rightarrow V_i^1$$
with $\mod V_i^1\setminus V_i^0 > \epsilon_0$ . Moreover $D_{F_i}\mathcal{R}\cdot v_i \in \hat{E}^h_{\mathcal{R}F_i}\cap \mathcal{B}^{\mathbb{R}}(V_i^0)$ and there is $C > 0$ such that 
$$|D_{F_i}\mathcal{R}\cdot v_i|_{\mathcal{B}^{\mathbb{R}}(V_i^0)}\leq C$$
for every large $i$. 
 Note that  $\mathcal{R}F_i\colon V_i^0 \rightarrow V_i^1$ is real on the real line and consequently  it  is hybrid conjugate with a real polynomial of type $n$ (see the Straightening lemma  in \cite[Proposition 4.1]{sm2}). It follows from Shen \cite{shenet} that $\mathcal{R}F_i$ does not have invariant line fields on its Julia set. So one can use the infinitesimal pullback argument  to conclude that there exist quasiconformal vector fields $\alpha_i\colon \mathbb{C}_n \rightarrow \mathbb{C}$ with $\sup_i |\overline{\partial} \alpha_i|_\infty < \infty$ such that 
\begin{equation} \label{lim_i} D_{F_i}\mathcal{R}\cdot v_i = \alpha_i \circ \mathcal{R} F_i - D(\mathcal{R}F_i)\cdot \alpha_i\end{equation}
on a domain a little bit  smaller than $V_i^0$, and  in particular on $U=D_{\delta_0,\theta_0}$.   By a  compactness argument \cite{mc2}  the sequence $\alpha_i$ has a convergent subsequence that converges to  a quasiconformal vector field $\alpha$. Let 
$$D_{\delta}= \{ x \in \mathbb{C}: \  dist(x,[-1,1]) < \delta\}\times \{1,\dots,n\}.$$ 
Note that by (\ref{delta_est})  there exists $\delta' > \delta_0$ such that  $\overline{U} \subset D_{\delta'} \subset V_i^0$ for every $i$. Since $$|D_{F_i}\mathcal{R}\cdot v_i|_{\mathcal{B}(D_{\delta'})}\leq C,$$
there exists a subsequence of $D_{F_i}\mathcal{R}\cdot v_i$  that converges to some $v$ on $\mathcal{B}^{\mathbb{R}}(U)$ satisfying 
$|v|_{\mathcal{B}(U)}\leq C$.  Since $\Omega_{p,n}$ is compact, without loss of generality we can assume that $\mathcal{R} G_i$  (and so $\mathcal{R}  F_i$) converges to some $G \in \Omega_{p,n}$. By  (\ref{lim_i}) we obtain
$$v =\alpha\circ G - DG\cdot \alpha$$
on the pos-critical set of $G$. Since  there are not invariant line fields supported in the Julia set of $G$, by the infinitesimal pullback argument we conclude that $v \in E^h_G$.  
In particular
\begin{equation}\label{pop1} |D\mathcal{R}_{G}^k\cdot v|_{\mathcal{R}^k(G),0}\leq \lambda^k |v|_{G,0} \leq C_1\lambda^k.\end{equation}
On the other hand since $v_i \in C^u_{2\epsilon}(G_i)$ and $\epsilon$ is small we have 
$$|D_{G_i}\mathcal{R}\cdot v_i |_{\mathcal{R}(G_i),0}\geq \theta| v_i |_{G_i,0}\geq C_2  \theta |v_i|_{\mathcal{B}^{\mathbb{R}}(U)}= C_2 \theta > 0,$$
The compactness of $\Omega_{p,n}$  gives   $\lim_i D_{G_i}\mathcal{R}\cdot v_i =v$ and  consequently for every $k\geq 1$
$$\lim_i D\mathcal{R}_{G_i}^k\cdot v_i=D\mathcal{R}_{G}^{k-1}\cdot v$$
and we  have that $v_i \in C^u_{2\epsilon}(G_i)$ so  for $k\geq 1$ 
$$|D\mathcal{R}_{G_i}^k\cdot v_i|_{\mathcal{R}^k(G_i),0}\geq  \theta^{k-1}  |D\mathcal{R}_{G_i}\cdot v_i|_{\mathcal{R}G_i,0}\geq C_2 \theta^k.$$   
Taking the limit on $i$ we obtain 
\begin{equation}\label{pop2} |D\mathcal{R}_{G}^k\cdot v|_{\mathcal{R}^k(G),0}\geq C_2 \theta^{k}.\end{equation}
Since $\lambda < 1 < \theta$ we conclude that  (\ref{pop1}) and  (\ref{pop2}) give us a contradiction.
\end{proof}

\begin{prop} \label{hip2}  For $\epsilon > 0$ small we can choose $\delta_3$ small enough  such that for every $\delta' \in (0,\delta_3)$ the following holds.  Let  $\mathcal{H}$ be a $C^{1+Lip}$ function 
$$\mathcal{H}\colon \mathbb{B}^u_G(u_0,\delta') \rightarrow E^h_G+G$$
such that $\mathcal{H} \in \mathcal{T}^1_0(G,\delta',2\epsilon)$, 
where $G \in \Omega_{p,n}$. Then there exists $w \in  \mathbb{B}^u_G(u_0,\delta')$ such that $w+\mathcal{H}(w)$ is a map whose critical points belong to the same periodic orbit. 
\end{prop}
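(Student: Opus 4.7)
The plan is to combine Proposition~\ref{lprop}, which produces approximations $G_i \to G$ in $\mathcal{B}_{nor}(U)$ by polynomial-like maps of type $n$ whose critical points all lie on a common periodic orbit, with the transversality Proposition~\ref{nuca} and a graph-transform fixed-point argument. Let $\mathcal{E}_{\mathrm{per}}$ denote the set of maps in $\mathcal{B}_{nor}(U)$ with a polynomial-like extension of type $n$ whose critical points belong to a single periodic orbit; the aim is to find a point of $\mathcal{E}_{\mathrm{per}}$ on the graph of $\mathcal{H}$ after iterating $\mathcal{R}$ to enlarge the unstable domain.

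First I would iterate $\mathcal{R}$ on $\mathcal{H}$: since $D\mathcal{R}$ expands by a factor $\geq \theta>1$ on $C^u(G)$ and contracts by $\leq \lambda<1$ on $E^h_G$, the image $\mathcal{R}^{k_0}(\mathrm{graph}\,\mathcal{H})$ is itself the graph of a $C^{1+\mathrm{Lip}}$ map $\mathcal{H}^{(k_0)} \in \mathcal{T}^1_0(\mathcal{R}^{k_0}G, \tilde\delta)$ with base point $\mathcal{R}^{k_0}F_0\in W^s_{\delta_3}(\mathcal{R}^{k_0}G)$, and I choose $k_0$ large enough that $\tilde\delta\geq 2\delta_3$. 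Crucially, $\mathcal{R}^{k_0}$ restricted to $\mathrm{graph}\,\mathcal{H}$ is a homeomorphism onto its image, so a point of $\mathcal{E}_{\mathrm{per}}$ in this image will pull back uniquely to a point of $\mathcal{E}_{\mathrm{per}}$ on $\mathrm{graph}\,\mathcal{H}$, using that the critical points of $F$ and of $\mathcal{R}F$ coincide as a set (so membership in $\mathcal{E}_{\mathrm{per}}$ is $\mathcal{R}^{-1}$-invariant along such orbits).

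Second I would apply Proposition~\ref{lprop} to $\mathcal{R}^{k_0}G$ to get polynomial-like $G_i\to \mathcal{R}^{k_0}G$ with $G_i\in\mathcal{E}_{\mathrm{per}}$ and modulus of extension $\geq \epsilon_0/2$. The condition defining $\mathcal{E}_{\mathrm{per}}$ near $G_i$ is given by $n$ transverse analytic equations (one forcing the first critical point to be periodic, and $n-1$ more forcing the others onto its orbit), so locally $\mathcal{E}_{\mathrm{per}}$ is a Banach submanifold of codimension $n$; an infinitesimal deformation preserving these equations must satisfy, on the (finite) post-critical set, the cohomological equation with a vector field vanishing at the critical points, so $T_{G_i}\mathcal{E}_{\mathrm{per}}=E^h_{G_i}$ (by the same arithmetic as in the proof of Proposition~\ref{codimension}). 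Proposition~\ref{nuca}, applicable thanks to the definite modulus of the polynomial-like extension, gives $E^h_{G_i}\cap C^u(\mathcal{R}^{k_0}G)=\{0\}$ for $i$ large, so the implicit function theorem expresses $\mathcal{E}_{\mathrm{per}}$ near $G_i$ as the graph of a $C^{1+\mathrm{Lip}}$ function $\Phi_i\colon (E^h_{\mathcal{R}^{k_0}G}+\mathcal{R}^{k_0}G) \to E^u_{\mathcal{R}^{k_0}G}$ whose Lipschitz norm in the adapted metric can be made arbitrarily small, uniformly in $i$.

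Finally I would solve $w=\Phi_i(\mathcal{H}^{(k_0)}(w))$ on $\mathbb{B}^u_{\mathcal{R}^{k_0}G}(u_0^{(k_0)},\tilde\delta)$ by Banach contraction: the composition $\Phi_i\circ\mathcal{H}^{(k_0)}$ is a uniform contraction (product of a small Lipschitz constant for $\Phi_i$ with $|D\mathcal{H}^{(k_0)}|\leq \epsilon$), and for $i$ large one has $\Phi_i(\mathcal{H}^{(k_0)}(u_0^{(k_0)}))\approx \pi^u_{\mathcal{R}^{k_0}G}(G_i)\to 0$ while $|u_0^{(k_0)}|_{\mathcal{R}^{k_0}G,0}\leq\delta_3\leq\tilde\delta/2$, so the closed $\tilde\delta$-ball is mapped strictly into itself. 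The fixed point $w^\star$ produces $w^\star+\mathcal{H}^{(k_0)}(w^\star)\in\mathcal{E}_{\mathrm{per}}\cap \mathrm{graph}\,\mathcal{H}^{(k_0)}$, which pulls back by $\mathcal{R}^{-k_0}$ along $\mathrm{graph}\,\mathcal{H}$ to the desired $w\in\mathbb{B}^u_G(u_0,\delta')$ with $w+\mathcal{H}(w)\in\mathcal{E}_{\mathrm{per}}$. The main obstacle is the second step: verifying the local submanifold structure of $\mathcal{E}_{\mathrm{per}}$ near $G_i$ with the identification $T_{G_i}\mathcal{E}_{\mathrm{per}}=E^h_{G_i}$, together with the uniform-in-$i$ transversality and Lipschitz bound on $\Phi_i$ that Proposition~\ref{nuca} provides precisely because of the definite polynomial-like modulus furnished by Proposition~\ref{lprop}.
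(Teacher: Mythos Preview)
Your approach is genuinely different from the paper's, and the obstacle you flag at the end is real and not resolved by your argument as stated. The paper does not attempt to realize the periodic locus $\mathcal{E}_{\mathrm{per}}$ as a global graph $\Phi_i$ over $E^h_{\mathcal{R}^{k_0}G}$ and then intersect via a contraction. Instead it thickens the graph of $\mathcal{H}$ to a full neighborhood $\tilde{\mathcal{H}}(u,v)=u+\mathcal{H}(u)+v$, applies $\mathcal{R}$ once, uses Proposition~\ref{lprop} to find \emph{one} point $(u_1,v_1)$ with $\tilde{\mathcal{H}}(u_1,v_1)\in\mathcal{E}_{\mathrm{per}}$, and then runs a one-parameter continuation in $t\in[0,1]$ along the segment $tv_1$, solving for $u(t)$ by the implicit function theorem so that $\tilde{\mathcal{H}}_1(u(t),tv_1)\in\mathcal{E}_{\mathrm{per}}$ with the \emph{fixed} combinatorics and period determined by $(u_1,v_1)$. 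Openness of the set of good $t$ is the implicit function theorem (invertibility of the relevant Jacobian comes from (\ref{nothor}), which is Proposition~\ref{nuca}); closedness comes from a Lipschitz bound $|u'(t)|\le L|v_1|$ derived from the cone condition (\ref{conenotb}). One reaches $t=0$, giving $w=u(0)$.

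The gap in your scheme is precisely where you locate it. You need $\Phi_i$ to be defined on a ball in $E^h_{\mathcal{R}^{k_0}G}$ of size comparable to $\delta_3$ (so that it covers the range of $\mathcal{H}^{(k_0)}$) with uniformly small Lipschitz constant there. Proposition~\ref{nuca} gives only the pointwise transversality $E^h_F\cap C^u(\mathcal{R}^{k_0}G)=\{0\}$ at each $F\in\mathcal{E}_{\mathrm{per}}$ near $\mathcal{R}^{k_0}G$; it does not, by itself, propagate the manifold $\mathcal{E}_{\mathrm{per}}$ across a region of definite size. Since the period of the critical orbit of $G_i$ tends to infinity as $i\to\infty$, the defining equations for $\mathcal{E}_{\mathrm{per}}$ near $G_i$ involve arbitrarily high iterates of the map, so the quantitative implicit function theorem gives you a domain for $\Phi_i$ that may shrink to zero. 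To show that $\mathcal{E}_{\mathrm{per}}$ nevertheless extends as a graph over the needed region you must run exactly the kind of open--closed continuation argument the paper carries out, using the cone condition to bound the slope; the paper's version is easier because it is one-dimensional (a single segment in the stable direction), whereas your formulation asks for a graph over an infinite-dimensional stable ball. In short, your fixed-point setup repackages the difficulty rather than avoiding it, and the missing ingredient is the continuation argument that forms the heart of the paper's proof.
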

\begin{proof}  Define
$$\tilde{\mathcal{H}}\colon \mathbb{B}^u_G(u_0,\delta')\times \{v \in E^h(G)\colon \  |v|\leq \delta'  \}\rightarrow \mathcal{B}_{nor}^{\mathbb{R}}(U)$$
as  $\tilde{\mathcal{H}}(u,v)= u+\mathcal{H}(u)+v$. Let $F=u_0+ \mathcal{H}(u_0)$. Note that $\tilde{\mathcal{H}}$ is a homeomorphism on its image, which  is an open neighbourhood of $F$. Define $\tilde{\mathcal{H}}_1(u,v)= \mathcal{R}\tilde{\mathcal{H}}(u,v)$. If $\delta_3$ is small enough there is  a smooth family of domains $\hat{U}_{(u,v)}$ such that for every $(u,v)$ in the domain of $\tilde{\mathcal{H}}_1$  we have that 
$$\tilde{\mathcal{H}}_1(u,v)\colon \hat{U}_{(u,v)} \rightarrow \hat{V}$$
is a polynomial-like map of type $n$ such that 
$$\mod \hat{V}\setminus \hat{U}_{(u,v)}  > \epsilon_0.$$
Reducing  $\hat{V}$ a little bit, for every $$(w,z) \in E^u(G)\times E^h(G)$$ we have
$$D_u \tilde{\mathcal{H}}_1(u,v)\cdot w + D_v \tilde{\mathcal{H}}_1(u,v)\cdot z \in \mathcal{B}( \hat{U}_{(u,v)}).$$
Moreover if $w \in E^u(G)\setminus\{0\}$  we have
$$D_u \tilde{\mathcal{H}}_1(u,v)\cdot w  \in C^u_{2\epsilon}(\mathcal{R}G),$$
so by  Proposition \ref{nuca}, 
\begin{equation}\label{nothor} D_u \tilde{\mathcal{H}}_1(u,v)\cdot w  \not\in E^h_{\tilde{\mathcal{H}}_1(u,v)  },\end{equation}
and moreover  for every $(w,z) \in E^u(G)\times E^h(G)$ such that 
$$D_u \tilde{\mathcal{H}}_1(u,v)\cdot w + D_v \tilde{\mathcal{H}}_1(u,v)\cdot z \in E^h_{\tilde{\mathcal{H}}_1(u,v)},$$ 
we have 
\begin{equation}\label{notincone} D_u \tilde{\mathcal{H}}_1(u,v)\cdot w + D_v \tilde{\mathcal{H}}_1(u,v)\cdot z \not\in C^u_{2\epsilon}(\mathcal{R}G).\end{equation}
The image of $\tilde{\mathcal{H}}$ is an open neighbourhood of $u_0+\mathcal{H}(u_0) \in W^s_{\delta_3}(G)$, with $G\in \Omega_{n,p}$, in particular by Proposition \ref{lprop}  there exists $(u_1,v_1)$ such that $\tilde{\mathcal{H}}(u_1,v_1)$ is a map whose critical points belong to the same periodic orbit, and consequently $\tilde{\mathcal{H}}_1(u_1,v_1)=\mathcal{R} \tilde{\mathcal{H}}(u_1,v_1)$ is also a map whose critical points belong to the same periodic orbit. Furthermore one can choose $(u_1,v_1)$  arbitrarily close to $(u_0,0)$.   If $v_1=0$ choose $w=u_1$ and we finished the proof in this case. Otherwise $v_1 \neq 0$ and  we consider the $C^{1+Lip}$ smooth map 
$$(u,t,x) \in E^u(G)\times \mathbb{R} \times U  \mapsto  f_{(u,t)}(x):=\tilde{\mathcal{H}}_1(u,tv_1)(x).$$
The critical points of $f_{(u_1,1)}$ belong to the same periodic orbit, so there are natural numbers $i_k$, $k=1,\dots,n$ and  we can index the critical points $$Crit=\{ (0,j)\}_{0\leq j\leq n-1} =\{ (0,j_k)\}_{0\leq k\leq n-1}$$ in such way that for every $k\leq n-1$
$$f^{i_k}_{(u_0,1)}(0,j_k)=(0,j_{k+1 \ mod \ n}) \ and \ f^{i}_{(u_0,1)}(0,j_k)\not\in Crit \ for \ i < i_k.$$
We claim that there is  a function $t \mapsto u(t)$, defined for every $t \in [0,1]$ such that 
\begin{equation}\label{impl} f^{i_k}_{(u(t),t)}(0,j_k)=(0,j_{k+1 \ mod \ n}) \ and \ f^{i}_{(u(t),t)}(0,j_k)\not\in Crit \ for \ i < i_k,   \end{equation} 
for every  $k \leq n-1$. Indeed, let $Y$ be the set of $q \in [0,1]$ such that there exists a continuous function $u$ defined on $[q,1]$ such that (\ref{impl}) holds for every $t \in [q,1]$ and $u(1)=u_1$. Note that $1 \in Y$.  We need to show that $0 \in Y$. It is enough to show that $Y$ is a open and closed subset of  $[0,1]$.  Indeed, suppose that  $(u_2,t_2)$ satisfies \begin{equation}\label{impl2} f^{i_k}_{(u_2,t_2)}(0,j_k)=(0,j_{k+1 \ mod \ n}) \ and \ f^{i}_{(u_2,t_2)}(0,j_k)\not\in Crit \ for \ i < i_k,\end{equation} 
for every $k\leq n-1$.  Note that  the linear map 
$$w\mapsto (D_u f^{i_k}_{(u_2,t_2)}(0,j_k)\cdot w)_{0\leq k\leq n-1}$$
is invertible, otherwise it would exists $w \in E^u(G)\setminus\{0\}$ such that $$D_u f^{i_k}_{(u_2,t_2)}(0,j_k)\cdot w =0$$ for every $k$, so using the infinitesimal pullback argument one can conclude that  $$D_u \tilde{\mathcal{H}}_1(u_2,t_2v_1)\cdot w \in E^h_{ \tilde{\mathcal{H}}_1(u_2,t_2v_1)},$$ which contradicts (\ref{nothor}). So by the Implicit Function Theorem  there exists an open interval $O$ with $t_2 \in O$ such that there is a unique continuous function $u$ defined on $O$ such that (\ref{impl}) holds for every $t \in O$ and $u(t_2)=u_2$. We conclude that $Y$ is an open set and that for each $ q\in Y$ there exists an {\it unique} continuous function $U$ defined on $[q,1]$ and satisfying  and (\ref{impl}) and $u(1)=u_1$. To show that $Y$ is closed, suppose that $q_n \in Y$ is a decreasing  sequence converging to some $q \in [0,1]$. Then there exists a unique continuous function $u$ defined in $(q,1]$ such that $u(1)=u_1$ and (\ref{impl}) holds.   We claim that $u$ is a Lipchitz function on $(q,1]$, so we can extend it to a continuous function $u$ defined in  $[q,1]$. Indeed note that
$$\partial_t f_{(u(t),t)}  \in E^h_{f_{(u(t),t)}},$$
so by (\ref{notincone}) 
\begin{equation}\label{conenotb} \partial_t f_{(u(t),t)}   \not\in C^u_{2\epsilon}(\mathcal{R}G).\end{equation}
Moreover
\begin{eqnarray}\partial_t f_{(u(t),t)} &=   D\mathcal{R}_{u(t)+\mathcal{H}(u(t)) + tv_1}\cdot ( u'(t) + D_u\mathcal{H}_{u(t)}\cdot u'(t) + v_1) \end{eqnarray} 
Let
$$y = D\mathcal{R}_{u(t)+\mathcal{H}(u(t)) + tv_1}\cdot ( u'(t) + D_u\mathcal{H}_{u(t)}\cdot u'(t)).$$ Note that 
$$|y|_{\mathcal{R}G,0}\geq \lambda \frac{1-2\epsilon}{1+2\epsilon}  |u'(t)|_{G,0}$$
Suppose that $|u'(t)|_{G,0}\geq L |v_1|_{G,0}$. If $\delta'$ is small enough then  there is $C > 0$ such that 
\begin{eqnarray}
|\pi_{\mathcal{R}G,0}^u(\partial_t f_{(u(t),t)})|_{\mathcal{R}G,0} &\geq& 
\big(1- \frac{C}{L \lambda}\frac{1+2\epsilon}{1-2\epsilon} \big)  |y|_{\mathcal{R}G,0},
\end{eqnarray} 
  and
\begin{eqnarray}
|\pi_{\mathcal{R}G,0}^h(\partial_t f_{(u(t),t)})|_{\mathcal{R}G,0}&\leq&   \big(2\epsilon'+ \frac{C}{L \lambda}\frac{1+2\epsilon}{1-2\epsilon} \big)  |y|_{\mathcal{R}G,0}.
\end{eqnarray} 
If   $L$ is  large  enough then 
$$2\epsilon'+ \frac{C}{L \lambda}\frac{1+2\epsilon}{1-2\epsilon}  \leq   2\epsilon (1- \frac{C}{L \lambda}\frac{1+2\epsilon}{1-2\epsilon} ),$$
which implies that $\partial_t f_{(u(t),t)} \in C^u_{2\epsilon}(\mathcal{R}G)$. This contradicts (\ref{conenotb}). In particular there is $L$ satisfying $|u'(t)|_{G,0}\leq  L |v_1|_{G,0}$ for every $t \in (q,1]$  and  consequently $u$ is a Lipchitz function.  So  we can extend $u$ to a continuous map to $[q,1]$.  It is easy to see that (\ref{impl}) also holds for $t=q$. We conclude that $Y$ is closed. Since $Y$ is an open, closed, non empty subset of $[0,1]$ we conclude that $Y=[0,1]$, so in particular $0 \in Y$ and therefore there exists $w$ such that $f_{(w,0)}=\tilde{\mathcal{H}}_1(w,0)= \mathcal{R}(w+\mathcal{H}(w))$ is a map whose critical points belong to the same periodic orbit, and consequently $w+\mathcal{H}(w)$ has the same property. 
\end{proof}

\begin{proof}[Proof of Corollary \ref{tei}]  Let $\epsilon$ be small. It is easy to see that if $\delta_3 > 0$ is  small enough then  for every $G\in \Omega_{p,n}$ and for every $C^{1+Lip}$  function $\mathcal{H}\in  \mathcal{T}^1_0(G,\delta',\epsilon)$, with $\delta' < \delta_3$ and  for every 
$$F \in \hat{\mathcal{H}}\cap W^s_{\delta_3}(\Omega_{n,p}),$$
there is $G_F \in \Omega_{n,p}$ such that $F\in W^s_{\delta_3}(G_F)$ and $\delta''> 0$ such that 
$$\{ w+\mathcal{H}(w)\colon \  w \in   \mathbb{B}^u(v_0,\delta') \}\cap \{ u+v+G_F\colon \ u \in \mathbb{B}^u_{G_F}(\pi^u_{G_F}(F-G_F),\delta''), \ v \in E^h_{G_F}   \}$$
is the graph $\hat{\mathcal{H}}_F$ of a $C^{1+Lip}$ function in $\mathcal{T}^1_0(G_F,\delta'',2\epsilon)$. By Proposition \ref{hip2} there is $w \in \mathbb{B}^u_{G_F}(\pi^u_{G_F}(F-G_F),\delta'')$ such that $w + \mathcal{H}_F(w)$  is a map such that all its critical points belong to the same periodic orbit. In particular every map close enough to $w + \mathcal{H}_F(w)$ is a hyperbolic map with an attracting periodic orbit that attracts all its critical points. In particular we can find  hyperbolic maps in $\hat{\mathcal{H}}$ arbitrarily close to $F$. Note that hyperbolic maps  do not belong to $W^s_{\delta_3}(\Omega_{n,p})$, since every map in $W^s_{\delta_3}(\Omega_{n,p})$ is infinitely renormalizable.
\end{proof}
 
\section{Families of multimodal maps}\label{multimodalfamilies} 

In the beginning of Section \ref{teisection} we saw that a version of Theorem A. for real-analytic families of extended maps  that {\it belong} to $\mathcal{W}^{\mathbb{R}}$ can be obtained from the hyperbolicity of $\Omega_{n,p}$, the Empty Interior Transversality property and \cite[Theorem  1]{sm5}. This is not enough  to our purposes, once Theorem A. deals with real-analytic families of  multimodal maps.  Indeed a multimodal map with more than a critical point is  {\it not} an extended map. 

To prove Theorem A. we will need a classic tool, {\it inducing}. We will associate to each real-analytic multimodal map $f$ that is close enough to an infinitely renormalizable multimodal map with bounded combinatorics a renormalization  $F$ of $f$, that is an extended map    in $\mathcal{W}^{\mathbb{R}}$. Indeed a renormalizable multimodal map can be renormalizable in many ways (it can have  distinct cycles of restrictive intervals with disjoint orbits) and many times (it can have deeper and deeper renormalizations), so we need to {\it mark} $f$ with a restrictive interval $P$ in such way to make this  association 
$$(f,P)\mapsto \mathcal{I}(f,P)=F$$
well-behaved. Indeed we are going to see that $\mathcal{I}$ can be defined  in such way that it is  a real-analytic map defined in an open set of a real Banach space with image in $\mathcal{W}^{\mathbb{R}}$. The derivative $D_{(f,P)}\mathcal{I}$ of this map  has dense image at every infinitely renormalizable marked multimodal maps $(f,P)$, which allows us to use Proposition 8.1 of \cite{sm5} to conclude that $\mathcal{I}^{-1}W_\delta^s(\Omega_{n,p})$ intersects a generic real-analytic family of multimodal maps on a set of parameters with zero Lebesgue measure. This is the main  argument of the proof of Theorem A. We provide the complete proof below.

 Let $V\subset \mathbb{C}$ be a connected open set,  symmetric with respect to the real line ($z \in V$ implies $\overline{z} \in V$)   such that  $[-1,1] \subset V$. In this section we will denote by $\mathcal{B}_\mathbb{C}$ by  affine subspace of  $\mathcal{B}(V)$ defined by the restrictions $f(-1)=f(-1)=-1$.  Denote by $\mathcal{B}_{\mathbb{R}}$ the real Banach space of all functions $f \in \mathcal{B}_\mathbb{C}$ that are real on the $V\cap \mathbb{R}$.  
 
 Given $m \in \mathbb{N}$, let $\Gamma^{\omega_\mathbb{R}}_m(\mathcal{B}_{\mathbb{R}})$ be the set of all  continuous functions
 $$\gamma\colon \overline{\mathbb{D}}^m \rightarrow \mathcal{B}_\mathbb{C}$$ 
that are complex analytic on $\mathbb{D}^m$ and such that $\gamma(\lambda)\in \mathcal{B}_{\mathbb{R}}$ for every $\lambda \in [-1,1]^m$. We can endow $\Gamma^{\omega_\mathbb{R}}_m(\mathcal{B}_{\mathbb{R}})$ with the sup norm. 

Let $\Gamma\subset \mathcal{B}_{\mathbb{R}}$ be  the open subset of multimodal maps $f\colon [-1, 1]\rightarrow [-1,1]$, where $-1$ is a repelling fixed point, $f'(1)\neq 0$,  with quadratic critical points,  negative schwarzian derivative and   $f(-1,1)\subset (-1,1)$. 

Denote by $\Gamma_n \subset \Gamma^{\omega_\mathbb{R}}_n(\mathcal{B}_{\mathbb{R}})$ the subset of all families $\gamma$ such that $\gamma(\lambda) \in \Gamma$ for every $\lambda \in [-1,1]^n$. Note that $\Gamma_n$ is an open subset of $\Gamma^{\omega_\mathbb{R}}_n(\mathcal{B}_{\mathbb{R}})$.

\subsection{Generic families} Our main result for generic families is

\begin{thm}[Theorem A] \label{thmA}For every $\gamma$ in a generic subset of $\Gamma_m$, the set $\Lambda$ of parameters $\lambda$ such that $\gamma(\lambda)$  has (at least) one  solenoidal attractor with bounded combinatorics on $(-1,1)$ has zero Lebesgue measure.
\end{thm}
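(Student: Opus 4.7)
The plan is to reduce Theorem \ref{thmA} to the dynamics of $\mathcal{R}$ near the hyperbolic set $\Omega_{n,p}$ and then invoke the Banach-space transversality machinery of \cite{sm5}.

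First, I would reduce the problem to the stable shadow. If $\gamma(\lambda_0)$ has a solenoidal attractor with bounded combinatorics, then (as in Section \ref{re_ex}) one can extract an induced extended map of type $n$, for some $n$, that is infinitely renormalizable with combinatorics in $\mathcal{C}_{p,n}$ for some $p$. Because $\mathcal{M}$ consists of maps with quadratic critical points and negative Schwarzian, the real bounds and Epstein-class arguments give complex-analytic extensions of this induction, so the construction produces an analytic map $\lambda \mapsto F_\lambda \in \mathcal{B}_{nor}(U)$ in an open neighborhood of $\lambda_0$ in $\mathbb{C}^m$; Proposition \ref{contraction2} then places $F_{\lambda_0} \in W^s_{\delta_3}(\Omega_{n,p})$. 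Since the combinatorial data that determines such an induction is countable, it suffices to show that for generic $\gamma \in \mathcal{M}_m$, every analytic map $\Phi\colon U \to \mathcal{B}_{nor}(U)$ produced this way satisfies $\mathrm{Leb}\bigl(\Phi^{-1}(W^s_{\delta_3}(\Omega_{n,p})) \cap [-1,1]^m\bigr) = 0$.

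Next, I would establish generic transversality. By Proposition \ref{codimension}, $E^h_G$ has codimension $n$ for every $G \in \Omega_{n,p}$, and by Proposition \ref{qts} the quotient bundle $E$ is a continuous vector bundle of rank $n$. Using the density of the image of $D\mathcal{R}_F$ in $T\mathcal{B}_{nor}(U)$ from Theorem \ref{dense}, a standard perturbation argument shows that a Baire-generic $\gamma \in \mathcal{M}_m$ has the property that for every combinatorial induction $\Phi$ as above, the composition of $D\Phi(\lambda)$ with the projection to the rank-$n$ quotient $T_G\mathcal{B}_{nor}(U)/E^h_G$ is surjective at each $\lambda$ with $\Phi(\lambda)$ in the stable shadow of $G \in \Omega_{n,p}$. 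For such $\gamma$, the graphs of $\Phi$ fit the hypothesis of Corollary \ref{tei}, giving at least that $\Phi^{-1}(W^s_{\delta_3}(\Omega_{n,p}))$ has empty interior.

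Finally, to upgrade \emph{empty interior} to \emph{zero Lebesgue measure}, I would apply the partially hyperbolic invariant set theory of \cite{sm5}. The operator $\mathcal{R}$ is strongly compact (Remark \ref{sc}), $\Omega_{n,p}$ is hyperbolic with finite unstable dimension $n$ (Theorem \ref{hip1}), $E^h$ has finite codimension $n$, and Corollary \ref{unstable_cones} supplies forward-invariant unstable cones near $\Omega_{n,p}$. Under the transversality established above, the main result of \cite{sm5} applies and yields absolute continuity of the stable holonomy along the finite-dimensional family $\Phi$, hence $\mathrm{Leb}(\Phi^{-1}(W^s_{\delta_3}(\Omega_{n,p}))) = 0$; a countable union over combinatorial types then gives Theorem \ref{thmA}. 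The main obstacle is verifying that the generic transversality built from Theorem \ref{dense} and the quotient bundle structure really matches the hypothesis demanded by the abstract result in \cite{sm5} (where one needs not just pointwise transversality but a uniform cone-compatible version); once that translation is in place, the zero measure conclusion is a direct application.
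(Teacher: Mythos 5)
Your overall strategy (induce extended maps, invoke the complex bounds to place them in the stable shadow of $\Omega_{n,p}$, then invoke \cite{sm5}) is the same as the paper's, but there are two genuine gaps.

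\textbf{The marking step is missing.} You write that the combinatorial data determining an induction ``is countable'' and move on, but the delicate point is not counting combinatorial types: it is ensuring that the assignment $\lambda\mapsto F_\lambda$ (equivalently $g\mapsto I_g$) can be made consistently and \emph{real-analytically} on a \emph{countable} collection of open parameter regions. A multimodal map can have several disjoint cycles of restrictive intervals, and the one you pick must move analytically with the map. The paper's Step~I handles this with a Zorn's Lemma maximality argument (to build, for each marked pair $(f_0,P_0)$, a maximal connected domain $\mathcal{V}_{f_0,P_0}$ carrying an analytic continuation of the marked restrictive interval) followed by a compactness argument to show that the resulting partition $\mathcal{G}$ of $\mathcal{O}_{j,q,n}$ is in fact countable. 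Without this, the ``countable union over inductions'' at the end of your argument is not justified.

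\textbf{The genericity mechanism is misconceived.} You propose a separate step in which a ``standard perturbation argument'' makes a Baire-generic $\gamma$ pointwise transversal to $E^h$ along the whole stable shadow, and only then invoke \cite{sm5}. That is not how the proof works, and it is unclear how such a perturbation argument could run: the stable shadow over which the transversality would have to hold simultaneously is an uncountable (and non-smooth) laminated set, and the parameter is chosen once and for all. The actual logic is the reverse. Corollary~\ref{tei} is a statement about the hyperbolic set itself --- every $C^{1+Lip}$ graph in $\mathcal{T}^1_0(G,\delta)$ transversal to $E^h$ meets $W^s_{\delta_3}(\Omega_{n,p})$ in a set with empty interior --- and it is an \emph{unconditional} input (it follows from Proposition~\ref{hip2}, which uses the existence of hyperbolic maps near $\Omega_{n,p}$). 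Once Theorem~\ref{hip1} (hyperbolicity), Corollary~\ref{tei} (transversal empty interior), strong compactness (Remark~\ref{sc}), and the density of the image of $D_g\mathcal{I}_k$ (the analogue of Theorem~\ref{dense}) are in hand, Proposition~8.1 of~\cite{sm5} \emph{by itself} produces the generic set of families and the Lebesgue-null conclusion; no preliminary genericity argument about $\gamma$, and no ``absolute continuity of stable holonomy'', is needed. Your Corollary~\ref{tei} invocation is also off target: it applies to graphs in $\mathcal{T}^1_0(G,\delta)$ over the unstable space $E^u_G$, not directly to an $m$-parameter family $\Phi$.
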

\begin{proof} We divide the proof in several steps.\\ 

\noindent {\it Step I (Marking restrictive intervals)}.  It turns out that a multimodal map may have many disjoint cycles of restrictive intervals. To deal with that we need to "mark" one of those restrictive intervals. To this end fix $j \in \mathbb{N}^*$ and $q, n  \in \mathbb{N}\setminus\{0,1\}$.  Let $\mathcal{O}_{j,q,n}$  be the set of all pairs $(f_0,P_0)$, such that 
\begin{itemize}
\item[A.]  The map $f_0 \in \Gamma$  has $j$ critical points in $[-1,1]$.
\item[B.] $P_0$ is a restrictive interval of $f_0$ such that each $f_0^i(P_0)$ have at most one critical point for every $i$, $\cup_i f_0^i(P_0)$ contains  $n$ critical points and   $P_0$ has a repelling periodic point in its boundary, with period $q'< q$. In particular  $f^{q'}_0(\partial P_0)\subset \partial P_0$.
\item[C.] The $f_0$-forward orbit of any critical point on the orbit of such restrictive interval $P_0$ {\it does not } fall in the orbit of such periodic point. 

\end{itemize} 

Note that the image $\pi_1(\mathcal{O}_{j,q,n})$ of the  projection onto the first coordinate in  $\mathcal{O}_{j,q,n}$   is an open subset of $\Gamma$.  Of course the countable family 
$$\{  \pi_1(\mathcal{O}_{j,q,n})\}_{j,q,n}$$
covers all infinitely renormalizable multimodal maps.

Fix $(f_0,P_0)\in \mathcal{O}_{j,q,n}$. By the implicit function theorem  the repelling  periodic point of $f_0$  in the boundary of $P_0$ has an analytic continuation  that is also repelling and it defines a restrictive interval $P_g$ for  each map $g$ in an open connected neighborhood $\mathcal{V}_0$ of $f_0$ on $\Gamma$ and such restrictive interval also satisfies properties A., B. and C.  In particular the family $\mathcal{F}$ of pairs $(\mathcal{V},P)$ where
\begin{itemize}
\item[1.] $\mathcal{V}$ is an open and connected subset of $\Gamma$, with $f_0 \in \mathcal{V}$. 
\item[2.] The real analytic function
$$g \in \mathcal{V} \mapsto P(g)$$
associate with each map $g \in \mathcal{V}$ a restrictive interval $P(g)$ of $g$ satisfying $(g,P(g))\in \mathcal{O}_{j,q,n}$ and moreover $P(f_0)=P_0$.
\end{itemize}
is non empty and consequently by Zorn's Lemma $\mathcal{F}$ has a maximal element with respect to the order $(\mathcal{V}_1,P_1) < (\mathcal{V}_2,P_2)$ if and only if $\mathcal{V}_1\subset \mathcal{V}_2$ and $P_2(g)=P_1(g)$ for every $g \in \mathcal{V}_1$.  We claim  that such  maximal element is unique.

We claim that  if $(\mathcal{V}_0,P_0), (\mathcal{V}_1,P_1) \in \mathcal{F}$ then $P_0=P_1$ on $\mathcal{V}_0\cap \mathcal{V}_1$. Indeed  since $f \in \mathcal{V}_i$, $i=0,1$,  has always $j$  critical points (moving continously with respect to $f$, since they are quadratic) and  a point $b_{f,i} \in \partial P_i(f)$ is a repelling periodic point of $f$ that is analytic continuation of $b_{f_0,0}=b_{f_0,1}$, it follows that all those  periodic points have exactly the same combinatorics with respect to the symbolic dynamics defined by partition induced by the critical points. In particular if $f \in \mathcal{V}_0\cap \mathcal{V}_1$ then $b_{f,0}, b_{f,1}$ are repelling periodic points of $f$ with the same combinatorics. Since $f$ has negative schwarzian derivative, the minimal principle implies that $b_{f,0}=b_{f,1}$. This proves the claim. 

In particular  the maximal element of $\mathcal{F}$, denoted   by $(\mathcal{V}_{f_0,P_0}, P_{f_0,P_0})$, can be described by 
$$\mathcal{V}_{f_0,P_0} =\cup_{(\mathcal{V},P) \in \mathcal{F}}  \mathcal{V}$$
and $P_{f_0,P_0}(f)=P(f)$ for every  $f \in \mathcal{V}$ satisfying  $(\mathcal{V},P)\in \mathcal{F}$. Note that 

$$\mathcal{G}=\{\mathcal{V}_{f_0,P_0}\colon (f_0,P_0) \in \mathcal{O}_{j,q,n}\}$$
is a partition of $\mathcal{O}_{j,q,n}$. We claim that such  partition  has a countable  number of elements. Indeed   suppose that
$$\{ (f_\lambda,P_\lambda) \}_{\lambda\in \Lambda} $$ is an uncountable family such that 
$$\mathcal{V}_{f_\lambda,P_\lambda} \neq \mathcal{V}_{f_\mu,P_\mu} $$
for every $\lambda\neq \mu$.  Choose a complex neighborhood $W$ of $[-1,1]$ such that  $\overline{W}\subset U$. Then there exists a sequence $\lambda _k \in \Lambda$, $k\in \mathbb{N}$, such that 
\begin{itemize}
\item[P1.] There are  $n$ and $q'< q$ such that $(f_{\lambda_k},P_{\lambda_k})$ satisfies the conditions $A$ and $B$ for every $k$. 
\item[P2.] $\lim_k(f_{\lambda_k},P_{\lambda_k})= (f_\infty,P_\infty)$ on $\mathcal{B}_\mathbb{R}(W)$, where $(f_\infty,P_\infty)$ is a multimodal map with $j$ quadratic critical points, negative schwarzian derivative and $f_\infty(-1,1)\subset (-1,1)$, and that also satisfies $A.$, $B.$ and $C.$ for the very same $n$ and $q'$ as in P1.
\item[P3.] There is $\theta > 1$ such that if $b_{\lambda_k}$ is the  repelling periodic point in the  boundary of $P_{\lambda_k}$ then $|Df_{\lambda_k}^{q'}(b_{\lambda_k})| > \theta$ for every $k$.
\item[P4.] If $k\neq k'$ then $\lambda_k\neq \lambda_{k'}$. 
\end{itemize}

By the implicit function theorem there is a ball $Y$ of $\mathcal{B}_\mathbb{R}(W)$ around $f_\infty$  and a real-analytic function $P$ defined in $Y$ such that for every $f \in Y$ we have that $P(f)$ is a restrictive interval for $f$ satisfying $A$ and $B$, and additionally $P(f_{\lambda_k})=P_{\lambda_k}$ for every large $k$.  In particular, choose $k_0, k_1$ large enough and a small connected open subset $\tilde{W} \subset \Gamma$ around the segment $\{t f_{\lambda_{k_0}}+(1-t)f_{\lambda_{k_1}}, \ t \in [0,1]\}$. Then the function $P$ is defined in $\tilde{W}$, which implies that 
$$\mathcal{V}_{f_{\lambda_{k_0}},P_{\lambda_{k_0}}} =  \mathcal{V}_{f_{\lambda_{k_1}},P_{\lambda_{k_1}}} ,$$
which is a contradiction. This completes the proof of our claim. \\

\noindent {\it Step II. (Replacing multimodal maps by extended maps of type $n$)} A real analytic multimodal map does not have the  nice structure of a multimodal map of type $n$. Fix some open set $\mathcal{V}_{f_0,P_0} \subset \mathcal{O}_{j,q,n}$. We will replace every $g \in \pi_1(\mathcal{V}_{f_0,P_0})$ by a induced map that is an extended map of type n. Denote by   $I_g$  the extended map of type $n$ that is  the renormalization of $g$ associated with the restrictive interval $P_{f_0,P_0}(g)$. Of course $I_g$ is a real-analytic extended map with negative schwarzian derivative and quadratic critical points.\\

\noindent {\it Step III. (Compexification).} Fix $p\geq 2$ Let $\mathcal{W} \subset \mathcal{B}_{nor}(U)$ be  the domain of the complexification of the $p$-bounded renormalization operator $\mathcal{R}$  as defined Section \ref{no_re}. Note that if  $I_g$ is infinitely renormalizable with $p$-bounded combinatorics we {\it don't } necessarily have that $I_g \in \mathcal{W}$. It may be the case that $I_g$ is not defined on the domain $U$, for instance. However by  the beau complex bounds given by Proposition \ref{co_bo} and the universality result in Proposition \ref{contraction2}     there is $k$  such that $R^{k'}(I_g)\in \mathcal{W}$ for every  $k'\geq k$. Again by  the beau complex bounds and Proposition \ref{contraction2}  we can find  open subsets $\mathcal{V}_{f_0,P_0}^k\subset \mathcal{V}_{f_0,P_0}$ such that for 
\begin{itemize}
\item[E1.] For every $g \in \mathcal{V}_{f_0,P_0}^k$ we have that $\mathcal{I}_k(g)=R^{k}(I_g)$ is well defined and it belongs to $\mathcal{W}$.
\item[E2.] For every $g \in \mathcal{V}_{f_0,P_0}^k$ that is infinitely renormalizable with $p$-bounded combinatorics we have $R^{k'}(I_g)\in \mathcal{W}$ for every  $k'\geq k$.
\item[E3.] We have that
$$\cup_k \mathcal{V}_{f_0,P_0}^k$$
contains all  infinitely maps in $ \mathcal{V}_{f_0,P_0}$ which are renormalizable with $p$-bounded combinatorics.
\end{itemize}

The operator $\mathcal{I}_k$ has a complexification (it can the proven using {\it exactly}  the same argument as in the complexification of the renormalization operator in Section \ref{no_re}). From now on we restrict $\mathcal{I}_k$  to  real maps. Note that  the image of the operator $D_g\mathcal{I}_k$ is dense in $T\mathcal{B}_{nor}^{\mathbb{R}}(U)$ (again, the argument is the same as with the renormalization operator in Theorem  \ref{dense}).\\

\noindent {\it Step IV. (Applying the hyperbolicity of $\Omega_{n,p}$).} Due Theorem \ref{hip1} we have that  $\Omega_{n,p}$ is a hyperbolic invariant set of $\mathcal{R}$.  Moreover  Corollary \ref{tei} says that $W^s(\Omega_{n,p})$ has transversal empty interior.  Theorem  \ref{dense} tells us that 
$D_F\mathcal{R}(T_{F}\mathcal{B}_{nor}^{\mathbb{R}}(U))$ is dense in $T_{\mathcal{R}F}\mathcal{B}_{nor}^{\mathbb{R}}(U)$ for every $F \in \mathcal{W}^{\mathbb{R}}$. So we conclude that $\mathcal{R}$ (restricted to real maps) satisfies the assumptions of Theorem 1 in  \cite{sm5} (taking $k=\omega_{\mathbb{R}}$ there). Now  we can apply  Proposition 8.1 of \cite{sm5}  taking $\mathcal{M}=\mathcal{I}_k$ to conclude that for a generic $\gamma \in \Gamma^{\omega_\mathbb{R}}_n(\mathcal{B}_{\mathbb{R}})$ the set of parameters $\lambda \in [-1,1]^n$ where $\mathcal{I}_k(\gamma(\lambda))$  is infinitely  renormalizable with $p$-bounded combinatorics has zero Lebesgue measure. Since there is just a  countable number of choices for $k$, elements of $\mathcal{G}$,  $q$, $p$ and $j$, we concluded the proof.
\end{proof}

Indeed  Proposition 8.1 of \cite{sm5} implies an analogous result for finitely differentiable families of maps in $\Gamma$. We refer the reader to \cite{sm5} for additional statements and definitions for this setting.

\subsection{ Transversal families of polynomial-like maps} 

Recall the definition of $\hat{E}^h_{f}$ and $\hat{E}^v_{f}$ in Section \ref{vdir}. 

\begin{thm}[Transversal families]\label{sv} Let $\Lambda$ be an open subset of $\mathbb{C}^d$. Let 
$$\lambda \in \Lambda \mapsto f_\lambda\colon V^1_\lambda \rightarrow V^2_\lambda$$
be a complex analytic family of polynomial-like maps such that for every   $\lambda \in \Lambda\cap \mathbb{R}^d$ we have that $V^1_\lambda, V^2_\lambda$ are symmetric with respect with $\mathbb{R}$, its real trace is an interval, $f_\lambda(x) \in \mathbb{R}$ for every $x \in \mathbb{R}$, $f_\lambda$ has negative Schwarzian derivative and just quadratic critical points on the real line.  Suppose that for every $\lambda_0 \in \mathbb{R}^d$  such that $f_{\lambda_0}$ is infinitely renormalizable with bounded combinatorics we have that \\ \\
\noindent {\bf (Transversality assumption.)} Every holomorphic vector in a neighborhood of $K(f_{\lambda_0})$ can be written as  a sum of a vector in $\hat{E}^h_{f_{\lambda_0}}$ and a vector in  $$D_\lambda f_\lambda  |_{\lambda=\lambda_0} (\mathbb{R}^d).$$
\ \\
\noindent Then the set of parameters $\lambda \in \Lambda\cap \mathbb{R}^d$ where $f_\lambda$ is infinitely renormalizable with bounded combinatorics   has zero $d$-dimensional Lebesgue measure. 
\end{thm}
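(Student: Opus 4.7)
The strategy is to adapt the proof of Theorem~\ref{thmA}, replacing the use of genericity by the explicit transversality hypothesis. As in that argument, one decomposes the set of parameters ``infinitely renormalizable with bounded combinatorics'' into a countable union indexed by: a combinatorial bound $p$; a type $n$; a marked restrictive cycle (Step~I of the proof of Theorem~\ref{thmA}); and a renormalization depth $k$ large enough that the induced extended map $I_{f_\lambda}$ admits a $k$th polynomial-like renormalization lying in $\mathcal{W}\subset \mathcal{B}_{nor}(U)$. On each piece one obtains an open set $\Lambda'\subset \Lambda\cap \mathbb{R}^d$ and a real-analytic map
$$\Psi_k\colon \Lambda' \rightarrow \mathcal{B}_{nor}(U), \qquad \Psi_k(\lambda)=\mathcal{R}^k(I_{f_\lambda}),$$
and it suffices to show that $\Psi_k^{-1}(W^s_{\delta_3}(\Omega_{n,p}))$ has zero $d$-dimensional Lebesgue measure on $\Lambda'$.

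The main step is to promote the transversality hypothesis to transversality of $\Psi_k$ to the stable distribution at every $\lambda_0$ with $\Psi_k(\lambda_0)\in \Omega_{n,p}$:
\begin{equation}\label{tr_goal}
T_{\Psi_k(\lambda_0)}\mathcal{B}_{nor}(U)=E^h_{\Psi_k(\lambda_0)}+ D\Psi_k|_{\lambda_0}(\mathbb{R}^d).
\end{equation}
The assumption $\mathbb{B}_{f_{\lambda_0}}=\hat{E}^h_{f_{\lambda_0}}+D_\lambda f_\lambda|_{\lambda_0}(\mathbb{R}^d)$, combined with the splitting $\mathbb{B}_f=\hat{E}^h_f+\hat{E}^v_f$ from (\ref{somadi}), forces the image of $D_\lambda f_\lambda|_{\lambda_0}$ to project onto the quotient $\mathbb{B}_{f_{\lambda_0}}/\hat{E}^h_{f_{\lambda_0}}$. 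Passing from $f_{\lambda_0}$ to the induced extended map $I_{f_{\lambda_0}}$ preserves this, since the infinitesimal pullback argument of Proposition~\ref{ipa} sends horizontal vectors to horizontal vectors under the induced-map construction. Iterating renormalization, the invariance $(D_F\mathcal{R})^{-1}(E^h_{\mathcal{R}F})\subset E^h_F$ of Proposition~\ref{invariance} together with the density of Theorem~\ref{dense} imply that $D\mathcal{R}_F$ descends to an isomorphism $T\mathcal{B}_{nor}(U)/E^h_F\to T\mathcal{B}_{nor}(U)/E^h_{\mathcal{R}F}$ of $n$-dimensional spaces (Proposition~\ref{codimension}), so transversality persists under iteration of $\mathcal{R}$, yielding~(\ref{tr_goal}).

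Once (\ref{tr_goal}) holds, invoking Proposition~8.1 of \cite{sm5} exactly as at the end of the proof of Theorem~\ref{thmA} closes the argument: hyperbolicity of $\Omega_{n,p}$ (Theorem~\ref{hip1}), transversal empty interior of $W^s(\Omega_{n,p})$ (Corollary~\ref{tei}), and transversality of the finite-dimensional real-analytic family $\Psi_k$ to the stable lamination are all now in place, so $\Psi_k^{-1}(W^s_{\delta_3}(\Omega_{n,p}))$ has zero Lebesgue measure on $\Lambda'$. Summing over the countable family of pieces gives the theorem.

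The hard part will be verifying~(\ref{tr_goal}): the transversality hypothesis is formulated in the large space $\mathbb{B}_{f_{\lambda_0}}$ of germs around $K(f_{\lambda_0})$, whereas the hyperbolic theory lives in the smaller normalized Banach space $T\mathcal{B}_{nor}(U)$. One has the identification $E^h_F=\hat{E}^h_F\cap T\mathcal{B}_{nor}(U)$ from Section~\ref{vdir} and the codimension identity of Proposition~\ref{codimension}, but one must still argue that a vector of $D_\lambda f_\lambda|_{\lambda_0}(\mathbb{R}^d)$ whose class in $\mathbb{B}/\hat{E}^h$ is nonzero does not lose its nontrivial vertical component upon restriction to $U$ and passage to the extended-map renormalization; this bookkeeping between the two horizontal spaces is the technically delicate point.
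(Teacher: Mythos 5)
Your overall strategy is the same as the paper's: mark restrictive cycles, pass to induced extended maps of type $n$, compose with $\mathcal{R}^k$ to land in $\mathcal{W}$, and push the transversality hypothesis forward through that composition before invoking a measure-zero result from \cite{sm5}. But there are two concrete gaps.

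First, you close the argument by citing Proposition~8.1 of \cite{sm5} ``exactly as at the end of the proof of Theorem~\ref{thmA}.'' That proposition is the \emph{genericity} statement: it says that \emph{generic} families in $\Gamma^{\omega_{\mathbb{R}}}_m(\mathcal{B}_{\mathbb{R}})$ meet the stable lamination in a Lebesgue-null set. It gives no information about a single, fixed family $\Psi_k$. The paper's proof of this theorem instead invokes Corollary~10.2 of \cite{sm5}, which is the statement tailored for a transversal finite-dimensional family. Using Proposition~8.1 here would only reproduce Theorem~\ref{thmA} and would not yield the conclusion for the specific family $\lambda\mapsto f_\lambda$.

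Second, the paper does not aim at your full $d$-dimensional transversality~(\ref{tr_goal}) and apply a single result to the whole family. It extracts, at each $\lambda_0$ with $f_{\lambda_0}$ infinitely renormalizable, an $n$-dimensional subspace $S_{\lambda_0}\subset\mathbb{R}^d$ with $D_{f_{\lambda_0}}\mathcal{I}_k\cdot D_\lambda f_\lambda(S_{\lambda_0})\pitchfork E^h_{\mathcal{I}_k(f_{\lambda_0})}$, completes $S_{\lambda_0}$ to a basis of $\mathbb{R}^d$, foliates a neighborhood of $\lambda_0$ by $n$-dimensional affine slices parallel to $S_{\lambda_0}$, applies Corollary~10.2 of \cite{sm5} to each slice to get zero $n$-dimensional measure, and concludes by Fubini. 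This slicing is not cosmetic: the stable lamination has codimension $n$, so the transversal-family result is an $n$-parameter statement and must be applied on $n$-dimensional leaves. Your proposal should make this Fubini reduction explicit.

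Your flagged ``hard part''—transferring nondegeneracy from the germ space $\mathbb{B}_{f_{\lambda_0}}$ with $\hat{E}^h_{f_{\lambda_0}}$ to the Banach space $T\mathcal{B}_{nor}(U)$ with $E^h_{\mathcal{I}_k(f_{\lambda_0})}$—is indeed the delicate point, and the paper handles it essentially as you suggest: invariance $D_{f_\lambda}\mathcal{I}_k(E^h_{f_\lambda})\subset E^h_{\mathcal{I}_k(f_\lambda)}$ (proved ``exactly as'' Proposition~\ref{invariance}), density of $\mathrm{Im}\,D_{f_\lambda}\mathcal{I}_k$ (as in Theorem~\ref{dense}), and the $n$-dimensional quotient $T\mathcal{B}_{nor}(U)/E^h$ from Proposition~\ref{codimension}. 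The paper is also terse here, so your caution is warranted, but you should at least state that, since the quotient is finite-dimensional, density of $\mathrm{Im}\,D\mathcal{I}_k$ makes the induced map on quotients onto, which is what lets the hypothesis on $\hat{E}^h_{f_{\lambda_0}}+D_\lambda f_\lambda(\mathbb{R}^d)$ descend to the transversality of $D\mathcal{I}_k\cdot D_\lambda f_\lambda(\mathbb{R}^d)$ with $E^h_{\mathcal{I}_k(f_{\lambda_0})}$.
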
 
\begin{proof} We can find a countable family  of domains $U_i\subset \mathbb{C}$, symmetric with respect to $\mathbb{R}$,  and open subsets $\Lambda_i\subset \Lambda \cap \mathbb{R}^d$ such that 
$\cup_ i \Lambda_i=\Lambda \cap \mathbb{R}^d$,  $f_\lambda\in \mathcal{B}(U_i)$, for $\lambda\in \Lambda_i$, with $K(f_\lambda)\cap \mathbb{R} \subset U_i\cap \mathbb{R}$, and 
$$\lambda \in \Lambda_i \mapsto f_\lambda \in \mathcal{B}(U_i)$$ is an real analytic family.  It is enough to prove the conclusion of Theorem \ref{sv} for each one of those families. So fix $i$.  The proof goes as the proof of Theorem \ref{thmA}. We can define the sets $\mathcal{O}_{j,q,n}$ replacing the pairs $(f,P)$ by pairs of the form $(\lambda, P)$, where $\lambda \in \Lambda_i$ and $P$ is a restrictive interval of $f_\lambda$. In a similar way we can define $(\mathcal{V}_{\lambda_0,P_{\lambda_0}},P_{\lambda_0,P_{\lambda_0}})$, the sets  $\mathcal{V}_{\lambda_0,P_{\lambda_0}}^k\subset \Lambda_i$ and the  real-analytic parametrized families
$$\lambda \in \mathcal{V}_{\lambda_0,P_{\lambda_0}}^k \rightarrow \mathcal{I}_k(f_\lambda)\in \mathcal{W}.$$
Suppose that  $f_\lambda$ is infinitely renormalizable with $p$-bounded combinatorics. Then 
$$D_{f_\lambda}  \mathcal{I}_k (E^h_{f_\lambda})  \subset E^h_{\mathcal{I}_k(f_\lambda)}.$$
This follows exactly as the proof of Proposition \ref{invariance}. On the other hand we know (see the proof of Theorem \ref{thmA}) that 
 $Im \ D_{f_\lambda} \mathcal{I}_k$ is dense in $T_{\mathcal{I}_k(f_\lambda)}\mathcal{B}_{nor}^{\mathbb{R}}(U).$
By the Transversality assumption  this  implies that there is a subspace $S_\lambda \subset \mathbb{R}^d$, with $\dim S_\lambda =n$,  such that
$$D_{f_\lambda} \mathcal{I}_k \cdot D_\lambda f_\lambda (S_\lambda) \pitchfork E^h_{\mathcal{I}_k(f_\lambda)}.$$
Suppose that $\lambda_0$ is such that $f_{\lambda_0}$ is infinitely renormalizable with $p$-bounded combinatorics. Let $v_1, \dots, v_n, v_{n+1}, \dots, v_{d}$ be a basis of $\mathbb{R}^d$ such that $v_1, \dots, v_n$ is a basis for $S_{\lambda_0}$. Then for every $\gamma=(\gamma_1,\dots,\gamma_{d-n}) \in \mathbb{R}^{n-d}$ that is small enough we have that the family
$$\theta=(\theta_1,\dots,\theta_n)\mapsto g_\theta= f_{\lambda_0+ \sum_{i=1}^{n}  \theta_i v_i+ \sum_{i=n+1}^{d}  \gamma_{i-n} v_i },$$
where $\theta$ is also small, satisfies
$$D_{g_\theta} \mathcal{I}_k \cdot D_\theta g_\theta (\mathbb{R}^n) \pitchfork E^h_{\mathcal{I}_k(g_\theta)}.$$
So by \cite[Corollary 10.2]{sm5} we have that for every small $\gamma$, the set of small parameters $\theta$ such that $g_\theta$ is infinitely renormalizable with bounded combinatorics has zero $n$-dimensional Lebesgue measure. By the Fubini's Theorem it follows that in a small neighborhood the parameter $\lambda_0$ the set of parameters $\lambda$ such that $f_\lambda$ is infinitely renormalizable with $p$-bounded combinatorics has zero $d$-dimensional Lebesgue measure. This completes the proof. 
\end{proof}

\begin{proof}[Proof of Theorem C] Let  $f_{\lambda_1,\lambda_2}(z)=z^3-3\lambda_1^2z +\lambda_2$. Note that if $\lambda_1=0$ then $f_{\lambda_1,\lambda_2}$ is not infinitely renormalizable, so we assume that $\lambda_1\neq 0$.  Let $\lambda_0=(a,b)$, $a\neq 0$.  Then 
$$\partial_\lambda f_\lambda|_{\lambda=\lambda_0}  (\mathbb{R}^2) =\{  cz+d , \  c,d \in \mathbb{R}\}$$
By Proposition \ref{v444} we have that $\hat{E}^v_{f_\lambda}$ is the space of cubic polynomials, so   $\dim \hat{E}^v_{f_\lambda} =4$ and 
$$\partial_\lambda f_\lambda|_{\lambda=\lambda_0}  (\mathbb{R}^2) \subset  \hat{E}^v_{f_\lambda}.$$
We claim  that 
\begin{equation}\label{hv}  \{ 2z^3-b, -3z^2-3a^2-1\} \subset  \hat{E}^h_{f_\lambda}\cap \hat{E}^v_{f_\lambda}\end{equation} 
Indeed, let $H_t(z)=z/t$. Then 
$$2z^3 -b = \partial_t    (H_t\circ f_{a,b}\circ H_t^{-1}(z))|_{t=1},$$
so $2z^3-b = \alpha_1(f_{a,b}(z))-Df_{a,b}(z)\alpha_1(z)$, where $\alpha_1(z)=\partial_t H_t(z)|_{t=1}= -z$.  Let $S_t(z)=z-t$. Then
$$3z^2-3a^2-1 = \partial_t    (S_t\circ f_{a,b}\circ S_t^{-1}(z))|_{t=0},$$
so $3z^2-3a^2-1= \alpha_2(f_{a,b}(z))-Df_{a,b}(z)\alpha_2(z)$, where $\alpha_2(z)=\partial_t S_t(z)|_{t=0}= -1$. 
Since $\{ 1,z, 2z^3 -b,3z^2-3a^2-1\}$ is a basis of $\hat{E}^v_{f_\lambda}$, by (\ref{somadi}) and  (\ref{hv}) we have that every holomorphic vector in a neighborhood of $K(f_{a,b})$ can be written as  a sum of a vector in $\hat{E}^h_{f_{a,b}}$ and a vector in  $\partial_\lambda f_\lambda|_{\lambda=\lambda_0}  (\mathbb{R}^2)$. Now we apply  Theorem  \ref{sv}. \end{proof}

\subsection{Compositions of quadratic maps} 

We can say something about a specific family of extended maps of type $n$. This family was  introduced in \cite{sm2}.  Let $\lambda=(\lambda_i)_{i\leq n},$ with $\lambda_i \in [0,1]$ and define
$$F_\lambda \colon \mathbb{C}\times \{ i\}_{i\leq n} \rightarrow \mathbb{C}\times \{ i\}_{i\leq n}$$
as $F_\lambda(z,i)=(-2\lambda_i z^2 + 2\lambda_i-1,i+1 \mod n).$

It follows from the study in \cite{sm2} that each  possible combinatorial type of an infinitely renormalizable extended map of type $n$ with combinatorics bounded by $p$ can be realized by a unique parameter in $[0,1]^n$ and  the set of such parameters $\Lambda_{p,n} \subset [0,1]^n$ is a Cantor set \cite[Theorem 2]{sm2}. The following result  answers a conjecture in \cite{sm2}. 

\begin{thm}We have that $m(\Lambda_{p,n})=0$, where $m$ is the $n$-dimensional Lebesgue measure. 
\end{thm}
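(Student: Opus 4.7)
The plan is to verify the transversality assumption of Theorem~\ref{sv}, in its natural extended-map analog, for the family $(F_\lambda)_{\lambda\in[0,1]^n}$: at every $\lambda_0\in\Lambda_{p,n}$, every germ of holomorphic vector on a neighborhood of $K(F_{\lambda_0})$ is the sum of a vector in $\hat{E}^h_{F_{\lambda_0}}$ and a vector in $\partial_\lambda F_\lambda|_{\lambda=\lambda_0}(\mathbb{R}^n)$. Once this is established, the proof of Theorem~\ref{sv} applies mutatis mutandis---its ingredients, Theorem~\ref{hip1}, Corollary~\ref{tei}, and Proposition~8.1 of \cite{sm5}, do not distinguish polynomial-like maps from polynomial-like maps of type $n$, and the passage to $\mathcal{R}^k(F_\lambda)$ for large $k$ to land inside $\mathcal{W}$ is directly analogous to the complexification step in the proof of Theorem~\ref{thmA}---yielding $m(\Lambda_{p,n})=0$.

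By (\ref{somadi}), $\mathbb{B}_F=\hat{E}^h_F+\hat{E}^v_F$, so it suffices to cover $\hat{E}^v_{F_{\lambda_0}}$ by $\hat{E}^h_{F_{\lambda_0}}+\partial_\lambda F_\lambda|_{\lambda=\lambda_0}(\mathbb{R}^n)$. By Proposition~\ref{v444}, $\hat{E}^v_{F_\lambda}$ has dimension $3n$. For any piecewise-affine vector field $\alpha(z,j)=a_jz+b_j$ on $\mathbb{C}_n$, the vector $v_\alpha:=\alpha\circ F_\lambda-DF_\lambda\cdot\alpha$ is a polynomial of degree at most $2$ on each component, hence lies in $\hat{E}^h_{F_\lambda}\cap\hat{E}^v_{F_\lambda}$. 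The assignment $\alpha\mapsto v_\alpha$ is injective: matching coefficients of $1,z,z^2$ in the homogeneous equation $\alpha\circ F_\lambda=DF_\lambda\cdot\alpha$ yields $b_j=0$ and $a_{j+1}=2a_j$, and the cyclic closure $a_0=2^na_0$ forces $a_0=0$. Thus $\{v_\alpha\}$ contributes a $2n$-dimensional subspace of $\hat{E}^h_{F_\lambda}\cap\hat{E}^v_{F_\lambda}$.

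The heart of the proof is to show the $n$ parameter derivatives $\partial_{\lambda_i}F_\lambda$ are independent modulo $\{v_\alpha\}$. Assume $\sum_ic_i\,\partial_{\lambda_i}F_\lambda=v_\alpha$ for some piecewise-affine $\alpha=(a_jz+b_j)_j$. Using that $\partial_{\lambda_i}F_\lambda(z,j)$ equals $-2z^2+2$ when $j=i$ and vanishes otherwise, matching coefficients of $1,z,z^2$ component-by-component yields $b_j=0$, $c_j=\lambda_j(a_{j+1}-2a_j)$, and $2c_j=a_{j+1}(2\lambda_j-1)$. Eliminating $c_j$ produces the recurrence $a_{j+1}=4\lambda_ja_j$, whose cyclic closure gives
\[
a_0\Bigl(4^n\prod_{j=1}^n\lambda_j-1\Bigr)=0.
\]
Now $(-1,1)$ is a fixed point of $F_\lambda^n$ with multiplier exactly $4^n\prod_j\lambda_j$, and any infinitely renormalizable extended map must have all boundary fixed points repelling (a neutral or attracting boundary fixed point would attract a critical orbit, precluding arbitrarily deep restrictive intervals); hence this multiplier strictly exceeds $1$, forcing $a_0=0$ and consequently all $c_j=0$. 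Together with the $2n$-dimensional contribution from $\{v_\alpha\}$, the parameter derivatives fill out the remaining $n$ dimensions of $\hat{E}^v_{F_{\lambda_0}}$, establishing the transversality.

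The only potentially delicate step is the dynamical input $4^n\prod_j\lambda_j>1$ on the infinitely renormalizable locus; this is the unique place where the combinatorial hypothesis enters and prevents the cohomological computation from degenerating at the indifferent boundary locus. Everything else is routine matching of polynomial coefficients, and the conclusion follows from the extended-map analog of Theorem~\ref{sv}.
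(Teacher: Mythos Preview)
Your proof is correct, but it takes a somewhat different route from the paper's. The paper argues by contradiction in the style of Lyubich's proof for the quadratic family: supposing $v=\partial_\lambda F_\lambda|_{\lambda=\lambda_0}\cdot w\in E^h_{F_{\lambda_0}}$ for some $w\neq 0$, one notes that $v$ is also vertical (polynomials share an external class), and combining the horizontal and vertical vector fields via the infinitesimal pullback argument together with the absence of invariant line fields on $J(F_{\lambda_0})$ forces the associated $\alpha$ to be a global conformal vector field on each $\overline{\mathbb{C}}\times\{i\}$; it then vanishes at enough points to be identically zero, whence $v=0$ and $w=0$. Your approach instead follows the pattern of the paper's proof of Theorem~C: an explicit dimension count in $\hat E^v_{F_{\lambda_0}}$, exhibiting the $2n$-dimensional subspace $\{v_\alpha:\alpha\text{ piecewise affine}\}$ of $\hat E^h\cap\hat E^v$ and then verifying by coefficient matching that the $n$ parameter derivatives span a complement to it inside $\hat E^v$. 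Both arguments ultimately hinge on the same dynamical input---the multiplier $4^n\prod_j\lambda_j$ of the boundary periodic orbit being different from $1$---though in the paper's version this is absorbed into the ``three points'' claim, while in yours it is made explicit as the nondegeneracy of the cyclic recurrence $a_{j+1}=4\lambda_j a_j$. Your route is more computational and makes the role of the repelling boundary point completely transparent; the paper's is shorter and more conceptual but leans more directly on the quasiconformal machinery.
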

\begin{proof} Due Corollary 10.2 in \cite{sm5},  it is enough to show that this family is transversal to the horizontal distribution $F\rightarrow E^h_F$.  We will give a proof similar  to the proof of the transversality of the quadratic family by  Lyubich \cite{lyu}. Indeed, suppose by contradiction that there exists $\lambda_0 \in \Lambda_{p,n}$ and $w \in \mathbb{R}^n\setminus \{0\}$ such that 
$$v=\partial_\lambda F_\lambda|_{\lambda=\lambda_0} \cdot w \in E^h_{F_{\lambda_0}}.$$
So there is a quasiconformal vector field $\alpha_0$, defined in a neighborhood of the postcritical set $P(F_{\lambda_0})$ 
satisfying (\ref{tcc}) on $P(F_{\lambda_0})$.  Since this is a family of polynomials, the conformal dynamics outside the Julia set of $F_{\lambda_0}$ is  
always the same, so $v$ is also a {\it vertical} direction, that is, there exists a conformal vector field $\alpha_1$ defined outside the Julia set such that (\ref{tcc}) holds outside the Julia set.  Using the infinitesimal pullback argument we can find a quasiconformal vector field solution $\alpha$ that satisfies (\ref{tcc}) everywhere and moreover it is conformal outside the Julia set. Since $F_{\lambda_0}$ does not support invariant line fields on its Julia set we conclude that $\alpha$ is conformal everywhere and indeed it is equal to zero, since it is zero at three points of $\overline{\mathbb{C}}\times \{i\}$, for each $i\leq n$. So $v=0$, which implies $w=0$. 
\end{proof}

\begin{rem} Note that $\Lambda_{p,n}$ only includes the parameters where each renormalization involves all $n$ critical points, that is, each cycle of intervals covers all critical points.    If we consider infinitely renormalizable maps where fewer points are involved then the set of parameters it is not a Cantor set anymore. However it is likely that this  larger subset of parameters also have zero Lebesgue measure. 
\end{rem}

\bibliographystyle{abbrv}
\bibliography{bibliografia}

\def\cprime{$'$}
\begin{thebibliography}{10}

\bibitem{a_book}
L.~V. Ahlfors.
\newblock {\em Lectures on quasiconformal mappings}, volume~38 of {\em
  University Lecture Series}.
\newblock American Mathematical Society, Providence, RI, second edition, 2006.
\newblock With supplemental chapters by C. J. Earle, I. Kra, M. Shishikura and
  J. H. Hubbard.

\bibitem{al33}
A.~Avila and M.~Lyubich.
\newblock The full renormalization horseshoe for unimodal maps of higher
  degree: exponential contraction along hybrid classes.
\newblock {\em Publ. Math. Inst. Hautes \'Etudes Sci.}, (114):171--223, 2011.

\bibitem{alm}
A.~Avila, M.~Lyubich, and W.~de~Melo.
\newblock Regular or stochastic dynamics in real analytic families of unimodal
  maps.
\newblock {\em Invent. Math.}, 154(3):451--550, 2003.

\bibitem{als}
A.~Avila, M.~Lyubich, and W.~Shen.
\newblock Parapuzzle of the {M}ultibrot set and typical dynamics of unimodal
  maps.
\newblock {\em J. Eur. Math. Soc. (JEMS)}, 13(1):27--56, 2011.

\bibitem{am2}
A.~Avila and C.~G. Moreira.
\newblock Phase-parameter relation and sharp statistical properties for general
  families of unimodal maps.
\newblock In {\em Geometry and dynamics}, volume 389 of {\em Contemp. Math.},
  pages 1--42. Amer. Math. Soc., Providence, RI, 2005.

\bibitem{bl}
A.~M. Blokh and M.~Y. Lyubich.
\newblock Attractors of the transformations of the interval.
\newblock {\em Funktsional. Anal. i Prilozhen.}, 21(2):70--71, 1987.

\bibitem{bl2}
A.~M. Blokh and M.~Y. Lyubich.
\newblock Decomposition of one-dimensional dynamical systems into ergodic
  components. {T}he case of a negative {S}chwarzian derivative.
\newblock {\em Algebra i Analiz}, 1(1):128--145, 1989.

\bibitem{wild2}
H.~Bruin, G.~Keller, T.~Nowicki, and S.~van Strien.
\newblock Wild {C}antor attractors exist.
\newblock {\em Ann. of Math. (2)}, 143(1):97--130, 1996.

\bibitem{wild4}
H.~Bruin, G.~Keller, and M.~St.~Pierre.
\newblock Adding machines and wild attractors.
\newblock {\em Ergodic Theory Dynam. Systems}, 17(6):1267--1287, 1997.

\bibitem{sbr}
H.~Bruin, W.~Shen, and S.~van Strien.
\newblock Existence of unique {SRB}-measures is typical for real unicritical
  polynomial families.
\newblock {\em Ann. Sci. \'Ecole Norm. Sup. (4)}, 39(3):381--414, 2006.

\bibitem{clark}
T.~Clark.
\newblock Regular or stochastic dynamics in families of higher-degree unimodal
  maps.
\newblock {\em Ergodic Theory Dynam. Systems}, 34(5):1538--1566, 2014.

\bibitem{fma}
E.~de~Faria, W.~de~Melo, and A.~Pinto.
\newblock Global hyperbolicity of renormalization for {$C^r$} unimodal
  mappings.
\newblock {\em Ann. of Math. (2)}, 164(3):731--824, 2006.

\bibitem{ms}
W.~de~Melo and S.~van Strien.
\newblock {\em One-dimensional dynamics}, volume~25 of {\em Ergebnisse der
  Mathematik und ihrer Grenzgebiete (3) [Results in Mathematics and Related
  Areas (3)]}.
\newblock Springer-Verlag, Berlin, 1993.

\bibitem{dgp}
B.~Derrida, A.~Gervois, and Y.~Pomeau.
\newblock Universal metric properties of bifurcations of endomorphisms.
\newblock {\em J. Phys. A}, 12(3):269--296, 1979.

\bibitem{fei1}
M.~J. Feigenbaum.
\newblock Quantitative universality for a class of nonlinear transformations.
\newblock {\em J. Statist. Phys.}, 19(1):25--52, 1978.

\bibitem{fei2}
M.~J. Feigenbaum.
\newblock The universal metric properties of nonlinear transformations.
\newblock {\em J. Statist. Phys.}, 21(6):669--706, 1979.

\bibitem{sinai}
A.~I. Gol{\cprime}berg, Y.~G. Sina{\u\i}, and K.~M. Khanin.
\newblock Universal properties of sequences of period-tripling bifurcations.
\newblock {\em Uspekhi Mat. Nauk}, 38(1(229)):159--160, 1983.

\bibitem{gs}
J.~Graczyk and G.~\'Swiatek.
\newblock Generic hyperbolicity in the logistic family.
\newblock {\em Ann. of Math. (2)}, 146(1):1--52, 1997.

\bibitem{hu}
J.~Hu.
\newblock {\em Renormalization, rigidity, and universality in bifurcation
  theory}.
\newblock ProQuest LLC, Ann Arbor, MI, 1995.
\newblock Thesis (Ph.D.)--City University of New York.

\bibitem{jak}
M.~V. Jakobson.
\newblock Absolutely continuous invariant measures for one-parameter families
  of one-dimensional maps.
\newblock {\em Comm. Math. Phys.}, 81(1):39--88, 1981.

\bibitem{kapo}
M.~Kapovich.
\newblock On the dynamics of pseudo-{A}nosov homeomorphisms on representation
  varieties of surface groups.
\newblock {\em Ann. Acad. Sci. Fenn. Math.}, 23(1):83--100, 1998.

\bibitem{wild3}
G.~Keller and T.~Nowicki.
\newblock Fibonacci maps re(al)visited.
\newblock {\em Ergodic Theory Dynam. Systems}, 15(1):99--120, 1995.

\bibitem{kss2}
O.~Kozlovski, W.~Shen, and S.~van Strien.
\newblock Density of hyperbolicity in dimension one.
\newblock {\em Ann. of Math. (2)}, 166(1):145--182, 2007.

\bibitem{kss1}
O.~Kozlovski, W.~Shen, and S.~van Strien.
\newblock Rigidity for real polynomials.
\newblock {\em Ann. of Math. (2)}, 165(3):749--841, 2007.

\bibitem{lanford}
O.~E. Lanford, III.
\newblock A computer-assisted proof of the {F}eigenbaum conjectures.
\newblock {\em Bull. Amer. Math. Soc. (N.S.)}, 6(3):427--434, 1982.

\bibitem{lhyp}
M.~Lyubich.
\newblock Dynamics of quadratic polynomials. {I}, {II}.
\newblock {\em Acta Math.}, 178(2):185--247, 247--297, 1997.

\bibitem{lyu}
M.~Lyubich.
\newblock Feigenbaum-{C}oullet-{T}resser universality and {M}ilnor's hairiness
  conjecture.
\newblock {\em Ann. of Math. (2)}, 149(2):319--420, 1999.

\bibitem{lyu3}
M.~Lyubich.
\newblock Dynamics of quadratic polynomials. {III}. {P}arapuzzle and {SBR}
  measures.
\newblock {\em Ast\'erisque}, (261):xii--xiii, 173--200, 2000.
\newblock G{\'e}om{\'e}trie complexe et syst{\`e}mes dynamiques (Orsay, 1995).

\bibitem{lyuq}
M.~Lyubich.
\newblock Almost every real quadratic map is either regular or stochastic.
\newblock {\em Ann. of Math. (2)}, 156(1):1--78, 2002.

\bibitem{mackay}
R.~S. MacKay and J.~B.~J. van Zeijts.
\newblock Period doubling for bimodal maps: a horseshoe for a renormalisation
  operator.
\newblock {\em Nonlinearity}, 1(1):253--277, 1988.

\bibitem{mc1}
C.~T. McMullen.
\newblock {\em Complex dynamics and renormalization}, volume 135 of {\em Annals
  of Mathematics Studies}.
\newblock Princeton University Press, Princeton, NJ, 1994.

\bibitem{mc2}
C.~T. McMullen.
\newblock {\em Renormalization and 3-manifolds which fiber over the circle},
  volume 142 of {\em Annals of Mathematics Studies}.
\newblock Princeton University Press, Princeton, NJ, 1996.

\bibitem{wild1}
J.~Milnor.
\newblock On the concept of attractor.
\newblock {\em Comm. Math. Phys.}, 99(2):177--195, 1985.

\bibitem{milnor}
J.~Milnor.
\newblock {\em Dynamics in one complex variable}, volume 160 of {\em Annals of
  Mathematics Studies}.
\newblock Princeton University Press, Princeton, NJ, third edition, 2006.

\bibitem{ss}
R.~J. Sacker and G.~R. Sell.
\newblock Existence of dichotomies and invariant splittings for linear
  differential systems. {I}.
\newblock {\em J. Differential Equations}, 15:429--458, 1974.

\bibitem{ss2}
R.~J. Sacker and G.~R. Sell.
\newblock Existence of dichotomies and invariant splittings for linear
  differential systems. {II}.
\newblock {\em J. Differential Equations}, 22(2):478--496, 1976.

\bibitem{shenet}
W.~Shen.
\newblock On the measurable dynamics of real rational functions.
\newblock {\em Ergodic Theory Dynam. Systems}, 23(3):957--983, 2003.

\bibitem{shen}
W.~Shen.
\newblock Decay of geometry for unimodal maps: an elementary proof.
\newblock {\em Ann. of Math. (2)}, 163(2):383--404, 2006.

\bibitem{sm1}
D.~Smania.
\newblock Complex bounds for multimodal maps: bounded combinatorics.
\newblock {\em Nonlinearity}, 14(5):1311--1330, 2001.

\bibitem{sm2}
D.~Smania.
\newblock Phase space universality for multimodal maps.
\newblock {\em Bull. Braz. Math. Soc. (N.S.)}, 36(2):225--274, 2005.

\bibitem{sm3}
D.~Smania.
\newblock On the hyperbolicity of the period-doubling fixed point.
\newblock {\em Trans. Amer. Math. Soc.}, 358(4):1827--1846 (electronic), 2006.

\bibitem{sm4}
D.~Smania.
\newblock Puzzle geometry and rigidity: the {F}ibonacci cycle is hyperbolic.
\newblock {\em J. Amer. Math. Soc.}, 20(3):629--673 (electronic), 2007.

\bibitem{sm5}
D.~Smania.
\newblock Shy shadows of infinite-dimensional partially hyperbolic invariant
  sets.
\newblock {\em Ergodic Theory Dynam. Systems}, 39(5):1361--1400, 2019.

\bibitem{ds}
D.~Sullivan.
\newblock Bounds, quadratic differentials, and renormalization conjectures.
\newblock In {\em American {M}athematical {S}ociety centennial publications,
  {V}ol. {II} ({P}rovidence, {RI}, 1988)}, pages 417--466. Amer. Math. Soc.,
  Providence, RI, 1992.

\bibitem{ct}
C.~Tresser and P.~Coullet.
\newblock It\'erations d'endomorphismes et groupe de renormalisation.
\newblock {\em C. R. Acad. Sci. Paris S\'er. A-B}, 287(7):A577--A580, 1978.

\end{thebibliography}

\end{document}